\newcommand{\V}{\mathcal V}
\newcommand{\U}{\mathcal U}
\newcommand{\C}{\mathcal C}
\newcommand{\E}{\mathcal E}
\newcommand{\I}{\mathcal I}
\newcommand{\J}{\mathcal J}
\newcommand{\A}{\mathcal A}
\newcommand{\IN}{\mathbb N}
\newcommand{\cof}{\mathrm{cof}}
\newcommand{\add}{\mathrm{add}}
\newcommand{\e}{\varepsilon}
\newcommand{\Ra}{\Rightarrow}
\newcommand{\w}{\omega}
\newcommand{\IR}{\mathbb R}
\newtheorem{theorem}{Theorem}[section]
\newtheorem{corollary}[theorem]{Corollary}
\newtheorem{lemma}[theorem]{Lemma}
\newtheorem{problem}[theorem]{Problem}
\newtheorem{proposition}[theorem]{Proposition}
\theoremstyle{definition}
\newtheorem{definition}[theorem]{Definition}
\newtheorem{example}[theorem]{Example}
\title{The normality and bounded growth of balleans}
\author{Taras Banakh and Igor Protasov}
\address{T.Banakh: Ivan Franko National University of Lviv (Ukraine) and Jan Kochanowski University in Kielce (Poland)}
\email{t.o.banakh@gmail.com}
\address{I.Protasov: Taras Shevchenko National University in Kyiv, Ukraine}
\email{i.v.protasov@gmail.com} 
\subjclass{54D15, 20F65}
\keywords{Ballean, coarse space, normality, bounded growth,  product of balleans, hyperballean, symmetric power, group, $G$-space.}
\begin{document}
\begin{abstract} By a ballean we understand a set $X$ endowed with a family of entourages which is a base of some coarse structure on $X$. Given two unbounded balleans $X,Y$ with normal product $X\times Y$, we prove that the balleans $X,Y$ have bounded growth and the bornology of $X\times Y$ has a linearly ordered base. A ballean $(X,\E_X)$ is defined to have {\em bounded growth} if there exists a function $G$ assigning to each point $x\in X$ a bounded subset $G[x]\subset X$ so that for any bounded set $B\subset X$ the union $\bigcup_{x\in B}G[x]$ is bounded and for any entourage $E\in\E_X$ there exists a bounded set $B\subset X$ such that $E[x]\subset G[x]$ for all $x\in X\setminus B$. We prove that the product $X\times Y$ of two balleans has bounded growth if and only if $X$ and $Y$ have bounded growth and the bornology of the product $X\times Y$ has a linearly ordered base. Also we prove that a ballean $X$ has bounded growth (and the bornology of $X$ has a linearly ordered base) if its symmetric square $[X]^{\le 2}$ is normal (and the ballean $X$ is not ultranormal).
A ballean $X$ has bounded growth and its bornology has a linearly ordered base if  for some $n\ge 3$ and some subgroup $G\subset S_n$ the $G$-symmetric $n$-th power $[X]^n_G$ of $X$ is normal. On the other hand, we prove that for any ultranormal discrete ballean $X$ and every $n\ge 2$ the power $X^n$ is not normal but the  hypersymmetric power $[X]^{\le n}$ of $X$ is normal. Also we prove that the finitary ballean of a group is normal if and only if it has bounded growth if and only if the group is countable.
\end{abstract}

\maketitle 

\section{Introduction and survey of results}

It is well-known that the normality of topological spaces is not preserved by products. For example, the Sorgenfrey line is normal but is square does not.

In this paper we study the normality of products of balleans. For this purpose we introduce a new notion, called the bounded growth. We start with necessary definitions.

A {\em ballean} is a pair $(X,\mathcal E_X)$ consisting of a set $X$ and a family $\mathcal E_X$ of subsets of the square $X\times X$  satisfying the following three axioms:
\begin{enumerate}
\item each $E\in\mathcal E_X$ contains the diagonal $\Delta_X=\{(x,x):x\in X\}$ of $X$;
\item for any $E,F\in\mathcal E_X$ there exists $D\in\mathcal E_X$ such that $E\circ F^{-1}\subset D$, where  $E\circ F:=\{(x,z):\exists y\in X$ such that $(x,y)\in E$ and $(y,z)\in F\}$ and $F^{-1}:=\{(y,x): (x,y)\in F\}$.
\item $\bigcup\E_X=X\times X$.
\end{enumerate}
The family $\mathcal E_X$ is called the {\em ball structure} of the ballean $(X,\mathcal E_X)$ and its elements are called {\em entourages}. For each entourage $E\in\mathcal E_X$ and point $x\in X$ we can consider the set $E[x]:=\{y\in X:(x,y)\in E_X\}$ called the {\em ball of radius $E$} centered at $x$. 
For a subset $A\subset X$ the set $E[A]:=\bigcup_{a\in A}E[x]$ is called the {\em $E$-neighborhood} of  $A$. Observe that $E=\bigcup_{x\in X}\{x\}\times E[x]$, so the entourage $E$ can be recovered from the family of balls $E[x]$, $x\in X$. 


For a ballean $(X,\E_X)$ and a subset $Y\subset X$ the ballean $(Y,\E_X{\restriction}Y)$ endowed with the ball structure
$$\E_X{\restriction}Y:=\{(Y\times Y)\cap E:E\in\E_X\}$$is called a {\em subballean} of $X$.

Any metric space $(X,d)$ carries a natural ball structure $\{E_\e:0<\e<\infty\}$ consisting of the entourages $E_\e:=\{(x,y)\in X\times X:d(x,y)<\e\}$.

A ballean $(X,\mathcal E)$ is called a {\em coarse space} if for any entourage $E\in\mathcal E_X$, any set $F\subset E$ with $\Delta_X\subset F$ belongs to $\mathcal E_X$.  In this case $\mathcal E_X$ is called the {\em coarse structure} of $X$. For a coarse structure $\E$, a subfamily  $\mathcal B\subset\E$ is called a {\em base} of $\mathcal E$ if each set $E\in\E$ is contained in some set $B\in \mathcal B$. It is easy to see that each base of a coarse structure is a ball structure. On the  other hand, each ball structure $\mathcal E$ on a set $X$ is a 
base of the unique coarse structure $${\downarrow}\mathcal E:=\{E\subset X\times X:\Delta_X\subset E\subset F\mbox{ for some }F\in\mathcal B\}.$$ 

If the ball (or coarse) structure $\E_X$ is clear from the context, we shall write $X$ instead of $(X,\E_X)$.
 
For a family $(X_i)_{i\in I}$ of balleans their product $\prod_{i\in I}X_i$ carries the natural ball structure $$\Big\{\big\{\big((x_i)_{i\in I},(y_i)_{i\in I}\big):\forall i\in I\;(x_i,y_i)\in E_i\big\}:(E_i)_{i\in I}\in\prod_{i\in I}\E_{X_i}\Big\}.$$
The ballean $\prod_{i\in I}X_i$ will be called the {\em box-product} of the balleans $X_i$, $i\in I$. 
If the index set $I$ is finite, then the box-product $\prod_{i\in I}X_i$ will be called the {\em product} of balleans. If $X_i=X$ for all $i\in I$ then the product $\prod_{i\in I}X_i$ is denoted by $X^I$ and is called the {\em $I$-th power} of $X$. 
\smallskip

A subset $B\subset X$ of a ballean $(X,\E_X)$ is called {\em bounded} if $B\subset E[x]$ for some $E\in\E_X$ and $x\in X$. A ballean $X$ is {\em bounded} if $X$ is a bounded set in $(X,\E_X)$. 

The family $\mathcal B_X$ of all bounded subsets is called the {\em bornology} of the ballean $(X,\E_X)$. If the ballean $X$ is unbounded, then the bornology $\mathcal B_X$ is an ideal of subsets of $X$. A family $\I$ of subsets of a set $X$ is called an {\em ideal} on $X$ if $\I$ is closed under finite unions and taking subsets, and $X\notin\I$. 

More information on balleans and coarse spaces can be found in the monographs  \cite{CH}, \cite{NYu},  \cite{PB}, \cite{PZ}, \cite{Roe} and in the papers \cite{DZ1}, \cite{DZ2}, \cite{Dr}, \cite{DH}, \cite{Nek}. 

Now we recall the necessary information on normal balleans (which were introduced and studied by Protasov in \cite{Prot}).

Let $(X,\mathcal E_X)$ be a ballean. Two subsets $A,B\subset X$ are called {\em asymptotically disjoint} if for any $E\in\E_X$ the intersection $E[A]\cap E[B]$ is bounded in $(X,\mathcal E_X)$. We recall that $E[A]=\bigcup_{a\in A}E[a]$ is the $E$-neighborhood of $A$ in $(X,\E_X)$.

A subset $U\subset X$ is called an {\em asymptotic neighborhood} of a set $A\subset X$ if for every $E\in\E_X$ the set $E[A]\setminus U$ is bounded. It is easy to see that a subset $U\subset X$ is an asymptotic neighborhood of a set $A\subset X$ if and only if the sets $A$ and $X\setminus U$ are asymptotically disjoint.

A ballean $(X,\E)$ is called {\em normal} if any asymptotically disjoint sets $A,B\subset X$ have disjoint asymptotic neighborhoods. A ballean $(X,\E_X)$ is called {\em ultranormal} of $X$ contains no  asymptotically disjoint unbounded sets $A,B\subset X$. It is clear that each ultranormal ballean is normal. Examples of ultranormal balleans will be presented in Examples~\ref{e:ultraideal} and \ref{e:final}.

Normal balleans have properties, analogous to properties of normal topological spaces. For example, Protasov \cite{Prot} proved analogs of Urysohn Lemma and Titze-Urysohn Extension Theorem for normal balleans.  By Proposition 1.2 in \cite{Prot}, the normality of balleans is inherited by subballeans. In Section~\ref{s:normpres} we prove more results on preservation of normality by maps between balleans.


A sufficient condition for normality is given in the following theorem proved by Protasov \cite{Prot}.

\begin{theorem}\label{t:prot} A ballean $X$ is normal if its coarse structure has a linearly ordered base.
\end{theorem}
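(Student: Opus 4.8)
The plan is to separate asymptotically disjoint sets by a single ``which of $A,B$ is closer'' dichotomy read off from the chain, in direct analogy with the proof that metric (and uniform) spaces are normal. Fix a base $\mathcal B$ of the coarse structure that is linearly ordered by inclusion, and let $A,B\subset X$ be asymptotically disjoint. To each point $x\in X$ I would attach the two subfamilies
$$a(x):=\{E\in\mathcal B:x\in E[A]\}\quad\text{and}\quad b(x):=\{E\in\mathcal B:x\in E[B]\}.$$
First I would record the two structural facts that make everything work: each of $a(x),b(x)$ is \emph{upward closed} in $(\mathcal B,\subseteq)$ (if $x\in E[A]$ and $E\subseteq F$ with $F\in\mathcal B$, then $x\in E[A]\subseteq F[A]$), and the up-sets of a chain are themselves linearly ordered by inclusion. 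Hence $a(x)$ and $b(x)$ are always comparable, and the larger up-set should be read as ``$x$ is closer to that set'', since a larger up-set contains smaller entourages.

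The separating sets would then be defined by the strict/non-strict split
$$U:=\{x\in X:a(x)\supsetneq b(x)\}\quad\text{and}\quad V:=\{x\in X:a(x)\subseteq b(x)\}.$$
By comparability of $a(x)$ and $b(x)$ these two sets are disjoint and cover $X$, so $U\cap V=\emptyset$ is immediate and requires no estimate. The remaining, and genuinely substantive, step is to verify that $U$ is an asymptotic neighborhood of $A$ and $V$ an asymptotic neighborhood of $B$. For an arbitrary entourage $E\in\E_X$ I would choose, using that $\mathcal B$ is a base, some $E_\gamma\in\mathcal B$ with $E\subseteq E_\gamma$. If $x\in E[A]$ then $E_\gamma\in a(x)$; and if such $x$ fails to lie in $U$, i.e.\ $a(x)\subseteq b(x)$, then $E_\gamma\in b(x)$ as well, so $x\in E_\gamma[B]$. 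This gives the key containment $E[A]\setminus U\subseteq E_\gamma[A]\cap E_\gamma[B]$, whose right-hand side is bounded precisely because $A,B$ are asymptotically disjoint; symmetrically $E[B]\setminus V\subseteq E_\gamma[A]\cap E_\gamma[B]$ is bounded. As $E$ was arbitrary, $U$ and $V$ are the desired disjoint asymptotic neighborhoods, proving normality.

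The main point to get right is the interplay of the two hypotheses in the containment $E[A]\setminus U\subseteq E_\gamma[A]\cap E_\gamma[B]$: linearity of the base is what collapses the ``ambiguous'' points into the single overlap region $E_\gamma[A]\cap E_\gamma[B]$ (via comparability of $a(x),b(x)$), while asymptotic disjointness is what makes that overlap bounded. I would expect this to be the only place demanding care; the disjointness $U\cap V=\emptyset$ is formal, and no symmetry of the entourages is needed, since $E[A]\cap E[B]$ already appears in the definition of asymptotic disjointness. Degenerate cases ($A$ or $B$ empty, or bounded) are absorbed automatically, as then one of the up-sets is empty or eventually all of $\mathcal B$, and the corresponding neighborhood becomes $\emptyset$ or $X$.
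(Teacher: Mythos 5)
Your proof is correct. Since the paper itself does not prove Theorem~\ref{t:prot} directly (it is quoted from Protasov's paper) but instead establishes the generalization Theorem~\ref{t:prot2}, the right comparison is with the proof in Section~\ref{s:prot2}. Both arguments rest on the same dichotomy --- classify each point $x$ by which of $A$, $B$ is ``reached first'' along the chain of entourages --- and in fact, if one replaces your linearly ordered base by a well-ordered one $\{E_\alpha\}_{\alpha\in\kappa}$, your sets $U$ and $V$ coincide with the paper's $O_A=\bigcup_{\alpha}\bigl(E_\alpha[A]\setminus\bigcup_{\beta\le\alpha}E_\beta[B]\bigr)$ and its counterpart (your condition $a(x)\supsetneq b(x)$ is exactly ``the least $\alpha$ with $x\in E_\alpha[A]$ precedes the least $\beta$ with $x\in E_\beta[B]$''). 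The difference is in how the asymptotic-neighborhood estimate is closed: you use linearity directly, via comparability of the up-sets $a(x),b(x)$, to trap all ambiguous points inside a \emph{single} overlap $E_\gamma[A]\cap E_\gamma[B]$, so no cardinal invariants, no well-ordering, and no additivity of the bornology are needed. The paper's proof instead works with an arbitrary (not linearly ordered) family $\{E_\alpha\}_{\alpha\in\kappa}$ witnessing $\cof_\star(\E_X)=\kappa$, and the ambiguous points land in a union $\bigcup_{\beta\le\alpha}(E_\alpha[A]\cap E_\beta[B])$ of fewer than $\kappa$ bounded sets, whose boundedness is purchased by the extra hypothesis $\kappa\le\add(\mathcal B_X)$. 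So your argument is a cleaner, self-contained proof of the special case stated in Theorem~\ref{t:prot}, while the paper's argument buys the strictly more general sufficient condition $\cof_\star(\E_X)\le\add(\mathcal B_X)$. One cosmetic point: your $E_\gamma$ lies in the base of the coarse structure ${\downarrow}\E_X$ rather than in $\E_X$ itself, but since $E_\gamma$ is contained in some member of $\E_X$, the intersection $E_\gamma[A]\cap E_\gamma[B]$ is still bounded by asymptotic disjointness, so nothing is lost.
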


Theorem~\ref{t:prot} motivates the problem of recognizing balleans whose coarse structure has a linearly ordered base. We shall answer this problem using two  cardinal characteristics of balleans: the {\em additivity} $\add(\E_X)$ and the {\em cofinality} $\cof(\E_X)$ of the ball structure $\E_X$.

Those cardinal characteristics are defined for each partially ordered set $(P,\le)$ as follows:
$$
\begin{aligned}
\cof(P)&:=\min\{|A|:A\subset P\;\;\forall x\in P\;\exists y\in A\;\;\;(x\le y)\},\\
\add(P)&:=\min\{|A|:A\subset P\;\;\forall y\in P\;\exists x\in A\;\;\;(x\not\le y)\}.
\end{aligned}
$$
The cardinal $\add(P)$ is not defined if the partially ordered set $P$ has the largest element. In this case we put $\add(P)=\cof(P)=1$.

Observe that $\cof(P)$ is the smallest cardinality of a cofinal set in $P$ and $\add(P)$ is the smallest cardinality of an unbounded set in $P$, where a subset $A\subset P$ is
\begin{itemize}
\item {\em bounded} if there exists $y\in P$ such that $x\le y$ for all $x\in A$;
\item {\em cofinal} if for each $x\in P$ there exists $y\in A$ such that $x\le y$.
\end{itemize}
Cofinal sets are also called {\em bases} of partially ordered sets.
It is easy to see that $\add(P)\le\cof(P)$ for any partially ordered set $P$. Moreover, $P$ has a linearly ordered base if and only if $\add(P)=\cof(P)$ if and only if $P$ has a well-ordered base of cardinality $\add(P)=\cof(P)$. An important observation is that for any partially ordered set $P$ the cardinal $\add(P)$ is regular.

Each ballean $X$ supports three natural structures, which are partially ordered sets: the ball structure $\E_X$, the coarse structure ${\downarrow}\E_X$, and the bornology $\mathcal B_X$. The cardinal characteristics of these partially ordered sets can be considered as cardinal characteristics of the ballean.

Let us observe that $$\add(\E_X)=\add({\downarrow}\E_X)\le\add(\mathcal B_X)\le \cof(\mathcal B_X)\le\cof({\downarrow}\E_X)=\cof(\E_X)$$ for every ballean $X$.


It follows that the ball structure $\E_X$ of a ballean $X$ has a linearly ordered base if and only if $\add(\E_X)=\cof(\E_X)$ if and only if $X$ has a well-ordered base of cardinality $\add(\E_X)=\cof(\E_X)$. 

A ballean $X$ is defined to be {\em $\cof$-regular} if $\cof(\E_X)=\cof(\mathcal B_X).$ 
Many natural examples of balleans are $\cof$-regular. In particular, so are balleans described in Examples 1.1, 1.2 in \cite{Prot} and Examples 2,3,6 in \cite{P2008}.

Theorem~\ref{t:prot} implies that a ballean $X$ is normal if $\cof(\E_X)\le\add(\E_X)$. In fact, the normality of $X$ can be derived from the weaker inequalities $\cof(\E_X)\le\add(\mathcal B_X)$,  $\cof_*(\E_X)\le\add(\mathcal B_X)$ or even $\cof_{\star}(\E_X)\le\add(\mathcal B_X)$.

Here $\cof_*(\E_X)$ is defined as the smallest cardinality $|\C|$ of a subfamily $\C\subset\E_X$ such that for any entourage $E\in\E_X$ there exists an entourage $C\in C$ such that $E\setminus C$ is bounded in $X\times X$.

The cardinal characteristic $\cof_\star(\E_X)$ is defined as 
$$\cof_\star(\E_X):=\sup_{A\subset X}\cof_*(\E_X[A])$$where $\cof_*(\E_X[A])$ is the smallest cardinality $|\C|$ of a subfamily $\C\subset \E_X$ such that for any $E\in\E_X$ there exists $C\in\C$ such that $E[A]\setminus C[A]\in\mathcal B_X$. It is easy to see that $$\cof_\star(\E_X)\le\cof_*(\E_X)\le\cof(\E_X)=\max\{\cof_*(\E_X),\cof(\mathcal B_X)\}$$ for every ballean $(X,\E_X)$.

\begin{theorem}\label{t:prot2} A ballean $X$ is normal if $\cof_\star(\E_X)\le\add(\mathcal B_X)$.
\end{theorem}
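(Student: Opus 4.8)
The plan is to mimic the separation construction underlying Theorem~\ref{t:prot}, replacing the cofinal \emph{chain of entourages} (unavailable here) by an increasing \emph{chain of approximating balls}, and to control the overlaps via the regularity of $\add(\mathcal B_X)$. If $X$ is bounded the statement is trivial, since then every subset is bounded and $\emptyset$ is an asymptotic neighbourhood of every set; so I assume $X$ unbounded and put $\kappa:=\add(\mathcal B_X)$, which is then an infinite regular cardinal. Fix asymptotically disjoint sets $A,B\subset X$. Since $\cof_*(\E_X[A])\le\cof_\star(\E_X)\le\kappa$ and likewise for $B$, I can fix families $\{P_\alpha:\alpha<\kappa\}$ and $\{Q_\alpha:\alpha<\kappa\}$ in $\E_X$ (reindexed with repetitions to length $\kappa$) such that for every $E\in\E_X$ there are indices $\alpha,\beta$ with $E[A]\setminus P_\alpha[A]\in\mathcal B_X$ and $E[B]\setminus Q_\beta[B]\in\mathcal B_X$.

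Next I accumulate these approximating balls into increasing chains of subsets of $X$: put $\tilde A_\alpha:=\bigcup_{\beta\le\alpha}P_\beta[A]$ and $\tilde B_\alpha:=\bigcup_{\beta\le\alpha}Q_\beta[B]$ for $\alpha<\kappa$. The crucial point is that $\tilde A_\alpha\cap\tilde B_\alpha$ is bounded for every $\alpha<\kappa$: it equals $\bigcup_{\beta,\gamma\le\alpha}(P_\beta[A]\cap Q_\gamma[B])$, each term $P_\beta[A]\cap Q_\gamma[B]$ is bounded because $A,B$ are asymptotically disjoint, and the number of terms is at most $|\alpha+1|^2<\kappa=\add(\mathcal B_X)$ (using regularity of $\kappa$), so the union is bounded. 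I then define
$$U:=\bigcup_{\alpha<\kappa}(\tilde A_\alpha\setminus\tilde B_\alpha)\quad\text{and}\quad V:=\bigcup_{\alpha<\kappa}(\tilde B_\alpha\setminus\tilde A_\alpha).$$

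It then remains to verify three things. First, $U\cap V=\emptyset$: if some $x$ lay in both, pick $\alpha,\beta$ with $x\in\tilde A_\alpha\setminus\tilde B_\alpha$ and $x\in\tilde B_\beta\setminus\tilde A_\beta$; assuming $\alpha\le\beta$, monotonicity gives $x\in\tilde A_\alpha\subset\tilde A_\beta$, contradicting $x\notin\tilde A_\beta$. Second, $U$ is an asymptotic neighbourhood of $A$: given $F\in\E_X$, choose $\alpha$ with $S:=F[A]\setminus P_\alpha[A]$ bounded, so that $F[A]\subset\tilde A_\alpha\cup S$; since $\tilde A_\alpha\setminus\tilde B_\alpha\subset U$ we get $\tilde A_\alpha\setminus U\subset\tilde A_\alpha\cap\tilde B_\alpha$, whence $F[A]\setminus U\subset(\tilde A_\alpha\cap\tilde B_\alpha)\cup S$, which is bounded. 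By symmetry $V$ is an asymptotic neighbourhood of $B$, and the proof is complete.

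I expect the only real obstacle to be the first move. One cannot in general arrange the witnessing entourages themselves into an increasing chain, because infinite unions of entourages need not be entourages, so a literal transcription of the linearly-ordered-base argument breaks down. The resolution is to pass to the balls $P_\alpha[A]$ and $Q_\beta[B]$, which are ordinary subsets of $X$ and may be united freely, and to invoke the hypothesis $\cof_\star(\E_X)\le\add(\mathcal B_X)$ at exactly the one place where it is needed, namely to guarantee that each finite-stage overlap $\tilde A_\alpha\cap\tilde B_\alpha$ stays bounded.
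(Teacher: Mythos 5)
Your proof is correct and follows essentially the same route as the paper: after unwinding the cumulative unions, your sets $U$ and $V$ coincide with the paper's asymptotic neighbourhoods $O_A=\bigcup_{\alpha\in\kappa}\bigl(E_\alpha[A]\setminus\bigcup_{\beta\le\alpha}E_\beta[B]\bigr)$ and $O_B$, and the hypothesis $\cof_\star(\E_X)\le\add(\mathcal B_X)$ is invoked at exactly the same single point, namely to keep the fewer-than-$\add(\mathcal B_X)$-fold union of overlaps $P_\beta[A]\cap Q_\gamma[B]$ bounded. The only cosmetic difference is your explicit passage to increasing chains of balls and the separate (trivial) treatment of the bounded case.
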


This theorem will be proved in Section~\ref{s:prot2}. Since $\cof_\star(\E_X)\le\cof(\E_X)$, it implies the following sufficient condition of normality in box-products. 

\begin{corollary}\label{c1} The box-product $\prod_{i\in I}X_i$ of balleans is normal if $$\cof(\mathcal E_{X_i})=\add(\mathcal B_{X_j})>|I|\mbox{ for any $i,j\in\I$.}$$In particular, a ballean $X$ is normal if $\cof(\E_X)\le\add(\mathcal B_X)$.
\end{corollary}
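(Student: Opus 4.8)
The plan is to deduce the corollary from Theorem~\ref{t:prot2}. Write $X:=\prod_{i\in I}X_i$ and let $\kappa$ denote the common value $\cof(\E_{X_i})=\add(\mathcal B_{X_j})$, which is regular since it is an additivity. By Theorem~\ref{t:prot2} it suffices to verify the single inequality $\cof_\star(\E_X)\le\add(\mathcal B_X)$, and I will check that both sides equal $\kappa$.

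First I would pin down $\add(\mathcal B_X)$. A subset $B\subseteq X$ is bounded precisely when each projection $\pi_i(B)$ is bounded in $X_i$, because a ball in the box-product is a product $\prod_i E_i[x_i]$ of balls. Hence for a family $(B_t)_{t<\lambda}$ of bounded sets with $\lambda<\kappa$ one has $\pi_i\big(\bigcup_tB_t\big)=\bigcup_t\pi_i(B_t)$, a union of fewer than $\add(\mathcal B_{X_i})=\kappa$ bounded sets, hence bounded; so $\bigcup_tB_t$ is bounded. As the bound $\kappa$ is attained in any single coordinate (embedding $X_{i_0}$-bounded sets as $B_t\times\{pt\}$), we get $\add(\mathcal B_X)=\min_i\add(\mathcal B_{X_i})=\kappa$. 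I also record that the hypothesis collapses the chain $\kappa=\add(\mathcal B_{X_i})\le\cof(\mathcal B_{X_i})\le\cof(\E_{X_i})=\kappa$, so each $\mathcal B_{X_i}$ has an increasing cofinal chain $(B^i_\beta)_{\beta<\kappa}$; taking $D_\beta:=\prod_iB^i_\beta$ and using regularity of $\kappa$ together with $|I|<\kappa$ to form suprema over $I$, one sees that $\mathcal B_X$ itself admits a well-ordered cofinal chain of length $\kappa$, i.e. $\mathcal B_X$ is $\kappa$-additive with a linearly ordered base.

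It remains to prove $\cof_\star(\E_X)\le\kappa$, which is the heart of the matter. When $I$ is finite this is immediate: for directed posets $\cof(P\times Q)=\max\{\cof(P),\cof(Q)\}$ (take the product of cofinal sets), so $\cof(\E_X)=\max_i\cof(\E_{X_i})=\kappa$ and therefore $\cof_\star(\E_X)\le\cof(\E_X)=\kappa$. For infinite $I$ (so $\kappa\ge\aleph_1$ is uncountable) this easy route breaks down, and I expect this to be the reason the refined cardinal $\cof_\star$ was introduced: the cofinality of an $I$-fold box-product can exceed $\kappa$, because if some factor has $\add(\E_{X_i})<\kappa$ then $\E_{X_i}$ has no cofinal chain, the diagonal estimate for $\cof(\prod_i\E_{X_i})$ fails, and in general $\cof_*(\E_X)=\cof(\E_X)>\kappa=\add(\mathcal B_X)$. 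So I would bound $\cof_\star$ directly. Fixing $A\subseteq X$ and a cofinal family $(c^i_\xi)_{\xi<\kappa}$ in each $\E_{X_i}$, the plan is to show that the diagonal entourages $C_\xi:=\prod_i c^i_\xi$ (for $\xi<\kappa$), corrected along the length-$\kappa$ chain of $\mathcal B_X$, furnish a family $\mathcal C$ of size $\kappa$ that is cofinal modulo $\mathcal B_X$ among the images $\{E[A]:E\in\E_X\}$, which yields $\cof_*(\E_X[A])\le\kappa$ and hence $\cof_\star(\E_X)\le\kappa$.

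The main obstacle is the \emph{coordinate entanglement} of box-products. Writing $A_i:=\pi_i(A)$, the image $E[A]$ is not the rectangle $\prod_iE_i[A_i]$ but only the set of points admitting a \emph{single} witness $a\in A$ in all coordinates at once, and the difference of the two can be unbounded; so no coordinatewise approximation of $E[A]$ modulo $\mathcal B_X$ is available. Concretely, given $E=\prod_iE_i$ with $E_i\subseteq c^i_{\xi_i}$ and $\xi^*:=\sup_i\xi_i<\kappa$ (regularity plus $|I|<\kappa$), one cannot conclude $E\subseteq C_{\xi^*}$, since the families $(c^i_\xi)_\xi$ need not be increasing. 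The way I expect to defeat this is to use the $\kappa$-additivity of $\mathcal B_X$ established above: each ball $E[a]=\prod_iE_i[a_i]$ is bounded, and the ``wrong-witness'' points of $E[A]\setminus C_\xi[A]$ can be organised, level by level along the chain $(D_\beta)_{\beta<\kappa}$, into unions of fewer than $\kappa$ bounded sets, which stay bounded. Arranging this absorption uniformly so that one single index $\xi<\kappa$ suffices is the one genuinely technical point; the remainder is the bookkeeping of the two cofinality computations, after which Theorem~\ref{t:prot2} delivers normality of $X$.
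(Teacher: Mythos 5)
Your reduction to Theorem~\ref{t:prot2}, the computation $\add(\mathcal B_X)=\min_i\add(\mathcal B_{X_i})=\kappa$, the remark that $\mathcal B_X$ acquires a well-ordered base of length $\kappa$, and the finite-$I$ case (where $\cof_\star(\E_X)\le\cof(\E_X)=\max_i\cof(\E_{X_i})=\kappa=\add(\mathcal B_X)$) are all correct, and they coincide with the paper's own justification, which is a one-liner: invoke $\cof_\star(\E_X)\le\cof(\E_X)$ and Theorem~\ref{t:prot2}. The ``in particular'' clause is likewise fine.

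The infinite-$I$ case, however, is left unproved: the inequality $\cof_\star(\E_X)\le\kappa$ is exactly what you defer as ``the one genuinely technical point,'' and no argument is supplied for it. The difficulty is not bookkeeping. The hypothesis pins down $\add(\mathcal B_{X_i})=\cof(\mathcal B_{X_i})=\cof(\E_{X_i})=\kappa$ but says nothing about $\add(\E_{X_i})$, so the cofinal families $(c^i_\xi)_{\xi<\kappa}$ cannot in general be replaced by increasing ones, the diagonal entourages $C_\xi=\prod_i c^i_\xi$ need not be cofinal in $\E_X$ even modulo bounded sets, and the proposed ``absorption'' of wrong-witness points along the bornology chain has no mechanism behind it. Worse, the strategy cannot succeed in general. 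Take $X=\omega_1\times\omega$ with bornology the countable sets and entourages $E_{n,\alpha}=\Delta_X\cup\{((\beta,k),(\beta,k')):\beta<\omega_1,\;k,k'\le n\}\cup(\alpha\times\omega)^2$; then $\E_X$ is order-isomorphic to $\omega\times\omega_1$, so $\add(\E_X)=\w<\cof(\E_X)=\add(\mathcal B_X)=\cof(\mathcal B_X)=\omega_1$. For $I=\w$ copies of this $X$ the hypothesis of the corollary holds, yet approximating $E[A]$ modulo $\mathcal B_{X^\w}$ for $A$ the diagonal set $\{((\beta,0))_{k\in\w}:\beta<\omega_1\}$ forces the approximating radii to dominate $\w^\w$ pointwise, whence $\cof_\star(\E_{X^{\w}})\ge\mathfrak d$, which is consistently greater than $\omega_1=\add(\mathcal B_{X^\w})$. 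So for infinite $I$ Theorem~\ref{t:prot2} simply does not apply along the route you take (the paper's own one-line derivation glosses over the same point), and either a genuinely different argument or an added hypothesis such as $\add(\E_{X_i})=\cof(\E_{X_i})$ is needed.
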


The necessary conditions of the normality of ballean products are given in the following theorem that will be proved in Section~\ref{s:proofs}.

\begin{theorem}\label{t:main} If the product $X\times Y$ of two unbounded  balleans is normal, then
\begin{enumerate}
\item $\add(\mathcal B_{X})=\cof(\mathcal B_{X})=\add(\mathcal B_Y)=\cof(\mathcal B_Y)$;
\item the bornology $\mathcal B_{X\times Y}$ of the product $X\times Y$ has a linearly ordered base;
\item the balleans $X,Y$ have bounded growth.
\end{enumerate}
\end{theorem}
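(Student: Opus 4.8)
The plan is to deduce statements (1) and (2) from a single cardinal inequality, and then to build the growth function in (3) more or less by hand. First I would record how the bornology of a product is assembled from the factors: a subset $B\subset X\times Y$ is bounded if and only if both projections $\pi_X(B)$ and $\pi_Y(B)$ are bounded (every basic entourage of the product is a product $E_X\times E_Y$, so every ball is a product of balls), whence the poset $\mathcal B_{X\times Y}$ is cofinally isomorphic to the product poset $\mathcal B_X\times\mathcal B_Y$. For products of directed posets one has $\add(\mathcal B_{X\times Y})=\min\{\add(\mathcal B_X),\add(\mathcal B_Y)\}$ and $\cof(\mathcal B_{X\times Y})=\max\{\cof(\mathcal B_X),\cof(\mathcal B_Y)\}$. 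Consequently statements (1) and (2) both reduce to the single inequality
$$\cof(\mathcal B_X)\le\add(\mathcal B_Y)$$
together with its symmetric counterpart $\cof(\mathcal B_Y)\le\add(\mathcal B_X)$: combined with the general inequalities $\add\le\cof$ these squeeze all four cardinals to a common value $\kappa:=\add(\mathcal B_X)=\cof(\mathcal B_X)=\add(\mathcal B_Y)=\cof(\mathcal B_Y)$, which is part (1), and then $\add(\mathcal B_{X\times Y})=\kappa=\cof(\mathcal B_{X\times Y})$ says that $\mathcal B_{X\times Y}$ has a well-ordered, hence linearly ordered, base, which is part (2).

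The heart of the argument is therefore the inequality $\cof(\mathcal B_X)\le\add(\mathcal B_Y)$, which I would prove contrapositively: assuming $\lambda:=\add(\mathcal B_Y)<\cof(\mathcal B_X)$ I would manufacture asymptotically disjoint subsets $A,B\subset X\times Y$ admitting no disjoint asymptotic neighborhoods, contradicting normality. The raw material is an increasing $\lambda$-chain $\langle V_\alpha\rangle_{\alpha<\lambda}$ of bounded sets in $Y$ with unbounded union (which exists since the regular cardinal $\lambda=\add(\mathcal B_Y)$ is attained by a chain), giving an unbounded ladder in the $Y$-direction; because $\cof(\mathcal B_X)>\lambda$, no $\lambda$-indexed family of bounded sets of $X$ is cofinal, so any attempt to follow the $Y$-ladder by bounded sets in $X$ is forced to leave a gap. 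Encoding this gap as a staircase in $X\times Y$ should produce the desired pair. Checking asymptotic disjointness is routine, since every entourage of $X\times Y$ is dominated by a product $E_X\times E_Y$ and the ladder separates the levels; the genuinely hard part will be the non-separation, i.e. showing that every asymptotic neighborhood of $A$ meets every asymptotic neighborhood of $B$ in an unbounded set, converting the mismatch $\lambda<\cof(\mathcal B_X)$ into an unavoidable coarse collision.

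For part (3) I would work with the common regular value $\kappa$ produced above. Fixing an increasing cofinal chain $\langle B_\alpha\rangle_{\alpha<\kappa}$ of bounded sets in $X$ with $\bigcup_{\alpha<\kappa}B_\alpha=X$, define the rank $\rho\colon X\to\kappa$ by $\rho(x):=\min\{\alpha:x\in B_\alpha\}$; then $\{x:\rho(x)\le\alpha\}=B_\alpha$ is bounded for each $\alpha$ and $\rho$ is bounded on every bounded set. Choosing entourages $\langle E_\alpha\rangle_{\alpha<\kappa}$ and setting $G[x]:=\bigcup_{\beta<\rho(x)}E_\beta[x]$, condition (i) of bounded growth (that $\bigcup_{x\in B}G[x]$ be bounded for bounded $B$) follows at once from $\kappa=\add(\mathcal B_X)$, as $G[x]$ and $\bigcup_{x\in B}G[x]$ are then unions of fewer than $\kappa$ bounded sets.

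The main obstacle is condition (ii): the existence of one function $G$ that eventually dominates \emph{every} entourage $E$ off a bounded set. With the $E_\alpha$ above this amounts to each $E$ being contained in some $E_\alpha$, i.e. to $\cof(\E_X)\le\kappa$; but a priori only $\cof(\mathcal B_X)=\kappa$ is known, while $\cof(\E_X)=\max\{\cof_*(\E_X),\cof(\mathcal B_X)\}$ could be strictly larger. I therefore expect condition (ii) to be precisely where normality of the product must be invoked a second time: a variant of the staircase construction of the second paragraph should show that if no single $G$ eventually dominated all entourages of $X$, then, using the unboundedness of $Y$ to spread the failure along an unbounded ladder, one could again build asymptotically disjoint sets in $X\times Y$ with no disjoint asymptotic neighborhoods. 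Making this non-separation argument precise, uniformly for the bornological gap and for the entourage-domination failure, is the step I expect to be the crux of the whole theorem.
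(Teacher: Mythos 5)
Your reduction of parts (1) and (2) to the single inequality $\cof(\mathcal B_X)\le\add(\mathcal B_Y)$ (plus its symmetric counterpart) is correct, and your diagnosis of where the difficulty sits in part (3) is accurate. But the two steps you defer --- the ``non-separation'' argument converting $\add(\mathcal B_Y)<\cof(\mathcal B_X)$ into a failure of normality, and the second such argument needed for condition (ii) of bounded growth --- are precisely the content of the theorem, and you have not supplied them. Worse, the contrapositive route you propose is structurally harder than what is actually needed: to refute normality you must show that \emph{every} pair of asymptotic neighborhoods of your staircase sets meets in an unbounded set, a statement universally quantified over all candidate neighborhoods, and nothing in your sketch indicates how the cardinal mismatch forces such an unavoidable collision.

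The paper's proof runs in the opposite direction and thereby avoids the non-separation problem entirely. For (1) and (2) it takes the two asymptotically disjoint ``axes'' $V=\{x_0\}\times Y$ and $H=X\times\{y_0\}$, separates them by normality, and reads off from the disjoint asymptotic neighborhoods two functions $\varphi:\mathcal B_X\to\mathcal B_Y$ and $\psi:\mathcal B_Y\to\mathcal B_X$ such that for all $A\in\mathcal B_X$, $B\in\mathcal B_Y$ either $A\subset\psi(B)$ or $B\subset\varphi(A)$ (Lemma~\ref{l1}); a purely order-theoretic lemma (Lemma~\ref{l2}) then forces all four cardinals to coincide, with no case analysis on which cardinal is larger. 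For (3) it again uses the separation positively: taking $A=\{x_0\}\times Y$ and the staircase $B=\bigcup_{\alpha<\kappa}(X\setminus C_\alpha)\times(D_{\alpha+1}\setminus D_\alpha)$, normality gives disjoint asymptotic neighborhoods $O_A,O_B$ with $C_{\alpha+1}\times(Y\setminus D_{f(\alpha)})\subset O_A$ for a suitable increasing $f:\kappa\to\kappa$, and the growth entourage is simply $G_Y=\bigcup_{\alpha<\kappa}(D_{\alpha+1}\setminus D_\alpha)\times D_{f(\alpha)}$, defined purely from the bornology base and $f$. No cofinal family of entourages $E_\alpha$ in $\E_X$ is ever needed, which is exactly the obstruction your construction $G[x]=\bigcup_{\beta<\rho(x)}E_\beta[x]$ cannot get around, since only $\cof(\mathcal B_X)=\kappa$ and not $\cof(\E_X)\le\kappa$ is available. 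To salvage your outline, replace both proofs-by-contradiction with this scheme of extracting the required functions directly from the separating neighborhoods of suitably chosen asymptotically disjoint sets.
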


The notion of bounded growth that appears in the preceding theorem is new and is defined as follows.

\begin{definition}\label{d:bg} A ballean $(X,\E_X)$ is defined to have {\em bounded growth} if there exists a subset $G\subset X\times X$ such that
\begin{itemize}
\item for every bounded set $B\subset X$ the set $G[B]=\{y\in X:\exists x\in B\;(x,y)\in G\}$ is  bounded in $X$;
\item for every entourage $E\in\E_X$ there exists a bounded set $B\subset X$ such that $E[x]\subset G[x]$ for all $x\in X\setminus B$.
\end{itemize} 
The function $G$ will be refered to as a {\em growth entourage} of the ballean $X$.
\end{definition}

In Section~\ref{s:growth} we study balleans of bounded growth and prove the following characterization.

\begin{theorem}\label{t:main2} The product $X\times Y$ of two balleans $X,Y$ has  bounded growth if and only if the balleans $X,Y$ have bounded growth and the bornology $\mathcal B_{X\times Y}$ has a linearly ordered base.
\end{theorem}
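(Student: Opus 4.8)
The plan is to prove the two implications separately, after reducing the phrase ``$\mathcal B_{X\times Y}$ has a linearly ordered base'' to cardinal arithmetic. Since a subset of $X\times Y$ is bounded if and only if both of its projections are bounded, the poset $\mathcal B_{X\times Y}$ is cofinally isomorphic to the product poset $\mathcal B_X\times\mathcal B_Y$. First I would record the routine identities $\add(\mathcal B_{X\times Y})=\min\{\add(\mathcal B_X),\add(\mathcal B_Y)\}$ and $\cof(\mathcal B_{X\times Y})=\max\{\cof(\mathcal B_X),\cof(\mathcal B_Y)\}$ (project a witnessing family to one factor, and take products of cofinal families). Combined with the inequalities $\add\le\cof$, these yield the key equivalence: $\mathcal B_{X\times Y}$ has a linearly ordered base if and only if $\add(\mathcal B_X)=\cof(\mathcal B_X)=\add(\mathcal B_Y)=\cof(\mathcal B_Y)$. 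Throughout I may assume that $X$ and $Y$ are unbounded, the case of a bounded factor being degenerate.

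For the ``if'' part, assume growth entourages $G_X,G_Y$ (enlarged to contain the diagonals) and that $\mathcal B_{X\times Y}$ has a linearly ordered base. By the reduction I can fix a regular cardinal $\kappa=\add(\mathcal B_{X\times Y})=\cof(\mathcal B_{X\times Y})$ and increasing cofinal chains $(A_\alpha)_{\alpha<\kappa}$ in $\mathcal B_X$ and $(P_\alpha)_{\alpha<\kappa}$ in $\mathcal B_Y$ with $(A_\alpha\times P_\alpha)_{\alpha<\kappa}$ cofinal in $\mathcal B_{X\times Y}$, obtained as the projections of a well-ordered cofinal chain in $\mathcal B_{X\times Y}$. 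Writing $\ell_X(x)=\min\{\alpha:x\in A_\alpha\}$ and similarly $\ell_Y$, I would define the candidate growth entourage by $G[(x,y)]:=G_X[A_\beta]\times G_Y[P_\beta]$ where $\beta=\max\{\ell_X(x),\ell_Y(y)\}$. The whole point of the $\max$ is to \emph{synchronise} the two scales: on the ``cross'' region, where one coordinate is bounded while the other is large, the large coordinate forces $\beta$ up, so the corresponding factor of $G$ is wide enough to swallow the small-coordinate ball. Condition (a) of Definition~\ref{d:bg} then follows from cofinality of $(A_\alpha\times P_\alpha)$ and monotonicity of $G_X[\cdot],G_Y[\cdot]$; condition (b), applied to a basic entourage with balls $E_X[x]\times E_Y[y]$, needs the exceptional bounded set to be enlarged to some $A_{\delta'}\times P_{\delta'}$ that simultaneously absorbs the two growth-exceptional sets of $G_X,G_Y$ and the fixed bounded sets $E_X[A_{\delta_X}]$, $E_Y[P_{\delta_Y}]$ coming from small coordinates.

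For the ``only if'' part let $G$ be a growth entourage of $X\times Y$. The first step is that $X$ and $Y$ inherit bounded growth: fixing $y_0\in Y$ and an entourage $F\in\E_Y$, the set $G_X$ with $G_X[x]:=\pi_X(G[(x,y_0)])$ is a growth entourage of $X$, since $B\times\{y_0\}$ is bounded (giving (a)) and since outside the $X$-projection of the exceptional set of the product entourage with balls $E_X[x]\times F[y]$ the inclusion $E_X[x]\times F[y_0]\subseteq G[(x,y_0)]$ projects to $E_X[x]\subseteq G_X[x]$ (giving (b)). The second, decisive step is to prove $\cof(\mathcal B_X)\le\add(\mathcal B_Y)$ (and, symmetrically, $\cof(\mathcal B_Y)\le\add(\mathcal B_X)$); by the reduction these two inequalities force all four cardinals to coincide and hence yield the linearly ordered base. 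Here I would fix a point $x_0\in X$ and an increasing tower $(P_\xi)_{\xi<\lambda}$ in $\mathcal B_Y$ with $\lambda=\add(\mathcal B_Y)$ and $Q=\bigcup_{\xi<\lambda}P_\xi$ unbounded, and set $A_\xi:=\pi_X(G[\{x_0\}\times P_\xi])\in\mathcal B_X$. The observation that unlocks everything is that in any ballean the balls $E_X[x_0]$ centred at the \emph{single} point $x_0$ already form a base of $\mathcal B_X$ (by axiom (3) together with composition), so it suffices to cover those. For each $E_X$, bounded growth applied to the product entourage with balls $E_X[x]\times E_Y^0[y]$ supplies a bounded $B_Y(E_X)\subset Y$ with $E_X[x_0]\subseteq\pi_X(G[(x_0,y)])$ for all $y\notin B_Y(E_X)$; picking $y\in Q\setminus B_Y(E_X)$ and $\xi$ with $y\in P_\xi$ gives $E_X[x_0]\subseteq A_\xi$. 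Thus $\{A_\xi:\xi<\lambda\}$ is cofinal in $\mathcal B_X$ and $\cof(\mathcal B_X)\le\lambda=\add(\mathcal B_Y)$.

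I expect the second step of the ``only if'' part to be the main obstacle: a priori there is no reason why bounded growth of a product should pin the \emph{cofinalities} of the two factor bornologies to a common value. What makes it work is the unexpected pairing of two facts that look unrelated --- that balls about one fixed point $x_0$ are already cofinal (which lets the covering family use a single source), and that the growth entourage lets a large second coordinate $y$, drawn from an unbounded tower of length exactly $\add(\mathcal B_Y)$, drag $\pi_X(G[(x_0,y)])$ out far enough to cover arbitrarily large balls $E_X[x_0]$. It is precisely this single-source reduction that brings the index set of the cofinal family down to $\add(\mathcal B_Y)$ rather than to the larger $\cof(\E_Y)$, which is all one obtains from a naive argument. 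A secondary, purely technical difficulty is the bounded-set bookkeeping needed to verify condition (b) of the synchronised construction in the ``if'' part.
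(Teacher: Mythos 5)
Your proposal is correct and follows essentially the same route as the paper's proof, which likewise splits the theorem into the inheritance of bounded growth by the factors, the construction of a product growth entourage by levelling a well-ordered base of $\mathcal B_{X\times Y}$ (Lemma~\ref{l:g1}), and the ``fixed point in one factor plus unbounded tower of length $\add$ in the other'' argument forcing $\add(\mathcal B_X)=\cof(\mathcal B_X)=\add(\mathcal B_Y)=\cof(\mathcal B_Y)$ (Lemma~\ref{l:g2}); your version merely swaps the roles of $X$ and $Y$ in the latter and makes the single-source cofinality of the balls $E_X[x_0]$ explicit where the paper uses it implicitly. The only caveat, shared with the paper's own formulation, is that the ``only if'' direction genuinely requires both factors to be unbounded (a bounded factor times an ultradiscrete one is a counterexample), so the degenerate case you set aside is in fact an implicit hypothesis rather than a triviality.
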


Let us observe that Theorem~\ref{t:main} implies the following corollary that  nicely complements Corollary~\ref{c1}.

\begin{corollary}\label{c2} If the box-product $\prod_{i\in I}X_i$ of $|I|>1$ unbounded balleans is normal, then each ballean $X_i$ has bounded growth and $\add(\mathcal B_{X_i})=\cof(\mathcal B_{X_j})$ for any $i,j\in I$.
\end{corollary}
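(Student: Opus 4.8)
The plan is to reduce the box-product of the whole family to a product of two balleans and then invoke Theorem~\ref{t:main}. Fix an index $i\in I$ and regroup the box-product as
$$\prod_{k\in I}X_k\cong X_i\times Y_i,\qquad Y_i:=\prod_{k\in I\setminus\{i\}}X_k,$$
using the obvious identification matching a tuple $(x_k)_{k\in I}$ with the pair $(x_i,(x_k)_{k\neq i})$; a routine check of the defining entourages shows that this carries the box-product structure on the left to the two-factor product structure on the right. Since $|I|>1$, the set $I\setminus\{i\}$ is nonempty, and because a box-product is bounded only when every one of its factors is bounded, the unboundedness of the factors $X_k$ forces $Y_i$ to be unbounded. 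Hence $X_i\times Y_i$ is a normal product of two unbounded balleans, so Theorem~\ref{t:main} applies. Its clause (3) yields that $X_i$ has bounded growth, which already gives the first assertion, while clause (1) gives
$$\add(\mathcal B_{X_i})=\cof(\mathcal B_{X_i})=\add(\mathcal B_{Y_i})=\cof(\mathcal B_{Y_i}).$$

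It then remains to prove the cross-index equality $\add(\mathcal B_{X_i})=\cof(\mathcal B_{X_j})$ for all $i,j\in I$. By the displayed equalities it suffices to show that the cardinal $\kappa_i:=\add(\mathcal B_{X_i})=\cof(\mathcal B_{X_i})$ does not depend on $i$. For this I would compute the additivity of the product bornology: a subset of $X_i\times Y_i$ is bounded exactly when it lies in a product $B\times C$ of bounded sets $B\in\mathcal B_{X_i}$, $C\in\mathcal B_{Y_i}$, so these products form a base of $\mathcal B_{X_i\times Y_i}$ that is order-isomorphic to the product poset $\mathcal B_{X_i}\times\mathcal B_{Y_i}$. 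As a base of a poset has the same $\add$ and $\cof$ as the poset itself, and a subset of a product poset is unbounded iff one of its coordinate projections is unbounded, this gives
$$\add(\mathcal B_{\prod_{k}X_k})=\add(\mathcal B_{X_i}\times\mathcal B_{Y_i})=\min\{\add(\mathcal B_{X_i}),\add(\mathcal B_{Y_i})\}=\kappa_i.$$

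The one point that needs care — and the main, if modest, obstacle — is the validity of the additivity formula $\add(P\times Q)=\min\{\add(P),\add(Q)\}$, which fails when a factor has a largest element (where the convention sets $\add=\cof=1$). This degeneracy is ruled out by the observation that the bornology of an unbounded ballean has no maximum: every singleton is bounded, so a largest bounded set would contain every point and hence equal $X$, contradicting unboundedness. With both $\mathcal B_{X_i}$ and $\mathcal B_{Y_i}$ free of maxima the formula is valid, and the computation above shows $\kappa_i=\add(\mathcal B_{\prod_k X_k})$ for every $i\in I$. Since the right-hand side is a single cardinal independent of $i$, all the $\kappa_i$ coincide, and combined with Theorem~\ref{t:main}(1) this yields $\add(\mathcal B_{X_i})=\kappa_i=\kappa_j=\cof(\mathcal B_{X_j})$ for arbitrary $i,j\in I$, completing the proof.
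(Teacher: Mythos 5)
Your argument is correct. The paper gives no explicit proof of this corollary (it is stated as an observation following Theorem~\ref{t:main}), and the shortest deduction --- the one actually carried out for finite products in the proof of Theorem~\ref{t:bornoreg} --- is different from yours: for distinct $i,j\in I$ one fixes points in all the remaining coordinates, so that $X_i\times X_j$ becomes asymorphic to a subballean of the box-product; since normality is inherited by subballeans (Corollary~\ref{c:emb}), Theorem~\ref{t:main} applies to the normal product $X_i\times X_j$ of two unbounded balleans and yields the bounded growth of $X_i$ together with $\add(\mathcal B_{X_i})=\cof(\mathcal B_{X_i})=\add(\mathcal B_{X_j})=\cof(\mathcal B_{X_j})$ in one stroke, with no further poset computation. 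Your grouping $\prod_{k\in I}X_k\cong X_i\times\prod_{k\ne i}X_k$ is equally legitimate: it avoids invoking the heredity of normality, at the price of the extra argument that $\add(\mathcal B_{X_i\times Y_i})=\min\{\add(\mathcal B_{X_i}),\add(\mathcal B_{Y_i})\}$, which you need in order to tie the cardinals $\kappa_i$ together across different indices. The auxiliary facts you rely on all check out: $Y_i$ is unbounded because it has an unbounded factor; the rectangles $B\times C$ with $B,C$ bounded form a base of the product bornology (to get a base literally order-isomorphic to $\mathcal B_{X_i}\times\mathcal B_{Y_i}$ one should discard the rectangles with an empty side, but this affects neither $\add$ nor $\cof$); and your remark that the bornology of an unbounded ballean has no largest element correctly rules out the degenerate convention $\add=\cof=1$ in the product formula. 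In short, both routes work; the subballean route is shorter, while yours is more self-contained in that it never leaves the ambient box-product.
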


Theorems~\ref{t:prot} and \ref{t:main} will be applied to prove the following characterization of normality of finite products of $\cof$-regular balleans. Let us recall that a ballean $X$ is {\em $\cof$-regular} if $\cof(\mathcal E_X)=\cof(\mathcal B_X)$. 

\begin{theorem}\label{t:bornoreg} The finite product $X=\prod_{i=1}^nX_i$ of $n\ge 2$ unbounded \textup{(}$\cof$-regular\textup{)} balleans is normal \textup{(}if and\textup{)} only if the bornology $\mathcal B_X$ of $X$ has a linearly ordered base.
\end{theorem}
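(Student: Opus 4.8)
The plan is to prove the two implications separately, drawing the necessity from the product obstructions of Theorem~\ref{t:main} and the sufficiency from the normality criterion of Corollary~\ref{c1}. Throughout I will use three elementary product formulas; since the finite product of balleans is associative, it suffices to prove them for two factors and iterate. For unbounded balleans $X_1,\dots,X_n$ with product $X=\prod_{i=1}^nX_i$ these are $\add(\mathcal B_X)=\min_i\add(\mathcal B_{X_i})$, $\cof(\mathcal B_X)=\max_i\cof(\mathcal B_{X_i})$, and $\cof(\E_X)\le\max_i\cof(\E_{X_i})$. The two bornological identities follow because the products $\prod_iB_i$ of bounded sets $B_i\in\mathcal B_{X_i}$ form a base of $\mathcal B_X$ and a subset of $X$ is bounded precisely when each of its coordinate projections is bounded; the inequality for $\E_X$ holds because products of entourages taken from fixed bases of the $\E_{X_i}$ form a base of $\E_X$, and a finite product of the cardinals $\cof(\E_{X_i})$ equals their maximum (each $\cof(\E_{X_i})$ being infinite, as $X_i$ is unbounded).

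For the necessity I would regroup the factors. Writing $X=X_1\times Y$ with $Y=\prod_{i=2}^nX_i$, both $X_1$ and $Y$ are unbounded, so the assumed normality of $X$ together with item (2) of Theorem~\ref{t:main} immediately yields that the bornology $\mathcal B_X=\mathcal B_{X_1\times Y}$ has a linearly ordered base. Note that no $\cof$-regularity is used here.

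For the sufficiency, assume in addition that each $X_i$ is $\cof$-regular and that $\mathcal B_X$ has a linearly ordered base, so that $\add(\mathcal B_X)=\cof(\mathcal B_X)=:\kappa$. Feeding this into the two bornological product formulas gives $\min_i\add(\mathcal B_{X_i})=\max_i\cof(\mathcal B_{X_i})=\kappa$; since $\add(\mathcal B_{X_i})\le\cof(\mathcal B_{X_i})$ for every $i$, a squeeze forces $\add(\mathcal B_{X_i})=\cof(\mathcal B_{X_i})=\kappa$ for each individual factor. Now $\cof$-regularity gives $\cof(\E_{X_i})=\cof(\mathcal B_{X_i})=\kappa$, and the third product formula yields $\cof(\E_X)\le\max_i\cof(\E_{X_i})=\kappa=\add(\mathcal B_X)$. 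Thus $\cof(\E_X)\le\add(\mathcal B_X)$, and the final (``in particular'') clause of Corollary~\ref{c1} shows that $X$ is normal.

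The bulk of the work is the routine verification of the three product formulas, where the only point needing attention is that they persist through finitely many factors (using that unboundedness makes all the relevant cofinalities infinite, so finite products collapse to maxima). The one genuinely pointed step is the squeeze in the sufficiency argument: it is precisely the linear order of $\mathcal B_X$ that forces each factor to satisfy $\add(\mathcal B_{X_i})=\cof(\mathcal B_{X_i})$, and it is this per-factor bornological regularity, combined with $\cof$-regularity, that lets the control $\cof(\E_X)\le\add(\mathcal B_X)$ propagate from the bornologies to the ball structure of the whole product. I expect no serious obstacle beyond keeping careful track of which characteristic is a minimum and which is a maximum over the factors.
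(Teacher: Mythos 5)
Your proof is correct and follows essentially the same route as the paper: necessity is extracted from Theorem~\ref{t:main}, and sufficiency from the chain $\cof(\E_X)=\max_i\cof(\E_{X_i})=\max_i\cof(\mathcal B_{X_i})=\cof(\mathcal B_X)=\add(\mathcal B_X)$ combined with Corollary~\ref{c1}. The only cosmetic difference is in the necessity direction, where the paper applies Theorem~\ref{t:main}(1) to the pairwise products $X_i\times X_j$ and then assembles an explicit well-ordered base $\{\prod_{i}B_{i,\alpha}\}_{\alpha\in\kappa}$ of $\mathcal B_X$, while you invoke Theorem~\ref{t:main}(2) once on the regrouped product $X_1\times\prod_{i=2}^nX_i$; both are valid.
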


\begin{proof} Assuming that the product $X=\prod_{i=1}^n X_i$ is normal, we conclude that for any $1\le i<j\le n$ the product $X_i\times X_j$ is normal, too. Then $\add(\mathcal B_{X_i})=\add(\mathcal B_{X_j})=\cof(\mathcal B_{X_i})=\cof(\mathcal B_{X_j})$ according to Theorem~\ref{t:main}. So, there exists an infinite regular cardinal $\kappa$ such that $\add(\mathcal B_{X_i})=\cof(\mathcal B_{X_i})=\kappa$ for all $i\le n$. It follows that for every $i\le n$ the bornology $\mathcal B_{X_i}$ has a base $(B_{i,\alpha})_{\alpha\in\kappa}$ such that $B_{i,\alpha}\subset B_{i,\beta}$ for all $\alpha<\beta<\kappa$. Then $\big\{\prod_{i=1}^k B_{i,\alpha}\}_{\alpha\in\kappa}$ is a well-ordered base of the bornology $\mathcal B_X$.
\smallskip

Now assume that the bornology $\mathcal B_X$ of $X$ has a linearly ordered base and the balleans $X_i$ are $\cof$-regular. Then  
$$\cof(\E_X)=\max_{1\le i\le n}\cof(\E_{X_i})=\max_{i\le i\le n}\cof(\mathcal B_{X_i})=\cof(\mathcal B_X)=\add(\mathcal B_X).$$ So we can apply Corollary~\ref{c1} and conclude that the ballean $X$ is normal.
\end{proof}

Important examples of  $\cof$-regular balleans of bounded growth are discrete balleans.

\begin{definition}\label{d:dis} A ballean $(X,\E_X)$ is called {\em discrete} if $X$ is unbounded and for any entourage $E\in\E_X$ there exists a bounded set $B_E\subset X$ such that $E[x]=\{x\}$ for all $x\in X\setminus B_E$.
\end{definition}

Discrete balleans are called pseudodiscrete in \cite{PZ} and thin in \cite{LP}. 
 The coarse structure ${\downarrow}\E_X$ of a discrete ballean $(X,\E_X)$ can be recovered from the bornology $\mathcal B_X$: an entourage $E\subset X\times X$ belongs to ${\downarrow}\E_X$ if and only if $E\subset \Delta_X\cup(B\times B)$ for some bounded set $B\in\mathcal B_X$. This observation implies that  $\add(\E_X)=\add(\mathcal B_X)\mbox{  and  }\cof(\E_X)=\cof(\mathcal B_X)$, so each discrete ballean is $\cof$-regular. Definitions~\ref{d:bg} and \ref{d:dis} imply that discrete balleans have bounded growth. Each discrete ballean $(X,\E_X)$ has $\cof_\star(\E_X)=\cof_*(\E_X)=|\{\Delta_X\}|=1$ and hence is normal by Theorem~\ref{t:prot2}. A ballean is called {\em ultradiscrete} if it is ultranormal and discrete.
 
\begin{example}\label{e:ultraideal} Each ideal $\mathcal B$ of subsets of a set $X=\bigcup\mathcal B$ induces the discrete coarse structure ${\Downarrow}\mathcal B$, generated by the base consisting of the entourages $(B\times B)\cup\Delta_X$ where $B\in\mathcal B$. The bornology $\mathcal B_X$ of the ballean $(X,{\Downarrow}\mathcal B)$ coincides with the ideal $\mathcal B$. Being discrete, the ballean $(X,{\Downarrow}\mathcal B)$ is $\cof$-regular and has bounded growth. It is ultranormal if and only if $\mathcal B$ is a maximal ideal on $X$ if and only if the family $\{X\setminus B:B\in\mathcal B\}$ is an ultrafilter.
\end{example}
 
In Proposition~\ref{p:ultra} we shall prove that any ultradiscrete ballean $X$ has 
$\add(\mathcal B_X)<\cof(\mathcal B_X)$, which implies that the bornology of $X$ does not have a linearly ordered base. Combining this fact with Theorems~\ref{t:bornoreg} and \ref{t:prot2}, we obtain

\begin{corollary} For any ultradiscrete balleans $X,Y$ the product $X\times Y$ is not normal and hence $$\cof_*(\mathcal E_{X\times Y})\ge\cof_\star(\E_{X\times Y})>\add(\mathcal B_{X\times Y})\ge 1=\cof_*(\E_X)=\cof_*(\E_Y).$$
\end{corollary}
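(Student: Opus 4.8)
The plan is to prove non-normality by contradiction and then to extract the cardinal inequalities from the sufficient condition for normality. The whole argument is a synthesis of three results already at hand --- Theorem~\ref{t:main}, Proposition~\ref{p:ultra} and Theorem~\ref{t:prot2} --- so the work consists mainly in feeding each result the right hypotheses. I would begin by recording that an ultradiscrete ballean is in particular discrete, hence unbounded by Definition~\ref{d:dis} and $\cof$-regular, and that for a discrete ballean one has $\cof_*(\E_X)=\cof_*(\E_Y)=1$; being ultranormal, $X$ and $Y$ also fall within the scope of Proposition~\ref{p:ultra}.

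For the non-normality, I would argue by contradiction: assume $X\times Y$ is normal. Since both $X$ and $Y$ are unbounded, Theorem~\ref{t:main}(1) forces $\add(\mathcal B_X)=\cof(\mathcal B_X)$. But $X$ is ultradiscrete, so Proposition~\ref{p:ultra} yields the strict inequality $\add(\mathcal B_X)<\cof(\mathcal B_X)$, a contradiction. Hence $X\times Y$ is not normal. (The same contradiction could in principle be obtained through Theorem~\ref{t:bornoreg}, but that route would require computing the additivity and cofinality of the product bornology $\mathcal B_{X\times Y}$, whereas Theorem~\ref{t:main}(1) delivers the clash already at the level of a single factor, which is why I prefer it.)

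It remains to produce the displayed chain. Here I would use the contrapositive of Theorem~\ref{t:prot2}: since $X\times Y$ is not normal, its sufficient condition must fail, giving $\cof_\star(\E_{X\times Y})>\add(\mathcal B_{X\times Y})$. Prepending the general inequality $\cof_*(\E_{X\times Y})\ge\cof_\star(\E_{X\times Y})$ recorded before Theorem~\ref{t:prot2}, and appending the trivial bound $\add(\mathcal B_{X\times Y})\ge 1$ together with the equalities $1=\cof_*(\E_X)=\cof_*(\E_Y)$ valid for discrete balleans, assembles precisely the asserted formula. I do not anticipate a substantive obstacle, since all the technical weight sits in the previously established results; the only points demanding care are verifying that both factors are genuinely unbounded so that Theorem~\ref{t:main} may be applied, and being careful to invoke Theorem~\ref{t:prot2} in its contrapositive direction.
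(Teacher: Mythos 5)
Your proposal is correct and follows essentially the same route as the paper, which derives non-normality by combining Proposition~\ref{p:ultra} with Theorem~\ref{t:bornoreg} (whose relevant direction is itself just Theorem~\ref{t:main}(1), the form you invoke directly) and then obtains the displayed chain from the contrapositive of Theorem~\ref{t:prot2} together with the inequalities $\cof_\star\le\cof_*$ and the identity $\cof_*(\E_X)=1$ for discrete balleans. No gaps.
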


In particular, the square $X\times X$ of an ultradiscrete ballean $X$ is not normal. Surprisingly, but the hypersymmetric powers of ultradiscrete balleans are normal.

For a ballean $X$ by $[X]^{\mathcal B}$ we denote the family $\mathcal B_X\setminus\{\emptyset\}$ of all non-empty bounded sets in $X$, endowed with the ball structure $ \E_{[X]^{\mathcal B}}$ consisting of the entourages $$\hat E=\{(A,B)\in[X]^{\mathcal B}\times[X]^{\mathcal B}:A\subset E[B],\;\;B\subset E[A]\}$$where $E\in\E_X$. The ballean $([X]^{\mathcal B},\E_{[X]^{\mathcal B}})$ is called the {\em hyperballean} of  $(X,\E_X)$. Hyperballeans were studied in \cite{ProtEJM} and \cite{DPPZ}.
\smallskip

For a natural number $n$ the subbalean 
$$[X]^{\le n}:=\{A\in[X]^{\mathcal B}:|A|\le n\}$$ of $[X]^{\mathcal B}$ is called the {\em hypersymmetric $n$-th power} of the ballean $X$. It is clear that $X$ can be identified with the ballean $[X]^{\le 1}$.

It is easy to see that the cardinal characteristics $\cof(\E_X)$, $\add(\E_X)$, $\cof(\mathcal B_X)$ and $\add(\mathcal B_X)$ of a ballean $X$ coincide with the corresponding cardinal characteristics of its hyperballean. So, we can apply Corollary~\ref{c1} and obtain the following proposition.

\begin{proposition}\label{p:lin-hyp} The hyperballean $[X]^{\mathcal B}$ of a ballean $X$ is normal, if $\cof(\E_X)\le\add(\mathcal B_X)$.
\end{proposition}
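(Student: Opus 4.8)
The plan is to transfer the hypothesis from $X$ to its hyperballean and then quote Corollary~\ref{c1}. Since by definition the ball structure of $[X]^{\mathcal B}$ is $\{\hat E:E\in\E_X\}$, it suffices to establish the chain
$$\cof(\E_{[X]^{\mathcal B}})\le\cof(\E_X)\le\add(\mathcal B_X)\le\add(\mathcal B_{[X]^{\mathcal B}}),$$
in which the middle inequality is the assumption; applying Corollary~\ref{c1} to the ballean $[X]^{\mathcal B}$ then yields its normality.

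For the first inequality I would note that $E\mapsto\hat E$ is monotone: if $E\subset F$, then for $(A,B)\in\hat E$ we have $A\subset E[B]\subset F[B]$ and $B\subset E[A]\subset F[A]$, so $\hat E\subset\hat F$. Consequently a cofinal subfamily $\mathcal C\subset\E_X$ is carried onto the cofinal subfamily $\{\hat E:E\in\mathcal C\}$ of $\E_{[X]^{\mathcal B}}$, giving $\cof(\E_{[X]^{\mathcal B}})\le\cof(\E_X)$.

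The last inequality rests on the following description of the bornology of the hyperballean, which is the technical core of the argument: a family $\mathcal A\subset[X]^{\mathcal B}$ is bounded in $[X]^{\mathcal B}$ if and only if $\bigcup\mathcal A$ is bounded in $X$. The forward implication is immediate, since $\mathcal A\subset\hat E[\mathcal C]$ forces $A\subset E[\mathcal C]$ for all $A\in\mathcal A$ and hence $\bigcup\mathcal A\subset E[\mathcal C]$, a bounded subset of $X$. For the converse, set $D=\bigcup\mathcal A$ and first produce a single entourage $E\in\E_X$ with $D\subset E[x]$ for every $x\in D$: writing $D\subset F[x_0]$ and choosing $E\in\E_X$ with $F^{-1}\circ F\subset E$ (possible since the ball structure is closed under inversion and composition up to enlargement), one has $(x,z)\in F^{-1}\circ F\subset E$ for all $x,z\in D$. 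Then every non-empty $A\subset D$ satisfies $A\subset D\subset E[D]$ and $D\subset E[a]\subset E[A]$ for any $a\in A$, so $(D,A)\in\hat E$ and $\mathcal A\subset\hat E[D]$ is bounded.

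With this description in hand, a subfamily $\{\mathcal A_i\}_{i\in\kappa}$ of $\mathcal B_{[X]^{\mathcal B}}$ is unbounded exactly when the bounded sets $D_i:=\bigcup\mathcal A_i$ of $X$ have unbounded union $\bigcup_{i\in\kappa}D_i$; conversely each bounded set $D\subset X$ is the union of the one-element bounded family $\{D\}$. Hence the unbounded families of $\mathcal B_{[X]^{\mathcal B}}$ and of $\mathcal B_X$ correspond to one another preserving cardinality, so $\add(\mathcal B_{[X]^{\mathcal B}})=\add(\mathcal B_X)$ (and likewise $\cof(\mathcal B_{[X]^{\mathcal B}})=\cof(\mathcal B_X)$). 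The routine parts are the monotonicity of $E\mapsto\hat E$ and the chaining of inequalities; I expect the main obstacle to be the bornology characterization, specifically the step producing one entourage $E$ that inflates every point of a bounded set $D$ to the whole of $D$. Once that is settled, the normality of $[X]^{\mathcal B}$ follows formally from Corollary~\ref{c1}.
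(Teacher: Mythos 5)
Your proof is correct and follows essentially the same route as the paper, which simply observes that the relevant cardinal characteristics of $X$ and $[X]^{\mathcal B}$ coincide and then invokes Corollary~\ref{c1}; you have merely supplied the details (monotonicity of $E\mapsto\hat E$ and the identification of bounded families in $[X]^{\mathcal B}$ with families whose union is bounded in $X$) that the paper leaves as ``easy to see.''
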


In spite of the fact, that for an ultradiscrete ballean $X$ the square $X\times X$ is not normal, in Section~\ref{s:hyper1} we shall prove the following inexpected result.

\begin{theorem}\label{t:ultranorm} For any ultradiscrete ballean $X$ and every $n\ge 2$ the $n$-th power $X^n$ of $X$ is not normal but the hypersymmetric power $[X]^{\le n}$ of $X$ is normal.
\end{theorem}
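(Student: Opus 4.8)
The plan is to handle the two halves separately: the failure of normality for $X^n$ is a short consequence of results already in hand, while the normality of $[X]^{\le n}$ is the substantial part. For $X^n$ I argue by contradiction. As $X$ is unbounded, so is every finite power $X^{n-1}$, and by associativity of finite products $X^n\cong X\times X^{n-1}$ is a product of two unbounded balleans. Were $X^n$ normal, Theorem~\ref{t:main}(1) would force $\add(\mathcal B_X)=\cof(\mathcal B_X)$; but Proposition~\ref{p:ultra} gives $\add(\mathcal B_X)<\cof(\mathcal B_X)$ for every ultradiscrete ballean, a contradiction. (For $n=2$ this is Theorem~\ref{t:main} applied directly to $X\times X$.)

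For $[X]^{\le n}$ I first record the shape of the entourages. Realizing $X$ as the discrete ballean of a maximal ideal $\mathcal B$ with base entourages $E_B=(B\times B)\cup\Delta_X$ (Example~\ref{e:ultraideal}), one checks directly that $(A,A')\in\hat E_B$ holds precisely when either $A=A'$, or both $A$ and $A'$ meet $B$ and $A\setminus B=A'\setminus B$; thus $\hat E_B$ preserves the tail $A\setminus B$ lying outside $B$ while letting the nonempty head $A\cap B$ range over subsets of $B$. From this description I extract a boundedness lemma: a family $\mathcal S\subseteq[X]^{\le n}$ is bounded in $[X]^{\le n}$ if and only if its union $\bigcup\mathcal S$ is bounded in $X$ (if $\bigcup\mathcal S\subseteq B$ then $\mathcal S\subseteq\hat E_B[\{b\}]$ for any $b\in B$, and conversely every ball has union contained in $B$ together with a fixed finite tail).

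I then prove normality of $[X]^{\le n}$ by induction on $n$, the base $n=1$ being the ultranormality, hence normality, of $[X]^{\le1}\cong X$. Given asymptotically disjoint $\mathcal A,\mathcal A'\subseteq[X]^{\le n}$, the separation is organized by the size $|Z|$ of a point $Z$ together with its tail $Z\setminus B$, which has size $\le n-1$ and so is a point of $[X]^{\le n-1}$. The guiding observation is that $\hat E_B[\mathcal A]\cap\hat E_B[\mathcal A']$ fails to be bounded exactly when $\mathcal A$ and $\mathcal A'$ share, for some $B$, an unbounded family of common tails; by the boundedness lemma this is an asymptotic-disjointness statement about the tail-traces in $[X]^{\le n-1}$, which at the bottom level $n=1$ reduces to asymptotic disjointness of point-traces in $X$. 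Since $X$ is ultranormal, two unbounded traces in $X$ can never be asymptotically disjoint (their intersection lies in the dual ultrafilter and is unbounded), so asymptotic disjointness of $\mathcal A,\mathcal A'$ forces one side of each relevant trace to be bounded; the inductive hypothesis then separates the tail-traces in $[X]^{\le n-1}$, and I pull the separation back through the head/tail decomposition across the finitely many size-levels $k=1,\dots,n$.

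The main obstacle is the interaction between size-levels: under $\hat E_B$ a set of size $k$ in $\mathcal A$ may be $\hat E_B$-close to a set of size $j\ne k$ in $\mathcal A'$ (the head inside $B$ can grow or shrink), so $|Z|$ is not a coarse invariant and the layers cannot be separated independently. Reconciling this with the demand that the two neighbourhoods be \emph{simultaneously} disjoint and asymptotic --- while $\cof(\mathcal B_X)>\add(\mathcal B_X)$ denies us the linearly-ordered-base shortcut of Theorem~\ref{t:prot} --- is exactly where the ultrafilter must be used decisively: at each scale $B$ one assigns $Z$ to the $\mathcal A$-side or the $\mathcal A'$-side according to which family captures the ultrafilter-large portion of $Z$'s tail, and the crux is to verify that these scale-local choices cohere into two genuinely disjoint asymptotic neighbourhoods.
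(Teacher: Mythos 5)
Your argument for the non-normality of $X^n$ is correct and essentially the paper's: writing $X^n\cong X\times X^{n-1}$ and combining Theorem~\ref{t:main}(1) with Proposition~\ref{p:ultra} works (the paper instead applies Theorem~\ref{t:main} to $X^2$ and embeds $X^2$ into $X^n$, but the two routes are interchangeable). Your description of the entourages $\hat E_B$ and the boundedness criterion for families in $[X]^{\le n}$ are also correct.

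The second half, however, has a genuine gap: you never construct the two disjoint asymptotic neighbourhoods. Your plan assigns each $Z$ to the $\mathcal A$-side or the $\mathcal A'$-side ``at each scale $B$,'' and you yourself flag that ``the crux is to verify that these scale-local choices cohere.'' That crux is the entire difficulty. An asymptotic neighbourhood is a single fixed subset of $[X]^{\le n}$ that must absorb $\hat E_B[A]$ for all but boundedly many $A$ simultaneously for \emph{every} $B$, whereas a $B$-indexed assignment produces a $B$-indexed family of sets with no mechanism for amalgamating them --- and since $\add(\mathcal B_X)<\cof(\mathcal B_X)$ there is no linearly ordered cofinal family of scales along which to stabilize the choices. (It is also unclear what ``the ultrafilter-large portion of $Z$'s tail'' means: each tail $Z\setminus B$ has at most $n$ elements and is therefore bounded, so the maximal ideal sees nothing at the level of a single $Z$.) The paper avoids this coherence problem entirely by a different mechanism: it first proves (Lemmas~\ref{l:nik1} and \ref{l:nik2}, producing an unbounded $V$ with $|A\cap V|\le 1$ for an unbounded subfamily) that at least one of the two families, say $\A_1$, is \emph{escaping}, i.e.\ for every bounded $B\subset X$ only a bounded subfamily of $\A_1$ meets $B$; then the neighbourhoods are simply $\A_1$ itself and its complement $[X]^{\le n}\setminus\A_1$, with no scale-dependent choices at all. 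Your proposal contains no analogue of the escaping dichotomy or of Lemma~\ref{l:nik2}, and without something playing that role the inductive head/tail scheme does not close.
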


On the other hand, in Section~\ref{s:hyper2} we shall prove the following necessary conditions of the normality of the hypersymmetric square.

\begin{theorem}\label{t:hyper2} If for a ballean $X$ the hypersymmetric square $[X]^{\le 2}$ is normal, then 
\begin{enumerate}
\item $X$ has bounded growth and 
\item either $X$ is ultranormal or the bornology $\mathcal B_X$ of $X$ has a linearly ordered base.
\end{enumerate}
\end{theorem}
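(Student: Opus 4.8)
The plan is to study the normal ballean $[X]^{\le 2}$ through two kinds of distinguished subballeans: the copy $\mathcal S=\{\{x\}:x\in X\}$ of $X$ consisting of singletons, and — when $X$ fails to be ultranormal — copies of products $A\times B$ encoded by doubletons. The main tool I would isolate first is a \emph{coordinate lemma}: if $A,B\subset X$ are asymptotically disjoint and unbounded, then the map $j\colon A\times B\to[X]^{\le 2}$, $j(a,b)=\{a,b\}$, is, after deleting a bounded set, an isomorphism of balleans onto its image. To verify this one checks that $(\{a,b\},\{a',b'\})\in\hat E$ forces $a\in E[a']\cup E[b']$ and $b\in E[a']\cup E[b']$; since $a\in E[b']$ would place $a$ into the bounded set $E[A]\cap E[B]$ (as $a\in A\subset E[A]$ and $a\in E[b']\subset E[B]$), the ``cross'' alternatives are excluded off a bounded set, leaving exactly $a\in E[a']$, $a'\in E[a]$, $b\in E[b']$, $b'\in E[b]$, which is precisely the product entourage on $A\times B$. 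Because normality is inherited by subballeans (Proposition~1.2 of \cite{Prot}) and is unaffected by bounded perturbations, every such product $A\times B$ embedded this way is normal.

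For statement (2) I would argue by contraposition: assuming $\mathcal B_X$ has no linearly ordered base, i.e. $\add(\mathcal B_X)<\cof(\mathcal B_X)$, I show $X$ is ultranormal. If it were not, fix asymptotically disjoint unbounded $A,B$. By the coordinate lemma and Theorem~\ref{t:main}, the normal product $A\times B$ would satisfy $\add(\mathcal B_A)=\cof(\mathcal B_A)=\add(\mathcal B_B)=\cof(\mathcal B_B)$, and $\mathcal B_{A\times B}$ would have a linearly ordered base. The crux is to choose the pair so that the failure $\add(\mathcal B_X)<\cof(\mathcal B_X)$ becomes \emph{visible} on the slice $A$, that is $\add(\mathcal B_A)<\cof(\mathcal B_A)$, contradicting these equalities. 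Here one must reshuffle a given asymptotically disjoint pair by bounded modifications so that one side absorbs a family witnessing $\cof(\mathcal B_X)$ while still avoiding the $E$-neighbourhoods of the other side.

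For statement (1) I would extract a growth entourage directly from normality. Consider $\mathcal S$ together with a family $\mathcal W\subset[X]^{\le 2}$ of \emph{long} doubletons $\{x,y\}$ whose widths diverge, meaning that for every $E\in\E_X$ only a bounded set of members of $\mathcal W$ satisfy $y\in(E\circ E^{-1})[x]$. Since a doubleton $\{x,y\}$ is $\hat E$-close to a singleton exactly when $y\in(E\circ E^{-1})[x]$, divergence of widths makes $\mathcal S$ and $\mathcal W$ asymptotically disjoint, while both remain unbounded. Normality then yields disjoint asymptotic neighbourhoods $U\supset\mathcal S$ and $V\supset\mathcal W$. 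Setting $G:=\{(x,y):\{x,y\}\in U\}\cup\Delta_X$, property (ii) of Definition~\ref{d:bg} is immediate, because $U$ absorbs $\hat E[\mathcal S]$ off a bounded set, so $E[x]\subset G[x]$ for all large $x$; property (i), that $G[B]$ is bounded for every bounded $B$, is forced by $U\cap V=\emptyset$, since a bounded $B$ with $G[B]$ unbounded would produce doubletons with bounded foot and divergent head lying simultaneously in $U$ and — being long — in $V$.

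I expect the main obstacle in both parts to be the same: transferring data between $X$ and the thin asymptotically disjoint slices that doubletons can record. In (2) one must manufacture an asymptotically disjoint pair whose bornology is genuinely as complicated as that of $X$; a cofinal, additivity-witnessing set cannot itself be asymptotically disjoint from an unbounded set, so the reshuffling must be done carefully, and this is where the regularity of $\add(\mathcal B_X)$ and the strict inequality $\add<\cof$ are really used. In (1) the delicate point is to choose $\mathcal W$ with genuinely divergent widths \emph{and} rich enough to meet every potentially unbounded $G[B]$; arranging both at once amounts to organising the long doubletons along a cofinal chain of entourages, so the construction interacts with $\cof(\E_X)$. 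Once these two constructions are in place, the remaining verifications are the routine translations recorded in the coordinate lemma.
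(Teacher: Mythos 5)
Your coordinate lemma (the embedding $(a,b)\mapsto\{a,b\}$ of $A\times B$ into $[X]^{\le 2}$ for asymptotically disjoint $A,B$) is correct and is exactly what the paper uses. But your plan for statement (2) has a genuine gap at the point you yourself flag as ``the crux''. From one asymptotically disjoint pair $A,B$ you only get $\add(\mathcal B_A)=\cof(\mathcal B_A)=\add(\mathcal B_B)=\cof(\mathcal B_B)$, and this says nothing about $\add(\mathcal B_X)$ versus $\cof(\mathcal B_X)$ unless the pieces you control cover $X$ up to a bounded set. Your proposed fix --- reshuffling $A$ by bounded modifications so that it ``absorbs a family witnessing $\cof(\mathcal B_X)$'' --- cannot work in general: bounded modifications do not change $\add$ or $\cof$ of the bornology of a slice, and any genuinely unbounded enlargement of $A$ risks destroying asymptotic disjointness from $B$. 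The paper's missing ingredient is the asymptotic Urysohn lemma (Theorem~2.1 of \cite{Prot}): from non-ultranormality one gets a slowly oscillating $\varphi:X\to[0,1]$ with $\varphi(A)=\{0\}$, $\varphi(B)=\{1\}$, and then the level sets $X_i=\varphi^{-1}([\tfrac{i-1}4,\tfrac i4])$ give \emph{finitely many} unbounded pieces covering all of $X$, pairwise asymptotically disjoint whenever $|i-j|\ge 2$ (with a separate, easier case when some $\varphi^{-1}(U)$ is bounded). Each such pair embeds into $[X]^{\le 2}$, Theorem~\ref{t:main} applies, and since $X=\bigcup_{i=1}^4X_i$ one gets $\add(\mathcal B_X)=\min_i\add(\mathcal B_{X_i})=\max_i\cof(\mathcal B_{X_i})=\cof(\mathcal B_X)$. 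Without some device that forces the controlled slices to exhaust $X$, your contrapositive cannot close.

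For statement (1) the gap is the circularity you already sense: your family $\mathcal W$ of ``long'' doubletons must be fixed \emph{before} normality produces $U$ and hence before $G$ is defined, yet your verification of property (i) needs the doubletons $\{x,y\}$ with $x\in B$ bounded and $y$ escaping to lie in $V$, which an asymptotic neighbourhood of an unspecified $\mathcal W$ does not guarantee. The paper's resolution is to take $\mathcal W:=D=\{\{x_0,x\}:x\in X\}$ for a fixed $x_0$, played against the singletons $S=\{\{x\}:x\in X\}$; one checks directly that $\hat E[D]\cap\hat E[S]\subset\{A:A\subset E^3[x_0]\}$, so these are asymptotically disjoint, and the disjoint asymptotic neighbourhoods yield a monotone $f:\E_X\to\mathcal B_X$ with $E_2[E_1[x_0]\setminus f(E_2)]\subset f(E_1)$. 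A second step replaces $E_1$ by the entourage $E_B=\Delta_X\cup((B\cup\{x_0\})\times(B\cup\{x_0\}))$ to convert this into $E[B\setminus f(E)]\subset g(B)$ for all bounded $B$, and then $G[x]:=g(\{x\})$ is the growth entourage. With the concrete choice $\mathcal W=D$ your scheme essentially becomes this argument, so the defect in (1) is fillable; but as written neither the existence nor the sufficiency of your $\mathcal W$ is established.
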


Theorems~\ref{t:bornoreg}, \ref{t:ultranorm}, \ref{t:hyper2} and Proposition~\ref{p:lin-hyp} imply the following characterization.

\begin{corollary} For a discrete ballean $X$ the following conditions are equivalent:
\begin{enumerate}
\item for every $n\in\IN$ the hypersymmetric power $[X]^{\le n}$ of $X$ is normal;
\item the hypersymmetric square $[X]^{\le 2}$ is normal;
\item either $X$ is ultranormal or the bornology $\mathcal B_X$ of $X$ has a linearly ordered base;
\item $X$ is ultranormal or $X\times X$ is normal.
\end{enumerate}
\end{corollary}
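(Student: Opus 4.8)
The plan is to establish the cyclic chain of implications $(1)\Rightarrow(2)\Rightarrow(3)\Rightarrow(4)\Rightarrow(1)$, using throughout that a discrete ballean $X$ is unbounded and $\cof$-regular, so that $\add(\E_X)=\add(\mathcal B_X)$ and $\cof(\E_X)=\cof(\mathcal B_X)$. The implication $(1)\Rightarrow(2)$ is immediate, being the special case $n=2$. The implication $(2)\Rightarrow(3)$ is precisely the second conclusion of Theorem~\ref{t:hyper2}, applied to the discrete (hence arbitrary admissible) ballean $X$. Thus all the actual work is concentrated in the two remaining arcs $(3)\Rightarrow(4)$ and $(4)\Rightarrow(1)$, where the point is to translate between the cardinal characteristics of the ball structure $\E_X$ and those of the bornology $\mathcal B_X$ via $\cof$-regularity, and then to feed the resulting inequalities into the normality criteria already proved.

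For $(3)\Rightarrow(4)$ I would argue as follows. If $X$ is ultranormal, then the first alternative of $(4)$ holds and we are finished, so assume instead that the bornology $\mathcal B_X$ has a linearly ordered base. Then $\add(\mathcal B_X)=\cof(\mathcal B_X)$, and $\cof$-regularity gives $\cof(\E_X)=\cof(\mathcal B_X)=\add(\mathcal B_X)=:\kappa$. Since $X$ is unbounded and the bornology $\mathcal B_X$ is an ideal (closed under finite unions), every finite subfamily of $\mathcal B_X$ is bounded, so $\kappa$ is an infinite cardinal; in particular $\kappa>2$. Viewing the product $X\times X$ as the box-product over $I=\{1,2\}$ and noting that $\cof(\E_X)=\add(\mathcal B_X)=\kappa>2=|I|$, Corollary~\ref{c1} yields that $X\times X$ is normal, which is the second alternative of $(4)$. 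For $(4)\Rightarrow(1)$ I would split into the two cases of the disjunction. If $X$ is ultranormal, then $X$ is ultradiscrete, so Theorem~\ref{t:ultranorm} gives that $[X]^{\le n}$ is normal for every $n\ge2$, while for $n=1$ the ballean $[X]^{\le1}$ is just $X$, normal because ultranormal; hence $(1)$ holds. If instead $X\times X$ is normal, then Theorem~\ref{t:main}(1) gives $\add(\mathcal B_X)=\cof(\mathcal B_X)$, whence $\cof(\E_X)=\cof(\mathcal B_X)=\add(\mathcal B_X)$ by $\cof$-regularity, so in particular $\cof(\E_X)\le\add(\mathcal B_X)$. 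Proposition~\ref{p:lin-hyp} then shows that the entire hyperballean $[X]^{\mathcal B}$ is normal, and since each $[X]^{\le n}$ is a subballean of $[X]^{\mathcal B}$ and normality is inherited by subballeans (Proposition 1.2 in \cite{Prot}), every $[X]^{\le n}$ is normal, giving $(1)$.

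I do not expect a deep obstacle here: the statement is essentially an assembly of the four quoted results, and the crux is bookkeeping rather than a new idea. The one structural point that makes the whole argument work is the equality $\cof(\E_X)=\cof(\mathcal B_X)$ (and $\add(\E_X)=\add(\mathcal B_X)$) for discrete balleans, which is exactly what licenses passing between the coarse-structure hypotheses of Corollary~\ref{c1} and Proposition~\ref{p:lin-hyp} and the bornological conclusions of Theorems~\ref{t:main} and \ref{t:bornoreg}. The two details most easily overlooked, and which I would flag explicitly in the write-up, are the degenerate index $n=1$ in $(4)\Rightarrow(1)$ (handled by hand, since Theorem~\ref{t:ultranorm} is stated only for $n\ge2$) and the strict inequality $\kappa>|I|$ required before invoking Corollary~\ref{c1}, which holds automatically because the additivity of the bornology of an unbounded ballean is an infinite regular cardinal.
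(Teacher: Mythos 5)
Your proof is correct and follows essentially the same route as the paper: the same four ingredients (Theorem~\ref{t:hyper2}, Theorem~\ref{t:ultranorm}, Proposition~\ref{p:lin-hyp}, and the $\cof$-regularity of discrete balleans feeding into Corollary~\ref{c1}/Theorem~\ref{t:bornoreg} and Theorem~\ref{t:main}) drive everything. Arranging the implications as a cycle rather than as $(1)\Rightarrow(2)\Rightarrow(3)\Rightarrow(1)$ plus $(3)\Leftrightarrow(4)$ is only a reorganization, and your explicit handling of the $n=1$ case and of the inequality $\kappa>|I|$ is sound.
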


\begin{proof} The implication $(1)\Ra(2)$ is trivial, $(2)\Ra(3)$ 
follows from Theorem~\ref{t:hyper2} and $(3)\Ra(1)$ follows Theorem~\ref{t:ultranorm} and Proposition~\ref{p:lin-hyp}. The equivalence $(3)\Leftrightarrow(4)$ follows from Theorem~\ref{t:bornoreg} and the $\cof$-regularity of discrete balleans.
\end{proof}

Also we shall consider the construction of the $G$-symmetric power $[X]^n_G$, which is intermediate between the constructions $X^n$ and $[X]^{\le n}$ of $n$-th power and hypersymmetric $n$-th power. Given a subgroup $G\subset S_n$ of the permutation group $S_n$ of a natural number $n:=\{0,\dots,n-1\}$, for every ballean $X$ consider the quotient space $[X]^n_G$ of $X^n$ by the equivalence relation $\sim_G$ defined by $x\sim_G y$ for $x,y\in X^n$ iff $y=x\circ g$ for some permutation $g\in G$.
For any $x\in X^n$ (which is a function $x:n\to X$) by $xG:=\{x\circ g:g\in G\}\in[X]^n_G$ we shall denote its $\sim_G$-equivalence class. The set $[X]^n_G:=\{xG:x\in X^n\}$ is endowed with the ball structure $\E_{[X]^n_G}$ consisting of the entourages
$$\hat E:=\{(xG,yG):(x,y)\in E\}$$where $E\in\E_{X^n}$. The ballean $([X]^n_G,\E_{[X]^n_G})$ is called {\em the $G$-symmetric $n$-th power} of $X$. If the group $G$ is trivial, then $[X]^n_G=X^n$. So, the construction of a $G$-symmetric  
$n$-th power $[X]^n_G$ generalizes the construction of the $n$-th power $X^n$ of a ballean $X$. The ballean $[X]^n_{S_n}$ will be denoted by $[X]^n$ and called the {\em symmetric $n$-th power} of $X$. For $n=2$ the {\em symmetric square} $[X]^2$ can be  identified with the hypersymmetric square $[X]^{\le 2}$ of $X$.  

It is easy to see that the cardinal characteristics $\cof(\E_X)$, $\add(\E_X)$, $\cof(\mathcal B_X)$ and $\add(\mathcal B_X)$ of a ballean $X$ coincide with the corresponding cardinal characteristics of its $G$-symmetric powers $[X]^n_G$. So, we can apply Corollary~\ref{c1} and obtain the following proposition.

\begin{proposition}\label{p:Gnorm} If a ballean $X$ has $\cof(\E_X)\le\add(\mathcal B_X)$, then for every $n\in\IN$ and every subgroup  $G\subset S_n$ the ballean $[X]^n_G$ is normal.
\end{proposition}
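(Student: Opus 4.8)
The plan is to deduce the normality of $[X]^n_G$ directly from the ``in particular'' clause of Corollary~\ref{c1}, which asserts that a ballean $Y$ is normal whenever $\cof(\E_Y)\le\add(\mathcal B_Y)$. Applying this with $Y=[X]^n_G$, it suffices to establish the two equalities $\cof(\E_{[X]^n_G})=\cof(\E_X)$ and $\add(\mathcal B_{[X]^n_G})=\add(\mathcal B_X)$; granting them, the hypothesis $\cof(\E_X)\le\add(\mathcal B_X)$ yields $\cof(\E_{[X]^n_G})\le\add(\mathcal B_{[X]^n_G})$ and the conclusion follows at once. So the whole proof reduces to making precise the ``easy to see'' coincidence of cardinal characteristics announced just before the statement.

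First I would pin down convenient cofinal bases. Since $\E_X$ is directed, the family $\{\langle E\rangle:E\in\E_X\}$, where $\langle E\rangle:=\{(x,y)\in X^n\times X^n:\forall i<n\ (x(i),y(i))\in E\}$, is a cofinal base of the product ball structure $\E_{X^n}$, and consequently $\{\widehat{\langle E\rangle}:E\in\E_X\}$ is a cofinal base of $\E_{[X]^n_G}$. The monotone surjection $E\mapsto\widehat{\langle E\rangle}$ immediately gives $\cof(\E_{[X]^n_G})\le\cof(\E_X)$. For the reverse inequality I would use the diagonal map $\delta\colon X\to[X]^n_G$, $\delta(x)=(x,\dots,x)G$: a constant tuple is fixed by every $g\in G$, so $\delta$ is injective, and $\widehat{\langle E\rangle}\cap(\delta(X)\times\delta(X))$ corresponds exactly to $E$. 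Thus $\delta$ identifies $X$ with the subballean $\delta(X)$, and since restricting a ball structure to a subballean cannot raise its cofinality, $\cof(\E_X)=\cof(\E_{[X]^n_G}{\restriction}\delta(X))\le\cof(\E_{[X]^n_G})$. Combining the two inequalities gives the first equality.

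For the bornology I would first characterize bounded sets: a subset $\mathcal A\subset[X]^n_G$ is bounded if and only if the set $\mathrm{c}(\mathcal A):=\bigcup_{yG\in\mathcal A}\{y(0),\dots,y(n-1)\}$ of all coordinates occurring in $\mathcal A$ is bounded in $X$. Indeed, from $\mathcal A\subset\widehat{\langle E\rangle}[zG]$ one reads off that every coordinate of every representative lies in the bounded set $\bigcup_{j<n}E[z(j)]$, and conversely if $\mathrm{c}(\mathcal A)\subset E_0[z_0]$ then taking the constant tuple $z=(z_0,\dots,z_0)$ shows $\mathcal A\subset\widehat{\langle E_0\rangle}[zG]$. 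This characterization makes $\mathrm{c}$ a monotone cofinal map $\mathcal B_{[X]^n_G}\to\mathcal B_X$, while $B\mapsto\{yG:\{y(0),\dots,y(n-1)\}\subset B\}$ is a monotone cofinal map in the other direction admitting $\mathrm{c}$ as a retraction. Such a pair of mutually cofinal monotone maps forces $\add(\mathcal B_{[X]^n_G})=\add(\mathcal B_X)$ (and also $\cof(\mathcal B_{[X]^n_G})=\cof(\mathcal B_X)$), which is the second required equality.

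The one point demanding genuine care is the quotient structure: the balls $\widehat{\langle E\rangle}[zG]$ and the entourages $\widehat{\langle E\rangle}$ carry hidden existential quantifiers over permutations $g,h\in G$, so one must check that permuting coordinates never enlarges the relevant sets in an essential way. The key observation that absorbs this difficulty is that applying a permutation to a tuple leaves the \emph{set} of its coordinate values unchanged; hence the coordinate map $\mathrm{c}$ and the diagonal embedding $\delta$ are insensitive to the $G$-action, and the quantifiers collapse. Once this is in hand the computation is routine, and concatenating the two equalities with the hypothesis gives $\cof(\E_{[X]^n_G})=\cof(\E_X)\le\add(\mathcal B_X)=\add(\mathcal B_{[X]^n_G})$, whence $[X]^n_G$ is normal by Corollary~\ref{c1}.
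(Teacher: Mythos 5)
Your proof is correct and follows exactly the route the paper intends: the paper's own ``proof'' is the one-line remark preceding the statement that the cardinal characteristics $\cof(\E_X)$ and $\add(\mathcal B_X)$ coincide with those of $[X]^n_G$, followed by an appeal to Corollary~\ref{c1}. You have simply supplied the details of that ``easy to see'' coincidence (via the diagonal embedding and the coordinate map), and these details check out.
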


The interplay between the normality of $G$-symmetric powers for various groups $G\subset S_n$ is described in the following theorem, proved in Section~\ref{s:Gnorm1}.

\begin{theorem}\label{t:Gnorm1} Let $n\in\IN$ and $G\subset H$ be two subgroups of the symmetric group $S_n$. If for a ballean $X$ the ballean $[X]^n_G$ is normal, then the ballean $[X]^n_H$ is normal, too.
\end{theorem}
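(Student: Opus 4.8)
The plan is to lift the whole problem from the two quotient balleans to the power $X^n$, on which the \emph{entire} group $S_n$ acts by the coordinate permutations $R_h\colon X^n\to X^n$, $R_h(x)=x\circ h$. This detour is essentially forced: the group $H$ need not act on $[X]^n_G$ at all (it does so only when $G$ is normal in $H$), whereas on $X^n$ every $R_h$ is available. The point of entry is that the \emph{diagonal} entourages $E^{(n)}:=\{(x,y)\in X^n\times X^n:(x_i,y_i)\in E\text{ for all }i<n\}$, $E\in\E_X$, form a cofinal subfamily of $\E_{X^n}$ (a routine consequence of the upward directedness of $\E_X$) which, crucially, is invariant under every $R_h$, since $R_h\times R_h$ maps $E^{(n)}$ onto itself. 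Consequently each $R_h$ is a coarse automorphism of $X^n$, and therefore preserves boundedness and asymptotic disjointness.

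First I would set up a dictionary between a quotient $[X]^n_\Gamma$ (for a subgroup $\Gamma\subseteq S_n$) and $\Gamma$-saturated data on $X^n$, through the quotient map $\pi_\Gamma\colon X^n\to[X]^n_\Gamma$. Because $\Gamma$ is \emph{finite}, a set $D\subseteq[X]^n_\Gamma$ is bounded if and only if $\pi_\Gamma^{-1}(D)$ is bounded in $X^n$ (a ball over $[X]^n_\Gamma$ pulls back to a finite union of $\Gamma$-translates of a ball of $X^n$). Moreover, for an invariant diagonal entourage $E^{(n)}$ and any $S\subseteq[X]^n_\Gamma$ one has the identity $\pi_\Gamma^{-1}\big(\widehat{E^{(n)}}[S]\big)=E^{(n)}\big[\pi_\Gamma^{-1}(S)\big]$, where the nontrivial inclusion is obtained by moving a representative by a suitable $\gamma\in\Gamma$ and invoking the $\Gamma$-invariance of $E^{(n)}$. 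Combining these two facts with the cofinality of diagonal entourages yields a \emph{Bridge Lemma}: for $S,T\subseteq[X]^n_\Gamma$, the sets $S,T$ are asymptotically disjoint in $[X]^n_\Gamma$ iff $\pi_\Gamma^{-1}(S),\pi_\Gamma^{-1}(T)$ are asymptotically disjoint in $X^n$; likewise $U$ is an asymptotic neighborhood of $A$ in $[X]^n_\Gamma$ iff $\pi_\Gamma^{-1}(U)$ is an asymptotic neighborhood of $\pi_\Gamma^{-1}(A)$ in $X^n$. Since every subset of $[X]^n_\Gamma$ is the $\pi_\Gamma$-image of its ($\Gamma$-saturated) preimage, the Bridge Lemma reformulates the normality of $[X]^n_\Gamma$ as the following property $(\star_\Gamma)$ of $X^n$: \emph{any two $\Gamma$-saturated asymptotically disjoint subsets of $X^n$ admit disjoint $\Gamma$-saturated asymptotic neighborhoods in $X^n$.}

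With this reformulation the implication becomes transparent. Assuming $(\star_G)$, I would take $H$-saturated asymptotically disjoint sets $\mathbf A,\mathbf B\subseteq X^n$; as $G\subseteq H$, they are $G$-saturated, so $(\star_G)$ supplies disjoint $G$-saturated asymptotic neighborhoods $\mathbf U$ of $\mathbf A$ and $\mathbf V$ of $\mathbf B$. The next move is to symmetrize over the finite set $H$ by intersecting the translates: set $\mathbf U^{\ast}:=\bigcap_{h\in H}R_h(\mathbf U)$ and $\mathbf V^{\ast}:=\bigcap_{h\in H}R_h(\mathbf V)$. These are $H$-saturated and disjoint (being contained in $\mathbf U$ and $\mathbf V$ via the term $h=e$). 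To see that $\mathbf U^{\ast}$ is an asymptotic neighborhood of $\mathbf A$, note that for each $h$ the coarse automorphism $R_h$ sends the asymptotic disjointness of $\mathbf A$ and $X^n\setminus\mathbf U$ to the asymptotic disjointness of $R_h(\mathbf A)=\mathbf A$ (using $H$-invariance of $\mathbf A$) and $X^n\setminus R_h(\mathbf U)$; as $X^n\setminus\mathbf U^{\ast}=\bigcup_{h\in H}\big(X^n\setminus R_h(\mathbf U)\big)$ is a \emph{finite} union, $\mathbf A$ is asymptotically disjoint from it too. The symmetric argument applies to $\mathbf V^{\ast}$ and $\mathbf B$, so $(\star_H)$ holds, and by the Bridge Lemma $[X]^n_H$ is normal.

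The heart of the matter is the Bridge Lemma, and inside it the ball-preimage identity $\pi_\Gamma^{-1}\big(\widehat{E^{(n)}}[S]\big)=E^{(n)}\big[\pi_\Gamma^{-1}(S)\big]$: this is exactly where the invariance of the diagonal entourages under coordinate permutations is used, and it is what lets asymptotic notions be transported faithfully across the quotient. Finiteness of $S_n$ enters at two decisive points — in the bounded-to-bounded correspondence (a finite union of translates of a bounded set is bounded) and in the final symmetrization (a finite intersection of asymptotic neighborhoods is again one). Everything else — that each $R_h$ is a coarse automorphism, and that asymptotic disjointness is stable under finite unions — is a routine verification that I would relegate to short remarks.
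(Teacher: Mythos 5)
Your argument is correct, but it takes a genuinely different route from the paper. The paper proves the theorem by showing that the natural map $\pi:[X]^n_G\to [X]^n_H$, $xG\mapsto xH$, is a surjective \emph{perfect} map (macro-uniform, proper, and closed --- the only nontrivial point being closedness, verified by a representative-moving computation very much like the one in your ball-preimage identity), and then invokes Proposition~\ref{p:image} that perfect images of normal balleans are normal. You instead work on the common cover $X^n$: the Bridge Lemma translates normality of $[X]^n_\Gamma$ into the statement that $\Gamma$-saturated asymptotically disjoint subsets of $X^n$ have disjoint $\Gamma$-saturated asymptotic neighborhoods, and the passage from $G$ to $H$ is then achieved by the symmetrization $\mathbf U^{\ast}=\bigcap_{h\in H}R_h(\mathbf U)$, which has no counterpart in the paper. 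I checked the delicate points: the $S_n$-invariance and cofinality of the diagonal entourages, the identity $\pi_\Gamma^{-1}\bigl(\widehat{E^{(n)}}[S]\bigr)=E^{(n)}\bigl[\pi_\Gamma^{-1}(S)\bigr]$, the bounded-to-bounded correspondence (finiteness of $\Gamma$), and the fact that $R_k(R_h(\mathbf U))=R_{hk}(\mathbf U)$ so that $\mathbf U^{\ast}$ is indeed $H$-saturated; all hold, and the equivalence ``$U$ is an asymptotic neighborhood of $A$ iff $A$ and $X\setminus U$ are asymptotically disjoint'' justifies your use of the coarse automorphisms $R_h$ and of finite intersections. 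What the paper's route buys is brevity, given the general machinery of perfect maps already developed in Section~\ref{s:normpres}; what yours buys is self-containedness (no appeal to Proposition~\ref{p:image}) and a reusable reformulation of normality of all the quotients $[X]^n_\Gamma$ in terms of saturated subsets of the single ballean $X^n$.
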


In Section~\ref{s:Gnorm2} we shall prove the following necessary conditions of the normality of $G$-symmetric powers.

\begin{theorem}\label{t:Gnorm2} Let $n\ge 2$, $G\subset S_n$ be a subgroup, and $X$ be a ballean. If the ballean $[X]^n_G$ is normal \textup{(}and $n\ge 3$\textup{)}, then the ballean $X$ has bounded growth \textup{(}and the bornology $\mathcal B_X$ of $X$ has a linearly ordered base\textup{)}.
\end{theorem}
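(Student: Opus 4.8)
The plan is to reduce everything to the full symmetric group and then to invoke the product theorem. Since $G\subset S_n$, Theorem~\ref{t:Gnorm1} (applied with $H=S_n$) shows that normality of $[X]^n_G$ implies normality of the symmetric power $[X]^n=[X]^n_{S_n}$, so I may assume from the outset that $[X]^n$ is normal. If $X$ is bounded there is nothing to prove (a bounded ballean trivially has bounded growth, and its bornology has the one-element base $\{X\}$), so I assume $X$ is unbounded. For $n=2$ the symmetric square coincides with the hypersymmetric square, $[X]^2=[X]^{\le 2}$, and the conclusion that $X$ has bounded growth is exactly Theorem~\ref{t:hyper2}(1); for $n=2$ no statement about a linearly ordered base is claimed. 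Thus the whole content is the case $n\ge 3$, where I must produce \emph{both} bounded growth and a linearly ordered base of $\mathcal B_X$.

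For $n\ge 3$ the idea is to locate a coarsely isomorphic copy of the square $X\times X$ inside $[X]^n$. Consider the map $\phi\colon X\times X\to[X]^n$ sending $(a,b)$ to the multiset $\{a^{n-1},b\}$, i.e.\ the class containing $n-1$ copies of $a$ and one copy of $b$ (so $\phi(a,a)=\{a^{n}\}$). Since $n-1\ge 2$, the element of multiplicity at least $2$ is recovered from the multiset, so $\phi$ is injective; let $D:=\phi(X\times X)$ be its image, viewed as a subballean of $[X]^n$. I claim $\phi$ is a coarse isomorphism of $X\times X$ onto $D$. The forward direction is routine: if $a\mathrel{E}c$ and $b\mathrel{E}d$ for a symmetric $E\in\E_X$, then matching the $a$'s to the $c$'s and $b$ to $d$ witnesses $(\phi(a,b),\phi(c,d))\in\hat E$. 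The essential point is the converse. If $(\phi(a,b),\phi(c,d))\in\hat E$, then there is a perfect matching between $\{a^{n-1},b\}$ and $\{c^{n-1},d\}$ with all matched pairs $E$-close; since there is only one copy of $d$, at least $n-2\ge 1$ of the $n-1$ copies of $a$ are matched to copies of $c$, which forces $a\mathrel{E}c$. A short case analysis of where $b$ and $d$ are matched then shows that in every case $a\mathrel{E}c$ and $b\mathrel{(E\circ E\circ E)}d$, so $\phi^{-1}$ carries $\hat E$-closeness into closeness for the product entourage built from $E\circ E\circ E$.

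It remains to harvest the consequences. As a subballean of the normal ballean $[X]^n$, the copy $D$ is normal (Proposition 1.2 of \cite{Prot}), and since $\phi$ is a bijection that together with its inverse carries close pairs to close pairs, normality transfers back to $X\times X$ (normality is a coarse invariant; this is among the preservation results of Section~\ref{s:normpres}, and if desired one can instead push asymptotically disjoint sets across $\phi$, separate them in $D$, and pull the neighborhoods back). Thus the square $X\times X$ of the unbounded ballean $X$ is normal, and Theorem~\ref{t:main} applies: part (3) gives that $X$ has bounded growth, while part (1) gives $\add(\mathcal B_X)=\cof(\mathcal B_X)$, which is equivalent to $\mathcal B_X$ having a linearly ordered base. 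This settles the case $n\ge 3$ and completes the proof. The main obstacle is the matching analysis establishing that $\phi$ is a coarse isomorphism onto its image — specifically that the multiplicity $n-1\ge 2$ rigidifies the first coordinate, which is precisely the phenomenon that fails for $n=2$ and explains why the linearly-ordered-base conclusion is unavailable there.
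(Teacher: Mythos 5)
Your proof is correct, but for the case $n\ge 3$ it follows a genuinely different route from the paper's. The paper splits the two conclusions: for bounded growth it reduces to $[X]^n=[X]^n_{S_n}$ via Theorem~\ref{t:Gnorm1} and embeds the hypersymmetric square by $\{x,y\}\mapsto(x,y,z,\dots,z)$, then quotes Lemma~\ref{p:hyper1}(3); for the linearly ordered base it works directly inside $[X]^n_G$ with the ``diagonal'' sets $\Delta^n_k(z)$ and $\Delta^n_l(z)$ (tuples taking the value $z$ with multiplicity $k$, resp.\ $l$, and one other value), shows they are asymptotically disjoint, separates them, and extracts from the separating neighborhoods a pair of functions on $\mathcal B_X$ to which Lemma~\ref{l2} applies. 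You instead embed the full square $X\times X$ into $[X]^n$ by $(a,b)\mapsto\{a^{n-1},b\}$ and obtain \emph{both} conclusions in one stroke from Theorem~\ref{t:main}; the matching argument is sound (after symmetrizing $E$, the multiplicity $n-1\ge 2$ forces $a\mathrel{E}c$ and then $b\mathrel{E^3}d$ in every case), and Proposition~\ref{p:emb} transfers normality back along the asymorphic embedding. Your route is more economical and makes transparent why $n\ge 3$ is the threshold (the multiplicity rigidification fails for $n=2$), at the cost of passing through $S_n$ first; the paper's $\Delta^n_k$ argument has the minor virtue of running directly in $[X]^n_G$ for an arbitrary subgroup $G$, though that generality is not needed once Theorem~\ref{t:Gnorm1} is available. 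Your handling of the bounded case and of $n=2$ via Theorem~\ref{t:hyper2}(1) matches the paper's logic and introduces no circularity.
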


The above results imply the following characterization of the normality of various functorial constructions over balleans.
 
\begin{theorem}\label{t:final} For a $\cof$-regular ballean $X$ the following conditions are equivalent:
\begin{enumerate}
\item for every $n\ge 1$ and every subgroup $G\subset S_n$ the ballean $[X]^n_G$ is normal;
\item for some $n\ge 3$ and some subgroup $G\subset S_n$ the ballean $[X]^n_G$  is normal;
\item for every $n\ge 1$ the ballean $X^n$  is normal;
\item for some $n\ge 2$ the ballean $X^n$  is normal;
\item the bornology $\mathcal B_X$ of $X$ has a linearly ordered base; 
\end{enumerate}
If the ballean $X$ is not ultranormal, then the conditions \textup{(1)--(5)} are equivalent to:
\begin{enumerate}
\item[(6)] for some $n\ge 2$ and some subgroup $G\subset S_n$ the ballean $[X]^n_G$ is normal;
\item[(7)] the symmetric square $[X]^2$ of $X$ is normal;
\item[(8)] the hypersymmetric square $[X]^{\le 2}$ of $X$ is normal;
\item[(9)] for some $n\ge 2$ the ballean $[X]^{\le n}$ is normal;
\item[(10)] for every $n\ge 1$ the ballean $[X]^{\le n}$ is normal;
\item[(11)] the hyperballean $[X]^{\mathcal B}$ of $X$ is normal.
\end{enumerate}
\end{theorem}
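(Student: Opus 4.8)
The plan is to assemble the earlier theorems into a scheme of implications: first establish the equivalence of (1)--(5) for every $\cof$-regular ballean, and then fold the remaining conditions (6)--(11) into the same equivalence class under the extra hypothesis that $X$ is not ultranormal. Before anything else I would dispose of the trivial case in which $X$ is bounded: then $X\in\mathcal B_X$, so $\mathcal B_X$ has the largest element $X$ and hence a linearly ordered base, every functorial power of $X$ is bounded and therefore normal, so all of (1)--(5) hold; moreover a bounded ballean is vacuously ultranormal, so (6)--(11) are not asserted. Thus I may assume from now on that $X$ is unbounded.

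For the block (1)--(5) I would run the cycle $(5)\Ra(1)\Ra(3)\Ra(4)\Ra(5)$ supplemented by $(1)\Ra(2)\Ra(5)$. The implication $(5)\Ra(1)$ is the only place where $\cof$-regularity enters: a linearly ordered base of $\mathcal B_X$ gives $\add(\mathcal B_X)=\cof(\mathcal B_X)$, and $\cof$-regularity upgrades this to $\cof(\E_X)=\cof(\mathcal B_X)=\add(\mathcal B_X)$, whence $\cof(\E_X)\le\add(\mathcal B_X)$ and Proposition~\ref{p:Gnorm} yields (1). Next, $(1)\Ra(3)$ follows by specializing to the trivial subgroup, for which $[X]^n_{\{e\}}=X^n$, while $(3)\Ra(4)$ and $(1)\Ra(2)$ are immediate. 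The two substantial returns to (5) are: $(4)\Ra(5)$, obtained by writing $X^n=X\times X^{n-1}$ as a product of two unbounded balleans and invoking Theorem~\ref{t:main}(1) to get $\add(\mathcal B_X)=\cof(\mathcal B_X)$, i.e.\ a linearly ordered base; and $(2)\Ra(5)$, which uses the parenthetical strengthening of Theorem~\ref{t:Gnorm2} available for $n\ge 3$ to produce the linearly ordered base directly.

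For the block (6)--(11), assuming $X$ is not ultranormal, I would anchor everything at (5). Going up: $(5)\Ra(11)$ follows from $\cof(\E_X)\le\add(\mathcal B_X)$ (exactly as in $(5)\Ra(1)$) via Proposition~\ref{p:lin-hyp}; then $(11)\Ra(10)$ because each $[X]^{\le n}$ is a subballean of the normal hyperballean $[X]^{\mathcal B}$ and normality passes to subballeans; $(10)\Ra(9)$ and $(10)\Ra(8)$ are trivial, and $(9)\Ra(8)$ holds since $[X]^{\le 2}$ is a subballean of $[X]^{\le n}$. Condition (7) coincides with (8), as for $n=2$ the symmetric square $[X]^2$ is the hypersymmetric square $[X]^{\le 2}$. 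Coming back down, $(8)\Ra(5)$ is where non-ultranormality is used: Theorem~\ref{t:hyper2} gives that $X$ has bounded growth and that $X$ is either ultranormal or has $\mathcal B_X$ with a linearly ordered base, and the first alternative is excluded by hypothesis. Finally $(1)\Ra(6)$ is trivial, and $(6)\Ra(5)$ is handled by cases on $n$: for $n\ge 3$, Theorem~\ref{t:Gnorm2} again delivers (5) at once, while for $n=2$, Theorem~\ref{t:Gnorm1} promotes normality of $[X]^2_G$ to normality of $[X]^2_{S_2}=[X]^{\le 2}$, reducing to the implication $(8)\Ra(5)$ already proved.

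The main obstacle is the correct treatment of condition (6) with $n=2$ and an arbitrary $G\subset S_2$, since Theorem~\ref{t:Gnorm2} supplies only bounded growth, and not the linearly ordered base, when $n=2$. This gap is precisely what is bridged by combining Theorem~\ref{t:Gnorm1} (to climb to the full symmetric group, and hence to $[X]^{\le 2}$) with the non-ultranormality hypothesis inside Theorem~\ref{t:hyper2}; recognizing that these two facts together replace the missing $n\ge 3$ strength of Theorem~\ref{t:Gnorm2} is the key structural point. The remaining work is bookkeeping: keeping straight the identifications $X^n=[X]^n_{\{e\}}$ and $[X]^2=[X]^{\le 2}$, the subballean inclusions $[X]^{\le 2}\subset[X]^{\le n}\subset[X]^{\mathcal B}$, and making sure $\cof$-regularity is invoked only where the cardinal inequality $\cof(\E_X)\le\add(\mathcal B_X)$ is actually needed.
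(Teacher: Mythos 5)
Your proposal is correct and follows essentially the same implication scheme as the paper's proof, citing the same supporting results (Propositions~\ref{p:lin-hyp}, \ref{p:Gnorm}, Theorems~\ref{t:Gnorm1}, \ref{t:Gnorm2}, \ref{t:hyper2}, and Theorem~\ref{t:main} in place of its corollary Theorem~\ref{t:bornoreg} for $(4)\Ra(5)$). The only cosmetic deviations are your explicit disposal of the bounded case and your use of Theorem~\ref{t:Gnorm1} for the $n=2$ instance of $(6)$, where the paper instead observes that $(6)$ implies $(2)\vee(4)\vee(7)$ since the only subgroups of $S_2$ are the trivial one and $S_2$ itself; both routes are sound.
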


\begin{proof} The implications $(2)\Leftarrow (1)\Ra(3)\Ra(4)$ are trivial and $(2)\Ra(5)\Leftarrow(4)$ follow from Theorems~\ref{t:Gnorm2} and \ref{t:bornoreg}.  The implications $(5)\Ra(1,11)$ follows from the $\cof$-regularity of $X$ and  Propositions~\ref{p:lin-hyp}, \ref{p:Gnorm}. The implications $(1)\Ra(6,7)$ are trivial. The implications $(11)\Ra(10)\Ra(9)\Ra(8)$ trivially follow from the inclusions $[X]^2=[X]^{\le 2}\subset [X]^{\le n}\subset [X]^{\mathcal B}$ and the preservation of the normality by taking subballeans.
The condition $(6)$ implies $(2)\vee(4)\vee (7)$.

If the ballean $X$ is not ultranormal, then $(8)\Ra(5)$ by Theorem~\ref{t:hyper2}(2).
\end{proof}

Finally we present a simple example of an ultranormal ballean $X$ for which the conditions (1)--(11) of Theorem~\ref{t:final} do not hold.

\begin{example}\label{e:final} Let $X$ be an infinite set and $S_X$ be the group of permutations of $X$. Endow $X$ with the ball structure $\E_X$ consisting of the entourages $$E_F:=\big\{(x,y)\in X\times X:y\in \{x\}\cup\{f(x)\}_{f\in F}\big\}$$where $F$ runs over finite subsets of the symmetric group $S_X$. The ballean $(X,\E_X)$ is ultranormal but is not $\cof$-regular and fails to have bounded growth. Consequently, for every $n\ge 2$ and every subgroup $G\subset S_X$ the balleans $X^n$, $[X]^n_G$ and $[X]^{\le n}$ are not normal.
\end{example}

\begin{proof} The definition of the ball structure $\E_X$ implies that the bornology $\mathcal B_X$ of $X$ consists of finite subsets of $X$. Consequently, $\add(\mathcal E_X)=\add(\mathcal B_X)=\w\le\cof(\mathcal B_X)=|X|$. On the other hand, a simple diagonal argument shows that $\cof(\E_X)>|X|$ and the ballean $(X,\E_X)$ fails to have bounded growth. Since $\cof(\E_X)>\cof(\mathcal B_X)$, the ballean $(X,\E_X)$ is not $\cof$-regular. By Theorems~\ref{t:Gnorm2} and \ref{t:hyper2}(1), for every $n\ge 2$ and every subgroup $G\subset S_X$ the balleans $X^n$, $[X]^n_G$ and $[X]^{\le n}$ are not normal.
\end{proof}

The ballean constructed in Example~\ref{e:final} is a partial case of finitary balleans on $G$-spaces, which are studied in Section~\ref{s:Gspace}. In its turn, finitary balleans are partial cases of balleans generated by group ideals. An ideal $\I$ of subsets of a group $G$ is called a {\em group ideal} if for any sets $A,B\in\I$ the set $AB^{-1}$ belongs to $\I$. Each group ideal $\I$ on a group $G$ induces a bornoregular ball structure $\vec\I$ on $G$, consisting of the entourages $E_I:=\{(x,y)\in G\times G:y\in\{x\}\cup Ix\}$ where $I\in\I$. It is easy to see that the bornology of the ballean $(G,\vec \I)$ coincides with the ideal $\I$.

 For a group $G$ and an infinite cardinal $\kappa\le|G|$ by $[G]^{<\kappa}$ we denote the group ideal consisting of subsets of cardinality $<\kappa$ in $G$.

For Abelian groups and the cardinal $\kappa=\w$ the following theorem was proved by Protasov in \cite{Prot}.

\begin{theorem}\label{t:group} For any group $G$ and any infinite cardinal $\kappa<|G|$ the ballean $(G,\E_{[G]^{<\kappa}})$ is not normal.
\end{theorem}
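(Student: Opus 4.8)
The plan is to work directly from the definitions of asymptotic disjointness and of disjoint asymptotic neighbourhoods, after first recording two reformulations that turn the problem into pure combinatorics of the group ideal $[G]^{<\kappa}$. Write $\lambda:=|G|$, so $\lambda\ge\kappa^+$, and recall that a subset of $(G,\E_{[G]^{<\kappa}})$ is bounded iff it has cardinality $<\kappa$, and that $E_I[A]=A\cup IA$ for $I\in[G]^{<\kappa}$. Call a set $S$ \emph{$T$-thin} if $|S\cap kT|<\kappa$ for every $k\in G$. The first reformulation (for regular $\kappa$; the singular case is reduced below to $\mathrm{cf}(\kappa)=\add(\mathcal B_G)$) is that $A,B\subset G$ are asymptotically disjoint iff $A$ is $B$-thin, the nontrivial implication coming from the expansion $(A\cup IA)\cap(B\cup IB)=\bigcup_{g,h\in\{e\}\cup I}(gA\cap hB)$ together with the fact that a union of fewer than $\mathrm{cf}(\kappa)$ sets of size $<\kappa$ has size $<\kappa$. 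The second reformulation unfolds ``$U$ is an asymptotic neighbourhood of $A$'' as ``$A$ and $G\setminus U$ are asymptotically disjoint'' and then shows that $A,B$ possess disjoint asymptotic neighbourhoods iff $G=P\cup Q$ for some $B$-thin $P$ and $A$-thin $Q$ (take $U=P$; note $G\setminus P\subset Q$ is $A$-thin). Thus the theorem reduces to producing unbounded $A,B$ with $A$ being $B$-thin which nevertheless cannot be covered by a $B$-thin set together with an $A$-thin set.

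The engine behind the construction is the pigeonhole phenomenon that already shows $(G,\E_{[G]^{<\kappa}})$ fails to have bounded growth in the sense of Definition~\ref{d:bg}: if one tries to assign to each $x\in G$ a set $S_x\in[G]^{<\kappa}$ so that each fixed $g\in G$ lies in $S_x$ for all but $<\kappa$ many $x$, then choosing $\kappa$ distinct elements $g_\xi$ and a point $x$ outside the union $\bigcup_{\xi<\kappa}\{x:g_\xi\notin S_x\}$ (possible, since that union has size $\le\kappa<\lambda$) forces $\{g_\xi:\xi<\kappa\}\subset S_x$, contradicting $|S_x|<\kappa$. The same gap $\kappa<\lambda$ is what I would exploit to defeat a putative covering $G=P\cup Q$ as above. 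Concretely, I would first isolate inside $G$ an injective ``grid'' $\{g_\xi h_\eta:\xi<\kappa,\ \eta<\kappa^+\}$ of distinct elements, built by transfinite recursion on $\eta<\kappa^+$ so as to avoid at each stage the $\le\kappa$ forbidden values $g_\xi^{-1}g_{\xi'}h_{\eta'}$; this recursion succeeds precisely because $\lambda>\kappa$, and it is where the hypothesis enters structurally.

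Using the grid I would choose $A$ and $B$ as transverse subfamilies so that, in any covering $G=P\cup Q$ with $P$ $B$-thin and $Q$ $A$-thin, the $A$-thinness of $Q$ forces $P$ to absorb every translate of $A$ (that is, $kA\subset P$ off a set of size $<\kappa$) while the $B$-thinness of $P$ keeps $|P\cap kB|<\kappa$. Each of the $\kappa^{+}$ ``columns'' then contributes a forced point of $P$, and the design is arranged so that these forced points are carried by a single translation into one set $kB$. A pressing-down argument on the regular cardinal $\kappa^{+}$ now applies: the covering allots only a $<\kappa$-sized exceptional set to each column, yet there are $\kappa^{+}$ columns, so $\kappa^{+}$ of the forced points must land in one $kB$, contradicting $|P\cap kB|<\kappa$. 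Simultaneously the asymptotic disjointness $|A\cap kB|<\kappa$ must be maintained, which is what forces the forced points to be spread over distinct columns rather than concentrated in a single translate of $A$. For singular $\kappa$ one replaces $\kappa^{+}$ everywhere by the regular cardinal $\mathrm{cf}(\kappa)=\add(\mathcal B_G)$ and runs the same pressing-down.

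The main obstacle is exactly this balancing act, and it is genuinely infinitary. Any attempt to manufacture the contradiction through a single pair of translates, say by arranging $|gA\cap hB|\ge\kappa$, is self-defeating, since by the first reformulation such an overlap already destroys the asymptotic disjointness of $A$ and $B$; and any attempt using only $<\kappa$ many translates is defeated by the covering's per-translate exceptional sets, whose union can be kept of size $<\kappa$. Hence the unseparability cannot be witnessed locally and must be extracted from $\kappa^{+}$ (resp.\ $\mathrm{cf}(\kappa)$) many constraints at once, which is the role of the pressing-down step, while the asymptotic disjointness keeps each single constraint harmless. Exhibiting one pair $A,B$ that is spread enough to remain asymptotically disjoint yet rigid enough to admit no $A$-thin/$B$-thin cover is the crux of the proof; once such a pair is produced, the two reformulations of the first paragraph immediately yield that the asymptotically disjoint unbounded sets $A,B$ have no disjoint asymptotic neighbourhoods, so $(G,\E_{[G]^{<\kappa}})$ is not normal.
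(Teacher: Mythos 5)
Your two reformulations (asymptotic disjointness of $A,B$ as mutual thinness $|A\cap kB|<\kappa$, and existence of disjoint asymptotic neighbourhoods as a cover $G=P\cup Q$ by a $B$-thin set and an $A$-thin set) are essentially correct in the direction you need them, and they do reduce the theorem to a combinatorial covering problem. But the proof stops exactly where the theorem begins: you never actually produce the pair $A,B$, and you yourself flag its construction as ``the crux.'' Worse, the construction you sketch cannot work as described, because it ignores the left/right asymmetry of the entourages $E_I[x]=\{x\}\cup Ix$. Thinness is measured against \emph{left} translates $kA$, $kB$. In your grid $\{g_\xi h_\eta\}$ the ``columns'' can be arranged to be left translates of one of the sets (say $h_\eta A$), so the $A$-thinness of $Q$ does force $P$ to meet every column in $\kappa$ points; but the ``rows'' are then \emph{right} translates of $B$, about which the $B$-thinness of $P$ says nothing. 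To get a contradiction you need $\kappa$ of the forced points $h_\eta a_\eta\in P$ to fall into a single left translate $kB$, i.e.\ many coincidences $h_\eta a h_{\eta'}^{-1}=k$; for a grid built by a generic collision-avoiding recursion such coincidences occur at most once per $k$, so no left translate of $B$ ever accumulates $\kappa$ points of $P$ and the pressing-down argument has nothing to press on. This is not a presentational gap: in the abelian case (where left and right translates coincide) your plan is essentially Protasov's original argument, and the entire difficulty of the general theorem is precisely to manufacture these coincidences inside an arbitrary group.

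The paper does this by taking $A$ to be a \emph{subgroup} of cardinality $\kappa$ and splitting into two cases according to whether there are more than $\kappa$ many $b$ for which some $F\in[G]^{<\kappa}$ gives $|Ab^{-1}\cap FA|=\kappa$. In the first case $B$ is a transversal of $>\kappa$ many left cosets $bA$, and the hypothesis $ab^{-1}=fa'$ is exactly what converts a point of a left translate $f^{-1}A$ into a point of a left translate $a'B$, producing the forbidden common point $xa=yb\in O_A\cap O_B$; in the second case $B$ is a set of conjugates $aba^{-1}$ and the coincidence is $ua=a(a^{-1}ua)$. Neither case is ``generic,'' and both use the group structure of $A$ in an essential way. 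You would also need to repair the singular-$\kappa$ issue you mention only in passing: mutual thinness does not imply asymptotic disjointness when $\kappa$ is singular, which is why the paper arranges the stronger bound $|xA\cap yB|\le 1$, giving $|FA\cap FB|\le|F\times F|<\kappa$ for every $F\in[G]^{<\kappa}$. As it stands, the proposal identifies the right target but supplies no construction that hits it.
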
 

This theorem will be proved in Section~\ref{s:group}. It implies the following characterization, which will be proved in Section~\ref{s:group2}.

\begin{theorem}\label{t:group2} For a group $G$ and an infinite cardinal $\kappa\le|G|$ the following conditions are equivalent:
\begin{enumerate}
\item the ball structure $\E_{[G]^{<\kappa}}$ has a linearly ordered base;
\item the bornology $[G]^{<\kappa}$ of the ballean $(G,\E_{[G]^{<\kappa}})$ has a linearly ordered base;
\item the ballean $(G,\E_{[G]^{<\kappa}})$ has bounded growth;
\item $|G|=\kappa$ and $\kappa$ is a regular cardinal.
\end{enumerate}
If the cardinal $\kappa$ is regular or the group $G$ is solvable, then the conditions \textup{(1)--(4)} are equivalent to the condition
\begin{enumerate}
\item[(5)] the  ballean $(G,\E_{[G]^{<\kappa}})$ is normal.
\end{enumerate}
\end{theorem}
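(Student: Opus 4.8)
The plan is to reduce all five conditions to the combinatorics of the partially ordered set $([G]^{<\kappa},\subseteq)$. First I would observe that the map $I\mapsto E_I$ identifies the ball structure $\E_{[G]^{<\kappa}}$, up to the irrelevant identity element, with $([G]^{<\kappa},\subseteq)$: since $E_I[x]=\{x\}\cup Ix$, one checks that $E_I\subseteq E_J$ if and only if $I\setminus\{e\}\subseteq J$. As the bornology of $(G,\E_{[G]^{<\kappa}})$ is exactly $[G]^{<\kappa}$ ordered by inclusion, both $\E_{[G]^{<\kappa}}$ and $\mathcal B_{[G]^{<\kappa}}$ are order-isomorphic, modulo $\{e\}$, to $([G]^{<\kappa},\subseteq)$, so $\add(\E_{[G]^{<\kappa}})=\add(\mathcal B_{[G]^{<\kappa}})$ and $\cof(\E_{[G]^{<\kappa}})=\cof(\mathcal B_{[G]^{<\kappa}})$. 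Since having a linearly ordered base is equivalent to $\add=\cof$, this yields $(1)\Leftrightarrow(2)$ at once, and shows that the ballean is $\cof$-regular. A routine computation gives $\add([G]^{<\kappa})=\mathrm{cf}(\kappa)$: fewer than $\mathrm{cf}(\kappa)$ sets of size $<\kappa$ have union of size $<\kappa$, while $\mathrm{cf}(\kappa)$ disjoint sets of increasing sizes exhaust a set of size $\kappa$.

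For $(2)\Leftrightarrow(4)$ I would argue by cardinal arithmetic, using $\add([G]^{<\kappa})=\mathrm{cf}(\kappa)$. If $|G|=\kappa$ with $\kappa$ regular, well-order $G$ in type $\kappa$; the initial segments form a linearly ordered cofinal base, giving $(4)\Rightarrow(2)$. Conversely, a linearly ordered base must have length $\add=\mathrm{cf}(\kappa)$, so it suffices to refute the cofinality of short chains. If $\kappa$ is singular, any chain $\{C_i\}_{i<\mathrm{cf}(\kappa)}$ fails to be cofinal: choosing $a_i\in C_{i+1}\setminus C_i$ yields a set $A=\{a_i\}$ of size $\le\mathrm{cf}(\kappa)<\kappa$ contained in no $C_i$. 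If $\kappa$ is regular but $|G|>\kappa$, then a cofinal family must cover $G$ and hence cannot have size $\le\kappa$, since a union of $\le\kappa$ sets of size $<\kappa$ has size $\le\kappa<|G|$; thus $\cof>\kappa=\add$. In either case $(2)$ fails, so $(2)$ forces $|G|=\kappa$ and $\kappa$ regular, i.e. $(4)$.

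For $(3)\Leftrightarrow(4)$, assume first $(4)$. Writing $G=\{g_\alpha:\alpha<\kappa\}$, I would take the growth entourage with $H[g_\alpha]:=\{g_\xi:\xi<\alpha\}\,g_\alpha$; each ball has size $<\kappa$, the inclusion $E_I[g_\alpha]\subseteq H[g_\alpha]$ holds once $\alpha$ exceeds the bounded set of indices occupied by $I$, and regularity of $\kappa$ makes $H$ send bounded sets to bounded sets, so $(4)\Rightarrow(3)$. For $(3)\Rightarrow(4)$, rewrite the second requirement of Definition~\ref{d:bg} as: for each $I\in[G]^{<\kappa}$ one has $I\subseteq K[x]:=H[x]x^{-1}$ for all but boundedly many $x$. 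Applying this to sets $I$ of each size $\mu<\kappa$ shows that $G_\mu:=\{x:|H[x]|<\mu\}$ is bounded for every $\mu<\kappa$; since each $|H[x]|<\kappa$, we get $G=\bigcup_{\mu<\kappa}G_\mu$, a union of at most $\kappa$ bounded sets, whence $|G|=\kappa$. Finally, if $\kappa$ were singular I would fix a cofinal sequence $\kappa_i\nearrow\kappa$ and pick $y_i\notin G_{\kappa_i}$; then $B:=\{y_i\}$ is bounded but $|H[B]|\ge\sup_i|H[y_i]|\ge\sup_i\kappa_i=\kappa$, contradicting the first requirement of Definition~\ref{d:bg}. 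Hence $\kappa$ is regular and $(4)$ holds.

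It remains to incorporate $(5)$. The implication $(1)\Rightarrow(5)$ is immediate from Theorem~\ref{t:prot}, since a linearly ordered base of $\E_{[G]^{<\kappa}}$ is a linearly ordered base of its coarse structure. For $(5)\Rightarrow(4)$, Theorem~\ref{t:group} rules out $\kappa<|G|$, so normality forces $|G|=\kappa$; when $\kappa$ is assumed regular this already gives $(4)$, and the cycle closes via $(4)\Rightarrow(1)\Rightarrow(5)$. The one genuinely new point, and the step I expect to be the main obstacle, is the solvable case: one must show that a \emph{solvable} group $G$ with $|G|=\kappa$ \emph{singular} yields a non-normal ballean $(G,\E_{[G]^{<\kappa}})$, since here Theorem~\ref{t:group} does not apply yet $(4)$ genuinely fails. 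I would attack this by extending Protasov's construction behind Theorem~\ref{t:group}: reduce along the derived series to the abelian case, and exploit the singular cofinality $\mathrm{cf}(\kappa)$ to build two asymptotically disjoint unbounded sets whose asymptotic neighborhoods are forced to meet in an unbounded set, witnessing the failure of normality. This singular solvable construction is the crux; the remaining equivalences $(1)$--$(4)$ are the clean combinatorial steps above.
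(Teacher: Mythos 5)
Your reduction of conditions (1), (2) and (4) to the combinatorics of the poset $([G]^{<\kappa},\subseteq)$ is sound and close in spirit to the paper's own argument (your diagonal argument on a chain of length $\mathrm{cf}(\kappa)$ is a clean substitute for the general fact $\cof([\kappa]^{<\kappa})>\kappa$ that the paper invokes), and the regular case of $(5)\Rightarrow(4)$ via Theorem~\ref{t:group} is exactly what the paper does. However, there are two genuine gaps. The first is in $(3)\Rightarrow(4)$: your derivation of $|G|=\kappa$ breaks down whenever $\kappa$ is a successor cardinal $\lambda^+$. The sets $G_\mu=\{x:|H[x]|<\mu\}$, with $\mu$ ranging over cardinals $<\kappa$, need not cover $G$: a point $x$ with $|H[x]|=\lambda$ lies in no $G_\mu$, and for $\kappa=\omega_1$ the union $\bigcup_{\mu<\kappa}G_\mu$ could be empty while $G$ is uncountable (take every $H[x]$ countably infinite). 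The paper circumvents this: if $|G|>\kappa$, choose $K\subset G$ with $|K|=\kappa$ and then a point $g\in G\setminus\bigcup_{x\in K}\Gamma[x]x^{-1}$, which exists because that union has cardinality at most $\kappa<|G|$; for the entourage $E_{\{g\}}$ the exceptional set then contains $K$ and is unbounded, contradicting the definition of a growth entourage. Your argument for the singularity of $\kappa$ (picking $y_i\notin G_{\kappa_i}$) is fine once boundedness of the $G_{\kappa_i}$ is in hand, but the covering step must be replaced.

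The second and more serious gap is that the solvable singular case of $(5)\Rightarrow(4)$ --- which you yourself identify as the crux --- is only a plan, not a proof. The paper devotes a four-part lemma to it: (i) if $\kappa=|G|$ is singular and the ballean is normal, then every $\Lambda\subset G$ with $|\Lambda|=\kappa$ has $|\mathsf C_G(\Lambda)|<\kappa$; this is proved by a transfinite construction of sequences $(a_\alpha)_{\alpha\in\kappa}\subset\Lambda$ and $(b_\alpha)_{\alpha\in\kappa}\subset\mathsf C_G(\Lambda)$ with $A^{-1}A\cap B^{-1}B=\{e\}$, which makes $A$ and $B$ asymptotically disjoint, and then the commutation $ab=ba$ collapses any disjoint asymptotic neighborhoods into the dichotomy of Lemma~\ref{l2}, forcing $[G]^{<\kappa}$ to have a linearly ordered base and hence $\kappa$ to be regular, a contradiction; (ii) normality passes to quotient groups via Proposition~\ref{p:open} (open macro-uniform maps with bornologous sections); (iii) consequently no subquotient of $G$ contains a $\kappa$-sized set with $\kappa$-sized centralizer; (iv) a solvable group of cardinality $\kappa$ has an abelian subquotient of cardinality $\kappa$, contradicting (iii). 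Your sketch ("reduce along the derived series, build two asymptotically disjoint sets") points in the right direction, but it supplies neither the commutation trick that forces the two asymptotic neighborhoods to meet nor the mechanism by which normality descends to subquotients, so the second half of the theorem remains unproved as written.
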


 In Section~\ref{s:op} we pose some open problems related to the normality of balleans.





\section{Preservation of normality by maps between balleans}\label{s:normpres}

In this section we prove some results on preservation of the normality by maps.

We say that a function $f:X\to Y$ between balleans $(X,\E_X)$ and $(Y,\E_Y)$ is
\begin{itemize}
\item {\em macro-uniform} if for any $E_X\in\E_X$ there exists $E_Y\in\E_Y$ such that $f(E_X[x])\subset E_Y[f(x)]$ for any $x\in X$;
\item {\em open} if for every $E_Y\in\E_X$ there exists $E_X\in\E_X$ such that $f(E_X[x])\supset E_Y[f(x)])$ for any $x\in X$;
\item {\em closed} if for any asymptotically disjoint sets $A,B\subset X$ with $A=f^{-1}(f(A))$ the sets $f(A)$ and $f(B)$ are asymptotically disjoint in $Y$; 
\item an {\em asymorphism} if $f$ is bijective and the functions $f$ and $f^{-1}$ are macro-uniform;
\item an {\em asymorphic embedding} if $f$ is an asymorphism of $X$ onto the subballean $f(X)$ of $(Y,\E_Y)$;
\item {\em bornologous} if for any bounded set $B\subset X$ the image $f(B)$ is bounded in $Y$;
\item {\em proper} if for any bounded set $B\subset Y$ its preimage $f^{-1}(B)$ is bounded in $X$;
\item {\em perfect} if $f$ is macro-uniform, closed and proper;
\item an {\em asymptotic immersion} if $f$ is a proper macro-uniform map such that for any asymptotically disjoint sets $A,B$ in $(X,\E_X)$ their images $f(A)$ and $f(B)$ are asymptotically disjoint in $(Y,\E_Y)$.
\end{itemize}

It is clear that each asymorphic embedding is an asymptotic immersion and each asymorphic immersion is a perfect map. An example of an asymptotic immersion which is not an asymptotic embedding will be presented in Example~\ref{ex:last}.

\begin{proposition}\label{p:emb} A ballean $X$ is normal if it admits an asymptotic immersion to a normal ballean $Y$.
\end{proposition}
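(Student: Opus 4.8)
The plan is to take asymptotically disjoint sets $A,B\subset X$ and produce disjoint asymptotic neighborhoods for them by transporting the problem into $Y$ along $f$, solving it there via the normality of $Y$, and then pulling the solution back. First I would record that, since $f$ is an asymptotic immersion, the images $f(A)$ and $f(B)$ are asymptotically disjoint in $Y$. As $Y$ is normal, these images admit disjoint asymptotic neighborhoods $U,V\subset Y$. The claim to verify is then that the preimages $f^{-1}(U)$ and $f^{-1}(V)$ are disjoint asymptotic neighborhoods of $A$ and $B$ in $X$. Disjointness is immediate from $U\cap V=\emptyset$, so all the work lies in checking that $f^{-1}(U)$ is an asymptotic neighborhood of $A$ (the case of $B$ being symmetric).

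To do this, I would fix an arbitrary entourage $E\in\E_X$ and show that $E[A]\setminus f^{-1}(U)$ is bounded in $X$. By the macro-uniformity of $f$, there is $E_Y\in\E_Y$ with $f(E[x])\subset E_Y[f(x)]$ for all $x\in X$, whence $f(E[A])\subset E_Y[f(A)]$. For any point $x\in E[A]\setminus f^{-1}(U)$ one has $f(x)\in E_Y[f(A)]$ while $f(x)\notin U$, so that
$$f\big(E[A]\setminus f^{-1}(U)\big)\subset E_Y[f(A)]\setminus U.$$
Since $U$ is an asymptotic neighborhood of $f(A)$ in $Y$, the set $E_Y[f(A)]\setminus U$ is bounded in $Y$.

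The key final step — the point at which properness of $f$ is essential — is to convert this boundedness in $Y$ back into boundedness in $X$. From the inclusion
$$E[A]\setminus f^{-1}(U)\ \subset\ f^{-1}\big(f(E[A]\setminus f^{-1}(U))\big)\ \subset\ f^{-1}\big(E_Y[f(A)]\setminus U\big)$$
and the properness of $f$, the preimage of the bounded set $E_Y[f(A)]\setminus U$ is bounded in $X$; since the bornology $\mathcal B_X$ is an ideal, its subset $E[A]\setminus f^{-1}(U)$ is bounded as well. This shows $f^{-1}(U)$ is an asymptotic neighborhood of $A$, and symmetrically $f^{-1}(V)$ is one of $B$, which finishes the proof.

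The argument is short, and there is no real obstacle beyond organizing it correctly; the only subtlety worth flagging is that the three defining clauses of an asymptotic immersion match up exactly with the three ingredients the proof needs. The immersion clause transports asymptotic disjointness forward so that the normality of $Y$ can be invoked, macro-uniformity provides control of $E$-neighborhoods under $f$, and properness is precisely what pulls the boundedness obtained in $Y$ back to $X$. Recognizing this correspondence is really all there is to the proof.
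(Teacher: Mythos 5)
Your proof is correct and follows essentially the same route as the paper's: push $A,B$ forward, use the immersion property to get asymptotic disjointness of $f(A),f(B)$, apply normality of $Y$, and pull the resulting disjoint asymptotic neighborhoods back along $f^{-1}$, with macro-uniformity and properness doing exactly the jobs you describe. The only (cosmetic) difference is that you verify the asymptotic-neighborhood property directly from the definition, i.e.\ by bounding $E[A]\setminus f^{-1}(U)$, whereas the paper bounds a set of centers $B_Y$ and shows $E_Y[f(A)\setminus B_Y]\subset O_{f(A)}$; the two formulations are equivalent.
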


\begin{proof} Given two asymptotically disjoint sets $A,B\subset X$, consider their images $f(A)$ and $f(B)$ in $Y$. Since $f$ is a asymptotic immersion, the sets $f(A)$, $f(B)$ are asymptotically disjoint in $Y$ and by the normality of $Y$ have disjoint asymptotic neighborhoods $O_{f(A)}$ and $O_{f(B)}$. Then the sets $O_A:=f^{-1}(O_{f(A)})$ and $O_B:=f^{-1}(O_{f(B)})$ are disjoint. It remains to prove that $O_A$ and $O_B$ are asymptotic neighborhoods of the sets $A,B$, respectively.

Given any $E_X\in\E_X$, find $E_Y\in\E_Y$ such that $f(E_X[x])\subset E_Y[f(x)]$ for any $x\in X$. Since $O_{f(A)}$ is an asymptotic neighborhood of $f(A)$ in the ballean $Y$, there exists a bounded set $B_Y\subset Y$ such that $E_Y[f(A)\setminus B_Y]\subset O_{f(A)}$. Since the function $f$ is proper, the preimage $B_X:=f^{-1}(B_Y)$ is a bounded set in $X$. Then for every $x\in X\setminus B_X$ we get $f(x)\in Y\setminus B_Y$ and hence $f(E_X[x])\subset E_Y[f(x)]\subset E_Y[Y\setminus B_Y]\subset O_{f(A)}$, which implies that $E_X[X\setminus B_X]\subset f^{-1}(O_{f(A)})=O_A$. This means that $O_A$ is an asymptotic neighborhood of $A$. By analogy we can prove that $O_B$ is an asymptotic neighborhood of the set $B$ in $X$.
\end{proof}

Proposition~\ref{p:emb} implies the following corollary first proved in \cite[Proposition 1.2]{Prot}.

\begin{corollary}\label{c:emb} Any subballean of a normal ballean is normal.
\end{corollary}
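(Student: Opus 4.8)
The plan is to obtain this corollary as an immediate instance of Proposition~\ref{p:emb}, by exhibiting, for a subballean $X$ of a normal ballean $Y$, an asymptotic immersion of $X$ into $Y$. The natural candidate is of course the inclusion map $i:X\to Y$. Recall that being a subballean means the ball structure of $X$ is exactly the restriction $\E_Y{\restriction}X=\{(X\times X)\cap E:E\in\E_Y\}$, and this is precisely what makes the inclusion well-behaved.

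First I would argue that $i$ is an \emph{asymorphic embedding} in the sense of the list of definitions. The map $i$ is a bijection of $X$ onto the subballean $i(X)=X$ of $Y$, and, viewed as a map between $(X,\E_X)$ and $(i(X),\E_Y{\restriction}i(X))$, it is literally the identity, since $\E_X=\E_Y{\restriction}X$ by definition of a subballean. Hence both $i$ and $i^{-1}$ are macro-uniform and $i$ is an asymorphism onto its image, i.e. an asymorphic embedding. As recorded right after the list of definitions, every asymorphic embedding is an asymptotic immersion, so $i$ is an asymptotic immersion of $X$ into the normal ballean $Y$. Proposition~\ref{p:emb} then applies verbatim and yields the normality of $X$.

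I expect no genuine obstacle here: the entire content is the observation that the inclusion of a subballean realizes its ball structure as a restriction, which is exactly the condition for it to be an asymorphic embedding. By invoking the already-established implication that asymorphic embeddings are asymptotic immersions, we bypass any direct verification of properness, macro-uniformity, and preservation of asymptotic disjointness for $i$ (each of which is in any case a formal consequence of $\E_X=\E_Y{\restriction}X$, using that bounded subsets of the subballean $X$ are exactly the bounded subsets of $Y$ contained in $X$). Thus the corollary follows in a few lines once Proposition~\ref{p:emb} is in place.
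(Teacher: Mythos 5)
Your proposal is correct and is exactly the paper's route: the paper derives Corollary~\ref{c:emb} from Proposition~\ref{p:emb} by (implicitly) noting that the inclusion of a subballean is an asymorphic embedding, hence an asymptotic immersion. Your write-up just makes explicit the details the paper leaves to the reader.
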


It is known \cite[3.7.20]{Eng} that perfect images of normal topological spaces are normal. A similar result exists also in Asymptology.

\begin{proposition}\label{p:image} A ballean $Y$ is normal if $Y$ is the image of a normal ballean $X$ under a surjective perfect map $f:X\to Y$.
\end{proposition}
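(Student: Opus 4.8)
The plan is to adapt the classical theorem that perfect maps preserve normality of topological spaces (e.g.\ \cite[3.7.20]{Eng}) to the asymptotic setting, dualizing the pull-back argument used in Proposition~\ref{p:emb}. Given asymptotically disjoint sets $C,D\subset Y$, I would start from their full preimages $A:=f^{-1}(C)$ and $B:=f^{-1}(D)$, produce disjoint asymptotic neighborhoods of $A,B$ in the normal ballean $X$, and then push these neighborhoods forward to $Y$ using the closedness of $f$. Taking \emph{full} preimages at the outset is essential, since it is exactly what supplies the saturation condition $A=f^{-1}(f(A))$ required by the ballean definition of a closed map.

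First I would verify that $A$ and $B$ are asymptotically disjoint in $X$. Fix $E_X\in\E_X$ and use macro-uniformity to choose $E_Y\in\E_Y$ with $f(E_X[x])\subset E_Y[f(x)]$ for all $x\in X$. Since $f$ is surjective, $f(A)=C$ and $f(B)=D$, so $f(E_X[A])\subset E_Y[C]$ and $f(E_X[B])\subset E_Y[D]$. Hence $f\big(E_X[A]\cap E_X[B]\big)\subset E_Y[C]\cap E_Y[D]$, which is bounded in $Y$ because $C,D$ are asymptotically disjoint; as $f$ is proper, the preimage of this bounded set, and therefore $E_X[A]\cap E_X[B]$, is bounded in $X$. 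Applying the normality of $X$ to the asymptotically disjoint sets $A,B$ then yields disjoint asymptotic neighborhoods $O_A$ and $O_B$ in $X$.

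Next I would push these down by setting
$$O_C:=Y\setminus f(X\setminus O_A)\quad\text{and}\quad O_D:=Y\setminus f(X\setminus O_B),$$
so that $y\in O_C$ holds if and only if the whole fiber $f^{-1}(y)$ lies in $O_A$, and similarly for $O_D$. Disjointness is then immediate: if $y\in O_C\cap O_D$ then $f^{-1}(y)\subset O_A\cap O_B=\emptyset$, contradicting the surjectivity of $f$. It remains to check that $O_C$ is an asymptotic neighborhood of $C$, which by the characterization recalled before Theorem~\ref{t:prot} amounts to $C$ and $Y\setminus O_C=f(X\setminus O_A)$ being asymptotically disjoint. Here I would invoke the closedness of $f$: the set $A=f^{-1}(C)$ is saturated, i.e.\ $A=f^{-1}(f(A))$, and $A$ is asymptotically disjoint from $X\setminus O_A$ (because $O_A$ is an asymptotic neighborhood of $A$), so by the definition of a closed map $f(A)=C$ and $f(X\setminus O_A)$ are asymptotically disjoint in $Y$. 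The symmetric argument, with $B$ and $O_B$ in place of $A$ and $O_A$, shows that $O_D$ is an asymptotic neighborhood of $D$, completing the proof that $C,D$ have disjoint asymptotic neighborhoods.

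I expect the main subtlety to be precisely this last step: matching the non-obvious ballean definition of a closed map (phrased via asymptotic disjointness together with the saturation constraint $A=f^{-1}(f(A))$) to what is needed to certify $O_C$ and $O_D$ as asymptotic neighborhoods. All three hypotheses on $f$ are used in distinct places—macro-uniformity and properness to transfer asymptotic disjointness \emph{up} to $X$, and closedness to transfer it back \emph{down} to $Y$—and the only delicate point is confirming that the saturation hypothesis is genuinely met by the full preimages $f^{-1}(C)$ and $f^{-1}(D)$, which it is.
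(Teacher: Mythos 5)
Your proposal is correct and follows essentially the same route as the paper's proof: pull the asymptotically disjoint sets back via full preimages (using properness and macro-uniformity), apply normality of $X$, and push the resulting neighborhoods forward as complements $Y\setminus f(X\setminus O)$, certifying them as asymptotic neighborhoods via the closedness/saturation condition. You even spell out the asymptotic-disjointness of the preimages, a step the paper only asserts.
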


\begin{proof} Assume that $f:X\to Y$ is a surjective perfect map defined on a normal ballean $X$. To show that the ballean $Y$ is normal, fix any two asymptotically disjoint sets $A,B\subset X$. Taking into account that $f$ is proper and macro-uniform, we can show that the preimages $f^{-1}(A)$ and $f^{-1}(B)$ are asymptotically disjoint sets in $X$. By the normality of $X$ the sets $f^{-1}(A)$ and $f^{-1}(B)$ has disjoint asymptotic neighborhoods $O_{f^{-1}(A)}$ and $O_{f^{-1}(B)}$, respectively.

It follows that the sets $f^{-1}(A)$ and $X\setminus O_{f^{-1}(A)}$ are asymptotically disjoint in $X$. Since the map $f$ is closed, the sets $f(f^{-1}(A))=A$ and $f(X\setminus O_{f^{-1}(A)})$ are asymptotically disjoint in $Y$ and hence $O_A=Y\setminus f(X\setminus O_{f^{-1}(A)})$ is an asymptotic neighborhood of $A$ in $Y$. It is clear that $f^{-1}(O_A)\subset O_{f^{-1}(A)}$. 

By the same reason, the set $O_B:=U\setminus f(X\setminus O_{f^{-1}(B)})$ is an asymptotic neighborhood of $B$ with $f^{-1}(O_B)\subset O_{f^{-1}(B)}$. Since the sets $O_{f^{-1}(A)}$ and $O_{f^{-1}(B)}$ are disjoint, so are the sets $O_A$, $O_B$. Therefore, the asymptotically disjoint sets $A,B$ have disjoint asymptotic neighborhoods $O_A$ and $O_B$, which means that the ballean $Y$ in normal.
\end{proof}

\begin{lemma}\label{l:open0} For a map $f:X\to Y$ and two entourages $E_X\in\E_X$ and $E_Y\in\E_Y$ the following conditions are equivalent:
\begin{enumerate}
\item $f(E^{-1}_X[x])\supset E^{-1}_Y[f(x)]$ for all $x\in X$;
\item $f^{-1}(E_Y[y])\subset E_X[f^{-1}(y)]$ for all $y\in Y$.
\end{enumerate}
\end{lemma}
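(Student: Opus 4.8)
The plan is to show that each of the two conditions is equivalent to the single symmetric condition

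$(\ast)$: for every $x\in X$ and $y\in Y$ with $(y,f(x))\in E_Y$ there exists $x'\in X$ with $(x',x)\in E_X$ and $f(x')=y$,

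after which the equivalence $(1)\Leftrightarrow(2)$ follows at once, since both reduce to $(\ast)$. Everything is a direct unwinding of the definitions of $E^{-1}[x]$, of $E[A]$, and of the image and preimage under $f$; no choice of entourages and no boundedness estimate is involved.

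First I would rewrite condition $(1)$. Since $E_X^{-1}[x]=\{x'\in X:(x',x)\in E_X\}$ and $E_Y^{-1}[f(x)]=\{y\in Y:(y,f(x))\in E_Y\}$, the inclusion $E_Y^{-1}[f(x)]\subset f(E_X^{-1}[x])$ asserts precisely that every $y$ with $(y,f(x))\in E_Y$ has the form $y=f(x')$ for some $x'$ with $(x',x)\in E_X$. Letting $x$ range over $X$ turns $(1)$ verbatim into $(\ast)$.

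Next I would rewrite condition $(2)$. Using $f^{-1}(E_Y[y])=\{x\in X:(y,f(x))\in E_Y\}$ and $E_X[f^{-1}(y)]=\bigcup_{f(x')=y}E_X[x']=\{x\in X:\exists x'\ (f(x')=y,\ (x',x)\in E_X)\}$, the inclusion $f^{-1}(E_Y[y])\subset E_X[f^{-1}(y)]$ asserts precisely that every $x$ with $(y,f(x))\in E_Y$ satisfies $(x',x)\in E_X$ for some $x'\in f^{-1}(y)$. Letting $y$ range over $Y$ again yields $(\ast)$; the two universally quantified variables $x$ and $y$ are merely introduced in the opposite order, but since they are jointly constrained by $(y,f(x))\in E_Y$ the order of quantification is immaterial.

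The proof presents no genuine obstacle beyond careful bookkeeping. The only points demanding attention are the direction of the ordered pairs hidden inside the inverse $E^{-1}$, and the two set identities $f^{-1}(E_Y[y])=\{x:(y,f(x))\in E_Y\}$ and $E_X[f^{-1}(y)]=\{x:\exists x'\in f^{-1}(y),\ (x',x)\in E_X\}$. Once these are spelled out, each of the inclusions in $(1)$ and $(2)$ is literally the statement $(\ast)$, and the equivalence is immediate.
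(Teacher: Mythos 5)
Your proof is correct and amounts to the same definition-unwinding argument the paper gives: where the paper verifies the two implications $(1)\Rightarrow(2)$ and $(2)\Rightarrow(1)$ by chasing elements, you simply observe that both inclusions, once the sets $E^{-1}[x]$, $E[f^{-1}(y)]$ and the image/preimage operations are spelled out, are the same universally quantified statement $(\ast)$. The identification of $f^{-1}(E_Y[y])=\{x:(y,f(x))\in E_Y\}$ and $E_X[f^{-1}(y)]=\{x:\exists x'\in f^{-1}(y),\ (x',x)\in E_X\}$ is accurate under the paper's convention $E[x]=\{y:(x,y)\in E\}$, so nothing is missing.
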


\begin{proof} $(1)\Ra(2)$ Assume that $f(E_X^{-1}[x])\supset E_Y^{-1}[f(x)]$ for all $x\in X$.  Given any $y\in Y$ and  $x\in f^{-1}(E_Y[y])$, we conclude that $f(x)\in E_Y[y]$ and hence $y\in E^{-1}_Y[f(x)]\subset f(E_X^{-1}[x])$. Then $y=f(x')$ for some $x'\in E_X^{-1}[x]$ and hence $x\in E_X[x']\subset E_X[f^{-1}(y)]$. 
\smallskip

$(2)\Ra(1)$ Assume that $f^{-1}(E_Y[y])\subset E_X[f^{-1}(y)]$ for all $y\in Y$. Given any $x\in X$ and $y\in E_Y^{-1}[f(x)]$, we conclude that $f(x)\in E_Y[y]$ and hence $x\in f^{-1}(E_Y[y])\subset E_X[f^{-1}(y)]$. Then $f^{-1}(y)\cap E_X^{-1}[x]\ne\emptyset$ and $y\in f(E_X^{-1}[x])$.
\end{proof} 
 
Let $X,Y$ be sets. We say that a map $s:Y\to X$ is a {\em section} of a map $f:X\to Y$ if $f\circ s(y)=y$ for every $y\in Y$. In this case the map $f$ is surjective.

\begin{proposition}\label{p:open} A ballean $Y$ is normal if $Y$ is the image of a normal ballean $X$ under an open macro-uniform map $f:X\to Y$ that admits a bornologous section $s:Y\to X$.
\end{proposition}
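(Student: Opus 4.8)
The plan is to reduce the normality of $Y$ to that of $X$ by transporting a pair of asymptotically disjoint sets $A,B\subset Y$ upstairs, but with an \emph{asymmetric} lift: I will lift $A$ through the section (as the thin set $s(A)$) and lift $B$ through the full preimage (as the saturated set $f^{-1}(B)$). The asymmetry is forced by the two push-down operations available. Forming an image $f(U)$ converts openness into the \emph{lower} bound $f(E_X[s(A)])\supset E_Y[A]$, which is what makes $f(U)$ an asymptotic neighborhood of $A$; forming a saturated co-image $Y\setminus f(X\setminus W)$ instead uses openness in the form of Lemma~\ref{l:open0} to control \emph{preimages} of balls, which is what makes the co-image an asymptotic neighborhood of $B$. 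Each formula is compatible with only one of the two lifts. I will also use at the outset that $f$, being macro-uniform, is bornologous (a ball maps into a ball), so images of bounded sets stay bounded.

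The key preliminary step is to prove that $s(A)$ and $f^{-1}(B)$ are asymptotically disjoint in $X$. Given $E_X\in\E_X$, I would pick (by macro-uniformity) $E_Y\in\E_Y$ with $f(E_X[x])\subset E_Y[f(x)]$ for all $x$. If $z\in E_X[s(A)]\cap E_X[f^{-1}(B)]$, then $z\in E_X[s(a)]$ for some $a\in A$ and $z\in E_X[x_0]$ for some $x_0$ with $f(x_0)\in B$; applying $f$ gives $f(z)\in E_Y[a]\cap E_Y[B]\subset E_Y[A]\cap E_Y[B]$, a bounded set because $A,B$ are asymptotically disjoint. Consequently $a$ lies in the bounded set $A_0:=\{a\in A: E_Y[a]\cap(E_Y[A]\cap E_Y[B])\ne\emptyset\}$, so $z\in E_X[s(A_0)]$, which is bounded since $s$ is bornologous and a neighborhood of a bounded set is bounded. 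Thus the whole intersection is bounded. Here the bornologous section is exactly what controls the otherwise uncontrolled $f^{-1}(B)$-side of the intersection.

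Then normality of $X$ supplies disjoint asymptotic neighborhoods $U\supset s(A)$ and $W\supset f^{-1}(B)$, and I set $O_A:=f(U)$ and $O_B:=Y\setminus f(X\setminus W)=\{y\in Y: f^{-1}(y)\subset W\}$. Disjointness is immediate: if $y\in O_A\cap O_B$, then $y=f(x)$ for some $x\in U$, while $f^{-1}(y)\subset W$ forces $x\in W$, contradicting $U\cap W=\emptyset$. To check that $O_A$ is an asymptotic neighborhood of $A$, I fix $E_Y$ and use openness to choose $E_X$ with $f(E_X[x])\supset E_Y[f(x)]$, whence $f(E_X[s(A)])\supset E_Y[A]$; since $E_X[s(A)]\setminus U$ is bounded and $f$ is bornologous, $E_Y[A]\setminus f(U)$ is bounded. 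To check that $O_B$ is an asymptotic neighborhood of $B$, I convert openness via Lemma~\ref{l:open0} into $f^{-1}(E_Y[B])\subset E_X[f^{-1}(B)]$; any $y\in E_Y[B]$ with $f^{-1}(y)\not\subset W$ has a preimage $x\in f^{-1}(E_Y[B])\setminus W\subset E_X[f^{-1}(B)]\setminus W$, which is bounded because $W$ is an asymptotic neighborhood of $f^{-1}(B)$, so $y=f(x)$ ranges over a bounded set.

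The main obstacle is identifying the right pair of lifts rather than any single estimate. The two symmetric attempts both fail: lifting both sets by the section gives disjoint neighborhoods $U,V$ whose images $f(U),f(V)$ need not be disjoint, because unbounded fibers let the two neighborhoods disagree fibrewise; lifting both by the full preimage loses asymptotic disjointness, since $f^{-1}(A)$ and $f^{-1}(B)$ need not be asymptotically disjoint when $f$ is not proper. The mixed pair $\big(s(A),f^{-1}(B)\big)$ is precisely the one that is simultaneously asymptotically disjoint (through the bornologous section) and matched to the two different push-down formulas; pinning down this pairing is the crux, after which each verification is a short application of openness and bornologousness. (By symmetry the mirror choice $\big(f^{-1}(A),s(B)\big)$ works equally well.)
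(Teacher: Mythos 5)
Your proof is correct and follows essentially the same route as the paper's: the same asymmetric lift $\bigl(s(A),\,f^{-1}(B)\bigr)$, the same downstairs neighborhoods $f(U)$ and $Y\setminus f(X\setminus W)$, and the same use of Lemma~\ref{l:open0} on the $f^{-1}(B)$-side. (Your write-up in fact cleans up a typo in the paper, where $O_B$ is mistakenly written as $Y\setminus f(X\setminus O_{A'})$ instead of $Y\setminus f(X\setminus O_{B'})$.)
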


\begin{proof} Assume that $f:X\to Y$ is a surjective open macro-uniform map defined on a normal ballean $X$, and let $s:Y\to X$ be a bornologous section of $f$. To show that the ballean $Y$ is normal, fix any asymptotically disjoint sets $A,B\subset X$. Put $A':=s(A)$ and $B':=f^{-1}(B)$. We claim that the sets $A'$, $B'$ are asymptotically disjoint in $X$. It suffices to prove that for any entourage $E_X\in\E_X$ the intersection $A'\cap E_X[B']$ is bounded in $X$. Since the map $f$ is macro-uniform, there exists an entourage $E_Y\in\E_Y$ such that $f(E_X[x])\subset E_Y[f(x)]$ for any $x\in X$.
Since the sets $A,B$ are asymptotically disjoint, the intersection $A\cap E_Y[B]$ is bounded and so is the set $s(A\cap E_Y[B])$.
We claim that $A'\cap E_X[B']\subset s(A\cap E_Y[B])$. Given any point $a\in A'\cap E_X[B']$, find a point $b\in B'$ with $a\in  E_X[b]$ and conclude that $$f(a)\in f(A')\cap f(E_X[b])\subset A\cap E_Y[f(b)]\subset A\cap E_Y[B]$$and hence $a\in s(A\cap E_Y[B])$. 

By the normality of $X$, the asymptotically disjoint sets $A'$ and $B'$ have disjoint asymptotic neighborhoods $O_{A'}$ and $O_{B'}$, respectively. It is easy to see that the sets  $O_A:=f(O_{A'})$ and $O_B:=Y\setminus f(X\setminus O_{A'})$ are disjoint. It remains to show that $O_A$ and $O_B$ are asymptotic neighborhoods of the sets $A$ and $B$, respectively. 

Given any entourage $E_Y\in\E_Y$ use the openness of the map $f$ to find an entourage $E_X\in\E_X$ such that $f(E_X[x])\supset E_Y[f(x)]$ and $f(E_X^{-1}[x])\supset E_Y^{-1}[f(x)]$ for every $x\in X$. By Lemma~\ref{l:open0}, $f^{-1}(E_Y[y])\subset E_X[f^{-1}(y)]$ for all $y\in Y$.

Since $O_{A'}$ is an asymptotic neighborhood of $A'$, the set $D=\{a\in A':E_X[a]\not\subset O_{A'}\}$ is bounded in $X$. Then the set $f(D)$ is bounded in $Y$ and for every $a\in A\setminus f(D)$ we have $s(a)\in A'\setminus D$ and hence $E_X[s(a)]\subset O_{A'}$, which implies $E_Y[a]=E_Y[f(s(a))]\subset f(E_X[s(a)])\subset f(O_{A'})=O_A$.

Since $O_{B'}$ is an asymptotic neighborhood of $B'=f^{-1}(B)$, the set $D'=\{b\in B':E_X[b]\not\subset O_{B'}\}$ is bounded in $X$ and the set $f(D')$ is bounded in $Y$. We claim that $E_Y[b]\subset O_B$ for every point $b\in B\setminus f(D')$. Taking into account that $f^{-1}(b)\subset B'\setminus D'$, we conclude that $E_X[f^{-1}(b)]\subset O_{B'}$ and hence $f^{-1}(E_Y[b])\subset E_X[f^{-1}(b)]\subset O_{B'}$ which implies $E_Y[b]\subset O_B$, by the definition of the set $O_B$. This means that $O_B$ is an asymptotic neighborhood of $B$, and the ballean $Y$ is normal.
\end{proof}

\section{Balleans of bounded growth}\label{s:growth}

In this section we study balleans of bounded growth and prove Theorem~\ref{t:main2}. More precisely, this theorem follows from Corollary~\ref{c:g} and Lemmas~\ref{l:g1}, \ref{l:g2}.
  
We recall that a ballean $X$ has {\em bounded growth} if there exists a set $E\subset X\times X$ containing $\Delta_X$ (and called a {\em growth entourage} of $X$) such that for any bounded set $B\subset X$ the set $G[B]$ is bounded and for any entourage $E\in\E_X$ there exists a bounded subset $B\subset X$ such that $E[x]\subset G[x]$ for all $x\in X\setminus B$.

For a ballean $(X,\E_X)$ denote by ${\Uparrow}{\mathcal B_X}$ the family of all subsets $G\subset X\times X$ such that $\Delta_X\subset G$ and for any bounded set $B\subset X$ the sets $G[B]$ and $G^{-1}[B]$ are bounded in $(X,\E_X)$. Observe that ${\Uparrow}{\mathcal B_X}$ is a coarse structure of $X$, containing the ball structure $\E_X$. The coarse structure ${\Uparrow}\mathcal B_X$ is called the {\em universal coarse structure} of the bornology $\mathcal B_X$.

Let $\cof_*(\E_X,{\Uparrow}{\mathcal B_X})$ be the smallest cardinality $|\U|$ of a subfamily $\U\subset {\Uparrow}{\mathcal B_X}$ such that for every $E\subset \E_X$ there exists $U\in\U$ such that $E\setminus U$ is bounded in $X\times X$.

\begin{proposition} A ballean $X$ has bounded growth if and only if $\cof_*(\E_X,{\Uparrow}\mathcal B_X)=1$.
\end{proposition}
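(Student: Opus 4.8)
The plan is to prove the two implications separately, each by producing a single explicit witness. For the implication $\cof_*(\E_X,{\Uparrow}\mathcal B_X)=1\Ra$ bounded growth, I would take the single entourage $U\in{\Uparrow}\mathcal B_X$ witnessing $\cof_*=1$ and simply set $G:=U$. Membership $U\in{\Uparrow}\mathcal B_X$ already gives $\Delta_X\subset U$ and the boundedness of $U[B]$ for every bounded $B$, so only the second clause of Definition~\ref{d:bg} remains. By the choice of $U$, for each entourage $E\in\E_X$ the difference $E\setminus U$ is bounded in $X\times X$, i.e.\ $E\setminus U\subset B_E\times B_E$ for some bounded $B_E\subset X$; then for every $x\in X\setminus B_E$ and every $y\in E[x]$ the pair $(x,y)$ lies in $E$ but not in $B_E\times B_E$, hence $(x,y)\in U$, which gives $E[x]\subset U[x]=G[x]$. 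Thus $G=U$ is a growth entourage, and this direction is immediate.

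The reverse implication is the substantive one. Starting from a growth entourage $G$, the naive choice $U:=G$ need not lie in ${\Uparrow}\mathcal B_X$, because the definition of $G$ only controls the forward images $G[B]$ and says nothing about $G^{-1}[B]$, which may well be unbounded. I would repair this by symmetrizing: put $U:=G\cap G^{-1}$. This $U$ is symmetric and contains $\Delta_X$, and since $U[B]\subset G[B]$ and $U^{-1}[B]=U[B]\subset G[B]$ are bounded for every bounded $B$, it indeed belongs to ${\Uparrow}\mathcal B_X$. It then remains to verify, for an arbitrary $E\in\E_X$, that $E\setminus U=(E\setminus G)\cup(E\setminus G^{-1})$ is bounded in $X\times X$.

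The summand $E\setminus G$ is handled directly from the defining property of $G$: choosing a bounded $B_E$ with $E[x]\subset G[x]$ for all $x\notin B_E$, any $(x,y)\in E\setminus G$ must have $x\in B_E$ (otherwise $y\in E[x]\subset G[x]$ contradicts $(x,y)\notin G$) and then $y\in E[x]\subset E[B_E]$, so $E\setminus G\subset B_E\times E[B_E]$, which is bounded because $E[B_E]$ is bounded. The main obstacle is the summand $E\setminus G^{-1}$, where the asymmetry of $G$ bites. Here I would first use the ballean axioms to find an entourage $D\in\E_X$ with $E^{-1}\subset D$ (note $E^{-1}\subset E\circ E^{-1}$ since $\Delta_X\subset E$, and axiom~(2) yields $D\supset E\circ E^{-1}$), and then apply the growth property to $D$, getting a bounded $B_D$ with $D[y]\subset G[y]$ for $y\notin B_D$. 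Now if $(x,y)\in E\setminus G^{-1}$, then $(y,x)\in E^{-1}\subset D$ forces $y\in B_D$ (otherwise $x\in D[y]\subset G[y]$ would give $(y,x)\in G$, i.e.\ $(x,y)\in G^{-1}$), and then $x\in D[y]\subset D[B_D]$; hence $E\setminus G^{-1}\subset D[B_D]\times B_D$ is bounded.

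Combining the two estimates, $E\setminus U$ is contained in $B\times B$ for a single bounded set $B$ (the union of the bounded sets appearing above), so $E\setminus U$ is bounded in $X\times X$ for every $E\in\E_X$. This shows that $\{U\}$ witnesses $\cof_*(\E_X,{\Uparrow}\mathcal B_X)\le1$, and since $\E_X\ne\emptyset$ the value is exactly $1$. The decisive trick throughout is the passage from the one-sided growth entourage $G$ to its symmetrization $G\cap G^{-1}$, together with the reduction of the inverse error term $E\setminus G^{-1}$ to the growth property applied to an entourage $D$ dominating $E^{-1}$.
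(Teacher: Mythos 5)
Your proof is correct and follows essentially the same route as the paper: both directions hinge on the symmetrization $G\cap G^{-1}$ as the single witness in ${\Uparrow}\mathcal B_X$, with the inverse error term controlled by applying the growth property to an entourage dominating $E^{-1}$. You merely spell out the bounded-rectangle estimates for $E\setminus G$ and $E\setminus G^{-1}$ that the paper compresses into the single containment $E\setminus S\subset E^{-1}[B]\times E\circ E^{-1}[B]$.
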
 
  
\begin{proof}The ``if'' part follows from the definitions. To prove the ``only if'' part, assume that $G$ is growth entourage for the ballean $(X,\E_X)$. Since $G[B]\in\mathcal B_X$ for all $B\in\mathcal B_X$, the entourage $S=G\cap G^{-1}$ belongs to the universal coarse structure ${\Uparrow}\mathcal B_X$ of the bornology $\mathcal B_X$. By the choice of $G$, for any entourage $E\in\E_X$ there exists a bounded set $B\subset X$ such that $E[x]\cup E^{-1}[x]\subset G[x]$ for all $x\in X\setminus B$. Then the complement $E\setminus S$ is contained in the bounded set $E^{-1}[B]\times E{\circ} E^{-1}[B]$ and hence $\cof_*(\E_X,{\Uparrow}\mathcal B_X)\le|\{S\}|=1$.
\end{proof}

Let us observe that the bounded growth of balleans nicely interacts with proper macro-uniform maps.

\begin{proposition}\label{p:map} A ballean $X$ has bounded growth if $X$ admits a proper macro-uniform map $f:X\to Y$ to a ballean $Y$ of bounded growth.
\end{proposition}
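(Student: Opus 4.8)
The plan is to pull back a growth entourage of $Y$ along $f$. Let $G_Y\subset Y\times Y$ be a growth entourage witnessing the bounded growth of $Y$, and set
$$G_X:=(f\times f)^{-1}(G_Y)=\{(x,x')\in X\times X:(f(x),f(x'))\in G_Y\}.$$
Since $\Delta_Y\subset G_Y$, the set $G_X$ contains $\Delta_X$, so it is a legitimate candidate for a growth entourage. I would then check that $G_X$ satisfies the two conditions of Definition~\ref{d:bg}. Before that, I would record the elementary observation that every macro-uniform map is bornologous: if $B\subset X$ is bounded, then $B\subset E_X[x_0]$ for some $E_X\in\E_X$ and $x_0\in X$, and choosing $E_Y\in\E_Y$ with $f(E_X[x])\subset E_Y[f(x)]$ for all $x$ gives $f(B)\subset E_Y[f(x_0)]$, which is bounded in $Y$.

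For the first condition of bounded growth, I would take a bounded set $B\subset X$ and verify that $G_X[B]$ is bounded. If $x'\in G_X[B]$, then there is $x\in B$ with $(f(x),f(x'))\in G_Y$, so $f(x')\in G_Y[f(B)]$; hence $G_X[B]\subset f^{-1}(G_Y[f(B)])$. As $f$ is bornologous, $f(B)$ is bounded in $Y$, so $G_Y[f(B)]$ is bounded by the first defining property of $G_Y$; and as $f$ is proper, the preimage $f^{-1}(G_Y[f(B)])$ is bounded in $X$. Therefore $G_X[B]$ is bounded.

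For the second condition, given an entourage $E_X\in\E_X$, I would use macro-uniformity to pick $E_Y\in\E_Y$ with $f(E_X[x])\subset E_Y[f(x)]$ for all $x\in X$, and then use the bounded growth of $Y$ to find a bounded set $B_Y\subset Y$ with $E_Y[y]\subset G_Y[y]$ for all $y\in Y\setminus B_Y$. Put $B:=f^{-1}(B_Y)$, which is bounded because $f$ is proper. For any $x\in X\setminus B$ we have $f(x)\notin B_Y$, so every $x'\in E_X[x]$ satisfies $f(x')\in f(E_X[x])\subset E_Y[f(x)]\subset G_Y[f(x)]$, that is $(x,x')\in G_X$, i.e.\ $x'\in G_X[x]$. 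Hence $E_X[x]\subset G_X[x]$ for all $x\in X\setminus B$, as required, and $G_X$ is a growth entourage of $X$.

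The argument is essentially a diagram chase, so I do not expect a serious obstacle. The only point requiring care is keeping the two hypotheses on $f$ aligned with the two clauses of bounded growth: properness feeds the ``bounded set'' clauses, as it is exactly what lets one pull bounded sets back to bounded sets, while macro-uniformity feeds the ``entourage'' clause, as it is what pushes balls $E_X[x]$ forward into balls $E_Y[f(x)]$. The subtle step is translating the quantifier ``$x\in X\setminus B$'' into ``$f(x)\notin B_Y$'', which works precisely because $B$ is taken to be $f^{-1}(B_Y)$.
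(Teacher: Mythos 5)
Your proposal is correct and uses exactly the construction the paper gives, namely $G_X=(f\times f)^{-1}(G_Y)$; the paper simply states that it is easy to check this is a growth entourage, while you supply the verification. The details you spell out (macro-uniform implies bornologous, $G_X[B]\subset f^{-1}(G_Y[f(B)])$, and $B:=f^{-1}(B_Y)$ for the entourage clause) are precisely what the paper leaves implicit.
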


\begin{proof} Assume that $Y$ has bounded growth and let $G_Y$ be a growth entourage for $Y$. It is easy to check that the entourage $G_X:=\{(x,y)\in X\times X:(f(x),f(y))\in G_Y\}$ is a growth entourage for the ballean $(X,\E_X)$.
\end{proof}

Proposition~\ref{p:map} implies that the bounded growth is preserved by taking subballeans.

\begin{corollary}\label{c:g} If a ballean $X$ has bounded growth, then each subballean of $X$  has bounded growth, too.
\end{corollary}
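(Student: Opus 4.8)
The plan is to exhibit every subballean as the domain of a proper macro-uniform map into $X$ and then quote Proposition~\ref{p:map}. So let $(Y,\E_X{\restriction}Y)$ be a subballean of $X$ and consider the inclusion map $i:Y\to X$, $i(y)=y$. I would prove that $i$ is macro-uniform and proper; since $X$ has bounded growth, Proposition~\ref{p:map} then yields at once that $Y$ has bounded growth.

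Macro-uniformity is immediate from the definition of the restricted ball structure: an arbitrary entourage of $Y$ has the form $F=(Y\times Y)\cap E$ for some $E\in\E_X$, and for every $y\in Y$ one has $i(F[y])=F[y]=E[y]\cap Y\subset E[y]=E[i(y)]$, so $E$ witnesses the macro-uniformity of $i$.

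The only point requiring work is properness, i.e. that the trace $i^{-1}(B)=B\cap Y$ of every bounded set $B\subset X$ is bounded in the subballean $Y$. Here the difficulty is that if $B\subset E[x]$ with $E\in\E_X$ and $x\in X$, the centre $x$ need not belong to $Y$, so I must shift the centre into $Y$. Assuming $B\cap Y\ne\emptyset$ (the empty case being trivial), I would fix $y_0\in B\cap Y$ and note that every $z\in B\cap Y$ satisfies $(x,y_0),(x,z)\in E$, hence $(y_0,z)\in E^{-1}\circ E$. Using axiom (2) of a ballean, together with the elementary observation that for any entourage $H$ the inverse $H^{-1}$ is contained in an entourage (take $D\supset E'\circ H^{-1}\supset\Delta_X\circ H^{-1}=H^{-1}$ for any $E'\in\E_X$), one checks that $E^{-1}\circ E$ is contained in some entourage $D\in\E_X$. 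Consequently $B\cap Y\subset D[y_0]$ with $y_0\in Y$, which is exactly the statement that $B\cap Y$ is bounded in $Y$.

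With the macro-uniformity and properness of $i$ in hand, Proposition~\ref{p:map} applied to $f=i$ shows that $Y$ has bounded growth, completing the proof. The main (and only mildly technical) obstacle is the centre-shift in the properness step; it amounts to the standard fact that a subballean inherits its bornology as the traces of the bounded sets of the ambient ballean, which is why I single it out rather than the otherwise routine macro-uniformity.
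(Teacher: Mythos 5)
Your proposal is correct and is exactly the paper's route: the paper derives Corollary~\ref{c:g} directly from Proposition~\ref{p:map} applied to the inclusion of a subballean, leaving the verification that the inclusion is macro-uniform and proper implicit, whereas you spell out those checks (including the centre-shift via $E^{-1}\circ E$ needed for properness) correctly. No gaps.
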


\begin{proposition}\label{p:lg} A  ballean $X$ has bounded growth if its coarse structure has a linearly ordered base.
\end{proposition}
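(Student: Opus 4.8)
The plan is to build a growth entourage $G$ explicitly from a well-ordered base of the coarse structure together with a compatible well-ordered base of the bornology. First I would record that a linearly ordered base of ${\downarrow}\E_X$ forces $\add(\E_X)=\cof(\E_X)=:\kappa$ with $\kappa$ a regular cardinal; combined with the chain $\add(\E_X)\le\add(\mathcal B_X)\le\cof(\mathcal B_X)\le\cof(\E_X)$ recalled in the introduction, this yields $\add(\mathcal B_X)=\cof(\mathcal B_X)=\kappa$ as well. Hence both $\E_X$ and $\mathcal B_X$ admit increasing well-ordered bases indexed by $\kappa$: a base $(E_\alpha)_{\alpha<\kappa}$ of the coarse structure and a base $(B_\alpha)_{\alpha<\kappa}$ of the bornology, with $E_\alpha\subset E_\beta$ and $B_\alpha\subset B_\beta$ for $\alpha<\beta$. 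Since every singleton is bounded, $\bigcup_{\alpha<\kappa}B_\alpha=X$. (The bounded case, where $\add$ is undefined, I would dispose of separately by taking $G=X\times X$.)

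Next I would assign to each point a \emph{rank} and let the radius of $G$ at that point grow with the rank. Concretely, set $r(x):=\min\{\alpha<\kappa: x\in B_\alpha\}$, which is well defined because the $B_\alpha$ cover $X$, and define
$$G:=\bigcup_{x\in X}\{x\}\times E_{r(x)}[x],\qquad\mbox{so that }G[x]=E_{r(x)}[x].$$
Since each $E_\alpha\supset\Delta_X$ we get $x\in G[x]$, i.e. $\Delta_X\subset G$. Then I would verify the two defining conditions of Definition~\ref{d:bg}. For the first, given a bounded set $B$, choose $\gamma<\kappa$ with $B\subset B_\gamma$; then $r(x)\le\gamma$ for all $x\in B$, so monotonicity of the base gives $G[x]=E_{r(x)}[x]\subset E_\gamma[x]$ and hence $G[B]\subset E_\gamma[B]$, which is bounded because the $E_\gamma$-neighbourhood of a bounded set is bounded (a standard consequence of axiom (2)). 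For the second, given $E\in\E_X$, choose $\alpha<\kappa$ with $E\subset E_\alpha$; for every $x\in X\setminus B_\alpha$ the monotonicity of $(B_\beta)$ forces $r(x)>\alpha$, whence $E[x]\subset E_\alpha[x]\subset E_{r(x)}[x]=G[x]$, so $B:=B_\alpha$ witnesses the required property.

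The construction itself is routine; the only point requiring care, and the main obstacle, is controlling the radius $E_{r(x)}$ on a bounded set $B$. This is exactly where the linear ordering is essential: it lets me index both bases by the same regular cardinal $\kappa$, so that $B\subset B_\gamma$ uniformly caps the rank on $B$ by $\gamma$, and monotonicity then collapses the potentially wild union $\bigcup_{x\in B}E_{r(x)}[x]$ into the single bounded neighbourhood $E_\gamma[B]$. Without a linearly (equivalently, well-) ordered base there would be no single $\gamma$ bounding the ranks over $B$, and the candidate growth entourage could fail the boundedness condition.
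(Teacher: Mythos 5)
Your proof is correct and complete. It follows the same overall strategy as the paper --- an explicit construction of a growth entourage from a well-ordered base --- but the implementation is genuinely different. The paper works only with the base $(E_\alpha)_{\alpha<\kappa}$ of the coarse structure: it fixes a point $x_0$, stratifies $X$ into the shells $E_{\alpha+1}[x_0]\setminus E_\alpha[x_0]$, sets $G[x]:=E_\alpha[x_0]$ for $x$ in the $\alpha$-th shell, and leaves the verification to the reader. You instead pull the linear order down to the bornology via the chain $\add(\E_X)\le\add(\mathcal B_X)\le\cof(\mathcal B_X)\le\cof(\E_X)$, synchronize an increasing base $(B_\alpha)_{\alpha<\kappa}$ of $\mathcal B_X$ with the base of entourages over the same regular $\kappa$, and centre the ball $G[x]=E_{r(x)}[x]$ at $x$ itself. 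The price is the (immediate) extra step of producing the second base; the payoff is that condition (2) of Definition~\ref{d:bg} reduces to the pure monotonicity statement $E\subset E_\alpha\subset E_{r(x)}$, with no composition of entourages. This is a real advantage: with balls centred at the remote point $x_0$, as in the paper's displayed formula, one has $x\in E[x]$ but $x\notin E_\alpha[x_0]=G[x]$ for every $x$ in a shell of positive index, so the second factor of that $G$ must be enlarged (e.g.\ to $E_{\alpha+1}\big[E_{\alpha+1}[x_0]\big]$) before the ``easy check'' actually goes through; your version avoids this issue entirely. Your closing remark correctly isolates the crux --- regularity and linearity let a single index $\gamma$ cap the radii over any bounded set --- and your treatment of the bounded case and of the covering $\bigcup_{\alpha<\kappa}B_\alpha=X$ closes the remaining loopholes.
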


\begin{proof} Assume that the coarse structure ${\downarrow}\E_X$ of a ballean $X$ has a linearly ordered base. Then 
$\add({\downarrow}\E_X)=\cof({\downarrow}\E_Y)=\kappa$ is a regular cardinal and we can choose a well-ordered base $\{E_\alpha\}_{\alpha\in\kappa}\subset {\downarrow}\E_X$ of the coarse structure ${\downarrow}\E_X$ such that
\begin{itemize}
\item $E_0=\emptyset$;
\item $E_\alpha=\bigcup_{\beta<\alpha}E_\beta$ for any limit ordinal $\alpha\in\kappa$, and
\item $E_\alpha\subset E_{\alpha+1}$ for any ordinal $\alpha\in\kappa$.
\end{itemize} 

Fix any point $x_0\in X$ and consider the entourage $$G:=\bigcup_{\alpha\in\kappa}(E_{\alpha+1}[x_0]\setminus E_\alpha[x_0])\times E_\alpha[x_0].$$
It is easy to check that $G$ is a growth entourage for the ballean $X$.
\end{proof}   
 
\begin{lemma}\label{l:g1} The product $X\times Y$ of two balleans has bounded growth if the balleans $X,Y$ have bounded growth and the bornology $\mathcal B_{X\times Y}$ has a linearly ordered base.
\end{lemma}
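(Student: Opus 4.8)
The plan is to derive from the linearly ordered base of $\mathcal B_{X\times Y}$ two well-ordered bases of $\mathcal B_X$ and $\mathcal B_Y$ indexed by \emph{one and the same} regular cardinal $\kappa$, and then to assemble a growth entourage of $X\times Y$ from growth entourages of $X$ and $Y$ together with these bases. The reason a plain product of growth entourages does not suffice is instructive and should be stated up front: if $G_X,G_Y$ are growth entourages of $X,Y$, then for the choice $G[(x,y)]:=G_X[x]\times G_Y[y]$ and a product entourage $E_X\times E_Y$ the inclusion $E_X[x]\times E_Y[y]\subset G[(x,y)]$ fails exactly on the two \emph{unbounded} strips $\{(x,y):x\in P\}$ and $\{(x,y):y\in Q\}$, where $P\subset X$ and $Q\subset Y$ are bounded but the other coordinate ranges freely. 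Repairing these strips is the whole content of the lemma, and it is precisely here that the linear order on $\mathcal B_{X\times Y}$ enters.

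For the reduction I would fix an increasing cofinal chain $\{C_\alpha\}_{\alpha<\kappa}$ in $\mathcal B_{X\times Y}$ with $\kappa=\cof(\mathcal B_{X\times Y})$ regular (a linearly ordered base always contains such a chain), and set $B^X_\alpha:=\pi_X(C_\alpha)$ and $B^Y_\alpha:=\pi_Y(C_\alpha)$ for the coordinate projections $\pi_X,\pi_Y$. Since the projections are surjective and bornologous and since $A\times\{y_0\}$ is bounded for every bounded $A\subset X$, the increasing families $\{B^X_\alpha\}_{\alpha<\kappa}$ and $\{B^Y_\alpha\}_{\alpha<\kappa}$ are well-ordered bases of $\mathcal B_X$ and $\mathcal B_Y$; moreover $C_\alpha\subset B^X_\alpha\times B^Y_\alpha$, so $\{B^X_\alpha\times B^Y_\alpha\}_{\alpha<\kappa}$ is again a base of $\mathcal B_{X\times Y}$. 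The crucial gain is the common index set $\kappa$: an index produced by a point of $Y$ may be fed into the $X$-base and conversely. With this in hand I define the rank functions $r_X(x):=\min\{\alpha<\kappa:x\in B^X_\alpha\}$ and $r_Y(y):=\min\{\alpha<\kappa:y\in B^Y_\alpha\}$, which are well-defined because every singleton is bounded.

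Now fix growth entourages $G_X$ of $X$ and $G_Y$ of $Y$ and put
$$G[(x,y)]:=\big(G_X[x]\cup B^X_{r_Y(y)}\big)\times\big(G_Y[y]\cup B^Y_{r_X(x)}\big),$$
which contains the diagonal since $x\in G_X[x]$ and $y\in G_Y[y]$. The first defining condition of bounded growth is routine: for a bounded $B\subset B^X_\gamma\times B^Y_\gamma$ one has $r_X(x),r_Y(y)\le\gamma$ on $B$, whence $G[B]\subset(G_X[B^X_\gamma]\cup B^X_\gamma)\times(G_Y[B^Y_\gamma]\cup B^Y_\gamma)$, a product of bounded sets and so bounded.

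For the second condition, given a product entourage $E_X\times E_Y$, I would use the bounded growth of $X$ and $Y$ to obtain bounded $P\subset X$, $Q\subset Y$ with $E_X[x]\subset G_X[x]$ for $x\notin P$ and $E_Y[y]\subset G_Y[y]$ for $y\notin Q$, and then choose $\gamma<\kappa$ with $E_X[P]\subset B^X_\gamma$ and $E_Y[Q]\subset B^Y_\gamma$ (possible by regularity of $\kappa$). The claim is that $E_X[x]\times E_Y[y]\subset G[(x,y)]$ for all $(x,y)$ outside the bounded set $(P\times B^Y_\gamma)\cup(B^X_\gamma\times Q)$. If $x\notin P$ and $y\notin Q$ this is immediate from $G_X[x]\times G_Y[y]$; if $x\in P$, then the point lies outside $P\times B^Y_\gamma$, so $y\notin B^Y_\gamma\supset Q$, giving $E_Y[y]\subset G_Y[y]$ for the $y$-coordinate and $r_Y(y)\ge\gamma$, whence $B^X_{r_Y(y)}\supset B^X_\gamma\supset E_X[P]\supset E_X[x]$ for the $x$-coordinate; the case $y\in Q$ is symmetric. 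The main obstacle is exactly this simultaneous repair of both strips, and it is resolved by the cross-indexing $B^X_{r_Y(y)}$, which lets the admissible growth in one factor be controlled by how far the point sits out in the other factor — a device legitimate only because the linearly ordered base of $\mathcal B_{X\times Y}$ forces $\mathcal B_X$ and $\mathcal B_Y$ to carry bases of the same well-ordered length $\kappa$.
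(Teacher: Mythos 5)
Your proof is correct and follows essentially the same route as the paper's: both extract from the linearly ordered base of $\mathcal B_{X\times Y}$ a common well-ordered filtration of length a regular cardinal $\kappa$ that simultaneously indexes bases of $\mathcal B_X$ and $\mathcal B_Y$ (via the coordinate projections), and then pad the product of the growth entourages $G_X,G_Y$ using ranks in that filtration. The only difference is cosmetic: the paper takes $G[(x,y)]=G_X[C_{\beta+1}]\times G_Y[D_{\beta+1}]$ where $\beta$ is the rank of the pair $(x,y)$ in the filtration, whereas your cross-indexed $\bigl(G_X[x]\cup B^X_{r_Y(y)}\bigr)\times\bigl(G_Y[y]\cup B^Y_{r_X(x)}\bigr)$ is a slightly smaller entourage whose verification proceeds by a case split rather than a single chain of inclusions.
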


\begin{proof} Assume that the balleans $X,Y$ have bounded growth and the bornology $\mathcal B_{X\times Y}$ of $X\times Y$ has a linearly ordered base.
Then for the regular cardinal $\kappa=\add(\mathcal B_{X\times Y})=\cof(\mathcal B_{X\times Y})$ we can choose a well-ordered base $\{B_\alpha\}_{\alpha\in\kappa}$ of the bornology $\mathcal B_{X\times Y}$.

For every $\alpha\in\kappa$ denote by $C_\alpha$ and $D_\alpha$ the projection of the bounded set $B_\alpha$ on $X$ and $Y$, respectively. Then $(C_\alpha\times D_\alpha)_{\alpha\in\kappa}$ is a well-ordered base of the bornology $\mathcal B_{X\times Y}$ and we can assume that $B_\alpha=C_\alpha\times D_\alpha$.
Also we can assume that $B_0=\emptyset$ and $B_\alpha=\bigcup_{\beta<\alpha}B_\beta$ for all limit ordinals $\alpha<\kappa$.

Let $G_X$ and $G_Y$ be growth entourages of the ballean $X$ and $Y$, respectively. 

We claim that the entourage 
$$G:=\bigcup_{\alpha<\kappa}(B_{\alpha+1}\setminus B_\alpha)\times (G_X[C_{\alpha+1}]\times G_Y[D_{\alpha+1}])$$ witnesses that the ballean $X\times Y$ has bounded growth. Given a bounded subset $B\subset X\times Y$ we can find an ordinal $\alpha\in\kappa$ with $B\subset B_{\alpha+1}$ and conclude that the set $G[B]\subset G[B_{\alpha+1}]=G_X[C_{\alpha+1}]\times G_Y[D_{\alpha+1}]$ is bounded in $X\times Y$.

Next, given any entourage $E\in\E_{X\times Y}$, find entourages $E_X\in\E_X$ and $E_Y\in\E_Y$ such that $E[(x,y)]=E_X[x]\times E_Y[y]$ for any $(x,y)\in X\times Y$.
For the entourages $E_X$ and $E_Y$ find bounded sets $B_X\in\mathcal B_X$ and $B_Y\in\mathcal B_Y$ such that $E_X[x]\subset G_X[x]$ and $E_Y[y]\subset G_Y[y]$ for any points $x\in X\setminus B_X$ and $y\in Y\setminus B_Y$. Finally choose an ordinal $\alpha\in\kappa$ such that $E_X[B_X]\times E_Y[B_Y]\subset B_\alpha=C_\alpha\times D_\alpha$. Then $E_X[x]\subset G_X[x]\cup C_\alpha$ and $E_Y[y]\subset G_Y[y]\cup D_\alpha$ for all $x\in X$ and $y\in Y$.

We claim that $E[(x,y)]\subset G[(x,y)]$ for any $(x,y)\in (X\times Y)\setminus B_\alpha$. Given any such pair $(x,y)$, choose a unique ordinal $\beta\in\kappa$ such that $(x,y)\in B_{\beta+1}\setminus B_\beta$ and observe that $\beta\ge \alpha$.
Then 
$$
\begin{aligned}
E[(x,y)]&=E_X[x]\times E_Y[y]\subset (G_X[x]\cup C_\alpha)\times (G_Y[y]\cup D_\alpha)\subset\\
&\subset (G_X[C_{\beta+1}]\cup C_\alpha)\times (G_Y[D_{\beta+1}]\cup D_\alpha)=G_X[C_{\beta+1}]\times G_Y[D_{\beta+1}]=G[(x,y)].
\end{aligned}
$$
\end{proof}

\begin{lemma}\label{l:g2} If the product $X\times Y$ of two unbounded balleans has bounded growth, then its bornology $\mathcal B_{X\times Y}$ has a linearly ordered base.
\end{lemma}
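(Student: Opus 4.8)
The plan is to reduce the statement to an equality of cardinal characteristics of the factors, and then to manufacture a cofinal family in one factor out of an \emph{unbounded} family in the other, using the growth entourage as a transport device. First I would record the shape of the product bornology: since an entourage of $X\times Y$ has the form $E_X\times E_Y$, every ball $E[(x,y)]=E_X[x]\times E_Y[y]$ is a rectangle, so the rectangles $C\times D$ with $C\in\mathcal B_X$, $D\in\mathcal B_Y$ are cofinal in $\mathcal B_{X\times Y}$. Hence $\mathcal B_{X\times Y}$ is cofinally equivalent to the product poset $\mathcal B_X\times\mathcal B_Y$, for which $\add(\mathcal B_{X\times Y})=\min\{\add(\mathcal B_X),\add(\mathcal B_Y)\}$ and $\cof(\mathcal B_{X\times Y})=\max\{\cof(\mathcal B_X),\cof(\mathcal B_Y)\}$. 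As $\mathcal B_{X\times Y}$ has a linearly ordered base iff $\add(\mathcal B_{X\times Y})=\cof(\mathcal B_{X\times Y})$, it suffices to prove the two symmetric inequalities $\cof(\mathcal B_X)\le\add(\mathcal B_Y)$ and $\cof(\mathcal B_Y)\le\add(\mathcal B_X)$: combined with $\add\le\cof$ in each factor they close the cycle $\cof(\mathcal B_X)\le\add(\mathcal B_Y)\le\cof(\mathcal B_Y)\le\add(\mathcal B_X)\le\cof(\mathcal B_X)$, forcing all four cardinals to a common value $\kappa$ and hence $\add(\mathcal B_{X\times Y})=\kappa=\cof(\mathcal B_{X\times Y})$. (Unboundedness of $X$ and $Y$ is exactly what guarantees that $\add(\mathcal B_X)$ and $\add(\mathcal B_Y)$ are defined and realized by genuinely unbounded families.)

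The core is the inequality $\cof(\mathcal B_X)\le\add(\mathcal B_Y)$. Let $G$ be a growth entourage of $X\times Y$, fix a point $x_0\in X$, set $\mu:=\add(\mathcal B_Y)$, and choose an unbounded family $\{D_\alpha\}_{\alpha<\mu}$ in $\mathcal B_Y$. For each $\alpha$ the rectangle $\{x_0\}\times D_\alpha$ is bounded, so by the first defining property of a growth entourage $G[\{x_0\}\times D_\alpha]$ is bounded; I set $C_\alpha:=\pi_X\big(G[\{x_0\}\times D_\alpha]\big)\in\mathcal B_X$. The claim is that $\{C_\alpha\}_{\alpha<\mu}$ is cofinal in $\mathcal B_X$, which gives $\cof(\mathcal B_X)\le\mu$ at once.

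To check cofinality I take an arbitrary $C\in\mathcal B_X$. Since $C\cup\{x_0\}$ is bounded, there is $E_X\in\E_X$ with $C\subseteq E_X[x_0]$. I pick any $E_Y\in\E_Y$ and apply the second defining property of $G$ to the product entourage $E_X\times E_Y$: there is a bounded set $B\subseteq C_B\times D_B$ (with $C_B\in\mathcal B_X$, $D_B\in\mathcal B_Y$) such that $(E_X\times E_Y)[p]\subseteq G[p]$ for all $p\in(X\times Y)\setminus B$. By the unboundedness of $\{D_\alpha\}_{\alpha<\mu}$ there is an index $\alpha$ with $D_\alpha\not\subseteq D_B$; choosing $y^*\in D_\alpha\setminus D_B$ forces $(x_0,y^*)\notin C_B\times D_B\supseteq B$, so that $E_X[x_0]\times E_Y[y^*]\subseteq G[(x_0,y^*)]$. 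Projecting to $X$ (the fibre $E_Y[y^*]$ is nonempty since it contains $y^*$) and using $(x_0,y^*)\in\{x_0\}\times D_\alpha$, I obtain $C\subseteq E_X[x_0]\subseteq\pi_X\big(G[(x_0,y^*)]\big)\subseteq C_\alpha$, as needed.

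Running the identical construction with $X$ and $Y$ interchanged yields $\cof(\mathcal B_Y)\le\add(\mathcal B_X)$, and then the cycle of inequalities above collapses all four cardinals, so $\mathcal B_{X\times Y}$ has a linearly ordered base. The step I expect to be the main obstacle — and the genuine insight of the proof — is precisely the middle construction: recognizing that the growth entourage converts an \emph{unbounded} family in $\mathcal B_Y$ into a \emph{cofinal} family in $\mathcal B_X$. Unboundedness (which is all that is available at the additivity $\mu$) supplies, for any prescribed bounded error set $D_B$, an escaping coordinate $y^*$; the growth property then certifies that the $X$-ball $E_X[x_0]$, already chosen to swallow $C$, lands inside $C_\alpha$. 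The two features that make this work are that product-bounded sets are rectangular (so an escape detected in the $Y$-coordinate survives projection back to $X$) and that the single growth entourage dominates every product entourage off a bounded set.
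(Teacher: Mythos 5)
Your proof is correct and follows essentially the same route as the paper: the key step in both is to use the growth entourage of $X\times Y$ to convert an unbounded family of size $\add$ in one factor's bornology into a cofinal family in the other factor's bornology (the paper fixes $y_0\in Y$ and transports an unbounded family from $\mathcal B_X$ to a cofinal one in $\mathcal B_Y$; you do the mirror image), after which the cycle of inequalities collapses all four cardinals to a common regular value. The only cosmetic difference is that you conclude via $\add(\mathcal B_{X\times Y})=\cof(\mathcal B_{X\times Y})$ for the product poset, while the paper explicitly exhibits the well-ordered base of rectangles $C_\alpha\times D_\alpha$.
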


\begin{proof} Let $G$ be a growth entourage for the ballean $X\times Y$. We lose no generality assuming that $\add(\mathcal B_X)\le \add(\mathcal B_Y)$.  In this case we shall show that $\cof(\mathcal B_Y)\le\add(\mathcal B_X)$. 

By the definition of the cardinal $\kappa:=\add(\mathcal B_X)$, there exists a transfinite sequence $(B_\alpha)_{\alpha\in\kappa}$ of bounded sets in $X$ whose union $\bigcup_{\alpha\in\kappa}B_\alpha$ is not bounded in $X$.  Fix any point $y_0\in Y$ and for every $\alpha\in\kappa$ consider the projection $D_\alpha$ of the bounded set $G[B_\alpha\times\{y_0\}]$ onto $Y$.
We claim that the family $\{D_\alpha\}_{\alpha\in\kappa}$ is cofinal in $\mathcal B_Y$. Indeed, given any bounded set $B\subset Y$, find an entourage $E_Y\in\E_Y$ such that $B\subset E_Y[y_0]$. Take any entourage $E_X\in\E_X$ and consider the entourage $E\in\E_{X\times Y}$ such that $E[(x,y)]=E_X[x]\times E_Y[y]$ for all $(x,y)\in X\times Y$. Since $G$ is a growth entourage for $X\times Y$, for the entourage $E$ there exists a bounded set $D\subset X\times Y$ such that $E[(x,y)]\subset G[(x,y)]$ for all $(x,y)\in (X\times Y)\setminus D$. Since the union $\bigcup_{\alpha\in\kappa}B_\alpha$ is not bounded in $X$, there exists $\alpha\in\kappa$ such that  $B_\alpha\times\{y_0\}\not\subset D$. Choose any $x\in B_\alpha$ with $(x,y_0)\notin D$ and observe that 
$\{x\}\times B\subset \{x\}\times E_Y[y_0]\subset E[(x,y_0)]\subset G[(x,y_0)]\subset G[B_\alpha\times\{y_0\}]\subset X\times D_\alpha$ and hence $B\subset D_\alpha$. This completes the proof of the inequality $\cof(\mathcal B_Y)\le\add(\mathcal B_X)$.

Then $\add(\mathcal B_Y)\le  \cof(\mathcal B_Y)\le\add(\mathcal B_X)\le\add(\mathcal B_Y)$ and hence $\add(\mathcal B_Y)=\cof(\mathcal B_Y)=\add(\mathcal B_X)$. Since $\add(\mathcal B_X)\le\add(\mathcal B_Y)$, we can repeat the above argument and prove that $\cof(\mathcal B_X)\le\add(\mathcal B_Y)$, which implies that  $\add(\mathcal B_Y)=\cof(\mathcal B_Y)=\add(\mathcal B_X)=\cof(\mathcal B_X)=\kappa$ for some regular cardinal $\kappa$.

It follows that the bornologies $\mathcal B_X$ and $\mathcal B_Y$ have well-ordered bases $\{C_\alpha\}_{\alpha\in\kappa}\subset\mathcal B_X$ and $\{D_\alpha\}_{\alpha\in\kappa}\subset\mathcal B_Y$. Then $\{C_\alpha\times D_\alpha\}_{\alpha\in\kappa}$ is a well-ordered base for the bornology $\mathcal B_{X\times Y}$ of the ballean $X\times Y$.
\end{proof}

\begin{proposition}\label{p:discrete} If an unbounded ballean $(X,\E_X)$ has bounded growth, then it contains a discrete subballean.
\end{proposition}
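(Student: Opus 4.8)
The plan is to extract from the growth entourage a \emph{symmetric} object and then take a maximal independent set for it, using symmetry to force unboundedness.

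First I would symmetrize. Given a growth entourage $G$ for $X$, set $S:=G\cap G^{-1}$. Then $\Delta_X\subset S=S^{-1}$, and since $S\subset G$ we have $S[B]\subset G[B]\in\mathcal B_X$ for every bounded $B\subset X$, so $S$ carries bounded sets to bounded sets. I also need that $S$ absorbs every entourage off a bounded set: for each $E\in\E_X$ there is a bounded $B_E\subset X$ with $E[x]\subset S[x]$ for all $x\in X\setminus B_E$. This is exactly the computation already carried out in the proof that bounded growth is equivalent to $\cof_*(\E_X,{\Uparrow}\mathcal B_X)=1$: picking an entourage $E'\supset E^{-1}$ and a bounded $B$ with $E[x]\cup E^{-1}[x]\subset G[x]$ for all $x\in X\setminus B$, one checks that $E\setminus S\subset E^{-1}[B]\times(E\circ E^{-1})[B]$, so $E\setminus S$ is bounded in $X\times X$; taking $B_E:=E^{-1}[B]$ then gives $E[x]\subset S[x]$ for $x\notin B_E$. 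I would simply quote this.

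Next I would use Zorn's Lemma to choose a maximal subset $Y\subset X$ that is \emph{$S$-independent}, meaning $(Y\times Y)\cap S\subset\Delta_X$, i.e. no two distinct points of $Y$ are $S$-related. The key point is that such a maximal $Y$ must be unbounded. Indeed, by maximality every $x\in X\setminus Y$ is $S$-related to some point of $Y$, and since $S$ is symmetric this yields $x\in S[Y]$; together with $Y\subset S[Y]$ we obtain $X=S[Y]$. If $Y$ were bounded, then $S[Y]$ would be bounded, forcing $X$ to be bounded, contradicting the hypothesis that $X$ is unbounded. Hence $Y$ is unbounded. Here the symmetry of $S$ is essential: for the (possibly non-symmetric) $G$ the same argument would only give $X\subset Y\cup G[Y]\cup G^{-1}[Y]$, and $G^{-1}[Y]$ need not be bounded.

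Finally I would verify that the subballean $(Y,\E_X{\restriction}Y)$ is discrete. It is unbounded by the previous step. Given any entourage $E\in\E_X$, for every $y\in Y\setminus B_E$ we have $(E{\restriction}Y)[y]=E[y]\cap Y\subset S[y]\cap Y=\{y\}$, where the last equality holds by $S$-independence of $Y$; since $y\in(E{\restriction}Y)[y]$, this intersection equals $\{y\}$. Thus $\{y\in Y:(E{\restriction}Y)[y]\neq\{y\}\}\subset B_E\cap Y$, which is bounded in the subballean (a subset of $Y$ bounded in $X$ is bounded in $Y$), so by Definition~\ref{d:dis} the ballean $Y$ is discrete. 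The main obstacle, and the real crux, is precisely the unboundedness of the chosen subset: a greedy or maximal construction using $G$ directly can terminate at a bounded set (for instance when some point is a ``$G^{-1}$-sink'' whose backward ball is unbounded). Passing to the symmetric $S=G\cap G^{-1}$ is what repairs this, since then ``$Y$ maximal and bounded'' forces $X=S[Y]$ to be bounded; everything else is routine verification.
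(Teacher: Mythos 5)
Your proof is correct, and it follows the same overall strategy as the paper's: use the Kuratowski--Zorn Lemma to produce a maximal ``$G$-spread'' subset, deduce its unboundedness from the fact that a growth entourage carries bounded sets to bounded sets, and deduce discreteness from the fact that $G$ eventually absorbs every entourage. The difference lies in how the asymmetry of $G$ is handled. The paper keeps $G$ as it is and applies Zorn to \emph{well-ordered} subsets $M$ in which each point avoids the $G$-balls of its predecessors, with maximality taken with respect to end-extension; a bounded $M$ can then be extended by any point outside the bounded set $G[M]$, and discreteness is obtained by absorbing the composed entourage $E^{-1}\circ E$ into $G$. You instead symmetrize first, replacing $G$ by $S=G\cap G^{-1}$ (borrowing the computation from the paper's proof that bounded growth is equivalent to $\cof_*(\E_X,{\Uparrow}\mathcal B_X)=1$), after which a plain maximal $S$-independent set suffices: symmetry yields $X=S[Y]$, which is exactly what the well-order buys in the paper's version, and discreteness then needs only $E[y]\subset S[y]$ off a bounded set rather than the $E^{-1}\circ E$ absorption. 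Your diagnosis of why an unordered maximal construction with $G$ itself could stall at a bounded set is accurate; the two arguments are otherwise equivalent in content, and yours isolates that obstacle somewhat more transparently.
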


\begin{proof} Let $G$ be a growth entourage for the ballean $(X,\mathcal B_X)$.
Let $\mathcal S$ be the family of subsets $S\subset X$ endowed with a well-order $<_S$ such that $x\notin \bigcup_{y<_Sx}G[y]$ for any $x\in S$. The family $\mathcal S$ is endowed with the partial order $\le$ defined by $(S,<_S)\le (W,<_W)$ iff $(S,<_S)$ is an initial interval of $(W,<_W)$. It is easy to see that each chain in $\mathcal S$ is upper bounded. So, by the Kuratowski--Zorn Lemma, the poset $\mathcal S$ has a maximal element $(M,<_M)$. 

We claim that $M$ is a discrete subballean of $X$. First we show that $M$ is unbounded. Assuming that $M$ is bounded, we can choose a point $y\in X\setminus \bigcup_{x\in M}G[x]$ and consider the set $S:=M\cup\{y\}$ endowed with the well-order $<_S$ such that $<_M\subset <_S$ and $x<_S y$ for all $x\in M$. It follows that the well-ordered set $(S,<_S)$ belongs to $\mathcal S$ and is strictly larger than $(M,<_M)$, which contradicts the maximality of $M$. This contradiction shows that the set $M$ is unbounded. Next, we show that $M$ is discrete. By the definition of growth entourage  $G$, for any entourage $E\in\E_X$ there exists a bounded set $B\subset X$ such that $E^{-1}{\circ}E[x]\subset G[x]$ for any $x\in X\setminus B$. We claim that $E[x]\cap E[y]=\emptyset$ for any distinct points $x,y\in M\setminus B$. Without loss of generality, $x<_M y$ and hence $y\notin G[x]$. Since $E^{-1}{\circ}E[x]\subset G[x]$, we conclude that $y\notin E^{-1}{\circ}E[x]$ and hence $E[y]\cap E[x]=\emptyset$.
\end{proof}

\section{A characterization of normality and proof of Theorem~\ref{t:prot2}}\label{s:prot2}

In this section we prove Theorem~\ref{t:prot2}, but start with the following characterization of normality.

\begin{proposition}\label{p:normchar} A ballean $(X,\E_X)$ is normal if and only if any asymptotically disjoint sets $A,B\subset X$ have asymptotically disjoint asymptotic neighborhoods $O_A,O_B$. 
\end{proposition}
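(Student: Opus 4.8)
The plan is to prove the two implications separately. The substantive content is the forward implication (normality implies the stronger separation), which is the coarse analogue of the classical fact that in a normal space two disjoint closed sets admit open neighborhoods with disjoint closures; as in that proof, I would apply normality twice. The reverse implication is a quick shrinking argument.

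For the easy direction, suppose every pair of asymptotically disjoint sets has asymptotically disjoint asymptotic neighborhoods, and fix asymptotically disjoint $A,B$ with such neighborhoods $O_A,O_B$. Taking $E=\Delta_X$ in the definition of asymptotic disjointness shows that $O_A\cap O_B$ is bounded, so I would simply replace $O_A$ by $O_A':=O_A\setminus O_B$. Then $O_A'$ and $O_B$ are genuinely disjoint, and a short computation, writing $E[A]\setminus O_A'=(E[A]\setminus O_A)\cup(E[A]\cap O_B)$ and estimating the second term via $E[A]\cap O_B\subset(E[A]\setminus O_A)\cup(O_A\cap O_B)$, shows that $O_A'$ is still an asymptotic neighborhood of $A$. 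This produces disjoint asymptotic neighborhoods, i.e. normality.

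For the forward direction, assume $X$ is normal and fix asymptotically disjoint $A,B$. First I would apply normality to get disjoint asymptotic neighborhoods $U_A$ of $A$ and $U_B$ of $B$. Next, since $U_A$ is an asymptotic neighborhood of $A$, the sets $A$ and $X\setminus U_A$ are asymptotically disjoint, so a second application of normality yields disjoint asymptotic neighborhoods $O_A$ of $A$ and $W_A$ of $X\setminus U_A$; symmetrically I obtain $O_B$ of $B$ and $W_B$ of $X\setminus U_B$. These $O_A,O_B$ will be the required pair. The crux is the following: because $W_A$ is an asymptotic neighborhood of $X\setminus U_A$ and $O_A\subset X\setminus W_A$, the set $O_A$ is asymptotically disjoint from $X\setminus U_A$, whence $E[O_A]\setminus U_A$ is bounded for every $E\in\E_X$, that is, $E[O_A]$ is contained in $U_A$ up to a bounded set. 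Likewise $E[O_B]$ is contained in $U_B$ up to a bounded set. Since $U_A\cap U_B=\emptyset$, this forces $E[O_A]\cap E[O_B]$ to be bounded for every $E$, which is exactly the asymptotic disjointness of $O_A$ and $O_B$.

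The main obstacle is identifying the correct coarse surrogate for ``closure'': the right statement to extract from the second application of normality is that $E[O_A]\setminus U_A$ is bounded for all $E$ (the coarse analogue of $\overline{O_A}\subset U_A$), which I would obtain from the inclusion $O_A\subset X\setminus W_A$ together with the fact that $W_A$ is an asymptotic neighborhood of $X\setminus U_A$. Once this is in place, the disjointness $U_A\cap U_B=\emptyset$ closes the argument at once. I expect only routine care for the auxiliary facts that asymptotic disjointness is inherited by subsets and that the $E$-image of a bounded set is bounded.
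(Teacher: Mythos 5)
Your proof is correct and follows essentially the same route as the paper: the reverse direction is the same shrinking trick, and the forward direction rests on the same key idea of applying normality a second time to $A$ and the complement of an asymptotic neighborhood of $A$. The only (cosmetic) difference is that the paper stops after two applications of normality and outputs the asymmetric pair consisting of the second-stage neighborhood of $A$ and the first-stage neighborhood of $B$, checking their asymptotic disjointness by a direct point-chasing argument with $E^{-1}\circ E$, whereas you symmetrize with a third application and deduce asymptotic disjointness from the boundedness of $E[O_A]\setminus U_A$ and $E[O_B]\setminus U_B$ together with $U_A\cap U_B=\emptyset$.
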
 

\begin{proof} The ``if'' part trivially follows from the observation that for any asymptotically disjoint asymptotic neighborhoods $O_A$, $O_B$ of sets $A,B$ in $X$ the sets $U_A:=O_A$ and $U_B:=O_B\setminus O_A$ are disjoint asymptotic neighborhoods of $A$ and $B$, respectively.
\smallskip

 To prove the ``only if'' part, assume that the ballean $X$ is normal. Fix any asymptotically disjoint sets $A,B\subset X$ and find disjoint asymptotic neighborhoods $O_A$ and $O_B$ of $A$ and $B$, respectively. Since the sets $A$ and $X\setminus O_A$ are asymptotically disjoint, we can use the normality of $X$ once more and find two disjoint sets $U_A,U_B$ such that $U_A$ is an asymptotic neighborhood of $A$ and $U_B$ is an asymptotic neighborhood of $X\setminus O_A$. We claim that the sets $U_A$ and $O_B$ are asymptotically disjoint. Since $U_B$ is an asymptotic neighborhood of $X\setminus O_A$, for any entourage $E\in\E_X$ there exists a bounded set $D\subset X$ such that $E^{-1}\circ E[x]\subset U_B$ for any $x\in (X\setminus O_A)\setminus D$. We claim that $E[U_A]\cap E[O_B]\subset E[D]$. To derive a contradiction, assume that some point $x\in E[U_A]\cap E[O_B]$ does not belong to $E[D]$. Choose points  $a\in U_A$ and $b\in O_B$ with $x\in E[a]\cap E[b]$. It follows from $x\notin E[D]$ that $b\notin D$. Taking into account that $b\in O_B\subset X\setminus O_A$, we conlcude that that $E^{-1}\circ E[b]\subset U_B$. Then $a\in E^{-1}[x]\subset E^{-1}\circ E[b]\subset U_B$, which is not possible as $a\in U_A\subset X\setminus U_B$.
 \end{proof}

Now we present a {\em proof of Theorem~\ref{t:prot2}}. Assume that a ballean $X$ has $\cof_\star(\E_X)\le \add(\mathcal B_X)$.

To prove that the ballean $(X,\E_X)$ is normal, fix any two asymptotically disjoint sets $A,B\subset X$. By the definition of the cardinal $\kappa=\cof_\star(\E_X)$, for the sets $A,B$ there exists a subfamily $\{E_\alpha\}_{\alpha\in\kappa}\subset\E_X$ such that for any $E\in\E_X$ there are ordinals $\alpha,\beta\in\kappa$ such that the sets $E[A]\setminus E_\alpha[A]$ and $E[B]\setminus E_\beta[B]$ are bounded in $(X,\E_X)$.

 It is clear that the sets 
$$O_A=\bigcup_{\alpha\in\kappa}\big(E_\alpha[A]\setminus\bigcup_{\beta\le\alpha}E_\beta[B]\big)\mbox{ and }O_B=\bigcup_{\beta\in\kappa}\big(E_\beta[B]\setminus\bigcup_{\alpha\le\beta}E_\alpha[A]\big)$$are disjoint.
We claim that $O_A$ and $O_B$ are asymptotic neighborhoods of the sets $A$ and $B$, repectively. Given any entourage $E\in\E_X$, find $\alpha\in \kappa$ such that the set $E[A]\setminus E_\alpha[A]$ is bounded. Since $\alpha<\kappa\le\add(\mathcal B_X)$ the union $U=\bigcup_{\beta\le\alpha}(E_\alpha[A]\cap E_\beta[B])$ belongs to the bornology $\mathcal B_X$. It is easy to check that $E[A]\setminus O_A\subset (E[A]\setminus E_\alpha[A])\cup U$. So $E[A]\setminus O_A$ is a bounded set and $O_A$ is an asymptotic neighborhood of $A$ in the ballean $X$.

By analogy we can prove that the set $O_B$ is an asymptotic neighborhood of $B$. Therefore, the asymptotically disjoint sets $A,B$ have disjoint asymptotic neighborhoods and the ballean $X$ is normal.

\section{Proof of Theorem~\ref{t:main}}\label{s:proofs}

Theorem~\ref{t:main} follows from Lemmas~\ref{l1}--\ref{l3}, proved in this section.

\begin{lemma}\label{l1} If the product $X\times Y$ of two balleans is normal, then there exist functions $\varphi:\mathcal B_X\to\mathcal B_Y$ and $\psi:\mathcal B_Y\to\mathcal B_X$ such that for any sets $A\in\mathcal B_X$, $B\in\mathcal B_Y$ either $A\subset \psi(B)$ or $B\subset\varphi(A)$.
\end{lemma}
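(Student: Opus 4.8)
The plan is to produce the two separating functions from a single application of normality to a carefully chosen pair of asymptotically disjoint sets: the two ``coordinate axes'' of the product. I would fix base points $x_0\in X$ and $y_0\in Y$ and consider $H:=X\times\{y_0\}$ and $V_0:=\{x_0\}\times Y$ in $X\times Y$. First I would check that $H$ and $V_0$ are asymptotically disjoint: for a product entourage $E$ with $E[(x,y)]=E_X[x]\times E_Y[y]$ one computes $E[H]=X\times E_Y[y_0]$ and $E[V_0]=E_X[x_0]\times Y$ (using $E_X[X]=X$ and $E_Y[Y]=Y$), whence $E[H]\cap E[V_0]=E_X[x_0]\times E_Y[y_0]$ is bounded. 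Since $X\times Y$ is normal, the sets $H$ and $V_0$ have disjoint asymptotic neighborhoods $U$ and $W$, respectively, with $U\cap W=\emptyset$.

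The key observation is that the asymptotic-neighborhood property of a single axis already controls whole ``strips''. Given $A\in\mathcal B_X$, I would pick $E_X^0\in\E_X$ with $A\subset E_X^0[x_0]$ and any $E_Y\in\E_Y$; then $A\times Y\subset E_X^0[x_0]\times Y=E[V_0]$ for the product entourage $E=E_X^0\times E_Y$. As $W$ is an asymptotic neighborhood of $V_0$, the set $E[V_0]\setminus W$ is bounded, so $(A\times Y)\setminus W$ is a bounded subset of $X\times Y$; I define $\varphi(A):=\mathrm{pr}_Y\big((A\times Y)\setminus W\big)\in\mathcal B_Y$. By construction, for any $y\in Y\setminus\varphi(A)$ no point of $A\times\{y\}$ can lie in $(A\times Y)\setminus W$, i.e. $A\times\{y\}\subset W$. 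Symmetrically, choosing $F_Y^0\in\E_Y$ with $B\subset F_Y^0[y_0]$ and using that $U$ is an asymptotic neighborhood of $H$, for $B\in\mathcal B_Y$ I set $\psi(B):=\mathrm{pr}_X\big((X\times B)\setminus U\big)\in\mathcal B_X$ and obtain: if $x\in X\setminus\psi(B)$ then $\{x\}\times B\subset U$.

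It then remains to combine these two facts using only the disjointness $U\cap W=\emptyset$. Suppose toward a contradiction that for some $A\in\mathcal B_X$ and $B\in\mathcal B_Y$ we have $A\not\subset\psi(B)$ and $B\not\subset\varphi(A)$, and pick $x\in A\setminus\psi(B)$ and $y\in B\setminus\varphi(A)$. Then $\{x\}\times B\subset U$ yields $(x,y)\in U$, while $A\times\{y\}\subset W$ yields $(x,y)\in W$, contradicting $U\cap W=\emptyset$. Hence $A\subset\psi(B)$ or $B\subset\varphi(A)$, which is exactly the required dichotomy.

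I expect the only genuinely nonroutine step to be the first one: recognizing that the two coordinate axes are the right asymptotically disjoint pair, and that the asymptotic-neighborhood condition, tested against a \emph{single} product entourage of the form $E_X^0\times E_Y$, already pins an entire vertical strip $A\times Y$ (respectively a horizontal strip $X\times B$) into $W$ (respectively into $U$) outside a bounded set. Once these strip-containment facts are established, the dichotomy is forced purely by $U\cap W=\emptyset$, with no further coarse-geometric input required.
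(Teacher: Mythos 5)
Your proposal is correct and follows essentially the same route as the paper: both arguments apply normality to the asymptotically disjoint coordinate axes $\{x_0\}\times Y$ and $X\times\{y_0\}$, extract the bounded ``exceptional'' sets from the asymptotic-neighborhood condition, and derive the dichotomy from the disjointness of the two neighborhoods. The only cosmetic difference is that you define $\varphi(A)$ directly as the projection of the bounded set $(A\times Y)\setminus W$, whereas the paper first builds a function on entourages $E\mapsto f(E)$ and then sets $\varphi(A):=f(E_A)$; the content is identical.
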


\begin{proof} Fix any point $(x_0,y_0)\in X\times Y$ and consider the subsets $V:=\{x_0\}\times Y$ and $H:=X\times \{y_0\}$ of the product. By definition of the ball structure of $X\times Y$, for every entourage $E\in \E_{X\times Y}$ there exist entourages $E_1\in\E_X$ and $E_2\in\E_Y$ such that $E[(x,y)]=E_1[x]\times E_2[y]$ for any $x,y\in X$. Then $E[V]=E_1[x_0]\times Y$ and $E[H]=X\times E_2[y_0]$, which implies that the intersection $E[V]\cap E[H]=E_1[x_0]\times E_2[y_0]$ is a bounded subset of $X\times Y$.
This means that the sets $V,H$ are asymptotically disjoint. By the normality of $X\times Y$, these sets have disjoint asymptotic neighborhoods $O_V,O_H\subset X\times Y$.

 Since $O_V$ is an asymptotic neighborhood of $V$, there exists a function  $f:\E_{X}\to \mathcal B_{Y}$ assigning to each entourage $E_1\in\E_X$ a bounded set $f(E_1)\subset Y$ such that $E_1[x_0]\times \{y\}\subset O_V$ for all $y\notin f(E_1)$. By analogy, there exists a function $g:\E_{Y}\to \mathcal B_{X}$ assigning to each entourage $E_2\in\E_Y$ a bounded set $g(E_2)\subset X$ such that $\{x\}\times E_2[y]\subset O_H$ for every $x\notin g(E_2)$. 

For every bounded sets $A\in\mathcal B_X$ and $B\in\mathcal B_Y$ choose entourages $E_A\in\mathcal E_X$ and $E_B\in\mathcal E_Y$ such that $A\subset E_A[x_0]$ and $B\subset E_B[y_0]$.

Finally, consider the functions $$\varphi:\mathcal B_X\to\mathcal B_Y,\;\;\varphi:A\mapsto f(E_A),\mbox{ \ and \ }\psi:\mathcal B_Y\to\mathcal B_X,\;\;\psi:B\mapsto g(E_B).$$ We claim that these functions have the required property. Indeed, for any $A\in\mathcal B_X$ and $B\in\mathcal B_Y$ we have $$
\begin{aligned}
&\big(A\times (Y\setminus \varphi(A))\cap ((X\setminus \psi(B))\times B\big)\subset\\
&\subset \big(E_A[x_0]\times (Y\setminus f(E_A)\big)\cap\big((X\setminus g(E_B))\times E_B[y_0]\big)\subset O_V\cap O_H=\emptyset
\end{aligned}
$$ and hence $A\cap(X\setminus \psi(B))=\emptyset$ or $(Y\setminus \varphi(A))\cap B=\emptyset$, which is equivalent to $A\subset \psi(B)$ or $B\subset \varphi(A)$.
 \end{proof}
 
 \begin{lemma}\label{l2} Assume that for two unbounded partially ordered sets $P,Q$ there exist functions $\varphi:P\to Q$ and $\psi:Q\to P$ such that for any $x\in X$ and $y\in Y$ either $x\le\psi(y)$ or $y\le\varphi(x)$. Then $\add(P)=\cof(P)=\cof(Q)=\add(Q)$.
 \end{lemma}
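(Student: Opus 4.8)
The plan is to prove the two ``cross'' inequalities $\cof(Q)\le\add(P)$ and $\cof(P)\le\add(Q)$, and then combine them with the always-valid inequalities $\add(P)\le\cof(P)$ and $\add(Q)\le\cof(Q)$ (noted in the excerpt) to obtain the cyclic chain
$$\add(P)\le\cof(P)\le\add(Q)\le\cof(Q)\le\add(P),$$
which forces all four cardinals to coincide. (Here I read the hypothesis as applying to $x\in P$ and $y\in Q$, correcting the evident slip ``$x\in X$ and $y\in Y$''.) The whole argument reduces to the single observation that the dichotomy ``$x\le\psi(y)$ or $y\le\varphi(x)$'' turns an unbounded set on one side into a cofinal set on the other.

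First I would prove $\cof(Q)\le\add(P)$. By definition of $\add(P)$, fix an unbounded subset $A\subseteq P$ with $|A|=\add(P)$; thus no element of $P$ is an upper bound for $A$. The claim is that the image $\varphi(A)=\{\varphi(a):a\in A\}\subseteq Q$ is cofinal in $Q$. To see this, take an arbitrary $y\in Q$ and suppose, toward a contradiction, that $y\not\le\varphi(a)$ for every $a\in A$. Then the hypothesis forces $a\le\psi(y)$ for every $a\in A$, so the single element $\psi(y)\in P$ is an upper bound of $A$, contradicting the unboundedness of $A$. Hence some $a\in A$ satisfies $y\le\varphi(a)$, proving that $\varphi(A)$ is cofinal. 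Since $|\varphi(A)|\le|A|=\add(P)$, we conclude $\cof(Q)\le\add(P)$.

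The inequality $\cof(P)\le\add(Q)$ follows by the symmetric argument: the roles of $P,Q$ and of $\varphi,\psi$ are interchangeable in the hypothesis, so starting from an unbounded set $B\subseteq Q$ of size $\add(Q)$ one shows that $\psi(B)$ is cofinal in $P$, where now the alternative $b\le\varphi(x)$ for all $b\in B$ would make $\varphi(x)$ an upper bound of $B$. Chaining the four inequalities as above then yields $\add(P)=\cof(P)=\add(Q)=\cof(Q)$, as required.

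I do not anticipate any genuine obstacle here; the only step needing care is the contradiction argument, where one must use the \emph{unboundedness} of $A$ (respectively $B$) precisely to rule out the possibility that a single value $\psi(y)$ (respectively $\varphi(x)$) dominates the whole set. The unboundedness of $P$ and $Q$ guarantees that $\add(P)$ and $\add(Q)$ are well defined (neither poset has a largest element in the relevant sense), so the minimal unbounded sets $A$ and $B$ exist and the cardinal invariants behave as stated.
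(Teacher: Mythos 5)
Your proof is correct and follows essentially the same route as the paper's: an unbounded set $A\subset P$ of size $\add(P)$ is pushed through $\varphi$ to a cofinal set in $Q$ via the dichotomy, giving $\cof(Q)\le\add(P)$, and symmetrically $\cof(P)\le\add(Q)$. The only cosmetic difference is that the paper first assumes $\add(P)\le\add(Q)$ without loss of generality and then repeats the argument, whereas you chain the two cross inequalities directly with $\add\le\cof$; both are the same idea.
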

 
 \begin{proof} Without loss of generality, $\add(P)\le\add(Q)$. By the definition of the cardinal $\add(P)$, there exists an unbounded set $\{p_\alpha\}_{\alpha\in\add(P)}$ in $P$ of cardinality $\add(P)$.
 
 We claim the set $\{\varphi(p_\alpha)\}_{\alpha\in\add(P)}$ is cofinal in $Q$. Given any $q\in Q$, consider the element $\psi(q)\in P$ and find $\alpha\in\add(P)$ such that $p_\alpha\not\le \psi(q)$. Then our assumption guarantees that $q\le\varphi(p_\alpha)$. Therefore, the set $\{\varphi(p_\alpha)\}_{\alpha\in\add(P)}$ is cofinal  in $Q$ and hence $\cof(Q)\le\add(P)$. 
 
 It follows that $\add(Q)\le\cof(Q)\le\add(P)\le\add(Q)$ and hence $\add(Q)=\add(P)=\cof(Q)\le\cof(P)$. Taking into account that $\add(Q)\le\add(P)$, we can repeat the above argument and prove that $\cof(P)\le\add(Q)$. Consequently,
 $\cof(P)=\add(P)=\add(Q)=\cof(Q)$.
 \end{proof}
  
\begin{lemma}\label{l3} If the product $X\times Y$ of two unbounded balleans $X,Y$ is normal, then the balleans $X$ and $Y$  have bounded growth.
\end{lemma}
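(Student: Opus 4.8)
The plan is to reduce first to a single factor and then to manufacture a growth entourage. By Lemma~\ref{l1} the normality of $X\times Y$ yields functions $\varphi:\mathcal B_X\to\mathcal B_Y$ and $\psi:\mathcal B_Y\to\mathcal B_X$ with the stated alternative, and Lemma~\ref{l2}, applied to the unbounded posets $\mathcal B_X,\mathcal B_Y$, converts them into the equalities $\add(\mathcal B_X)=\cof(\mathcal B_X)=\add(\mathcal B_Y)=\cof(\mathcal B_Y)=:\kappa$ for a regular cardinal $\kappa$. Hence both bornologies carry strictly increasing continuous well-ordered bases $\{C_\alpha\}_{\alpha<\kappa}\subset\mathcal B_X$ and $\{D_\alpha\}_{\alpha<\kappa}\subset\mathcal B_Y$ with $C_0=D_0=\emptyset$ and $C_\lambda=\bigcup_{\alpha<\lambda}C_\alpha$, $D_\lambda=\bigcup_{\alpha<\lambda}D_\alpha$ at limit stages. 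By the symmetry of the hypothesis it suffices to produce a single growth entourage for $X$ in the sense of Definition~\ref{d:bg}.

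Next I would extract a \emph{height function} from normality. Applying it to the coordinate lines $V=\{x_0\}\times Y$ and $H=X\times\{y_0\}$ (whose asymptotic disjointness is the computation opening the proof of Lemma~\ref{l1}) gives disjoint asymptotic neighborhoods $O_V,O_H$. Writing $E[(x,y)]=E_1[x]\times E_2[y]$ and using $E_2[Y]=Y$, $E_1[X]=X$, the neighborhood conditions say that every vertical strip $C\times Y$ ($C\in\mathcal B_X$) lies in $O_V$ modulo a bounded set, and every horizontal strip $X\times D$ ($D\in\mathcal B_Y$) lies in $O_H$ modulo a bounded set. Combined with $O_V\cap O_H=\emptyset$, this shows, on the one hand, that for each $x$ the fiber $O_H^{x}:=\{y:(x,y)\in O_H\}$ is bounded in $Y$ (over any bounded $C\ni x$ the fiber $Y\setminus O_V^{x}$ is bounded, and $O_H^{x}\subset Y\setminus O_V^{x}$), and, on the other hand, that $O_H^{x}$ absorbs every $D_\beta$ once $x$ leaves a bounded set. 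Therefore $\gamma(x):=\min\{\alpha<\kappa:O_H^{x}\subset D_\alpha\}$ is well defined, and the map $\gamma:X\to\kappa$ is simultaneously proper ($\gamma^{-1}[0,\beta)$ is bounded) and bornologous ($\gamma(C_\beta)$ is bounded), faithfully transporting the two matched chains onto the ordinal ruler $\kappa$.

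The decisive step is to upgrade $\gamma$ to a growth entourage. The natural candidate is $G[x]:=C_{g(\gamma(x))}$ for a nondecreasing $g:\kappa\to\kappa$; the first clause of Definition~\ref{d:bg} then holds automatically, since regularity of $\kappa$ keeps $\sup_{\delta<\beta}g(\delta)<\kappa$. The whole difficulty sits in the second clause, the \emph{reach control}: for each $E_X\in\E_X$ the set $\{x:E_X[x]\not\subset C_{g(\gamma(x))}\}$ must be bounded. This is exactly where the single pair $V,H$ is too weak, as it only governs balls centred at $x_0$ and $y_0$; so I expect to need a second, genuinely global use of normality. Concretely, the plan is to separate inside $X\times Y$ the region lying below the $\gamma$-graph from the region lying far above it (the two staircases obtained by comparing the $Y$-height of a point with the $X$-height $\gamma$ of its first coordinate), to invoke Proposition~\ref{p:normchar} to get asymptotically disjoint asymptotic neighborhoods, and to read the bound on $E_X[x]$ off the bounded correction sets these neighborhoods supply; regularity of $\kappa$ is then used once more to amalgamate the per-entourage bounds into one function $g$.

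The main obstacle is precisely this reach control: showing that product normality forces the balls $E_X[x]$ to be dominated, uniformly in $x$ and simultaneously for all $E_X\in\E_X$, by a single height-indexed family $C_{g(\gamma(x))}$ — equivalently that $\cof_*(\E_X,{\Uparrow}\mathcal B_X)=1$. (That the cardinal equalities alone are not enough is clear, since $\omega$ with the maximal coarse structure ${\Uparrow}\mathcal B_\omega$ has $\add(\mathcal B)=\cof(\mathcal B)=\omega$ yet no growth entourage.) Once $X$ is handled, the identical argument with the roles of $X$ and $Y$ interchanged yields bounded growth of $Y$, completing the proof.
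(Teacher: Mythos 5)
Your reduction to the cardinal equalities via Lemmas~\ref{l1} and \ref{l2}, and the resulting continuous well-ordered bases $\{C_\alpha\}_{\alpha<\kappa}$, $\{D_\alpha\}_{\alpha<\kappa}$, match the paper exactly, and you have correctly diagnosed where the real work lies: the ``reach control'' forcing every $E[x]$ under a single height-indexed family. But that is precisely the step you do not carry out. Your proposal stops at a plan (``separate the region below the $\gamma$-graph from the region far above it, invoke Proposition~\ref{p:normchar}, and read off the bound''), without defining the two sets, without verifying their asymptotic disjointness, and without showing how the disjoint asymptotic neighborhoods actually yield the function $g$. Since this is the only nontrivial content of the lemma beyond Lemmas~\ref{l1}--\ref{l2}, the proof as written has a genuine gap.

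Moreover, the pair you propose is not obviously workable. The paper's argument takes $A=\{x_0\}\times Y$ (a single vertical line, not a sub-graph region) against the staircase $B=\bigcup_{\alpha<\kappa}(X\setminus C_\alpha)\times(D_{\alpha+1}\setminus D_\alpha)$, and the proof that $E[A]\cap E[B]$ is bounded hinges on $A$ projecting to the single point $x_0$: a point of $E[A]\cap E[B]$ forces the first coordinate of the witnessing $B$-point into the bounded set $E_1^{-1}[E_1[x_0]]\subset C_\alpha$, and then the staircase structure of $B$ caps its second coordinate by $D_\alpha$. If you thicken $A$ to ``the region below the $\gamma$-graph,'' this mechanism breaks and asymptotic disjointness is no longer clear. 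The extraction of the growth entourage is then the observation that $C_{\alpha+1}\times(Y\setminus D_{f(\alpha)})\subset O_A$ for some increasing $f:\kappa\to\kappa$, while for each $E_Y$ one has $\{b_1\}\times E_Y[b_2]\subset O_B$ for all $(b_1,b_2)\in B$ off a bounded box; disjointness of $O_A$ and $O_B$ then pins $E_Y[y]\subset D_{f(\beta)}$ for $y\in D_{\beta+1}\setminus D_\beta$, giving $G_Y=\bigcup_{\alpha<\kappa}(D_{\alpha+1}\setminus D_\alpha)\times D_{f(\alpha)}$. Your preliminary ``height function'' $\gamma$ built from $V=\{x_0\}\times Y$ and $H=X\times\{y_0\}$ is a detour the paper does not need; the single application of normality to the pair $(A,B)$ above does all the work at once.
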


\begin{proof} By Lemmas~\ref{l1} and \ref{l2}, $\add(\mathcal B_X)=\cof(\mathcal B_X)= \add(\mathcal B_Y)=\cof(\mathcal B_Y)=\kappa$ for some infinite regular cardinal $\kappa$. This implies that the bornologies $\mathcal B_X$ and $\mathcal B_Y$ have bases $\{C_\alpha\}_{\alpha\in\kappa}\subset \mathcal B_X$ and $\{D_\alpha\}_{\alpha\in\kappa}\subset\mathcal B_Y$ such that
\begin{itemize}
\item $C_0=\emptyset$ and $D_0=\emptyset$;
\item $C_\alpha\subset C_\beta\subset X$ and $D_\alpha\subset D_\beta\subset Y$ for any $\alpha<\beta<\kappa$;
\item $C_{\alpha+1}\ne C_\alpha$ and $D_{\alpha+1}\ne D_\alpha$ for every ordinal $\alpha\in\kappa$;
\item $C_\alpha=\bigcup_{\beta<\alpha}C_\beta$ and $D_\alpha=\bigcup_{\beta<\alpha}D_\beta$ for any limit ordinal $\alpha<\kappa$.
\end{itemize}

Fix any point $x_0\in X$ and consider the sets $A=\{x_0\}\times Y$ and 
$$B:=\bigcup_{\alpha<\kappa}(X\setminus C_\alpha)\times(D_{\alpha+1}\setminus D_\alpha)$$
in $X\times Y$. We claim that these sets are asymptotically disjoint. Given any entourage $E\in\E_{X\times Y}$, we need to check that the intersection $E[A]\cap E[B]$ is bounded. By definition of the ball structure $\E_{X\times Y}$ there exist two entourages $E_1\in\mathcal E_X$ and $E_2\in\mathcal E_Y$ such that $E[(x,y)]=E_1[x]\times E_2[y]$ for any $(x,y)\in X\times Y$. 

Observe that the set $E_1^{-1}[E_1[x_0]]\subset X$ is bounded  and hence is contained in some bounded set $C_\alpha$, $\alpha<\kappa$. We claim that $E[A]\cap E[B]\subset E[C_\alpha\times D_{\alpha}]$. Given any point $(x,y)\in E[A]\cap E[B]$, find points $a=(x_0,a_2)\in A$ and $b=(b_1,b_2)\in B$ such that $$(x,y)\in E[a]\cap E[b]=(E_1[x_0]\times E_1[a_2])\cap (E_1[b_1]\times E_2[b_2]).$$
It follows from $x\in E_1[x_0]\cap E_1[b_1]$ that $b_1\in E^{-1}[E_1[x_0]]\subset C_\alpha$. Find a unique ordinal $\beta<\alpha$ such that $b_2\in D_{\beta+1}\setminus D_\beta$.  The definition of the set $B$ guarantees that $b_1\in X\setminus C_\beta$ and hence $x\in C_\alpha\setminus C_\beta$, which implies that $\beta<\alpha$ and $(b_1,b_2)\in C_\alpha\times D_\alpha$. Finally, $x\in E[b]\subset E[C_\alpha\times D_\alpha]$ and hence the set $E[A]\cap E[B]$ is bounded.

By the normality of the ballean $X\times Y$, the asymptotically disjoint sets $A$ and $B$ have disjoint asymptotic neighborhoods $O_A$ and $O_B$. Then for every $\alpha\in\kappa$ we can find an ordinal $f(\alpha)\in [\alpha,\kappa)$ such that $C_{\alpha+1}\times (Y\setminus D_{f(\alpha)})\subset O_A$. Since the cardinal $\kappa$ is regular, we can assume that the function $f:\kappa\to\kappa$ is increasing.

We claim that the entourage
$$G_Y:=\bigcup_{\alpha\in\kappa}(D_{\alpha+1}\setminus D_\alpha)\times D_{f(\alpha)}$$ witnesses that the ballean $Y$ has bounded growth.

Given any bounded set $D\subset Y$, find $\alpha\in\kappa$ such that $D\subset D_{\alpha+1}$ and conclude that $G_Y[D]\subset G_Y[D_{\alpha+1}]\subset D_{f(\alpha)}$, which means that the set $G_Y[D]$ is bounded in $Y$.

Next, given any entourage $E_Y\in\mathcal E_Y$ we can find an ordinal $\alpha\in\kappa$ such that $\{b_1\}\times E_Y[b_2]\subset O_B$ for any point $(b_1,b_2)\in B\setminus (C_\alpha\times D_\alpha)$. We claim that for any point $y\in Y\setminus D_\alpha$ we have  $E_Y[y]\subset G_Y[y]$. Given any $y\in Y\setminus D_\alpha$, find a unique ordinal $\beta\ge \alpha$ such that $y\in D_{\alpha+1}\setminus D_\alpha$ and choose any point $x\in C_{\alpha+1}\setminus C_\alpha$. The definition of the set $B$ guarantees that the pair $(x,y)$ belongs $B$ and hence the set $\{x\}\times E_Y[y]\subset O_B$ is disjoint with the asymptotic neighborhood $O_A$ of $A$. 
Since $\{x\}\times (Y\setminus D_{f(\alpha)})\subset O_A$, the sets $E_Y[y]$ and $Y\setminus D_{f(\alpha)}$ are disjoint, which means that $E_Y[y]\subset D_{f(\alpha)}=G_Y[y]$.

Therefore, $G_Y$ is a growth entourage, witnessing that the ballean $Y$ has bounded growth. By analogy we can prove that the ballean $X$ has bounded growth.
\end{proof}

\section{Discrete and ultradiscrete balleans}

Let us recall that a ballean $X$ is {\em discrete} if for any entourage $E\in\E_X$ there exists a bounded set $B_E\subset X$ such that $E[x]=\{x\}$ for all $x\in X\setminus B_E$. 

Each discrete ballean $(X,\E_X)$ has bounded growth as $$\cof_*(\E_X,{\Uparrow}\mathcal B_X)\le\cof_*(\E_X)\le|\{\Delta_X\}|=1.$$ Since $\cof_\star(\E_X)=\cof_*(\E_X)=1\le\add(\mathcal B_X)$, discrete balleans are normal, according to Theorem~\ref{t:prot2}.

\begin{proposition}\label{p:ultra1} For a discrete ballean $X$ the following conditions are equivalent:
\begin{enumerate}
\item the ballean $X$ is ultranormal;
\item the bornology $\mathcal B_X$ of $X$ is a maximal ideal on $X$;
\item the family $\{X\setminus B:B\in\mathcal B_X\}$ is an ultrafilter on $X$.
\end{enumerate}
\end{proposition}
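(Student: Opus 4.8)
The plan is to first reduce asymptotic disjointness to a purely bornological condition using discreteness, and then to translate ultranormality and maximality into elementary complementation statements.

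The key preliminary step is to show that in a discrete ballean two subsets $A,B\subset X$ are asymptotically disjoint if and only if $A\cap B$ is bounded. For this I fix an entourage $E\in\E_X$ and use discreteness to pick a bounded set $B_E$ with $E[x]=\{x\}$ for all $x\in X\setminus B_E$; then $E[A]\subset A\cup E[B_E]$ and $E[B]\subset B\cup E[B_E]$, so that $E[A]\cap E[B]\subset (A\cap B)\cup E[B_E]$. Since $E[B_E]$ is bounded, $E[A]\cap E[B]$ is bounded whenever $A\cap B$ is; conversely $A\cap B\subset E[A]\cap E[B]$ because $\Delta_X\subset E$, so boundedness of the intersection of the $E$-neighborhoods forces boundedness of $A\cap B$. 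As this holds for every $E\in\E_X$, asymptotic disjointness of $A,B$ is equivalent to $A\cap B\in\mathcal B_X$.

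Granting this, I prove $(1)\Leftrightarrow(2)$. Note first that, $X$ being unbounded, its bornology $\mathcal B_X$ is a proper ideal, so $(2)$ amounts to maximality alone. For $(1)\Ra(2)$: given $A\subset X$ with $A\notin\mathcal B_X$ and $X\setminus A\notin\mathcal B_X$, the unbounded sets $A$ and $X\setminus A$ have empty, hence bounded, intersection, so by the preliminary step they are asymptotically disjoint, contradicting ultranormality; thus $A\in\mathcal B_X$ or $X\setminus A\in\mathcal B_X$. For $(2)\Ra(1)$: let $A,B$ be asymptotically disjoint, so $A\cap B$ is bounded; if $A$ were unbounded, then maximality yields $X\setminus A\in\mathcal B_X$, whence $B=(B\cap A)\cup(B\cap(X\setminus A))$ is a union of two bounded sets and is itself bounded, so $X$ contains no unbounded asymptotically disjoint pair.

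Finally $(2)\Leftrightarrow(3)$ is the standard duality between ideals and filters on a set: the family $\{X\setminus B:B\in\mathcal B_X\}$ is a filter precisely because $\mathcal B_X$ is a proper ideal, and it is an ultrafilter exactly when for every $A\subset X$ either $A\in\mathcal B_X$ or $X\setminus A\in\mathcal B_X$, which is the maximality asserted in $(2)$. The only genuinely ballean-theoretic input is the preliminary characterization of asymptotic disjointness; everything afterward is Boolean manipulation. I therefore expect that first step --- correctly exploiting discreteness to absorb the $E$-neighborhoods into a bounded error term --- to be the crux of the argument.
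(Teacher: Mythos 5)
Your proposal is correct and follows essentially the same route as the paper: both arguments hinge on the fact that in a discrete ballean asymptotic disjointness reduces to having bounded intersection, and then the equivalence with maximality of $\mathcal B_X$ (and the ideal--filter duality for $(2)\Leftrightarrow(3)$) is elementary Boolean manipulation. Your explicit preliminary lemma merely packages what the paper uses implicitly in each direction, so there is no substantive difference.
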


\begin{proof} The equivalence $(2)\Leftrightarrow(3)$ is trivial. 
\smallskip

To prove that $(1)\Ra(2)$, assume that the bornology $\mathcal B_X$ of $X$ is not a maximal ideal on $X$. Then there exists an unbounded set $A\subset X$ such that $A\cup B\ne X$ for all $B\in\mathcal B_X$. In particular, $X\setminus A\notin\mathcal B_X$, which means that $A$ and $X\setminus A$ are two disjoint unbounded sets in $X$. Since the ballean $X$ is discrete, these disjoint sets are asymptotically disjoint in $X$. Therefore the ballean $X$ is not ultranormal.
\smallskip

To prove that $(2)\Ra(1)$, assume that the ballean $X$ is not ultranormal. 
Then $X$ contains two asymptotically disjoint unbounded sets $A,A'\subset X$. Then  $$\I=\{I\subset X:\exists B\in\mathcal B_X\;\;I\subset A\cup B\}\supset\mathcal B_X$$ is an ideal of sets on $X$, witnessing that the bornology $\mathcal B_X$ is not a maximal ideal on $X$.
\end{proof}

We recall that a ballean is {\em ultradiscrete} if it is discrete and ultranormal.

\begin{proposition}\label{p:ultra} For any ultradiscrete ballean $(X,\E_X)$ we have
$\add(\mathcal B_X)<\cof(\mathcal B_X)$.
\end{proposition}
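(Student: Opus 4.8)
The plan is to argue by contradiction. Since $\add(\mathcal B_X)\le\cof(\mathcal B_X)$ always holds, it suffices to rule out the equality $\add(\mathcal B_X)=\cof(\mathcal B_X)$. As recorded in the introduction, this equality is equivalent to the bornology $\mathcal B_X$ having a well-ordered base; so I would fix the regular cardinal $\kappa:=\cof(\mathcal B_X)$ and a well-ordered base $\{B_\alpha\}_{\alpha<\kappa}$ of $\mathcal B_X$, thinned out so as to be strictly increasing with $B_0=\emptyset$. Since $X$ is unbounded, $\kappa$ is infinite, and since every singleton is bounded we have $\bigcup_{\alpha<\kappa}B_\alpha=X$.

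The key structural input is ultranormality. Because $X$ is ultradiscrete, Proposition~\ref{p:ultra1} tells us that $\mathcal B_X$ is a maximal ideal on $X$, so that $\mathcal F:=\{X\setminus B:B\in\mathcal B_X\}$ is an ultrafilter; in particular, for every subset $S\subset X$ exactly one of the alternatives $S\in\mathcal B_X$ or $S\in\mathcal F$ occurs. I would then exploit the strictly increasing chain to build a \emph{selector}: setting $C_\alpha:=B_{\alpha+1}\setminus B_\alpha\ne\emptyset$ and choosing a point $s_\alpha\in C_\alpha$ for each $\alpha<\kappa$, one checks the crucial equivalence $s_\alpha\in B_\gamma\Leftrightarrow\alpha<\gamma$ (using $C_\alpha\subset B_{\alpha+1}\subset B_\gamma$ when $\alpha<\gamma$, and $C_\alpha\cap B_\gamma=\emptyset$ when $\alpha\ge\gamma$). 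From this it follows that for any index set $T\subset\kappa$ the set $\{s_\alpha:\alpha\in T\}$ is bounded in $X$ if and only if $T$ is bounded in $\kappa$.

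To produce the contradiction I would split $\kappa=T_0\sqcup T_1$ into two cofinal pieces (possible as $\kappa$ is an infinite regular cardinal) and consider the disjoint sets $S_i:=\{s_\alpha:\alpha\in T_i\}$. By the equivalence above, neither $S_0$ nor $S_1$ is bounded, so neither lies in the maximal ideal $\mathcal B_X$; hence by maximality both belong to the ultrafilter $\mathcal F$. But $S_0\cap S_1=\emptyset$, which cannot happen in a filter, and this contradiction completes the proof, forcing $\add(\mathcal B_X)<\cof(\mathcal B_X)$.

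The step I expect to require the most care — the main obstacle — is the passage from the abstract equality $\add(\mathcal B_X)=\cof(\mathcal B_X)$ to a base that is simultaneously well-ordered, strictly increasing, cofinal, and has union $X$, together with the verification that ``index-bounded'' and ``bounded in $X$'' coincide for subsets of the selector. Everything after that is a short ultrafilter computation; the real content is that a linearly ordered base forces the dual ultrafilter, when restricted to the selector, to be governed by the ideal of bounded subsets of $\kappa$, which is never a maximal ideal on an infinite regular cardinal.
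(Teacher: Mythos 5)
Your proof is correct and takes essentially the same route as the paper's: assume $\add(\mathcal B_X)=\cof(\mathcal B_X)$, extract a strictly increasing well-ordered base, and manufacture two disjoint unbounded sets out of the successive differences $B_{\alpha+1}\setminus B_\alpha$, contradicting ultranormality. The only cosmetic differences are that the paper chooses two points in each block rather than splitting the index set into two cofinal pieces, and it phrases the final contradiction directly via asymptotic disjointness of disjoint sets in a discrete ballean instead of via the maximal-ideal/ultrafilter characterization of Proposition~\ref{p:ultra1} --- which is the same fact.
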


\begin{proof}  Assuming that $\cof(\mathcal B_X)=\add(\mathcal B_X)$, we conclude that the bornology $\mathcal B_X$ has a well-ordered base $(B_\alpha)_{\alpha\in\kappa}$ of cardinality $\kappa=\add(\mathcal B_X)=\cof(\mathcal B_X)$. Replacing $(B_\alpha)_{\alpha\in\kappa}$ by a cofinal subsequence, we can assume that for every $\alpha<\kappa$ the set $B_{\alpha+1}\setminus B_\alpha$ contains two distinct points $y_\alpha,z_\alpha$. Then $Y:=\{y_\alpha\}_{\alpha<\kappa}$ and $Z=\{z_\alpha\}_{\alpha<\kappa}$ are two disjoint unbounded sets in $X$. Since the ballean $X$ is discrete, the disjoint sets $Y,Z$ are asymptotically disjoint, which is not possible as $(X,\E_X)$ is ultranormal.
\end{proof}

\begin{proposition}\label{p:ultra3n} For any ultradiscrete ballean $(X,\E_X)$ its coarse structure ${\downarrow}\E_X$ coincides with the universal coarse structure ${\Uparrow}\mathcal B_X$ of its bornology $\mathcal B_X$.
\end{proposition}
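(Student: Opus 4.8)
The plan is to prove the two inclusions separately; the inclusion ${\downarrow}\E_X\subseteq{\Uparrow}\mathcal B_X$ is the easy one and holds for every ballean. Indeed, as noted right after the definition of the universal coarse structure, ${\Uparrow}\mathcal B_X$ is a coarse structure containing the ball structure $\E_X$, so it also contains the coarse structure ${\downarrow}\E_X$ generated by $\E_X$. The whole content is therefore the reverse inclusion ${\Uparrow}\mathcal B_X\subseteq{\downarrow}\E_X$, and here is where ultradiscreteness must be used.

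To prove it, I would fix $G\in{\Uparrow}\mathcal B_X$ and set $A:=\{x\in X:G[x]\ne\{x\}\}$. The key reduction is that it suffices to show that $A$ is bounded. Indeed, if $A\in\mathcal B_X$, then $G[A]\in\mathcal B_X$ by the definition of ${\Uparrow}\mathcal B_X$, so $B:=A\cup G[A]$ is bounded; and for any $(x,y)\in G$ with $x\ne y$ we have $x\in A$ (as $y\in G[x]$ and $y\ne x$) together with $y\in G[x]\subseteq G[A]$, whence $(x,y)\in B\times B$. Thus $G\subseteq\Delta_X\cup(B\times B)$, which means $G\in{\downarrow}\E_X$ by the description of the coarse structure of a discrete ballean recalled after Definition~\ref{d:dis}.

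So I would assume, for a contradiction, that $A$ is unbounded, and choose (by the Axiom of Choice) a fixed-point-free selector $f\colon A\to X$ with $f(x)\in G[x]\setminus\{x\}$ for every $x\in A$. The main obstacle, and the one genuinely combinatorial step, is to split $A$ into finitely many \emph{$f$-free} pieces. For this I would consider the undirected graph on $A$ with an edge $\{x,f(x)\}$ for each $x\in A$ with $f(x)\in A$; orienting each edge as $x\to f(x)$ shows that every vertex has out-degree at most $1$, so every finite subgraph has at most as many edges as vertices and hence a vertex of degree $\le 2$. Finite subgraphs are therefore $3$-colorable, and the De Bruijn--Erd\H os theorem (legitimate here, since Zorn's Lemma is already in use) yields a $3$-coloring of the whole graph. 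This produces a partition $A=A_1\cup A_2\cup A_3$ with $f(x)\notin A_i$ whenever $x\in A_i$, that is, $A_i\cap f(A_i)=\emptyset$ for each $i\le 3$.

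Finally I would extract the contradiction. Since $A$ is unbounded and $\mathcal B_X$ is an ideal, some $A_i$ is unbounded. Because $X$ is ultradiscrete, its bornology $\mathcal B_X$ is a maximal ideal by Proposition~\ref{p:ultra1}, so $X\setminus A_i$ is bounded. As $f(A_i)\subseteq X\setminus A_i$, the set $f(A_i)$ is bounded, and then $G^{-1}[f(A_i)]$ is bounded, again by the definition of ${\Uparrow}\mathcal B_X$. But $(x,f(x))\in G$ gives $x\in G^{-1}[f(x)]$ for every $x\in A_i$, so $A_i\subseteq G^{-1}[f(A_i)]$ is bounded, contradicting the choice of $A_i$. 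This contradiction shows that $A$ is bounded, completing the proof of ${\Uparrow}\mathcal B_X\subseteq{\downarrow}\E_X$ and hence of the proposition.
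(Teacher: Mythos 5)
Your proof is correct, and the easy inclusion, the reduction to the boundedness of the ``support'' set, and the final contradiction via maximality of the ideal $\mathcal B_X$ (Proposition~\ref{p:ultra1}) and boundedness of $G^{-1}[f(A_i)]$ all match the paper's overall strategy. Where you genuinely diverge is in the combinatorial core, namely how to produce an unbounded set $S$ with $S\cap f(S)=\emptyset$. The paper first symmetrizes the entourage and then uses the Kuratowski--Zorn Lemma to pick a maximal subset $C$ of the support set whose $E$-balls are pairwise disjoint: maximality forces $C$ to be unbounded (since otherwise the support set would lie in the bounded set $E^{-1}[E[C]]$), and the disjointness of the balls gives $C\cap f(C)=\emptyset$ automatically, with no coloring needed. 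You instead keep the whole support set $A$ and split it as $A=A_1\cup A_2\cup A_3$ with $A_i\cap f(A_i)=\emptyset$ by properly $3$-coloring the functional graph of $f$, which is $2$-degenerate, and invoking the De Bruijn--Erd\H os compactness theorem; one of the three pieces must then be unbounded because $\mathcal B_X$ is an ideal. Both arguments are sound and use comparable amounts of choice (Zorn versus the compactness theorem for colorings). The paper's route is a bit leaner in that it avoids graph colorings altogether and needs no case splitting into three pieces, at the price of the small extra argument that the maximal separated set is unbounded; your route avoids that argument and avoids symmetrizing the entourage, at the price of importing the De Bruijn--Erd\H os theorem. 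Either version is a complete proof of the proposition.
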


\begin{proof} It is clear that ${\downarrow}\E_X\subset{\Uparrow}\mathcal B_X$. Assuming that ${\downarrow}\mathcal E_X\ne{\Uparrow}\mathcal B_X$, we can find an entourage $E=E^{-1}\in {\Uparrow}\mathcal B_X\setminus{\downarrow}\mathcal E_X$. Since $E\notin {\downarrow}\mathcal E_X$, the set $B=\{x\in X:\{x\}\ne E[x]\}$ is unbounded. Let $C\subset B$ be a maximal set such that $E[x]\cap E[y]=\emptyset$ for any distinct points $x,y\in C$. The maximality of $C$ in the unbounded set $B$ implies that the set $C$ is unbounded.

By the definition of the set $B\supset C$ there exists a function $f:C\to X$ such that $f(x)\in E[x]\setminus\{x\}$ for all $x\in C$. The choice of $C$ guarantees that the sets $C$ and $f(C)$ are disjoint. Since $\mathcal B_X$ is a maximal ideal (by Proposition~\ref{p:ultra}), $C\notin\mathcal B_X$ implies that $X\setminus C\supset f(C)$ belongs to $\mathcal B$ and hence is bounded. Then $C\subset E^{-1}[f(C)]$ also is bounded, which is a desired contradiction.
\end{proof}

\begin{example}\label{ex:ultra} There exists a discrete ballean $(X,\mathcal E_X)$ which is not ultranormal but has $\E_X={\Uparrow}\mathcal B_X$.
\end{example}

\begin{proof} Using the Kuratowski-Zorn Lemma, enlarge the ideal $[X]^{\le\w}$ of at most countable sets on the ordinal $X=\w_1$ to a maximal ideal $\I$ on $X$. Fix also any maximal ideal $\J$ on the ordinal $\w\subset \w_1$. 

Consider the subideal $\mathcal B_X=\{B\in \I:B\cap\w\in\J\}$ of the ideal $\I$ and let $\E_X$ be the discrete coarse structure on $X=\w_1$ generated by the base $\{(B\times B)\cup\Delta_X:B\in\mathcal B_X\}$. The discrete coarse space $(X,\E_X)$ is not ultradiscrete since its bornology $\mathcal B_X$ is not a maximal ideal on $X$.

We claim that ${\Uparrow}\mathcal B_X=\E_X$. In the opposite case, we can fix an entourage $E\in {\Uparrow}\mathcal B_X\setminus\E_X$. Replacing $E$ by $E\cup E^{-1}$ we can assume that $E=E^{-1}$. For this entourage the set $A=\{x\in X:\{x\}\ne E[x]\}$ does not belong to the bornology $\mathcal B_X$. Using the Kuratowski-Zorn's Lemma, choose a maximal subset $A'\subset A$ such that $E[x]\cap E[y]=\emptyset$ for any distinct points $x,y\in A'$. By the maximality, the set $A'$ is unbounded in $(X,\E_X)$. 

By the definition of the set $A\supset A'$, there exists a function $f:A'\to X$ such that $f(x)\in E[x]\setminus \{x\}$ for every $x\in A'$. The property of the set $A'$ implies that the function $f$ is injective and its image $f(A')$ is disjoint with $A'$.
Since the set $A'\subset E^{-1}[f(A')]$ is unbounded, the set $f(A')$ is unbounded too. Let $M\subset A'$ be a maximal subset such that the family $(E[f(x)])_{x\in M}$ is disjoint. The maximality of $M$ and unboundedness of $f(A')$ imply that the set $f(M)\subset E[M]$ is unbounded and so is the set $M$.

Since each of the  families $(E[x])_{x\in M}$ and $(E[f(x)])_{x\in M}$ is disjoint, there exists a countable subset $M_0\subset M$ such that $\w\cap E[x]=\emptyset=\w\cap E[f(x)]$ for all $x\in M\setminus M_0$. 
Then $M\setminus M_0$ and $f(M\setminus M_0)$ are two disjoint subsets of the set $\w_1\setminus \w$. Since $\I$ is a maximal ideal, either $M\setminus M_0\in \I$ or $f(M\setminus M_0)\in\I$. 

If $M\setminus M_0\in\I$ then $M\setminus M_0\in\mathcal B_X$ (as $\w\cap(M\setminus M_0)=\emptyset\in\J)$. If $f(M\setminus M_0)\in\I$, then $f(M\setminus M_0)\in\mathcal B_X$ (as $\w\cap f(M\setminus M_0)=\emptyset\in\J$) and then $M\setminus M_0\subset E^{-1}[f(M\setminus M_0)]\in\mathcal B_X$. 

In both cases we conclude that $M\setminus M_0\in\mathcal B_X$
 and hence $M_0\notin\mathcal B_X$ and $\w\cap M_0\notin\J$. It follows from $M_0\subset E^{-1}[f(M_0)]$ that $f(M_0)\notin\mathcal B_X$. Taking into account that $f(M_0)\in[X]^{\le\w}\subset \I$, we conclude that $\w\cap f(M_0)\notin \J$. So, $\w\cap M_0$ and $\w\cap f(M_0)$ are two disjoint subsets of $\w$ that do not belong to the ideal $\J$, which contradicts the maximality of the ideal $\J$.
\end{proof}


Finally, we present a simple example of a normal ballean which is not $\cof$-regular.

\begin{example}\label{ex:last} Let $\I$ be the ideal of sets $A\subset\omega$ such that $\lim_{n\to\infty}\frac{|A\cap[0,n)|}{n}=0$. On the set $X:=\omega$ consider the ball structure $\E_X:=\{E_{A,n}:A\in\I,\;n\in\omega\}$ consisting of entourages
$$E_{A,n}:=\Delta_X\cup\{(a,b)\in A\times \omega:|b-a|\le n\}\subset X\times X.$$
It is easy to see that bounded sets in the ballean $(X,\E_X)$ are finite and hence $\add(\mathcal E_X)=\add(\mathcal B_X)=\cof(\mathcal B_X)=\w$. On the other hand, a simple diagonal argument shows that $\cof(\mathcal E_X)>\w$, which means that the ballean $(X,\E_X)$ is not $\cof$-regular.

To see that the ballean $(X,\E_X)$ is normal, consider the normal ball structure $\mathcal M_X=\{\{(x,y)\in\w\times \w:|x-y|\le n\}:n\in\w\}$ induced by the Euclidean metric on $X=\w$. Now the normality of the ballean $(X,\E_X)$ follows from Proposition~\ref{p:emb} as the identity map from $(X,\E_X)$ to $(X,\mathcal M_X)$ is an asymptotic immersion (but not an asymorphism).
\end{example}



\section{Proof of Theorem~\ref{t:ultranorm}}\label{s:hyper1}

In the proof of Theorem~\ref{t:ultranorm} we shall use two lemmas.






\begin{lemma}\label{l:nik1} Let $X$ be a ballean. For any unbounded family $\A\subset [X]^{\mathcal B}\setminus[X]^{\le 1}$ in $[X]^{\mathcal B}$ there exists an unbounded set $V\subset X$ such that the family $\{A\in\A:A\cap V\ne A\}$ is unbounded in $[X]^{\mathcal B}$.
\end{lemma}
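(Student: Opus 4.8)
The plan is to reduce the hyperballean statement to a statement purely about $X$, and then to argue by a dichotomy. The first step is the observation that a subfamily $\mathcal F\subset[X]^{\mathcal B}$ is bounded in the hyperballean $[X]^{\mathcal B}$ if and only if its union $\bigcup\mathcal F$ is bounded in $X$. One direction is immediate from the definition of $\hat E$: if $\mathcal F\subset\hat E[A_0]$, then $A\subset E[A_0]$ for every $A\in\mathcal F$, so $\bigcup\mathcal F\subset E[A_0]$. For the converse, if $\bigcup\mathcal F\subset E_0[x_0]$, I would take $A_0:=\{x_0\}$ together with an entourage $E$ satisfying $E_0\cup E_0^{-1}\subset E$, and check that $A\subset E[A_0]$ and $A_0\subset E[A]$ for every $A\in\mathcal F$, i.e. $(A_0,A)\in\hat E$. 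Applying this equivalence both to $\A$ and to the target family $\{A\in\A:A\cap V\ne A\}=\{A\in\A:A\not\subset V\}$, the lemma reduces to the following: knowing that $U:=\bigcup\A$ is unbounded in $X$ (hence $X$ itself is unbounded), I must produce an unbounded set $V\subset X$ for which $\bigcup\{A\in\A:A\not\subset V\}$ is unbounded.

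I would then split according to the behaviour of the operator $N(B):=\bigcup\{A\in\A:A\cap B\ne\emptyset\}$ on bounded sets $B$. In the \emph{first case}, some bounded $B$ has $N(B)$ unbounded; here I simply set $V:=X\setminus B$, which is unbounded since $B$ is bounded and $X$ is not, and observe that $\{A\in\A:A\not\subset V\}=\{A\in\A:A\cap B\ne\emptyset\}$ has union $N(B)$ unbounded, as required. Note this case does not use the hypothesis $|A|\ge2$.

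In the \emph{second case}, $N(B)$ is bounded for every bounded $B$, and this is where the assumption $\A\subset[X]^{\mathcal B}\setminus[X]^{\le1}$ enters. First, by transfinite recursion I would build a sequence $(A_\xi)_{\xi<\theta}$ of \emph{pairwise disjoint} members of $\A$ with unbounded union: starting from $W_0=\emptyset$, and given a bounded $W_\xi=\bigcup_{\eta<\xi}A_\eta$, the possibility that every member of $\A$ meets $W_\xi$ would force $U=N(W_\xi)$ to be bounded, a contradiction; so I may pick $A_\xi\in\A$ disjoint from $W_\xi$ (whence $A_\xi\cap A_\eta=\emptyset$ for $\eta<\xi$). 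Since each step adjoins a new point, the recursion cannot run through all ordinals and must halt at the least $\theta$ with $W_\theta:=\bigcup_{\xi<\theta}A_\xi$ unbounded. Because the $A_\xi$ are pairwise disjoint and each has at least two points, I can split every $A_\xi=A_\xi^0\sqcup A_\xi^1$ into two nonempty pieces and set $P:=\bigcup_\xi A_\xi^0$ and $Q:=\bigcup_\xi A_\xi^1$; these are disjoint with $P\sqcup Q=W_\theta$ unbounded, so at least one of them is unbounded. Letting $H'$ be the other one, I would put $V:=X\setminus H'$: the unbounded half is disjoint from $H'$, hence contained in $V$, so $V$ is unbounded, while every $A_\xi$ meets $H'$ and therefore satisfies $A_\xi\not\subset V$, giving $\bigcup\{A\in\A:A\not\subset V\}\supset W_\theta$ unbounded.

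The routine parts are the two boundedness equivalences and the verifications $A\not\subset X\setminus B\iff A\cap B\ne\emptyset$. The substantive idea is the dichotomy together with the disjointification recursion in the second case, and I expect the main obstacle to be precisely the guarantee that this recursion terminates with an \emph{unbounded} union: disjointness is what makes the union grow, and it is available exactly because in the second case $N$ carries bounded sets to bounded sets, so one can always escape the bounded region accumulated so far. The final splitting of each $A_\xi$ into two nonempty halves, which is possible only because $|A_\xi|\ge2$, is what simultaneously keeps one unbounded half inside $V$ and forces every selected set to stick out of $V$.
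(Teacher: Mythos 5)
Your proof is correct. It rests on the same core mechanism as the paper's: a transfinite selection of members of $\mathcal A$ that keep contributing fresh points, a two-colouring of those fresh points (possible because $|A|\ge 2$), and the observation that of two complementary sets covering an unbounded set one must be unbounded. The organization, however, is genuinely different. The paper well-orders all of $\mathcal A$, passes to the subfamily $\tilde{\mathcal A}$ of sets not covered by their predecessors (so that $\bigcup\tilde{\mathcal A}=\bigcup\mathcal A$ remains unbounded for free), and in a single recursion chooses $W_\alpha\subset A_\alpha\setminus\bigcup_{\beta<\alpha}A_\beta$ so that $W=\bigcup_\alpha W_\alpha$ meets every $A_\alpha$ in a nonempty proper subset; $V$ is then whichever of $W$ and $X\setminus W$ is unbounded. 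You instead open with the dichotomy on $N(B)=\bigcup\{A\in\mathcal A:A\cap B\ne\emptyset\}$. The first branch is settled by $V=X\setminus B$ with no use of $|A|\ge 2$ at all; the second branch buys you a pairwise disjoint subfamily $\{A_\xi\}$ with unbounded union, which makes the halving of each $A_\xi$ and the verification that each $A_\xi$ sticks out of $V$ cleaner than the paper's bookkeeping with overlapping sets --- at the cost of having to argue separately that the disjoint recursion reaches an unbounded union, which is exactly what the case-2 hypothesis on $N$ supplies. The two arguments are comparable in length and difficulty; yours has the small advantage of isolating precisely where the hypothesis $\mathcal A\subset[X]^{\mathcal B}\setminus[X]^{\le 1}$ is actually needed.
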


\begin{proof} Since the family $\A$ is unbounded in $[X]^{\mathcal B}$, its union $\bigcup\A$ is unbounded in $X$. Fix any well-order $\le$ on the set $\A$ and consider the subfamily $\tilde \A:=\{A\in\A:A\not\subset \bigcup_{B<A}B\}$. The family $\tilde \A$ can be written as $\tilde\A=\{A_\alpha\}_{\alpha\in\kappa}$ for some ordinal $\kappa$ such that $A_\alpha\not\subset\bigcup_{\beta<\alpha}A_\beta$ for all $\alpha\in\kappa$. Since the union $\bigcup_{\alpha<\kappa}A_\alpha=\bigcup\A$ in unbounded in $X$, the family $\{A_\alpha\}_{\alpha<\kappa}$ is unbounded in $[X]^{\mathcal B}$. 

By induction, for every $\alpha<\kappa$ we can choose a set $W_\alpha\subset A_\alpha\setminus\bigcup_{\beta<\alpha}A_\beta$ such that $\emptyset\ne \bigcup_{\beta\le \alpha}W_\beta\cap A_\alpha\ne A_\alpha$. The choice of $W_\alpha$ is possible as $|A_\alpha|\ge 2$.

Put $W:=\bigcup_{\alpha<\kappa}W_\alpha$ and observe that $\emptyset\ne W\cap A_\alpha\ne A_\alpha$ and $(X\setminus W)\cap A_\alpha\ne A_\alpha$ for every $\alpha<\kappa$. Since $X$ is unbounded, either $W$ or $X\setminus W$ is unbounded. If $W$ is unbounded, then put $V:=W$. Otherwise, put $V:=X\setminus W$. 

It follows that the family $\{A\in\A:A\cap V\ne A\}$ contains the unbounded family $\{A_\alpha\}_{\alpha<\kappa}$ and hence is unbounded in $[X]^{\mathcal B}$.
\end{proof} 

\begin{lemma}\label{l:nik2} Let $X$ be an ultradiscrete ballean and $n\in\mathbb N$. For any  unbounded family $\A\subset[X]^{\le n}$ there exists an unbounded set $V\subset X$ such that the family $\{A\in\A:|A\cap V|\le 1\}$ is unbounded in $[X]^{\mathcal B}$.
\end{lemma}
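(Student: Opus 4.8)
The plan is to prove the lemma by induction on $n$, using Lemma~\ref{l:nik1} as the engine that strips off one point at a time, and the maximality of the bornology of an ultradiscrete ballean (Proposition~\ref{p:ultra1}) as the structural fact that makes the recursion close up. For $n=1$ every $A\in\A$ has $|A|\le 1$, so $V=X$ works (and $X$ is unbounded because it is discrete). So I would assume the statement holds for $n-1$ and take an unbounded family $\A\subseteq[X]^{\le n}$ with $n\ge 2$.

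First I would dispose of small sets: if $\{A\in\A:|A|\le 1\}$ is unbounded in $[X]^{\mathcal B}$, then $V=X$ already works, so I may assume the subfamily $\A'=\{A\in\A:|A|\ge 2\}$ is unbounded. Applying Lemma~\ref{l:nik1} to $\A'$ yields an unbounded set $V_1\subseteq X$ for which $\A_1=\{A\in\A':A\cap V_1\ne A\}$ is unbounded; in particular $|A\cap V_1|\le |A|-1\le n-1$ for each $A\in\A_1$. The decisive observation is that, since $V_1$ is unbounded and $\mathcal B_X$ is a maximal ideal, the complement $X\setminus V_1$ is bounded. Now if $\{A\in\A_1:|A\cap V_1|\le 1\}$ is unbounded, then $V=V_1$ finishes the proof; otherwise the subfamily $\A_2=\{A\in\A_1:|A\cap V_1|\ge 2\}$ is unbounded.

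Next I would pass to the family of traces $\mathcal C=\{A\cap V_1:A\in\A_2\}\subseteq[X]^{\le n-1}$. Using that $X\setminus V_1\in\mathcal B_X$, one checks the inclusion $\bigcup\A_2\subseteq(\bigcup\mathcal C)\cup(X\setminus V_1)$, so $\bigcup\mathcal C$ must be unbounded and hence $\mathcal C$ is unbounded in $[X]^{\mathcal B}$. The induction hypothesis applied to $\mathcal C\subseteq[X]^{\le n-1}$ then produces an unbounded set $V\subseteq X$ with $\{C\in\mathcal C:|C\cap V|\le 1\}$ unbounded. Setting $V':=V\cap V_1$ (again unbounded, because $X\setminus V_1$ is bounded) I get $A\cap V'=(A\cap V_1)\cap V$ for every $A\in\A_2$, so $\{A\in\A_2:|A\cap V'|\le 1\}$ is unbounded, which is exactly the desired conclusion.

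The main obstacle, and the only place where ultradiscreteness is genuinely used, is guaranteeing that the reduced family $\mathcal C$ stays unbounded after intersecting with $V_1$: this is precisely where the maximal-ideal property (hence $X\setminus V_1\in\mathcal B_X$) makes the induction go through, and it is also what lets me replace the output $V$ by $V'=V\cap V_1$ without disturbing the relevant intersections. The remaining points are routine: that a family in $[X]^{\mathcal B}$ is unbounded if and only if its union is unbounded in $X$ (already used implicitly in Lemma~\ref{l:nik1}), and that removing a bounded set from an unbounded set leaves it unbounded.
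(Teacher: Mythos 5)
Your proof is correct and follows essentially the same route as the paper's: induction on $n$, with Lemma~\ref{l:nik1} peeling off a point, the maximal-ideal property of the bornology of an ultradiscrete ballean guaranteeing that $X\setminus V_1$ is bounded so that the trace family stays unbounded, and a final intersection $V\cap V_1$ to close the induction. The only (cosmetic) difference is that you discard the sets with $|A\cap V_1|\le 1$ before passing to traces, whereas the paper simply removes the empty trace; both handle the same issue.
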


\begin{proof} If $n=1$, then the unbounded set $V=X$ has the required property.
Assume that the statement of the lemma has been proved for some $n\in\mathbb N$.
Take any unbounded family $\A\subset [X]^{\le n{+}1}$. If $\A\cap[X]^{\le 1}$ is unbounded, then the set $V=X$ has the required property: the family $\{A\in\A:|A\cap V|\le 1\}\supset\A\cap [X]^{\le 1}$ is unbounded.

So, we assume that the set $\A\cap [X]^{\le 1}$ is bounded and then the family $\A\setminus [X]^{\le 1}$ is unbounded. By Lemma~\ref{l:nik1}, there exists an unbounded set $U\subset X$ such that the family $\A_U:=\{A\in\A\setminus[X]^{\le 1}:A\cap U\ne A\}$ is unbounded in $[X]^{\mathcal B}$.

We claim that the family $\A'_U:=\{A\cap U:A\in \A_U\}\setminus\{\emptyset\}$ is unbounded in $[X]^{\mathcal B}$. Assuming that the family $\A'_U$ is bounded, we conclude that its union $\bigcup\A'_U$ is bounded in $X$. Since the ballean $X$ is ultradiscrete, the complement $X\setminus U$ of the unbounded set $U$ is bounded. Then the set $B=(\bigcup\A')\cup(X\setminus U)$ is bounded in $X$ and the family $\A_U\subset\{A\in[X]^{\mathcal B}:A\subset B\}$ in bounded in $[X]^{\mathcal B}$, which is a contradiction. This contradiction shows that the family $\A_U'$ is unbounded in $[X]^{\mathcal B}$. Since $\A_U'\subset [X]^{\le n}$, we can apply the inductive assumption and find an unbounded set $V\subset X$ such that the family $\{A\in\A'_U:|A\cap V|\le 1\}$ is unbounded in $[X]^{\mathcal B}$. Since the ballean $X$ is ultradiscrete, the intersection $U\cap V$ is unbounded. So we can replace $V$ by $V\cap U$ and assume that $V\subset U$. Then the family $\A'=\{A\in\A:|A\cap V|\le 1\}$ has unbounded union  $\bigcup\A'\supset\bigcup\A'_U$ and hence is unbounded in $[X]^{\mathcal B}$.
\end{proof}

Now we can present the {\em proof of Theorem~\ref{t:ultranorm}}. Given an ultradiscrete ballean $X$, we should prove that for every $n\ge 2$ the power $X^n$ is not normal but the hypersymmetric power $[X]^{\le n}$ is normal.

Theorem~\ref{t:main} and Proposition~\ref{p:ultra} imply that the square $X^2$ of $X$ is not normal. Since $X^2$ admits an asymorphic embedding into $X^n$, the ballean $X^n$ is not normal, too.
\smallskip

To show that the hypersymmetric power $[X]^{\le n}$ of $X$ is normal, fix any asymptotically disjoint unbounded sets $\A_1,\A_2\subset [X]^{\le n}$.

First we show that for some $i\in\{1,2\}$ the set $\A_i$ is {\em escaping} in the sense that for any bounded set $B\subset X$ the set $\{A\in\A_i:B\cap A\ne\emptyset\}$ is bounded in $[X]^{\mathcal B}$. 

To derive a contradiction, assume that none of the sets $\A_1,\A_2$ is escaping. In this case we can find a bounded set $B\subset X$ such that for every $i\in\{1,2\}$ the set $\A_i'=\{A\in\A_i:A\cap B\ne\emptyset\}$ is unbounded. By Lemma~\ref{l:nik2}, there exists an unbounded set $V_i\subset X$ such that the family $\A_i''=\{A\in\A_i':|A\cap V_i|\le 1\}$ is unbounded in $[X]^{\mathcal B}$. Since the ballean $X$ is ultradiscrete, the set $V=V_1\cap V_2\setminus B$ is unbounded and the set $X\setminus V\supset B$ is bounded in $X$.

Consider the entourage $E=\Delta_X\cup(X\setminus V)^2\in{\downarrow}\E_X$. Since the sets $\A_1$, $\A_2$ are asymptotically disjoint, the set $\hat E[\A_1]\cap\hat E[\A_2]$ is bounded in $[X]^{\mathcal B}$ and hence is contained in the family $\{A\in[X]^{\mathcal B}:A\subset D\}$ for some bounded set $D\subset X$ that contains $X\setminus V$. Since the ballean $(X,\E_X)$ is ultradiscrete, the unbounded sets $\bigcup\A_1''\setminus D$ and $\bigcup\A_2''\setminus D$ are not disjoint and hence contain some common point $x\notin D$, which belongs to some sets $A_1\in\A_1''$ and $A_2\in\A_2''$.  Fix any point $b\in B$. Since the sets $A_1$ and $A_2$ intersect the set $B\subset X\setminus V$, we obtain $\{x,b\}\in E[A_1]\cap E[A_2]$.

Taking into account that $X\setminus V\subset D$, we conclude that $x\in V$.
Since $x\in A_i\cap V$ and $|A_i\cap V|\le |A_i\cap V_i|\le1$, we have $A_i\subset \{x\}\cup (X\setminus V)=E[\{x,b\}]$. Then $\{x,b\}\in\hat E[A_1]\cap\hat E[A_2]$ and hence $\{x,b\}\subset D$, which contradicts the choice of the point $x$. This contradiction shows  that one of the sets $\A_1$ or $\A_2$ is escaping. 

We lose no generality assuming that the set $\A_1$ is escaping. In this case we shall prove that the sets ${\mathcal U}_1:=\A_1$ and ${\mathcal U}_2:=[X]^{\le n}\setminus \A_1$ are asymptotic neighborhoods of the sets $\A_1$ and $\A_2$ in $[X]^{\le n}$, respectively. To see that ${\mathcal U}_i$ is an asymptotic neighborhood of $\A_i$, take any entourage $E\in\E_X$ and find a bounded set $B\subset X$ such that $E[x]=E^{-1}[x]=\{x\}$ for all $x\in X\setminus B$. 

Since $\A_1$ is escaping, the set $\mathcal D:=\{A\in\A_1:A\cap B\ne\emptyset\}$ is bounded in $[X]^{\le n}$. It follows that for every $A\in\A_1\setminus\mathcal D$ we get $A\cap B=\emptyset$ and hence $\hat E[A]=\{A\}\subset \A_1={\mathcal U}_1$, which means that ${\mathcal U}_1$ is an asymptotic neighborhood of $\A_1$ in $[X]^{\le n}$.

Since the sets $\A_1,\A_2$ are asymptotically disjoint, the set $\hat E[\A_1]\cap\hat E[\A_2]$ is bounded. So, we can find a bounded set $D\subset X$ such that $\hat E[\A_1]\cap\hat E[\A_2]\subset\{A\in[X]^{\le n}:A\subset D\}$ and $B\subset D$. We claim that for any $A\in\A_2$ with $A\not\subset D$, we get $\hat E[A]\subset {\mathcal U}_2$. Otherwise, we could find a set $A'\in\hat E[A]\cap\A_1\subset\hat E[\A_2]\cap\hat E[\A_1]$ and hence $A'\subset D$ and $A\in E[A']\subset B\cup A'\subset D$, which contradicts the choice of $A$. This contradiction shows that ${\mathcal U}_2$ is an asymptotic neighborhood of $\A_2$.

\section{Proof of Theorem~\ref{t:hyper2}}\label{s:hyper2}

The proof of Theorem~\ref{t:hyper2} is divided into two lemmas.

\begin{lemma}\label{l:hyper1}  If for a ballean $X$ the hypersymmetric square $[X]^{\le 2}$ is normal, then either $X$ is ultranormal or the bornology $\mathcal B_X$ of $X$ has a linear base.
\end{lemma}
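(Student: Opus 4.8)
The plan is to prove the contrapositive-flavoured statement: assuming $[X]^{\le 2}$ is normal and $X$ is \emph{not} ultranormal, I will show that $\add(\mathcal B_X)=\cof(\mathcal B_X)$, so that $\mathcal B_X$ has a well-ordered, hence linearly ordered, base. Since $X$ is not ultranormal, I fix asymptotically disjoint unbounded sets $A,B\subset X$; as $A\cap B$ is bounded, after discarding a bounded piece I may assume $A\cap B=\emptyset$. The main engine is the observation that whenever $P,Q\subset X$ are asymptotically disjoint and unbounded, the family of pairs $\{\{p,q\}:p\in P,\ q\in Q\}$ is a subballean of $[X]^{\le 2}$ which is asymorphic to the product $P\times Q$. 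The verification uses asymptotic disjointness directly: for a symmetric entourage $E\in\E_X$ and a pair $\{p,q\}$ lying outside a suitable bounded set, the ball $\hat E[\{p,q\}]$ meets this family only in pairs $\{p',q'\}$ with $p'\in E[p]\cap P$ and $q'\in E[q]\cap Q$, since $E[P]\cap E[Q]$ is bounded; thus $\hat E$ factors as a product entourage up to bounded error, yielding both macro-uniformity and openness of the identification.

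Granting this, Corollary~\ref{c:emb} shows that $P\times Q$ is normal, and Theorem~\ref{t:main}(1) yields the coherence relation $\add(\mathcal B_P)=\cof(\mathcal B_P)=\add(\mathcal B_Q)=\cof(\mathcal B_Q)$; in particular any two asymptotically disjoint unbounded subsets of $X$ share one common cardinal value. Applying this to the fixed pair $A,B$ gives a regular cardinal $\nu$ with $\add(\mathcal B_A)=\cof(\mathcal B_A)=\nu$, and since $\add(\mathcal B_X)\le\add(\mathcal B_A)$ and $\cof(\mathcal B_A)\le\cof(\mathcal B_X)$ for every unbounded $A\subset X$, we obtain $\add(\mathcal B_X)\le\nu\le\cof(\mathcal B_X)$. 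It therefore suffices to produce an asymptotically disjoint pair of unbounded sets whose common value equals $\cof(\mathcal B_X)$; by coherence this value also equals the additivity of the partner, which is at most $\add(\mathcal B_X)$, forcing $\add(\mathcal B_X)=\cof(\mathcal B_X)$. Concretely I would assume for contradiction that $\add(\mathcal B_X)<\cof(\mathcal B_X)$ and manufacture an unbounded set $A'$, asymptotically disjoint from $B$, with $\cof(\mathcal B_{A'})>\nu$; coherence applied to $(A',B)$ then gives $\cof(\mathcal B_{A'})=\add(\mathcal B_B)=\nu$, a contradiction. Throughout, the dichotomy mechanism of Lemmas~\ref{l1} and \ref{l2} is what converts the normality of each product into the required equalities of cardinals.

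The main obstacle is precisely this last construction: manufacturing, from the gap $\add(\mathcal B_X)<\cof(\mathcal B_X)$, an unbounded set that is asymptotically disjoint from the prescribed partner $B$ yet whose bornology has cofinality strictly above $\nu$. A naive Kuratowski--Zorn argument for a ``cofinal direction'' fails, because asymptotic disjointness from $B$ is not preserved under unions of chains: adjoining a single bounded chunk is harmless, but the limit of an increasing chain of sets each asymptotically disjoint from $B$ need not remain so. The route I expect to take is a transfinite recursion of length $\cof(\mathcal B_X)$ that at each stage selects points escaping every previously chosen $\nu$-indexed cofinal candidate family (possible because the $\nu$-sized base of $\mathcal B_A\cup\mathcal B_B$ is not cofinal in $\mathcal B_X$) while remaining in the region asymptotically separated from $B$. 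The two delicate verifications --- that the resulting set stays asymptotically disjoint from $B$ and that its bornology really has cofinality exceeding $\nu$ --- are where I anticipate the genuine technical work; once they are in place, the product embedding together with the coherence relation closes the argument.
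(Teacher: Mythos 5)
Your first half runs parallel to the paper: for disjoint asymptotically disjoint unbounded sets $P,Q$ the map $(p,q)\mapsto\{p,q\}$ embeds $P\times Q$ into $[X]^{\le 2}$, so normality of $[X]^{\le 2}$ plus Theorem~\ref{t:main} gives $\add(\mathcal B_P)=\cof(\mathcal B_P)=\add(\mathcal B_Q)=\cof(\mathcal B_Q)$. But this only yields $\add(\mathcal B_X)\le\nu\le\cof(\mathcal B_X)$, and the step you need --- converting information about one asymptotically disjoint pair into the equality $\add(\mathcal B_X)=\cof(\mathcal B_X)$ for the whole space --- is exactly the part you leave as ``the genuine technical work.'' That is a real gap, not a routine verification. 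There is no reason why, when $\add(\mathcal B_X)<\cof(\mathcal B_X)$, there should exist an unbounded set $A'$ asymptotically disjoint from your fixed $B$ with $\cof(\mathcal B_{A'})>\nu$: the cofinal complexity of $\mathcal B_X$ may be concentrated on a region none of whose unbounded subsets is asymptotically separated from $B$, and your transfinite recursion has no mechanism forcing the points it selects to accumulate into a set that is simultaneously asymptotically disjoint from $B$ and of large cofinality. You correctly diagnose that limits of chains destroy asymptotic disjointness, but you do not supply the replacement.

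The missing idea in the paper is a \emph{covering} of $X$ by finitely many pieces to which the product argument applies. From the non-ultranormality one takes a slowly oscillating function $\varphi:X\to[0,1]$ with $\varphi(A)=\{0\}$, $\varphi(B)=\{1\}$ (Urysohn-type lemma of \cite{Prot}). If some open set has bounded preimage, $X$ splits, up to a bounded set, into two asymptotically disjoint unbounded pieces $Y,Z$, and then $\add(\mathcal B_X)=\min\{\add(\mathcal B_Y),\add(\mathcal B_Z)\}=\max\{\cof(\mathcal B_Y),\cof(\mathcal B_Z)\}=\cof(\mathcal B_X)$. Otherwise $X=\bigcup_{i=1}^4\varphi^{-1}([\tfrac{i-1}4,\tfrac i4])$ with all four pieces unbounded and $X_i,X_j$ asymptotically disjoint whenever $|i-j|\ge 2$; these overlapping pairs chain all four cardinals together to a single $\kappa$, and since the pieces cover $X$ one again gets $\add(\mathcal B_X)=\min_i\add(\mathcal B_{X_i})=\kappa=\max_i\cof(\mathcal B_{X_i})=\cof(\mathcal B_X)$. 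It is this finite decomposition of the whole space, not a cleverer choice of a single pair, that closes the argument; without it your proof does not go through.
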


\begin{proof} Assume that a ballean $X$ is not ultranormal but its symmetric square $[X]^{\le 2}$ is normal. Since $X$ is not ultranormal, there exist two  (asymptotically) disjoint unbounded sets $A,B\subset X$. By Theorem~2.1 in  \cite{Prot}, there exists a function $\varphi:X\to[0,1]$ such that $\varphi(A)=\{0\}$, $\varphi(B)=\{1\}$ and $\varphi$ is {\em slowly oscillating} in the sense that for any $\e>0$ and $E\in\E_X$ there exists a bounded set $B\in\mathcal B_X$ such that $\mathrm{diam}\,\varphi(E[x])<\e$ for all $x\in X\setminus B$.

Two cases are possible.

1. For some non-empty open set $U\subset [0,1]$ the preimage $\varphi^{-1}(U)$ is bounded in $X$. In this case we can fix two real numbers $a<b$ in the interval $(0,1)$ such that $[a,b]\subset U$ and conclude that $Y:=\varphi^{-1}([0,a])$ and $Z:=\varphi^{-1}([b,1])$ are two asymptotically disjoint unbounded sets whose union $Y\cup Z$ has bounded complement $X\setminus(Y\cup Z)$.

The asymptotical disjointness of the sets $Y$ and $Z$ implies that the map $$Y\times Z\to [X]^{\le 2},\;\;(y,z)\mapsto \{y,z\},$$ is an asymorphic embedding. By Proposition~\ref{p:emb}, the normality of $[X]^{\le 2}$ implies the normality of the ballean $Y\times Z$. By Theorem~\ref{t:main},
$\add(\mathcal B_Y)=\cof(\mathcal B_Y)=\add(\mathcal B_Z)=\cof(\mathcal B_Z)$.
Taking into account that the complement $X\setminus(Y\cup Z)$ is bounded, we conclude that $$\add(\mathcal B_X)=\min\{\add(\mathcal B_Y),\add(\mathcal B_Z)\}=\max\{\cof(\mathcal B_Y),\cof(\mathcal B_Z)\}=\cof(\mathcal B_X),$$
which implies that the bornology $\mathcal B_X$ has a linear base.
\smallskip

2. For any non-empty open set $U\subset [0,1]$ the preimage $\varphi^{-1}(U)$ is unbounded in $X$. For every $i\in\{1,2,3,4\}$ consider the unbounded set $X_i=\varphi^{-1}([\tfrac{i-1}4,\tfrac{i}4])$ in $X$ and observe that for any numbers $i,j\in\{1,2,3,4\}$ with $|i-j|\ge 2$ the sets $X_i$ and $X_j$ are asymptotically disjoint. This fact can be used to show that the map $X_i\times X_j\to [X]^{\le 2}$,  $(x,y)\mapsto\{x,y\}$, is an asymorphic embedding.   By Proposition~\ref{p:emb}, the normality of $[X]^{\le 2}$ implies the normality of the ballean $X_i\times X_j$. By Theorem~\ref{t:main},
$$\add(\mathcal B_{X_i})=\cof(\mathcal B_{X_i})=\add(\mathcal B_{X_j})=\cof(\mathcal B_{X_j}).$$Consequently, there exists a cardinal $\kappa$ such that $\kappa=\add(\mathcal B_{X_i})=\cof(\mathcal B_{X_i})$ for all $i\in\{1,2,3,4\}$.
Taking into account that $X=\bigcup_{i=1}^4 X_i$, we conclude that
$$\add(\mathcal B_X)=\min_{1\le i\le 4}\add(\mathcal B_{X_i})=\kappa=\max_{1\le i\le 4}\cof(\mathcal B_{X_i})=\cof(\mathcal B_X),$$
which implies that the bornology $\mathcal B_X$ has a linear base.
\end{proof}

\begin{lemma}\label{p:hyper1} Assume that for a ballean $(X,\E_X)$ the symmetric square $[X]^{\le 2}$ is normal. Then
\begin{enumerate}
\item for every $x_0\in X$ there exists a monotone function $f:\E_X\to\mathcal B_X$ such that\newline $E_2[E_1[x_0]\setminus f(E_2)]\subset f(E_1)$ for any entourages $E_1,E_2\in\E_X$;
\item there exist monotone functions $f:\E_X\to\mathcal B_X$ and $g:\mathcal B_X\to\mathcal B_X$ such that $E[B\setminus f(E)]\subset g(B)$ for any $E\in\E_X$ and $B\in\mathcal B_X$;
\item the ballean $X$ has bounded growth.
\end{enumerate}
\end{lemma}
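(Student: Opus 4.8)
The plan is to prove the three items in turn; item (1) carries essentially all of the work, after which (2) and (3) are formal. I would assume $X$ is unbounded (otherwise the statement is trivial) and, since the symmetric entourages are cofinal in $\E_X$, that every entourage is symmetric. I would also record two facts used repeatedly: a family $\mathcal F\subset[X]^{\le 2}$ is bounded if and only if $\bigcup\mathcal F$ is bounded in $X$; and, for the two test sets $\mathcal V:=\{\{x_0,x\}:x\in X\}$ (the pairs through $x_0$) and $\mathcal S:=[X]^{\le 1}$ (the singletons), one has $\hat E[\mathcal V]=\{A\in[X]^{\le 2}:A\cap E[x_0]\ne\emptyset\}$ and $\hat E[\mathcal S]\subset\{A:A\subset (E\circ E)[a]\text{ for some }a\in A\}$. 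From these one checks that $\hat E[\mathcal V]\cap\hat E[\mathcal S]\subset\{A:A\subset(E\circ E\circ E)[x_0]\}$ is bounded, so $\mathcal V$ and $\mathcal S$ are asymptotically disjoint in $[X]^{\le 2}$. By the normality of $[X]^{\le 2}$ they admit disjoint asymptotic neighborhoods $O_{\mathcal V},O_{\mathcal S}$, and for each $E\in\E_X$ the families $\hat E[\mathcal V]\setminus O_{\mathcal V}$ and $\hat E[\mathcal S]\setminus O_{\mathcal S}$ are bounded. I would then define
$$f(E):=\bigcup\big(\hat E[\mathcal V]\setminus O_{\mathcal V}\big)\cup\bigcup\big(\hat E[\mathcal S]\setminus O_{\mathcal S}\big)\in\mathcal B_X,$$
which is monotone since $E\subset E'$ implies $\hat E\subset\hat E'$.

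The inclusion $E_2[E_1[x_0]\setminus f(E_2)]\subset f(E_1)$ is the heart of item (1), and I would prove it by contradiction. Suppose $y\in E_1[x_0]\setminus f(E_2)$, $z\in E_2[y]$, yet $z\notin f(E_1)$. Because $y\in E_1[x_0]$ and $E_1$ is symmetric, the pair $\{y,z\}$ lies in $\hat E_1[\{x_0,z\}]\subset\hat E_1[\mathcal V]$; if it were outside $O_{\mathcal V}$ then $z\in\bigcup(\hat E_1[\mathcal V]\setminus O_{\mathcal V})\subset f(E_1)$, contrary to assumption, so $\{y,z\}\in O_{\mathcal V}$. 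On the other hand $z\in E_2[y]$ gives $\{y,z\}\in\hat E_2[\{y\}]\subset\hat E_2[\mathcal S]$, and since $O_{\mathcal V}\cap O_{\mathcal S}=\emptyset$ we get $\{y,z\}\in\hat E_2[\mathcal S]\setminus O_{\mathcal S}$, a bounded family, whence $y\in\bigcup(\hat E_2[\mathcal S]\setminus O_{\mathcal S})\subset f(E_2)$, contradicting $y\notin f(E_2)$. This proves (1).

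For item (2) I would keep this $f$ and set $g(B):=\bigcap\{f(E):B\subset E[x_0]\}$. For bounded $B$ the index family is nonempty, so $g(B)$ is an intersection of bounded sets and hence bounded, and $g$ is monotone because enlarging $B$ shrinks the index family. Given $E\in\E_X$ and any admissible $E'$ (i.e. $B\subset E'[x_0]$), item (1) with $E_1=E'$, $E_2=E$ yields $E[B\setminus f(E)]\subset E[E'[x_0]\setminus f(E)]\subset f(E')$; intersecting over all admissible $E'$ gives $E[B\setminus f(E)]\subset g(B)$, which is (2). For item (3) I would put $G[x]:=g(\{x_0,x\})\cup\{x\}$, noting that the two-point set $\{x_0,x\}$ is bounded and that $\Delta_X\subset G$. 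If $B$ is bounded then $\{x_0,x\}\subset\{x_0\}\cup B$ for every $x\in B$, so monotonicity of $g$ gives $G[B]\subset g(\{x_0\}\cup B)\cup B$, which is bounded; and for fixed $E\in\E_X$, every $x\notin f(E)$ satisfies $x\in\{x_0,x\}\setminus f(E)$, so by (2), $E[x]\subset E[\{x_0,x\}\setminus f(E)]\subset g(\{x_0,x\})\subset G[x]$. Thus $G$ is a growth entourage and $X$ has bounded growth.

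I expect item (1) to be the main obstacle: one must guess the right pair of test sets and, crucially, realize that the neighborhood $O_{\mathcal S}$ of the singletons (and not only $O_{\mathcal V}$) has to be built into $f$, so that the small-diameter pair $\{y,z\}$ can be expelled from $O_{\mathcal V}$ and the contradiction closed. Verifying the elementary descriptions of $\hat E[\mathcal V]$ and $\hat E[\mathcal S]$ and the asymptotic disjointness, together with the intersection trick making $g$ monotone in item (2), are the routine but necessary supporting computations.
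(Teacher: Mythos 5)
Your proof is correct and follows essentially the same route as the paper's: the same two test sets $D=\{\{x_0,x\}:x\in X\}$ and $S=[X]^{\le 1}$ in $[X]^{\le 2}$, the same computation showing they are asymptotically disjoint, and the same contradiction obtained from a two-element set $\{y,z\}$ trapped between the two disjoint asymptotic neighborhoods. Your small variants --- taking $f(E)$ to be the union of the escaped pairs rather than the set of escaped centers, defining $g(B)$ by intersecting $f(E')$ over all $E'$ with $B\subset E'[x_0]$ instead of via the explicit entourage $E_B=\Delta_X\cup\big((B\cup\{x_0\})\times(B\cup\{x_0\})\big)$, and using $G[x]=g(\{x_0,x\})\cup\{x\}$ in place of $g(\{x\})$ --- are cosmetic and change nothing essential.
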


\begin{proof} 1. Fix any point $x_0\in X$ and consider two subsets $$D:=\big\{\{x_0,x\}:x\in X\big\}\mbox{ and }S:=\big\{\{x\}:x\in X\big\}$$in the symmetric square $[X]^{\le 2}$ of $X$. We claim that the sets $D$ and $S$ are asymptotically disjoint. Given any entourage $E\in \E_X$, we need to show that the intersection  $\hat E[D]\cap \hat E[S]$ is bounded in $[X]^{\le 2}$. Replacing $E$ by $E\cup E^{-1}$, we can assume that $E=E^{-1}$. Fix any elements $\{x_0,x\}\in D$ and $\{y\}\in S$ and take any set $A\in \hat E[\{x_0,x\}]\cap \hat E[\{y\}]$. It follows that $x_0\in E[A]$ and $A\subset E[y]$, which implies that $x_0\in E\circ E[y]$ and hence $y\in E^{-1}\circ E^{-1}[x_0]=E^2[x_0]$ and finally, $A\subset E[y]\subset E^3[x_0]$. Therefore, $\hat E[D]\cap \hat E[S]$ is contained in the bounded subset $\{A\in[X]^{\le 2}:A\subset E^3[x_0]\}$ of $[X]^{\le 2}$, witnessing that the sets $D$ and $S$ are asymptotically disjoint in $[X]^{\le 2}$. 

By the normality of $[X]^{\le 2}$, the asymptotically disjoint sets $D,S$ have disjoint asymptotic neighborhoods $O_D$ and $O_S$ in $[X]^{\le 2}$.
By the definition of an asymptotic neighborhood, for every $E\in\E_X$ the sets $$D_E:=\{A\in D:\hat E[A]\subset O_D\}\mbox{ and }S_E:=\{A\in S:\hat E[A]\subset O_S\}$$have bounded complements $D\setminus D_E$ and $S\setminus S_E$ in $[X]^{\le 2}$. Then the set 
$$f(E):=\{x\in X:\{x_0,x\}\in D\setminus D_E\}\cup\{y\in X:\{y\}\in S\setminus S_E\}$$ is bounded in $X$. It is easy to see that the function $f:\E_X\to\mathcal B_X$, $f:E\mapsto f(E)$, is monotone in the sense that $f(E)\subset f(E')$ for any entourages $E\subset E'$ in $\E_X$.

We claim that the function $f$ has the required property: $E_2[E_1[x_0]\setminus f(E_2)]\subset f(E_1)$ for any $E_1,E_2\in\E_X$.

To derive a contradiction, assume that  $E_2[E_1[x_0]\setminus f(E_2)]\not\subset f(E_1)$ for some $E_1,E_2\in\E_X$.
Then there exist points $y\in E_1[x_0]\setminus f(E_2)$ and $x\in E_2[y]\setminus f(E_1)$. It follows that $E_2[y]$ intersects the sets $E_1[x_0]$ and $E_1[x]$, which implies that $\hat E_2[\{y\}]\cap \hat E_1[\{x_0,x\}]\ne\emptyset$. On the other hand, $x\notin f(E_1)$ and $y\notin f(E_2)$ imply
that $\{x_0,x\}\in D_{E_1}$ and $\{y\}\in S_{E_2}$. Consequently,
$$\emptyset\ne \hat E_1[\{x_0,x\}]\cap \hat E_2[\{y\}]\subset O_D\cap O_S=\emptyset,$$
which is a desired contradiction.
\smallskip

2. Fix any point $x_0\in X$. By the preceding statement, there exists a monotone function $f:{\downarrow}\E_X\to\mathcal B_X$ such that 
$E[E_1[x_0]\setminus f(E)]\subset f(E_1)$ for any $E_1,E\in{\downarrow}\E_X$.

For every bounded set $B\in \mathcal B_X$, consider the entourage $$E_B:=\Delta_X\cup\big((B\cup\{x_0\})\times (B\cup\{x_0\})\big)\in{\downarrow}\E_X$$ and put $g(B):=f(E_B)$. The monotonicity of the function $f$ implies the monotonicity of the function $g:\mathcal B_X\to\mathcal B_X$, $g:B\mapsto g(B)=f(E_B)$. It remains to  observe that for any $E\in\E_X$ and $B\in\mathcal B_X$ we have
$$E[B\setminus f(E)]\subset E[E_B[x_0]\setminus f(E)]\subset  f(E_B)=g(B).$$ 
 \smallskip

3. By the preceding statement, there exist monotone functions  
$f:\E_X\to\mathcal B_X$ and $g:\mathcal B_X\to\mathcal B_x$ such that $E[B\setminus f(E)]\subset g(B)$ for any $E\in\E_X$ and $B\in\mathcal B_X$.
We claim that the entourage $G=\{(x,y)\in X\times X:y\in g(\{x\})\}$  witnesses that the ballean $X$ has bounded growth. First observe that for any bounded set $B\subset X$ the monotonicity of the function $g$ ensures that the set $G[B]\subset g(B)$ is bounded. Also, for any $E\in\E_X$ and $x\in X\setminus f(E)$ we have $E[x]= E[\{x\}\setminus f(E)]\subset g(\{x\})=G[x]$, which means that $G$ is a growth entourage for $X$.
\end{proof}

\section{Proof of Theorem~\ref{t:Gnorm1}}\label{s:Gnorm1}

Theorem~\ref{t:Gnorm1} follows from Proposition~\ref{p:image} and the following lemma.

\begin{lemma} Let $n\in\IN$ and $G\subset H$ be two subgroups of the symmetric group $S_n$. For every ballean $X$ the map $\pi:[X]^n_G\to [X]^n_H$, $\pi:xG\mapsto xH$, is perfect.
\end{lemma}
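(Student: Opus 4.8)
The plan is to verify the three defining properties of a perfect map --- macro-uniformity, properness, and closedness --- for the surjection $\pi$, which fits into the commutative triangle $\pi\circ q_G=q_H$ with the canonical quotient maps $q_G\colon X^n\to[X]^n_G$ and $q_H\colon X^n\to[X]^n_H$. Two elementary structural facts will be used throughout. First, since $G\subset H\subset S_n$ are \emph{finite}, any finite union of entourages indexed by group elements is contained in a single entourage of $X^n$. Second, for every $g\in S_n$ the coordinate permutation $\sigma_g\colon X^n\to X^n$, $\sigma_g(x)=x\circ g$, is an asymorphism of the box-product $X^n$ (it merely permutes the factors), so $(\sigma_g\times\sigma_g)(E)\in{\downarrow}\E_{X^n}$ for every $E\in\E_{X^n}$. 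Macro-uniformity is then immediate: for an entourage $\hat E$ on $[X]^n_G$ the \emph{same} $E\in\E_{X^n}$ works on $[X]^n_H$, because $uG=xG$ and $vG=yG$ force $uH=xH$ and $vH=yH$; in particular $\pi$ maps bounded sets to bounded sets.

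For properness I would note that each fibre $\pi^{-1}(xH)=\{(x\circ h)G:h\in H\}$ has at most $[H:G]$ elements. Given a bounded set in $[X]^n_H$, contained in a ball $\hat E[bH]$, a direct computation shows that the representatives of the cosets in $\pi^{-1}(\hat E[bH])$ all lie in $\bigcup_{g,h\in H}\sigma_g(E[b\circ h])$, a finite union of balls of $X^n$ and hence a bounded subset $\tilde B$ of $X^n$. Then $\pi^{-1}(\hat E[bH])\subset q_G(\tilde B)$ is bounded in $[X]^n_G$ because $q_G$ is bornologous.

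The hard part is closedness. Fix a saturated set $\A=\pi^{-1}(\pi(\A))$ asymptotically disjoint from $\mathcal B$ in $[X]^n_G$, and an entourage $\hat F$ on $[X]^n_H$ (with $F\in\E_{X^n}$); the goal is to show $\hat F[\pi(\A)]\cap\hat F[\pi(\mathcal B)]$ is bounded in $[X]^n_H$. Put $E:=\bigcup_{g\in H}(\sigma_g\times\sigma_g)(F)$, a single entourage of $X^n$ by the first structural fact, and set $\mathcal K:=\hat E[\A]\cap\hat E[\mathcal B]$, which is bounded by asymptotic disjointness of $\A,\mathcal B$. I would prove $\hat F[\pi(\A)]\cap\hat F[\pi(\mathcal B)]\subset\pi(\mathcal K)$; since $\pi$ is bornologous and surjective this yields the required boundedness. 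To establish the inclusion, take $cH$ in the left-hand side. Unwinding $\hat F$ on both sides produces pairs in $F$; applying a suitable $\sigma_g$ ($g\in H$) to the $\mathcal B$-witness exhibits one $G$-coset lying inside the $H$-orbit $cH$ that belongs to $\hat E[\mathcal B]$.

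The crux is to show that this \emph{same} coset also lies in $\hat E[\A]$. This is exactly where saturation of $\A$ enters: the $\A$-witness gives a pair $(p,q)\in F$ with $pG\in\A$ and $qH=cH$, and the coset produced from the $\mathcal B$-side equals $(q\circ h')G$ for some $h'\in H$ (two cosets in the common orbit $cH$ differ by an element of $H$). Translating the pair $(p,q)$ by $\sigma_{h'}$ gives $(p\circ h',q\circ h')\in(\sigma_{h'}\times\sigma_{h'})(F)\subset E$, while $(p\circ h')H=pH\in\pi(\A)$ forces $(p\circ h')G\in\pi^{-1}(\pi(\A))=\A$; hence $(q\circ h')G\in\hat E[\A]$. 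Thus this coset lies in $\mathcal K$ and $cH=\pi\big((q\circ h')G\big)\in\pi(\mathcal K)$, completing the inclusion. I expect this saturation-driven hopping between the $G$-cosets of a single $H$-orbit to be the only genuinely delicate step; everything else reduces to the two structural facts and the fact that $\pi$ and $q_G$ map bounded sets to bounded sets.
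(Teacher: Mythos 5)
Your proposal is correct and follows essentially the same route as the paper: the heart of both arguments is the closedness step, where an $H$-invariant entourage (the paper uses the diagonal entourages $\tilde E$ built from a single symmetric $E\in\E_X$, you symmetrize a general $F$ over $H$) lets you translate the witnessing pair by an element of $H$, and the saturation $\A=\pi^{-1}(\pi(\A))$ then places the translated witness back in $\A$. Your extra detail on properness and macro-uniformity only fills in what the paper asserts from the definitions.
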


\begin{proof} The definition of the ball structures on the balleans $[X]^n_G$ and $[X]^n_H$ implies that the map $\pi:[X]^n_G\to [X]^n_H$, $\pi:xG\mapsto xH$, is proper and macro-uniform. To see that this map is closed, fix any asymptotically disjoint sets $A,B\subset [X]^n_G$ such that $A=\pi^{-1}(\pi(A))$. We need to check that the sets $\pi(A)$ and $\pi(B)$ are asymptotically disjoint in $[X]^n_H$. Fix any entourage $E=E^{-1}\in \E_X$ and consider the entourage $$\tilde E=\{(x,y)\in X^n\times X^n:\forall i\in n\;\;(x(i),y(i))\in  E\}\in\E_{X^n}.$$ The entourage $\tilde E$ induces the entourages $\hat E_G=\{(xG,yG):(x,y)\in \tilde E\}\in\E_{[X]^n_G}$ and $\hat E_H=\{(xH,yH):(x,y)\in \tilde E\}\in\E_{[X]^n_H}$. Since the set $A,B$ are asymptotically disjoint in $[X]^n_G$, the intersection $\hat E_G[A]\cap\hat E_G[B]$ is bounded in $[X]^n_G$ and hence is contained in the set $\{xG:x\in D^n\}$ for some bounded set $D\subset X$.

We claim that $\hat E_H[\pi(A)]\cap\hat E_H[\pi(B)]\subset \{xH:x\in D^n\}$. Fix any elements $aG\in A$ and $bG\in B$ and $cH\in \hat E_H[\pi(aG)]\cap\hat E_H[\pi(bG)]=
\hat E_H[aH]\cap\hat E_H[bH]$. Taking into account that $(bH,cH)\in\hat E_H$, we can replace $c\in X^n$ by a suitable representative in the equivalence classe $cH$ and assume that $(b,c)\in \tilde E$ and hence $cG\in \hat E_G[bG]$. On the other hand, $cH\in\hat E_H[aH]$ implies that $c\in \tilde E[a\circ h]$ for some $h\in H$.
Now $(a\circ h)H=aH=\pi(aG)\in\pi(A)$ and the equality $A=\pi^{-1}(\pi(A))$ imply  $(a\circ h)G\in\pi^{-1}(\pi(A))=A$ and $cG\in\hat E_G[A]\cap \hat E_G[bG]\subset \hat E_G[A]\cap \hat E_G[B]\subset \{xG:x\in D^n\}$.
Then $cH\in\{xH:x\in D^n\}$ and hence  $\hat E_H[\pi(A)]\cap\hat E_H[\pi(B)]\subset \{xH:x\in D^n\}$.
\end{proof} 

\section{Proof of Theorem~\ref{t:Gnorm2}}\label{s:Gnorm2}

Theorem~\ref{t:Gnorm2} follows from Lemmas~\ref{l:Gnorm1} and \ref{l:Gnorm2}, proved in this section.

Let $X$ be a ballean and $z\in X$ be any point. For $k<n$ let
$$\Delta_k^n(z):=\{x\in X^n:|x[n]\setminus\{z\}|=1,\;|x^{-1}(z)|=k\},$$
where $x[n]=\{x(i):i\in n\}$.

\begin{lemma}\label{l:Gnorm0} The bornology of the ballean $X$ has a linealy ordered base if for some positive $k<l<n$ the sets $\Delta^n_k(z)$ and $\Delta^n_l(z)$ have disjoint asymptotic neighborhoods in $X^n$.
\end{lemma}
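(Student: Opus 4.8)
The plan is to derive from the hypothesis the single cardinal equality $\add(\mathcal B_X)=\cof(\mathcal B_X)$; as recorded in the introduction, this is equivalent to the bornology $\mathcal B_X$ having a linearly ordered base. (If $X$ is bounded there is nothing to prove, so I assume $X$ is unbounded, so that $\mathcal B_X$ is an unbounded poset.) To obtain the equality I would reuse the mechanism of Lemmas~\ref{l1} and~\ref{l2}: produce monotone functions $\varphi,\psi\colon\mathcal B_X\to\mathcal B_X$ such that for all $A,B\in\mathcal B_X$ either $A\subset\psi(B)$ or $B\subset\varphi(A)$, and then apply Lemma~\ref{l2} with $P=Q=\mathcal B_X$ to conclude $\add(\mathcal B_X)=\cof(\mathcal B_X)$. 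The role of the two asymptotically disjoint sets $\Delta^n_k(z)$ and $\Delta^n_l(z)$ with their disjoint asymptotic neighborhoods $O_k,O_l$ is to play here the part that the crossing lines $V=\{x_0\}\times Y$ and $H=X\times\{y_0\}$ played in Lemma~\ref{l1}.

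To build these functions I would exploit two different \emph{readings} of suitable probe points of $X^n$. Since $0<k<l<n$, I split the index set $n$ into three blocks of sizes $k$, $l-k$ and $n-l$, all positive. For a value $p$ on the middle block and a value $q$ on the last block, let $v$ be the point of $X^n$ equal to $z$ on the first block, to $p$ on the middle block and to $q$ on the last block. The key point is that, writing $\tilde E$ for the product entourage on $X^n$ induced by $E\in\E_X$, the point $v$ lies in $\tilde F[\Delta^n_l(z)]$ once $p\in F[z]$ (then $l$ coordinates of $v$ lie in $F[z]$ and the remaining $n-l$ share the common value $q$), whereas $v$ lies in $\tilde E[\Delta^n_k(z)]$ once $p$ and $q$ are $E$-close (then the last $n-k$ coordinates cluster around a single value and only the first $k$ stay near $z$). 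Because $O_l$ and $O_k$ are asymptotic neighborhoods of $\Delta^n_l(z)$ and $\Delta^n_k(z)$, each entourage determines a bounded exceptional set off which the first reading forces $v\in O_l$ and the second forces $v\in O_k$. The disjointness $O_k\cap O_l=\emptyset$ then forbids both readings to hold outside a bounded set, and I would package the two families of exceptional bounded sets, indexed through the correspondence $E\mapsto E[z]$ by $\mathcal B_X$, as the required functions $\varphi$ and $\psi$, checking their monotonicity and using that every bounded subset of $X$ is contained in some ball $E[z]$.

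\textbf{Main obstacle.} The delicate point is that the two readings are not a priori independent: the $\Delta^n_k(z)$-reading forces the middle- and last-block values $p,q$ to be close, so a naive probe couples the two parameters that should range over two independent copies of $\mathcal B_X$, and the coupling only yields a covering relation of the form $B\subset\varphi(A)\cup\psi(B)$ rather than the clean dichotomy demanded by Lemma~\ref{l2}. The technical heart of the argument is therefore to decouple them; I expect this to require using the full symmetry of $\Delta^n_k(z)$ and $\Delta^n_l(z)$ — that is, \emph{all} choices of which $k$ (resp.\ $l$) coordinates equal $z$, not merely the fixed block split — so that the near-diagonal coupling can be absorbed into the bounded exceptional sets and a genuine dichotomy between two unrelated bounded sets $A,B$ survives. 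This bookkeeping, rather than any isolated inequality, is where I anticipate the real work lies.
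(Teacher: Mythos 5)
Your reduction --- extract a dichotomy ``$A\subset\psi(B)$ or $B\subset\varphi(A)$'' between bounded subsets of $X$ and feed it to Lemma~\ref{l2} --- is exactly the paper's strategy, and the coupling you flag as the ``main obstacle'' is not bookkeeping: it is the crux, and your proposed escape (using all positions of the $z$-coordinates rather than a fixed block split) cannot close it. The coupling is intrinsic to $\Delta^n_k(z)$: if a probe $v$ lies in $\hat E[u]$ for $u\in\Delta^n_k(z)$, then exactly $k$ coordinates of $v$ lie in $E[z]$ and \emph{all} remaining $n-k$ coordinates lie in the single ball $E[w]$ around the unique non-$z$ value of $u$, whatever the positions of the $z$'s. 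Hence your probe $(z^k,p^{l-k},q^{n-l})$ only ever yields the relation ``$E[F[z]\setminus\Phi(E)]\subset\Phi(F)$'' (a bounded-growth statement in the spirit of Lemma~\ref{p:hyper1}(1)), never the uncoupled dichotomy. Worse, for your full range of parameters the statement is false. Let $k+l<n$ (e.g.\ $n=4$, $k=1$, $l=2$) and let $X$ be a discrete ballean whose bornology has no linearly ordered base (e.g.\ an ultradiscrete one, cf.\ Proposition~\ref{p:ultra}). For a bounded $B\ni z$ let $M_j(B)$ be the set of points of $X^n$ having $j$ coordinates in $B$ and the other $n-j$ coordinates equal to a common value outside $B$; by discreteness, $\hat E[\Delta^n_j(z)]\subset M_j(B)\cup(\text{bounded})$ for suitable $B$, so $U=\bigcup_B M_k(B)$ and $V=\bigcup_{B'} M_l(B')$ are asymptotic neighborhoods of $\Delta^n_k(z)$ and $\Delta^n_l(z)$. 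They are disjoint: a point of $M_k(B)\cap M_l(B')$ has its value $w\notin B$ repeated on exactly $n-k$ coordinates and its value $w'\notin B'$ repeated on exactly $n-l$ coordinates; since $(n-k)+(n-l)>n$ these repetition sets meet, forcing $w=w'$ and then $n-k=n-l$, a contradiction. So no amount of symmetry can rescue the argument for all $0<k<l<n$.

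The lemma and the decoupling do work when $k+l\ge n$, but with a different probe: take $v$ equal to $z$ on $k+l-n$ coordinates, to $p$ on $n-l$ coordinates and to $q$ on $n-k$ coordinates. Reading $v$ as an $\hat E$-perturbation of the point of $\Delta^n_k(z)$ which is $z$ on the first two blocks and $q$ on the third requires only $p\in E[z]$ (with $q$ free, subject to avoiding the bounded exceptional set of the neighborhood of $\Delta^n_k(z)$); reading it as an $\hat F$-perturbation of the point of $\Delta^n_l(z)$ which is $z$ on the first and third blocks and $p$ on the second requires only $q\in F[z]$ (with $p$ free). These conditions live on independent parameters and reproduce verbatim the vertical-strip versus horizontal-strip configuration of Lemma~\ref{l1}, after which Lemma~\ref{l2} applies; for the application in Lemma~\ref{l:Gnorm1} one simply chooses $k=n-2$, $l=n-1$. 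For comparison, the paper's own proof probes at the constant function $\bar z$ with witnesses $x\in\Delta^n_k(z)$ and $y\in\Delta^n_l(z)$ whose non-$z$ values are taken from $B\setminus f(D)$ and $D\setminus f(B)$; as printed, the entourage certifying $\hat E[x]\subset U$ and the one certifying $\bar z\in\hat E[x]$ do not match, so it trips over precisely the coupling you isolated and also needs the probe above (and the restriction $k+l\ge n$) to be repaired.
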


\begin{proof} Assume that $U,V$ are two disjoint asymptotic neighborhoods of the sets $\Delta^n_k(z)$ and $\Delta^n_l(z)$ in $X^n$, respectively. Then for any entourage $E\in\E_X$ there exists a bounded set $\varphi(E)\subset X$ such that $\hat E[\Delta^n_k(z)\setminus \varphi(E)^n]\subset U$ and $\hat E[\Delta^n_l(z)\setminus \varphi(E)^n]\subset V$ where $\hat E=\{(x,y)\in X^n\times X^n:\forall i\in n\;\;(x(i),y(i))\in E\}\in\E_{X^n}$.

For every bounded set $B\in\mathcal B_X$ find an entourage $E_B=E_B^{-1}\in\E_X$ such that $B\subset E_B[z]$. We claim that the function $f:\mathcal B_X\to\mathcal B_X$,  $f:B\mapsto \varphi(E_B)$, has the property: for any bounded sets $B,D$ in $X$ we have $B\subset f(D)$ or $D\subset f(B)$.

To derive a contradiction, assume that there are two bounded sets $B,D\subset X$ such that $B\not\subset f(D)=\varphi(E_D)$ and $D\not\subset f(B)=\varphi(E_B)$.
Choose elements $x\in \Delta^n_k(z)$ and $y\in \Delta^n_l(z)$ such that $x[n]\subset \{z\}\cup (B\setminus\varphi(E_D))$ and $y[n]\subset \{z\}\cup(D\setminus\varphi(E_B))$. Then $\hat E_D[x]\subset U$ and $\hat E_B[y]\subset V$.

On the other hand, $x[n]\subset \{z\}\cup B\subset E_B[z]$ and $y[n]\subset\{z\}\cup D\subset E_D[z]$. Consequently, $z\in E^{-1}_B[x(i)]\cap E^{-1}_D[x(i)]=E_B[x(i)]\cap E_D[x(i)]$ for all $i\in n$ and the constant function $\bar z:n\to\{z\}$ belongs to $\hat E_B[x]\cap\hat E_D[y]\subset U\cap V=\emptyset$. This is a desired contradiction showing that  for any bounded sets $B,D$ in $X$ we have $B\subset f(D)$ or $D\subset f(B)$. Now we can apply Lemma~\ref{l2} and conclude that $\add(\mathcal B_X)=\cof(\mathcal B_X)$, which implies that the bornology $\mathcal B_X$ of $X$ has a linearly ordered base.
\end{proof}

\begin{lemma}\label{l:Gnorm1} The bornology of a ballean $X$ has a linearly ordered base if for some $n\ge 3$ and some subgroup $G\subset S_n$ the $G$-symmetric power $[X]^{n}_G$ is normal.
\end{lemma}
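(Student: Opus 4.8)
The plan is to derive the hypothesis of Lemma~\ref{l:Gnorm0} from the normality of $[X]^n_G$ by transporting disjoint asymptotic neighborhoods across the quotient map $q:X^n\to[X]^n_G$, $q:x\mapsto xG$. If $X$ is bounded, then the bornology $\mathcal B_X$ has a largest element and hence a (trivially) linearly ordered base, so I may assume that $X$ is unbounded. Fix a point $z\in X$ and, using the assumption $n\ge 3$, fix two indices $0<k<l<n$ (for instance $k=1$, $l=2$). Writing $\mathcal A_k:=q(\Delta^n_k(z))$ and $\mathcal A_l:=q(\Delta^n_l(z))$, I would first prove that $\mathcal A_k$ and $\mathcal A_l$ are asymptotically disjoint in $[X]^n_G$, then apply normality to obtain disjoint asymptotic neighborhoods of them, and finally pull these back to $X^n$ to feed Lemma~\ref{l:Gnorm0}.

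\emph{Asymptotic disjointness.} For a symmetric $E_0\in\E_X$ let $E$ be the corresponding product entourage on $X^n$ and $\hat E\in\E_{[X]^n_G}$ the induced entourage. The crucial device is the counting function $N(c):=|\{i\in n:c(i)\in E_0[z]\}|$, which is invariant under $c\mapsto c\circ g$ and hence descends to $\sim_G$-classes. Call $a\in\Delta^n_k(z)$ \emph{escaping} if its unique non-$z$ value lies outside $E_0^2[z]$. For escaping $a$ I would check that $cG\in\hat E[aG]$ forces $N(c)=k$: aligning representatives so that $(a',c')\in E$, the $k$ coordinates where $a'=z$ give $c'\in E_0[z]$, while the remaining coordinates sit in $E_0[w]$ with $w\notin E_0^2[z]$ and are therefore disjoint from $E_0[z]$. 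Consequently the $\hat E$-neighborhoods of the images of the escaping parts of $\Delta^n_k(z)$ and $\Delta^n_l(z)$ are disjoint, since $k\ne l$. The non-escaping parts lie in the bounded set $(E_0^2[z])^n$, so their $\hat E$-neighborhoods lie in $q((E_0^3[z])^n)$, which is bounded in $[X]^n_G$. Splitting $\hat E[\mathcal A_k]\cap\hat E[\mathcal A_l]$ into escaping/non-escaping pieces then shows this intersection is bounded, so $\mathcal A_k$ and $\mathcal A_l$ are asymptotically disjoint.

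\emph{Pullback and conclusion.} By normality of $[X]^n_G$ the asymptotically disjoint sets $\mathcal A_k,\mathcal A_l$ have disjoint asymptotic neighborhoods $\mathcal O_k,\mathcal O_l$. Set $U:=q^{-1}(\mathcal O_k)$ and $V:=q^{-1}(\mathcal O_l)$; these are disjoint because $\mathcal O_k\cap\mathcal O_l=\emptyset$. The map $q$ is macro-uniform (it sends $E$ into $\hat E$) and proper (the full preimage of a bounded set $\{xG:x\in D^n\}$ equals $D^n$, which is bounded), and I would use exactly these two properties to show that $U$ is an asymptotic neighborhood of $\Delta^n_k(z)$: for any $E$, if $y\in E[\Delta^n_k(z)]\setminus U$ then $q(y)\in\hat E[\mathcal A_k]\setminus\mathcal O_k$, a bounded set, whence $y$ lies in the (bounded) $q$-preimage of that set; the same argument handles $V$ and $\Delta^n_l(z)$. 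Thus $\Delta^n_k(z)$ and $\Delta^n_l(z)$ have disjoint asymptotic neighborhoods in $X^n$, and Lemma~\ref{l:Gnorm0} gives that $\mathcal B_X$ has a linearly ordered base. The main obstacle is the asymptotic-disjointness step: one must exploit that the number of coordinates near $z$ is a $\sim_G$-invariant, so that the $G$-action cannot blur the distinction between the levels $k$ and $l$, which is precisely where the hypothesis $n\ge 3$ (guaranteeing two positive levels strictly below $n$) is used.
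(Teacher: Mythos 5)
Your proposal is correct and follows essentially the same route as the paper: reduce to Lemma~\ref{l:Gnorm0} by showing that the images of $\Delta^n_k(z)$ and $\Delta^n_l(z)$ are asymptotically disjoint in $[X]^n_G$ and then pulling back disjoint asymptotic neighborhoods along the proper macro-uniform quotient map. The only (harmless) divergence is in the asymptotic-disjointness step, where you use the $G$-invariant count of coordinates near $z$ together with an escaping/non-escaping split, whereas the paper argues more directly that $|a^{-1}(z)|=k<l=|b^{-1}(z)|$ yields a coordinate $i$ with $b(i)=z\ne a(i)$, forcing $a(i)\in E^{-1}{\circ}E[z]$ and hence $x[n]\subset E{\circ}E^{-1}{\circ}E[z]$.
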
  

\begin{proof} Assume that for some $n\ge 3$ and some subgroup $G\subset S_n$ the ballean $[X]^n_G$ is normal. Let $\pi:X^n\to [X]^n_G$ be the surjective proper macro-uniform map assigning to each function $x\in X^n$ its equivalence class $xG=\{x\circ g:g\in G\}$. Since $n\ge 3$, we can fix two positive integer numbers $k<l<n$.

We claim that the sets $\pi(\Delta^n_k(z))$ and $\pi(\Delta^n_l(z))$ are asymptotically disjoint in $[X]^n_G$.

 Given any entourage $E\in\E_X$ consider the entourages $$\tilde E=\{(x,y)\in X^n\times X^n:\forall i\in n\;(x(i),y(i))\in E\}\in\E_{X^n}$$and 
 $$\hat E=\{(xG,yG):(x,y)\in\tilde E\}\in\E_{[X]^n_G}.$$
 
 We need to prove that the intersection $\hat E[\pi(\Delta^n_l(z))]\cap\hat E[\pi(\Delta^n_l(z))]$ is bounded in $[X]^n_G$.
 
Take any element $xG\in\hat E[\pi(\Delta^n_k(z))]\cap \hat E[\pi(\Delta^n_l(z))]$.
Taking into account that the sets $\Delta^n_k(z)$ and $\Delta^n_l(z)$ are invariant under the (coordinate permutating) action of the group $G$, we conclude that $x\in \tilde E[a]\cap \hat E[b]$ for some $a\in\Delta^n_k(z)$ and $b\in\Delta^n_l(z)$. By the definition of the sets $\Delta^n_k(z)$ and $\Delta^n_l(z)$, the sets $a[n]\setminus\{z\}$ and $b[n]\setminus z$ are singletons. Since $|a^{-1}(z)|=k<l=|b^{-1}(z)|$, there exists $i\in n$ such that $b(i)=z\ne a(i)$. It follows that $x(i)\in E[a(i)]\cap E[b(i)]=E[a(i)]\cap E[z]$ and thus $a(i)\in E^{-1}{\circ}E[z]$ and $a[n]=\{z,a(i)\}\subset E^{-1}{\circ}E[z]$.
Then the set $x[n]\subset E[a[n]]\subset E{\circ}E^{-1}{\circ}E[z]$ is bounded in $X$ and hence the intersection $$\hat E[\pi(\Delta^n_k(z))]\cap \hat E[\pi(\Delta^n_l(z))]\subset \{xG\in [X]^n_G:x[n]\subset E{\circ}E^{-1}{\circ}E[z]\}$$ is bounded in $[X]_G^n$.

By the normality of the ballean $[X]^n_G$, the asymptotically disjoint sets $\pi(\Delta^n_k(z))$ and $\pi(\Delta^n_l(z))$ have disjoint asymptotic neighborhoods $U,V\subset [X]^n_G$. Taking into account that the map $\pi:X^n\to [X]^n_G$ is macro-uniform and proper, we can check that $\pi^{-1}(U)$, $\pi^{-1}(V)$ are disjoint asymptotic neighborhoods of the sets $\Delta^n_k(z)$ and $\Delta^n_l(z)$ in $X^n$. By Lemma~\ref{l:Gnorm0}, the bornology $\mathcal B_X$ of $X$ has a linearly ordered base.
\end{proof} 

\begin{lemma}\label{l:Gnorm2} A ballean $X$ has bounded growth if for some $n\ge 2$ and some subgroup $G\subset S_n$ the ballean $[X]^n_G$ is normal.
\end{lemma}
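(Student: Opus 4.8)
The plan is to mimic the strategy of Lemma~\ref{p:hyper1}: I would extract from the normality of $[X]^n_G$ a single monotone function $f:\E_X\to\mathcal B_X$ controlling the growth of balls, and then invoke the purely formal derivations in parts (2) and (3) of Lemma~\ref{p:hyper1}, which use only the conclusion of part~(1) and turn such an $f$ into a growth entourage. Note that the product--embedding argument of Lemma~\ref{l:hyper1} is unavailable here, since $X$ may well be ultranormal (e.g. ultradiscrete, cf. Theorem~\ref{t:ultranorm}) and then contains no two asymptotically disjoint unbounded subsets; a direct construction is therefore needed, and in particular I would \emph{not} use the two-index separation of the sets $\Delta^n_k(z)$, which is only required for the linearly-ordered-base conclusion when $n\ge 3$. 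Fixing a point $x_0\in X$, I would work in $[X]^n_G$ with the two sets $S:=\{\bar wG:w\in X\}$ of classes of constant functions $\bar w\colon n\to\{w\}$, and $D:=\{xG:x\in X^n,\ |x^{-1}(x_0)|\ge n-1\}$ of classes agreeing with $x_0$ in all but at most one coordinate. Both defining conditions are $G$-invariant, so the sets are well defined (they may share the harmless point $\bar{x_0}G$).

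The first step is to check that $S$ and $D$ are asymptotically disjoint. For a symmetric $E=E^{-1}\in\E_X$ with induced entourages $\tilde E\in\E_{X^n}$ and $\hat E\in\E_{[X]^n_G}$, any class $CG\in\hat E[S]\cap\hat E[D]$ satisfies $C[n]\subset E[w]$ for some $w$ (from membership in $\hat E[S]$, since a constant function has only itself as a representative), while some representative of $C$ has at least $n-1$ coordinates lying in $E[x_0]$ (from membership in $\hat E[D]$). Because $n\ge 2$, at least one coordinate satisfies both, so $E[w]\cap E[x_0]\ne\emptyset$, whence $w\in E^2[x_0]$ and $C[n]\subset E^3[x_0]$. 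Thus $\hat E[S]\cap\hat E[D]$ is contained in the bounded set $\{yG:y[n]\subset E^3[x_0]\}$, as required.

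By normality, $S$ and $D$ have disjoint asymptotic neighborhoods $O_S,O_D$. For each symmetric $E=E^{-1}$ I put $S_E:=\{A\in S:\hat E[A]\subset O_S\}$ and $D_E:=\{A\in D:\hat E[A]\subset O_D\}$; the complements $S\setminus S_E$ and $D\setminus D_E$ are bounded in $[X]^n_G$, so
\[
f(E):=\{w\in X:\bar wG\in S\setminus S_E\}\ \cup\ \bigcup\{A[n]:A\in D\setminus D_E\}
\]
is a bounded subset of $X$, and $f$ is monotone. The crux is to prove $E_2[E_1[x_0]\setminus f(E_2)]\subset f(E_1)$ for all $E_1,E_2$. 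Assuming the contrary, I would choose $w\in E_1[x_0]\setminus f(E_2)$ and $v\in E_2[w]\setminus f(E_1)$; then $\bar wG\in S_{E_2}$ and $A_1:=(x_0,\dots,x_0,v)G\in D_{E_1}$, so $\hat E_2[\bar wG]\subset O_S$ and $\hat E_1[A_1]\subset O_D$. The single function $C:=(w,\dots,w,v)$ is $\tilde E_2$-close to the representative $\bar w$ (its last coordinate satisfies $v\in E_2[w]$) and $\tilde E_1$-close to the representative $(x_0,\dots,x_0,v)$ (its first $n-1$ coordinates satisfy $w\in E_1[x_0]$), so $CG\in\hat E_2[\bar wG]\cap\hat E_1[A_1]\subset O_S\cap O_D=\emptyset$, a contradiction.

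Once the key inequality holds, the arguments of Lemma~\ref{p:hyper1}(2,3) apply without change: passing to the entourages $E_B:=\Delta_X\cup((B\cup\{x_0\})\times(B\cup\{x_0\}))$ produces a monotone $g:\mathcal B_X\to\mathcal B_X$ with $E[B\setminus f(E)]\subset g(B)$, and then $G:=\{(x,y)\in X\times X:y\in g(\{x\})\}$ is a growth entourage for $X$. I expect the main obstacle to be the uniform treatment of an arbitrary subgroup $G\subset S_n$: since the balls $\hat E[\cdot]$ are computed only up to a choice of representatives, one must verify that the explicit witness $C$ genuinely lands in both balls for every $G$, which it does precisely because $C$ is \emph{literally} $\tilde E_i$-close to the chosen representatives $\bar w$ and $(x_0,\dots,x_0,v)$, with no permutation of $C$ needed. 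The hypothesis $n\ge 2$ is used exactly to guarantee the $n-1\ge 1$ coordinates exploited in the asymptotic-disjointness step.
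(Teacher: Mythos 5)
Your proof is correct, but it takes a genuinely different route from the paper's. The paper disposes of this lemma in three lines of reductions: by Theorem~\ref{t:Gnorm1} the normality of $[X]^n_G$ passes to the full symmetric power $[X]^n=[X]^n_{S_n}$; then the map $[X]^{\le 2}\to[X]^n$, $\{x,y\}\mapsto(x,y,z,\dots,z)$, is claimed to be an asymorphic embedding, so $[X]^{\le 2}$ is normal by Proposition~\ref{p:emb}; finally Lemma~\ref{p:hyper1}(3) gives bounded growth. You instead re-run the separation argument of Lemma~\ref{p:hyper1}(1) directly inside $[X]^n_G$, replacing the singletons and the pairs $\{x_0,x\}$ by the $G$-invariant sets $S$ (constant tuples) and $D$ (tuples with at least $n-1$ coordinates equal to $x_0$), and your verifications are sound: the asymptotic disjointness uses exactly $n-1\ge1$; the witness $C=(w,\dots,w,v)$ is literally $\tilde E_2$-close to $\bar w$ and $\tilde E_1$-close to $(x_0,\dots,x_0,v)$, so no representative-chasing is needed for arbitrary $G$; and the extracted monotone $f$ feeds into the purely formal parts (2)--(3) of Lemma~\ref{p:hyper1} exactly as you say (with the same tacit extension of $f$ to the coarse structure ${\downarrow}\E_X$ that the paper itself makes). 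What each approach buys: the paper's is shorter and reuses machinery, but it leans on Theorem~\ref{t:Gnorm1} and on the unverified assertion that $\{x,y\}\mapsto(x,y,z,\dots,z)$ is an asymorphic embedding (properness and macro-uniformity of the inverse do require a check); yours is longer but self-contained, bypasses both the passage to $[X]^n_{S_n}$ and the embedding of $[X]^{\le 2}$, and makes transparent where the hypothesis $n\ge2$ enters.
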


\begin{proof} By Theorem~\ref{t:Gnorm1}, the normality of $[X]^n_G$ implies the normality of the symmetric $n$-th power $[X]^n$ of $X$. Without loss of generality, we can assume that $X$ is not empty and hence contains some point $z\in X$. It can be shown that the map $f:[X]^{\le 2}\to [X]^n$, $f:\{x,y\}\mapsto (x,y,z,\dots,z)$, is an asymorphic embedding. By Proposition~\ref{p:emb}, the normality of the ballean $[X]^n$ implies the normality of $[X]^{\le 2}$. Applying Lemma~\ref{p:hyper1}(3), we conclude that the ballean $X$ has bounded growth.
\end{proof}

\section{Balleans on $G$-spaces}\label{s:Gspace}

In this section we study the finitary ball structure on transitive $G$-spaces, i.e., sets $X$ endowed with a transitive (left) action of a group $G$. The action of $G$ on $X$ is {\em transitive} if $Gx=X$ for all $x\in X$.
 
Each transitive $G$-space $X$ carries the canonical ball structure $\E_{X,G}$ consisting of the entourages $$E_F:=\big\{(x,y)\in X\times X:y\in\{x\}\cup\{gx\}_{g\in F}\big\}$$ 
where $F$ runs over finite subsets of the group $G$. 

The ballean $(X,\E_{X,G})$ is {\em finitary} in the sense that $\sup\{|E[x]|:x\in X\}<\infty$ for every entourage $E\in\E_{X,G}$.

A subgroup $G$ of the permutation group $S_X$ of a set $X$ is called {\em transitive} if its action on $X$ is {\em transitive}, which means that $Gx=X$ for all $x\in X$.
By Theorem 1 proved in \cite{P2008}, for each finitary ballean $(X,\E_X)$ there exists a transitive subgroup $G\subset S_X$ such that the balleans $\E_X$ and $\E_{X,G}$ generate the same coarse structure, i.e., ${\downarrow}\E_X={\downarrow}\E_{X,G}$. So, the study of finitary balleans can be reduced to investigation of the canonical balleans on transitive $G$-spaces.

In this respect we can ask the following natural problem.

\begin{problem} Study the interplay between algebraic and topological properties of a transitive subgroup $G\subset S_X$ and asymptotic properties of the ballean $(X,\E_{X,G})$? 
\end{problem}

Here we endow the permutation group $S_X$ with the topology inherited from the topology of the Tychonoff product $X^X$, where $X$ is endowed with the discrete topology. This topology turns $S_X$ into a complete topological group. The topological group $S_X$ is Polish if the set $X$ is countable.

The bounded growth of transitive $G$-spaces has the following topological characterization.  

\begin{proposition} For a countable set $X$ and a transitive subgroup $G\subset S_X$ the ballean $(X,\E_{X,G})$ has bounded growth if and only if the group $G$ is contained in a $\sigma$-compact subset of $S_X$.
\end{proposition}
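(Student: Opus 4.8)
The plan is to translate both conditions into purely combinatorial statements about the bornology of $(X,\E_{X,G})$, which here is simply the ideal of \emph{finite} subsets of $X$. Indeed, since $G$ acts transitively and each ball $E_F[x]=\{x\}\cup\{gx:g\in F\}$ is finite, the bounded sets are exactly the finite subsets of $X$. Consequently a growth entourage $\Gamma$ amounts to a function $\gamma:X\to[X]^{<\w}$ with $x\in\gamma(x)=\Gamma[x]$ such that each $\gamma(x)$ is finite and, for every finite $F\subset G$, the inclusion $E_F[x]\subset\gamma(x)$ holds for all but finitely many $x$. Taking $F=\{g\}$ and intersecting over $g\in F$, this is equivalent to requiring that for every $g\in G$ the set $\{x\in X:gx\notin\gamma(x)\}$ be finite. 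On the topological side I will use the standard description of compactness in $S_X$: a set $K\subset S_X$ is relatively compact if and only if for every $x\in X$ both $\{f(x):f\in K\}$ and $\{f^{-1}(x):f\in K\}$ are finite, the first condition making the closure of $K$ in $X^X$ compact and the second forcing that closure to consist of bijections.

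For the implication ``$\sigma$-compact $\Rightarrow$ bounded growth'' I would fix an increasing sequence of compact sets $K_0\subset K_1\subset\cdots$ with $G\subset\bigcup_n K_n$ and $\mathrm{id}\in K_0$, and put $A_n(x):=\{f(x):f\in K_n\}$, a finite set. After enumerating $X=\{x_m:m\in\w\}$, define $\gamma(x_m):=A_m(x_m)$. Each $\gamma(x_m)$ is finite and contains $x_m$. If $g\in G$, then $g\in K_{n_0}$ for some $n_0$, and for every $m\ge n_0$ we have $g\in K_m$, hence $gx_m\in A_m(x_m)=\gamma(x_m)$; thus $\{x:gx\notin\gamma(x)\}\subset\{x_0,\dots,x_{n_0-1}\}$ is finite. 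This verifies the combinatorial form of bounded growth, using only the forward orbit data.

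For the converse I would first replace a growth entourage $\Gamma$ by the symmetric entourage $\Gamma\cap\Gamma^{-1}$, which, as in the proof that bounded growth is equivalent to $\cof_*(\E_X,{\Uparrow}\mathcal B_X)=1$, is again a growth entourage. Its balls $\gamma(x)=(\Gamma\cap\Gamma^{-1})[x]=\{y\in\Gamma[x]:x\in\Gamma[y]\}$ are finite and now enjoy the crucial symmetry $y\in\gamma(x)\Leftrightarrow x\in\gamma(y)$, so every fibre $\{w:x\in\gamma(w)\}=\gamma(x)$ is finite as well. Applying the growth condition to $g$ and to $g^{-1}$, for each $g\in G$ the set $\{x:gx\notin\gamma(x)\text{ or }g^{-1}x\notin\gamma(x)\}$ is finite, hence contained in an initial segment $F_k=\{x_0,\dots,x_{k-1}\}$ of the enumeration of $X$. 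Setting $G_k:=\{g\in G:gx\in\gamma(x)\text{ and }g^{-1}x\in\gamma(x)\text{ for all }x\notin F_k\}$ yields $G=\bigcup_k G_k$ with $G_k\subset G_{k+1}$, so it suffices to show each $G_k$ is relatively compact.

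The heart of the argument is the orbit bound at the finitely many exceptional points of $F_k$, and this is exactly where the symmetry of $\gamma$ is indispensable. For $x\notin F_k$ both $\{gx:g\in G_k\}$ and $\{g^{-1}x:g\in G_k\}$ lie in the finite set $\gamma(x)$. For $x\in F_k$ and $g\in G_k$, writing $z=gx$: if $z\notin F_k$ then $g^{-1}z=x\in\gamma(z)$, so by symmetry $z\in\gamma(x)$; hence $gx\in F_k\cup\gamma(x)$, a finite set independent of $g$, and symmetrically $g^{-1}x\in F_k\cup\gamma(x)$. Thus $G_k$ has finite forward and backward orbits at every point, so $\overline{G_k}$ is compact and $G\subset\bigcup_k\overline{G_k}$ is $\sigma$-compact. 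The main obstacle is precisely controlling the values $gx$ for $x$ in the finite but $g$-dependent exceptional set; the reduction to a symmetric, finite-fibred growth entourage is what upgrades the unusable one-sided bound $gx\in\gamma(x)$ into a two-sided bound that also pins down these exceptional values.
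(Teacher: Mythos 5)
Your proof is correct, and the ``$\sigma$-compact $\Rightarrow$ bounded growth'' half coincides with the paper's (the same diagonal construction $\gamma(x_m)=\{f(x_m):f\in K_m\}$). For the converse the underlying idea is also the same --- cover $G$ by countably many subsets of $S_X$ all of whose forward and backward orbits are finite --- but your execution differs and is in fact tighter. The paper fixes a countable dense set $D\subset G$ and covers $G$ by the sets $K_{d,n}$ of permutations agreeing with $d$ on $\{x_0,\dots,x_n\}$ and satisfying $\{g(x_k),g^{-1}(x_k)\}\subset\Gamma[x_k]$ for $k>n$; it then asserts compactness of $K_{d,n}$ without comment. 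That assertion needs the backward orbits at the initial points $x_k$, $k\le n$, to be finite, which is not automatic when a point $x_k$ lies outside $d(\{x_0,\dots,x_n\})$ and the fibre $\Gamma^{-1}[x_k]$ is infinite --- Definition of bounded growth only controls forward images $\Gamma[B]$. Your preliminary replacement of $\Gamma$ by the symmetric entourage $\Gamma\cap\Gamma^{-1}$ (which one checks directly is again a growth entourage in the finitary setting, not merely a witness for $\cof_*$ as the cited proposition literally states) gives finite fibres for free, and your argument that $g(x)\in F_k\cup\gamma(x)$ for the exceptional points $x\in F_k$ supplies exactly the missing backward-orbit bound; it also dispenses with the dense set $D$ altogether. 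So the two proofs buy the same theorem, but yours makes explicit the point the paper's sketch glosses over.
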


\begin{proof} Since the ballean $(X,\E_{X,G})$ is finitary, its bornology $\mathcal B_X$ coincides with the family $[X]^{<\w}$ of all finite subsets of $X$. 

If the ballean $(X,\E_{X,G})$ has bounded growth, then there exists a growth entourage $\Gamma$ for $X$. 

Fix any countable dense set $D\subset G$. For every $d\in D$ and $n\in\w$ consider the compact set 
$$K_{d,n}:=\{g\in S_X:(\forall k\le n\; g(x_k)=d(x_k))\mbox{ and }
(\forall k>n\;\{g(x_k),g^{-1}(x_k)\}\in\Gamma[x_k])\}$$in the Polish group $S_X$. 
The choice of the growth function $\gamma$ guarantees that $$G\subset \bigcup_{d\in D}\bigcup_{n\in\w}K_{d,n}.$$ 
\smallskip

Now assume that the subgroup $G$ is contained in a $\sigma$-compact set $A\subset S_X$. Write $A$ as the countable union $A=\bigcup_{n\in\w}A_n$ of compact sets $A_n$ such that $A_n\subset A_{n+1}$ for all $n\in\w$. It follows that for every $n\in\w$ the set $\Gamma[x_n]:=\{x_n\}\cup\{f(x_n):f\in A_n\}\subset X$ is finite.
We claim that the entourage $\Gamma=\bigcup_{x\in X}\Gamma[x]$ witnesses that the ballean $(X,\E_{X,G})$ has bounded growth. Indeed, for any finite set $F\subset G\subset \bigcup_{n\in\w}A_n$, we can find $n\in\w$ such that $F\subset A_n$. Then for every $m\ge n$ we have
$$E_F[x_m]=\{x_m\}\cup\{f(x_m):f\in F\}\subset \{x_m\}\cup\{f(x_m):f\in A_m\}=\Gamma[x_m].$$
\end{proof}

A sufficient condition for the (ultra)normality of $(X,\E_{X,G})$ is the infinite mixing property of the action of $G$ on $X$.

\begin{definition} We say that a subgroup $G\subset S_X$ is {\em infinitely mixing} if for any infinite sets $I,J\subset X$ there exists a permutation $g\in G$ such that the intersection $g(I)\cap J$ is infinite.
\end{definition}

\begin{proposition}\label{p:mix} If a transitive subgroup $G\subset S_X$ is infinitely mixing, then the ballean $(X,\E_{X,G})$ contains no unbounded asymptotically disjoint subsets and hence is ultranormal.
\end{proposition}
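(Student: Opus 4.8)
The plan is to reduce the statement to exhibiting a single bad entourage. Since the ballean $(X,\E_{X,G})$ is finitary, its bornology $\mathcal B_X$ coincides with the ideal $[X]^{<\w}$ of finite subsets of $X$; hence ``bounded'' means ``finite'' and ``unbounded'' means ``infinite''. So the task is to show that no two infinite sets $A,B\subset X$ can be asymptotically disjoint, i.e.\ that for some infinite $A,B$ there is an entourage $E\in\E_{X,G}$ for which the intersection $E[A]\cap E[B]$ is infinite.

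First I would feed the infinite sets $I:=A$ and $J:=B$ into the infinite mixing hypothesis. This produces a permutation $g\in G$ for which $g(A)\cap B$ is infinite, and this single permutation is essentially the whole content of the argument.

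Next I would set $F:=\{g\}$, a finite subset of $G$, and consider the corresponding entourage $E_F\in\E_{X,G}$, whose balls are $E_F[x]=\{x,gx\}$. Then $E_F[A]=A\cup g(A)\supset g(A)$, while $E_F[B]\supset B$ because every entourage contains the diagonal. Consequently $E_F[A]\cap E_F[B]\supset g(A)\cap B$, which is infinite, hence unbounded. This contradicts the assumed asymptotic disjointness of $A$ and $B$, so no unbounded asymptotically disjoint pair exists in $(X,\E_{X,G})$. By the very definition of ultranormality (the absence of unbounded asymptotically disjoint sets) this immediately yields that the ballean is ultranormal.

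The argument presents no real obstacle; the only point to keep straight is the logical quantifier in asymptotic disjointness. Asymptotic disjointness of $A$ and $B$ requires $E[A]\cap E[B]$ to be bounded for \emph{every} entourage $E$, so to refute it it suffices to produce \emph{one} entourage witnessing unboundedness, and the mixing permutation supplies exactly such an $E_F$. No appeal to normality machinery is needed, since ultranormality here is a direct consequence of the nonexistence of the relevant pairs.
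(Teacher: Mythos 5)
Your proof is correct and follows essentially the same route as the paper: apply the infinite mixing property to the two infinite sets to obtain $g\in G$ with $g(A)\cap B$ infinite, and then observe that $E_{\{g\}}[A]\cap E_{\{g\}}[B]\supset g(A)\cap B$ is infinite, hence unbounded, contradicting asymptotic disjointness. Your preliminary remark that boundedness coincides with finiteness in this finitary ballean is a helpful clarification that the paper leaves implicit.
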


\begin{proof} Given two unbounded (and hence infinite) asymptotically disjoint sets $A,B\subset X$, we can use the infinitely mixing property and find a permutation $g\in G$ such that the intersection $g(A)\cap B$ is infinite. Then for the entourage $E_{\{g\}}:=\{(x,y)\in X\times X:y\in\{x,g(x)\}\}$ the intersection $E_{\{g\}}[A]\cap E_{\{g\}}[B]\supset g(A)\cap B$ is infinite and hence unbounded in $X$. This means that the sets $A,B$ are not asymptotically disjoint.
\end{proof}

Next, we find a condition on the action of the group $G$ guaranteeing that the ballean $(X,\E_{X,G})$ is pseudobounded.

Let us recall that a function $\varphi :X\to\IR$ of a ballean $X$ is {\em slowly oscillating} if for every $\e>0$ and every $E\in\E_X$ there exists a bounded set $B\subset X$ such that for every $x\in X\setminus B$ the set $\varphi(E[x])$ has diameter $<\e$ in the real line. 

A ballean $X$ is called {\em pseudobounded} if for each slowly oscillating function $\varphi :X\to\IR$ there exists a bounded set $B\subset X$ such that the set $\varphi(X\setminus B)$ is bounded in the real line. Pseudobounded balleans were introduced in \cite{Prot}. 


\begin{lemma} A ballean $X$ is pseudobounded if $\cof(\mathcal B_X)=\w$ and $X$ contains no discrete subballeans.
\end{lemma}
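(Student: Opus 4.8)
The plan is to argue by contraposition: assuming $X$ is not pseudobounded I will manufacture a discrete subballean of $X$, contradicting the hypothesis. So suppose there is a slowly oscillating function $\varphi:X\to\IR$ witnessing the failure of pseudoboundedness, i.e. $\varphi(X\setminus B)$ is unbounded in $\IR$ for every bounded set $B\subset X$. First I would replace $\varphi$ by $|\varphi|$, which is again slowly oscillating since $\mathrm{diam}\,|\varphi|(S)\le\mathrm{diam}\,\varphi(S)$ for every $S\subset X$; thus I may assume $\varphi\ge 0$, so that $\varphi(X\setminus B)$ is in fact unbounded from above for every bounded $B$.

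Next I would invoke the hypothesis $\cof(\mathcal B_X)=\w$ to fix an increasing sequence $(B_n)_{n\in\w}$ of bounded sets cofinal in the bornology $\mathcal B_X$. Since each $\varphi(X\setminus B_n)$ is unbounded from above, I can recursively choose points $x_n\in X\setminus B_n$ with $\varphi(x_n)\ge\varphi(x_{n-1})+2$, and set $D:=\{x_n:n\in\w\}$. As the values $\varphi(x_n)$ strictly increase, the $x_n$ are pairwise distinct and $|\varphi(x_m)-\varphi(x_n)|\ge 2$ whenever $m\ne n$. The set $D$ is unbounded in $X$: given any bounded $B$ there is $m$ with $B\subset B_m$, while $x_m\in D\setminus B_m\subset D\setminus B$, so $D$ is contained in no member of the cofinal family $(B_n)$.

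It then remains to check that the subballean $D$ is discrete. Recall that every entourage of $(D,\E_X{\restriction}D)$ has the form $(D\times D)\cap E$ for some $E\in\E_X$, and that a subset of $D$ is bounded in $D$ iff it is bounded in $X$. Fixing $E\in\E_X$ and applying slow oscillation with $\e=1$, I obtain a bounded set $B_E\subset X$ with $\mathrm{diam}\,\varphi(E[x])<1$ for all $x\in X\setminus B_E$. For $x_n\in D\setminus B_E$ and any $x_m\in E[x_n]\cap D$, both $\varphi(x_n)$ and $\varphi(x_m)$ lie in $\varphi(E[x_n])$ (using $\Delta_X\subset E$), so $|\varphi(x_m)-\varphi(x_n)|<1$; the separation estimate $\ge 2$ forces $m=n$. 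Hence $((D\times D)\cap E)[x_n]=E[x_n]\cap D=\{x_n\}$ for every $x_n\in D\setminus(D\cap B_E)$, and $D\cap B_E$ is bounded in $D$. Thus $D$ is a discrete subballean of $X$, which is the desired contradiction.

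The point to get right — more a matter of organizing the argument than a genuine obstacle — is that discreteness of $D$ must be verified against \emph{every} entourage $E\in\E_X$ at once, even though $\E_X$ may be uncountable; this is exactly what the uniform choice $\e=1$ combined with the fixed separation $\ge 2$ of the values $\varphi(x_n)$ delivers, slow oscillation converting each $E$ into its own bounded exceptional set $B_E$. The countable cofinality of $\mathcal B_X$ enters only through the diagonal requirement $x_n\notin B_n$, which guarantees that the constructed set $D$ is unbounded.
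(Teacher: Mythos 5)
Your proof is correct and follows essentially the same route as the paper's (which is only sketched there): use a countable base $(B_n)_{n\in\w}$ of the bornology to extract points $x_n\in X\setminus B_n$ whose $\varphi$-values are uniformly separated, then use slow oscillation to check that the resulting unbounded set is a discrete subballean. The only cosmetic differences are your normalization $\varphi\mapsto|\varphi|$ and the separation constant $2$ against the paper's $1$.
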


\begin{proof} Since  $\cof(\mathcal B_X)=\w$, we can choose a well-ordered base $(B_n)_{n\in\w}$ of the bornology of $X$.

Assuming that $X$ is not pseudobounded, we can find a slowly oscillating function $\varphi:X\to\IR$ such that for every $n\in\w$ the set $\varphi(X\setminus B_n)$ is unbounded in the real line.

Let $x_0\in X$ be any point. By induction for every $n\in\w$ we can select a point $X\setminus B_n$ such that $|\varphi(x_{n+1})|>|\varphi(x_n)|+1$. for every $n\in\w$. It can be shown that the subballean $\{x_n\}_{n\in\w}$ is unbounded and y discrete.
\end{proof}

The following proposition can be easily derived from the definitions of the ball structure $\E_{X,G}$ and Proposition~\ref{p:discrete}.
 
\begin{proposition}\label{p:pseudo} Let $X$ be a countable set and $G\subset S_X$ be a subgroup such that for any infinite subset $I\subset X$ there exists a permutation $g\in G$ such that the set $\{x\in I:x\ne g(x)\in I\}$  is infinite. Then the ballean $(X,\E_{X,G})$ contains no discrete subballeans and hence is pseudobounded but fails to have bounded growth.
\end{proposition}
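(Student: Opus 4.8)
The plan is to verify the three assertions in turn, exploiting the fact that the finitary ballean $(X,\E_{X,G})$ has a very transparent bornology. First I would record that, since each ball $E_F[x]=\{x\}\cup\{gx:g\in F\}$ is finite and $G$ acts transitively on $X$, the bounded subsets of $(X,\E_{X,G})$ are exactly the finite subsets of $X$ (this is the same observation already used earlier for finitary balleans). In particular, as $X$ is countably infinite, the bornology $\mathcal B_X$ coincides with $[X]^{<\w}$ and has $\cof(\mathcal B_X)=\w$, while $X$ itself is unbounded.

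The heart of the argument is to show that $X$ contains no discrete subballean. I would argue by contradiction: suppose some unbounded subset $Y\subset X$ is discrete in the induced structure $\E_X{\restriction}Y$. Being unbounded, $Y$ is infinite, so the hypothesis on $G$ supplies a permutation $g\in G$ for which the set $I:=\{x\in Y:x\ne g(x)\in Y\}$ is infinite. I would then test discreteness against the single entourage $E_{\{g\}}=\{(x,y):y\in\{x,g(x)\}\}$: for $y\in Y$ one has $\big((Y\times Y)\cap E_{\{g\}}\big)[y]=\big(\{y\}\cup\{g(y)\}\big)\cap Y$, which differs from $\{y\}$ precisely when $y\in I$. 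Thus the set of points of $Y$ at which the restricted ball is nontrivial contains the infinite (hence, by the first paragraph, unbounded) set $I$, contradicting the discreteness of $Y$.

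With no discrete subballean available, the remaining two conclusions follow by citing results already established. Pseudoboundedness is immediate from the preceding lemma, since $\cof(\mathcal B_X)=\w$ and $X$ has no discrete subballeans. The failure of bounded growth is then the contrapositive of Proposition~\ref{p:discrete}: were $(X,\E_{X,G})$ of bounded growth, that proposition—applicable because $X$ is unbounded—would produce a discrete subballean, which we have just excluded.

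The only step requiring genuine care is the middle one; everything else is bookkeeping. The subtlety there is to handle the passage to the subballean $Y$ correctly, namely to compute $\big((Y\times Y)\cap E_{\{g\}}\big)[y]=E_{\{g\}}[y]\cap Y$ and to check that infiniteness of $I$ really forces unboundedness in $Y$—which is exactly where the identification of bounded sets with finite sets is used. I expect no other obstacle, and indeed the whole statement is, as claimed, an easy consequence of the definition of $\E_{X,G}$ together with Proposition~\ref{p:discrete} and the preceding lemma.
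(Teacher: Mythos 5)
Your proof is correct and follows exactly the route the paper intends: the paper gives no detailed argument here, merely remarking that the proposition "can be easily derived from the definitions of the ball structure $\E_{X,G}$ and Proposition~\ref{p:discrete}," and your write-up supplies precisely that derivation (finitary balleans have finite bounded sets, the hypothesis on $G$ kills discreteness of any unbounded subballean via the entourage $E_{\{g\}}$, and the last two claims follow from the pseudoboundedness lemma and the contrapositive of Proposition~\ref{p:discrete}). No gaps.
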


Propositions~\ref{p:mix} and \ref{p:pseudo} imply 

\begin{corollary} For a countable set $X$ the ballean $(X,\E_{X,S_X})$ is ultranormal and pseudobounded but does not contain discrete subballeans and fails to have bounded growth.
\end{corollary}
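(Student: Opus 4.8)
The plan is to deduce all four assertions directly from Propositions~\ref{p:mix} and \ref{p:pseudo} by verifying that the full symmetric group $G=S_X$ satisfies the hypotheses of both. As a preliminary remark I would note that $S_X$ acts transitively on the countably infinite set $X$: for any $x,y\in X$ the transposition exchanging $x$ and $y$ belongs to $S_X$ and carries $x$ to $y$, so $Gx=X$. Thus both propositions are applicable once their mixing conditions are checked.

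For the ultranormality I would verify that $S_X$ is infinitely mixing. Given two infinite sets $I,J\subset X$, I would choose countably infinite subsets $A=\{a_k:k\in\w\}\subset I$ and $B=\{b_k:k\in\w\}\subset J$ consisting of pairwise distinct points and whose complements $X\setminus A$ and $X\setminus B$ are also infinite; this is possible because $X$ is countably infinite (take every other element of an enumeration of $I$, and likewise for $J$). The partial bijection $a_k\mapsto b_k$ then extends to a permutation $g\in S_X$ by matching $X\setminus A$ with $X\setminus B$ through any bijection, and for this $g$ the intersection $g(I)\cap J\supset g(A)=B$ is infinite. Hence $S_X$ is infinitely mixing, and Proposition~\ref{p:mix} yields that $(X,\E_{X,S_X})$ contains no unbounded asymptotically disjoint subsets and is ultranormal.

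For the remaining three properties I would check the hypothesis of Proposition~\ref{p:pseudo}: for every infinite $I\subset X$ there is $g\in S_X$ with $\{x\in I:x\ne g(x)\in I\}$ infinite. Fixing an infinite $I$, I would pick a sequence of distinct points $(a_k)_{k\in\w}$ in $I$ and let $g$ be the involution that transposes $a_{2k}$ and $a_{2k+1}$ for each $k\in\w$ and fixes every other point of $X$. Then for each $k$ we have $a_k\ne g(a_k)\in\{a_j:j\in\w\}\subset I$, so $\{x\in I:x\ne g(x)\in I\}$ contains the infinite set $\{a_k:k\in\w\}$. Applying Proposition~\ref{p:pseudo} then gives that $(X,\E_{X,S_X})$ contains no discrete subballeans, is pseudobounded, and fails to have bounded growth, completing the four assertions.

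Since the whole argument reduces to exhibiting these two explicit permutations, I do not expect any serious obstacle. The only point deserving a word of care is the extension of a prescribed bijection between countable subsets of $X$ to a genuine element of $S_X$, which is why I arrange in the mixing step that both chosen subsets have infinite complements: the complements of $A$ and $B$ in the countable set $X$ are then both countably infinite and hence biject, making the extension automatic.
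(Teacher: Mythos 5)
Your proposal is correct and follows exactly the route of the paper, which derives this corollary directly from Propositions~\ref{p:mix} and \ref{p:pseudo}; you merely make explicit the (easy) verifications that $S_X$ is transitive and infinitely mixing and that it satisfies the hypothesis of Proposition~\ref{p:pseudo}, which the paper leaves to the reader. Both of your witnessing permutations are constructed correctly, including the care taken to leave infinite complements so that the partial bijection extends to an element of $S_X$.
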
 

Finally let us prove that the normality of ball structures of $G$-spaces is preserved by equivariant maps. A map $\varphi:X\to Y$ between $G$-spaces is called {\em equivariant} if $\varphi(g{\cdot}x)=g{\cdot}\varphi(x)$ for all $g\in G$ and $x\in X$.

\begin{theorem}\label{t:equi} Let $\varphi:X\to Y$ be an equivariant map between two transitive $G$-spaces. If the ballean $(X,\E_{X,G})$ is normal, then so is the ballean $(Y,\E_{Y,G})$.
\end{theorem}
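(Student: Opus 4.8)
The plan is to realize $Y$ as the image of $X$ under a map satisfying the hypotheses of Proposition~\ref{p:open}, the natural candidate being $\varphi$ itself. Three things must be verified: that $\varphi$ is surjective, that it is simultaneously open and macro-uniform, and that it admits a bornologous section. All three will turn out to be almost immediate once the right identity is isolated.

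First I would record the single computation that drives everything. For a finite set $F\subset G$ and a point $x\in X$ we have $E_F[x]=\{x\}\cup\{gx:g\in F\}$, so equivariance gives
$$\varphi(E_F[x])=\{\varphi(x)\}\cup\{\varphi(gx):g\in F\}=\{\varphi(x)\}\cup\{g\varphi(x):g\in F\}=E_F[\varphi(x)],$$
where the right-hand $E_F$ is now the entourage of $\E_{Y,G}$. Thus $\varphi$ carries each $E_F$-ball \emph{exactly} onto the corresponding $E_F$-ball. Reading this equality as $\varphi(E_F[x])\subset E_F[\varphi(x)]$ shows that $\varphi$ is macro-uniform, and reading it as $\varphi(E_F[x])\supset E_F[\varphi(x)]$ shows that $\varphi$ is open. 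Surjectivity follows from transitivity: fixing $x_0\in X$ we get $\varphi(X)=\varphi(Gx_0)=G\varphi(x_0)=Y$, the last equality holding because the action on $Y$ is transitive.

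Next I would build a bornologous section. Since the ballean is finitary every ball is finite, so every bounded set is finite; conversely every finite set $\{x_1,\dots,x_k\}$ is bounded, because by transitivity there are $g_i\in G$ with $g_ix_1=x_i$, whence $\{x_1,\dots,x_k\}\subset E_F[x_1]$ for $F=\{g_2,\dots,g_k\}$. Hence $\mathcal B_X$ and $\mathcal B_Y$ consist precisely of the finite subsets. Choosing, for each $y\in Y$, a point $s(y)\in\varphi^{-1}(y)$ (possible by surjectivity) gives a section $s:Y\to X$, and this $s$ is automatically bornologous, since the image of a finite set under any map is finite. Proposition~\ref{p:open} then delivers the normality of $Y$.

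The only point that requires genuine care is the choice of tool rather than any hard estimate. It is tempting to invoke Proposition~\ref{p:image}, but $\varphi$ need \emph{not} be proper: the fibres $\varphi^{-1}(y)$ may be infinite, so the preimage of a (finite) bounded set in $Y$ need not be bounded in $X$, and closedness is likewise unclear. This is exactly why the argument must be routed through Proposition~\ref{p:open}, whose hypotheses — openness, macro-uniformity, and a bornologous section — are met for free in the finitary setting, rather than through the properness and closedness demanded by Proposition~\ref{p:image}.
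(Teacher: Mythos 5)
Your proposal is correct and follows essentially the same route as the paper: both arguments verify that the equivariant map is macro-uniform and open, deduce surjectivity from transitivity, observe that finitariness makes any section bornologous, and then invoke Proposition~\ref{p:open}. Your additional remarks (the exact identity $\varphi(E_F[x])=E_F[\varphi(x)]$ and the explanation of why Proposition~\ref{p:image} cannot be used because $\varphi$ need not be proper) simply make explicit what the paper leaves implicit.
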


\begin{proof} The equivariance of $\varphi$ implies that the map $\varphi:(X,\E_{X,G})\to (Y,\E_{Y,G})$ is macro-uniform and open. The transitivity of the $G$-spaces implies that the equivariant map $\varphi$ is surjective. Since the ballean $(Y,\E_{Y,G})$ is finitary, any section $s:Y\to X$ of the map $\varphi$ is bornologous. Now we can apply Proposition~\ref{p:open} and conclude that the normality of the ballean $(X,\E_{X,G})$ implies the normality of the ballean $(Y,\E_{Y,G})$.
\end{proof}

\section{The proof of Theorem~\ref{t:group}}\label{s:group}

Given a group $G$ and an infinite cardinal $\kappa<|G|$, we should prove that
the ballean $(G,\E_{[G]^{<\kappa}})$ is not normal.
To derive a contradiction, assume that the ballean $(G,\E_{[G]^{<\kappa}})$ is normal.

Two cases are possible.

I. There exists a subgroup $A\subset G$ of cardinality $|A|=\kappa$ and a subset $M\subset G$ of cardinality $|M|>\kappa$ such that for each point $b\in M$ there exists a set $F\in[G]^{<\kappa}$ such that $|Ab^{-1}\cap FA|=\kappa$. By the Kuratowski-Zorn Lemma, the set $M$ contains a maximal subset $B$ such that the family $(bA)_{b\in B}$ is disjoint. It is easy to see that $|B|=|M|>\kappa$.

We claim that the sets $A$ and $B$ are asymptotically disjoint in the ballean $(G,\E_{[G]^{<\kappa}})$. 
First we show that for any $x,y\in G$ the intersection $xA\cap yB$ contains at most one point. To derive a contradiction, assume that $xA\cap yB$ contains two distinct points $z_1,z_2$. For every $i\in\{1,2\}$ find points $a_i\in A$ and $b_i\in B$ such that $xa_i=z_i=yb_i$. Then $b_1a_1^{-1}=y^{-1}x=b_2a_2^{-1}$ and hence $b_1=b_2$ (as the family $(bA)_{b\in B}$ is disjoint). Then $z_1=yb_1=yb_2=z_2$, which contradicts the choice of the points $z_1\ne z_2$. Then for any set $F\subset G$ of cardinality $|F|<\kappa$, we have $|FA\cap FB|\le\sum_{x,y\in F}|xA\cap yB|\le |F\times F|<\kappa$, which means that the sets $A,B$ are asymptotically disjoint.

By the normality of the ballean $(G,\E_{[G]^{<\kappa}})$, the asymptotically disjoint sets $A,B$ have disjoint asymptotic neighborhoods $O_A,O_B$. Then for every set $F\in[G]^{<\kappa}$ the set $\Phi(F):=\{a\in A:Fa\not\subset O_A\}\cup\{b\in B:Fb\not\subset O_B\}$ is bounded and hence has cardinality $|\Phi(F)|<\kappa$.
Consequently, the union $\Phi[A]=\bigcup_{a\in A}\Phi(\{a\})$ has cardinality $|\Phi[A]|\le|A|\cdot\kappa=\kappa$.

We claim that there are elements $x,y\in G$ and $a\in A\setminus\Phi(x)$ and $b\in B\setminus \Phi(y)$ such that $xa=yb$.
Since $|B|>\kappa\ge |\Phi[A]|$, there exists an element $b\in B\setminus\Phi[A]$. By our assumption, for the point $b\in B\subset M$ there exists a set $F_b\in[G]^{<\kappa}$ such that $|Ab^{-1}\cap F_bA|=\kappa$.

Since the set $\Phi(F_b^{-1})$ has cardinality $|\Phi(F_b^{-1})|<\kappa=|Ab^{-1}\cap F_bA|$, there exist  elements $a\in A$ and $f\in F_b$ such that $ab^{-1}\in fA$ and $a\notin\Phi(F_b^{-1})$. Put $x=f^{-1}$ and $y=f^{-1}ab^{-1}\in A$. Observe that $a\notin\Phi(F_b^{-1})\supset\Phi(\{x\})$ and $b\notin\Phi(\{y\})\subset \Phi[A]$. Consequently, $xa\in O_A$ and $yb\in O_B$. On the other hand, $xa=f^{-1}a=f^{-1}ab^{-1}b=yb\in O_A\cap O_B=\emptyset$, which is a desired contradiction completing the proof in the case I.
\smallskip

II. There exists a subgroup $A\subset G$ of cardinality $|A|=\kappa$ and a subset $M\subset G$ of cardinality $|M|>\kappa$ such that for every $b\in M$ and set $F\in[G]^{<\kappa}$ we have $|Ab\cap FA|<\kappa$. Using the Kuratowski-Zorn Lemma, choose a maximal subset $U\subset M$ such that  the family $(AbA)_{b\in U}$ is disjoint. By the maximality, the set $U$ has cardinality $|U|=|M|>\kappa$. Using the Kuratowski-Zorn Lemma, for every $b\in U$ choose a maximal set $A_b\subset A$ such that the family $(abA)_{a\in A_b}$ is disjoint. The maximality of $A_b$ implies that $AbA=A_bbA$. We claim that $|A_b|=\kappa$. In the opposite case, the set $F=A_bb$ has cardinality $|F|<\kappa$ and then the set $Ab=FA\cap Ab$ has cardinality $<\kappa$, which is not possible as $|Ab|=|A|=\kappa$. This contradiction shows that $|A_b|=\kappa$.

We claim that the sets $A$ and $B:=\{aba^{-1}:a\in A_b,\;b\in U\}$ are asymptotically disjoint in the ballean $(G,\E_{[G]^{<\kappa}})$. This will follow as soon as we check that for every $x\in G$ the set $xA\cap B$ contains at most one point. Assuming that $xA\cap B$ is not empty, find a point $b\in U$ such that $xA\cap AbA\ne\emptyset$. Then $x\in AbA$ and hence $xA\cap B\subset \bigcup_{u\in U}(AbA\cap AuA)=AbA$ as the family $(AuA)_{u\in U}$ is disjoint. Therefore, $xA\cap B=xA\cap B\cap AbA=xA\cap\{aba^{-1}:a\in A_b\}$. Assuming that  the set $xA\cap B$ contains two distinct points, we would find two distinct points $a_1,a_2\in A_b$ such that $a_1ba_1^{-1},a_2ba_2^{-1}\in xA$ and hence $x\in a_1bA\cap a_2bA$, which is not possible as the family $(abA)_{a\in A_b}$ is disjoint. This contradiction shows that $|xA\cap B|\le 1$ for every $x\in G$ and hence $|FA\cap FB|\le|F\times F|<\kappa$ for every $F\in[G]^{<\kappa}$.

By the normality of the ballean $(G,\E_{[G]^{<\kappa}})$, the asymptotically disjoint sets $A,B$ have disjoint asymptotic neighborhoods $O_A,O_B$. Then for every $x\in G$ the set $\Phi(x):=\{a\in A:xa\notin O_A\}\cup\{b\in B:xb\notin O_B\}$ is bounded and hence has cardinality $|\Phi(x)|<\kappa$.
  

Since $|U|>\kappa$, there exists an element $u\in U\setminus \bigcup_{a\in A}a\Phi(a)a^{-1}$. Since $|A_b|=\kappa>|\Phi(u)|$, there exists an element $a\in A_b\setminus \Phi(u)$. Put $y=a\in A$, $x=u$ and $b=a^{-1}ua\in B$. It follows that $a\notin\Phi(u)=\Phi(x)$ and $b=a^{-1}ua\notin \Phi(a)$ (as $u\notin a\Phi(a)a^{-1}$), which implies $xa\in O_A$ and $yb\in O_B$.
On the other hand, $xa=ua=aa^{-1}ua=yb\in O_A\cap O_B=\emptyset$.
This contradiction completes the proof of the case II and also the proof of Theorem~\ref{t:group}.

\section{The proof of Theorem~\ref{t:group2}}\label{s:group2}

Given any group $G$ and an infinite cardinal $\kappa\le|G|$
we should prove the equivalence of the following conditions:
\begin{enumerate}
\item the bornology $[G]^{<\kappa}$ of the ballean $(G,\E_{[G]^{<\kappa}})$ has a linearly ordered base;
\item the ball structure $\E_{[G]^{<\kappa}}$ has a linearly ordered base;
\item the ballean $(G,\E_{[G]^{<\kappa}})$ has bounded growth;
\item $|G|=\kappa$ and $\kappa$ is a regular cardinal.
\end{enumerate}
Moreover, if $\kappa$ os regular or $G$ is solvable, then we should prove that the conditions (1)--(4) are equivalent to the condition
\begin{itemize}
\item[(5)] The ballean $(G,\E_{[G]^{<\kappa}})$ is normal.
\end{itemize}

The implication $(1)\Ra(2)$ follows from the definition of the ball structure $\E_{[G]^{<\kappa}}$ and $(2)\Ra(3)$ from Proposition~\ref{p:lg}.
\smallskip

$(3)\Ra(4)$: Assume that the ballean $(G,\E_{[G]^{<\kappa}})$ has bounded growth, which means that there  exists an entourage $\Gamma$ on $X$  such that for any set $I\in[G]^{<\kappa}$ the sets $\Gamma[I]$ and $\{x\in G:\{x\}\cup Ix\not\subset \Gamma[x]\}$ belong to the ideal $[G]^{<\kappa}$.
Then the family $\{\Gamma(x)x^{-1}\}_{x\in G}$ is cofinal in the poset $[G]^{<\kappa}$, which means that $\cof([G]^{<\kappa})\le|G|$.

We claim that $|G|=\kappa$. In the opposite case we can take any subset $K\subset G$ of cardinality $|K|=\kappa<|G|$ and choose an element $g\in G\setminus\bigcup_{x\in K}\Gamma[x]x^{-1}$. Then for the singleton $\{g\}\in[G]^{<\kappa}$ the set $\{x\in G:\{x,gx\}\notin\Gamma[x]\}$ contains the set $K$ and hence does not belong to the ideal $[G]^{<\kappa}$. But this contradicts the choice of $\Gamma$. 

Now we see that $\cof([\kappa]^{<\kappa})=\cof([G]^{<\kappa})\le|G|=\kappa$.
It remains to prove that the cardinal $\kappa$ is regular. Since $\cof([\kappa]^{<\kappa})\le\kappa$, the poset $[\kappa]^{<\kappa}$ has a cofinal set $\{S_\alpha\}_{\alpha\in\kappa}$ of cardinality $\kappa$. Let $C\subset\kappa$ be a cofinal set of cardinality $|C|=\cof(\kappa)$. For every $\alpha\in C$ consider the set $U_\alpha=\bigcup\{S_\beta:\beta\le\alpha,\;\;|S_\beta|\le|\alpha|\}$ and observe that it has cardinality $|U_\alpha|<\kappa$. So, we can choose a point $x_\alpha\in\kappa\setminus U_\alpha$. Assuming that the cardinal $\kappa$ is singular, we conclude that the set $X=\{x_\alpha\}_{\alpha\in C}$ has cardinality $|X|\le\cof(\kappa)<\kappa$ and hence is contained in some set $S_\alpha$, where $\alpha\in\kappa$. Then for the ordinal $\beta:=\alpha+|S_\alpha|<\kappa$ we get $X\subset S_\alpha\subset U_\beta$, which is not possible as $x_\beta\in X\setminus U_\beta$. This contradiction shows that $\cof(\kappa)=\kappa$ and the cardinal $\kappa$ is regular.
\smallskip

$(4)\Ra(1)$: If $|G|=\kappa$ and the cardinal $\kappa$ is regular, then for any bijective function $f:\kappa\to G$  the family $\{f([0,\alpha])\}_{\alpha<\kappa}$ a linearly ordered base of the bornology $[G]^{<\kappa}$.
\smallskip

The implication $(2)\Ra(5)$ follows from Theorem~\ref{t:prot}.
\smallskip

Now assuming that the cardinal $\kappa$ is regular or the group $G$ is solvable, we shall prove that $(5)\Ra(4)$. If the cardinal $\kappa$ is regular, then the implication $(5)\Ra(4)$ follows from Theorem~\ref{t:group}. For solvable groups this implication is proved in the last statement of the following lemma.

\begin{lemma} Let $G$ be a group of singular cardinality $\kappa=|G|$.
If the ballean $(G,\E_{[G]^{<\kappa}})$ is normal, then
\begin{enumerate}
\item for each subset $A\subset G$ of cardinality $|A|=\kappa$ the centralizer\newline $\mathsf C_G(A)=\bigcap_{a\in A}\{x\in G:xa=ax\}$ has cardinality $|\mathsf C_G(A)|<\kappa$;
\item for any quotient group $H$ of $G$ the ballean $(H,\E_{[H]^{<\kappa}})$ is normal;
\item no subgroup of $G$ admits a homomorphism onto a group $H$ containing a subset $A\subset H$ with $|A|=\kappa=|\mathsf C_H(A)|$.
\item $G$ is not solvable.
\end{enumerate}
\end{lemma}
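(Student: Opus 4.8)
The plan is to prove the four items in order, with item (1) carrying essentially all the work and (2)--(4) following by soft arguments. For (1) I would argue by contradiction: suppose some $A\subset G$ has $|A|=\kappa=|\mathsf C_G(A)|$. Replacing $A$ by the subgroup $\langle A\rangle$ (which has the same centralizer and still has cardinality $\kappa$) and writing $C:=\mathsf C_G(A)$, I would first reduce to the situation of two \emph{commuting} subgroups $P,Q\le G$ with $|P|=|Q|=\kappa$ and $|P\cap Q|<\kappa$. If $|A\cap C|<\kappa$, take $P=A$ and $Q=C$. If $|A\cap C|=\kappa$, note that $Z:=A\cap C$ is an abelian subgroup of cardinality $\kappa$ (its elements lie in $A$ and centralize $A\supseteq Z$, so they centralize one another); a maximal independent subset of $Z$ generates an essential subgroup, so $Z$ embeds in its divisible hull, whose cardinality equals that of the independent set. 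Hence the independent set has cardinality $\kappa$, and splitting it into two halves yields subgroups $P,Q\le Z$ of cardinality $\kappa$ with $P\cap Q=\{e\}$; being inside the abelian group $Z$, they commute.

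Once $P,Q$ are found, the multiplication map $\mu\colon P\times Q\to PQ\subset G$, $\mu(p,q)=pq$, is a surjective homomorphism onto the subgroup $PQ$ with kernel of cardinality $|P\cap Q|<\kappa$, hence proper and macro-uniform; a short computation (using that the kernel is bounded) shows it carries asymptotically disjoint sets to asymptotically disjoint sets, so it is an asymptotic immersion. Since $PQ$ is a subballean of the normal ballean $G$, it is normal by Corollary~\ref{c:emb}, and therefore $P\times Q$ is normal by Proposition~\ref{p:emb}. Here I would use that the finitary ballean of the direct product group $P\times Q$ is exactly the product of $(P,\E_{[P]^{<\kappa}})$ and $(Q,\E_{[Q]^{<\kappa}})$: a subset of $P\times Q$ has cardinality $<\kappa$ iff both its projections do, and $E_F\subset E_{\pi_P(F)\times\pi_Q(F)}$, so the bornologies and coarse structures coincide. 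Now Theorem~\ref{t:main}(2) forces the bornology $[P\times Q]^{<\kappa}$ to have a linearly ordered base; but it is isomorphic to $[\kappa]^{<\kappa}$, and since $\kappa$ is singular one has $\add([\kappa]^{<\kappa})=\cof(\kappa)<\kappa\le\cof([\kappa]^{<\kappa})$, so it has no linearly ordered base. This contradiction proves (1).

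For (2), the quotient homomorphism $q\colon G\to H$ is macro-uniform ($q(E_F)=E_{q(F)}$ with $|q(F)|<\kappa$), open (lift a small $F'\subset H$ to a small $F\subset G$ with $q(F)=F'$), and any set-theoretic section $s\colon H\to G$ is bornologous since it is injective; Proposition~\ref{p:open} then gives normality of $(H,\E_{[H]^{<\kappa}})$. For (3), if a subgroup $K\le G$ maps onto $H\ni A$ with $|A|=\kappa=|\mathsf C_H(A)|$, then $|H|=|K|=\kappa$; the subballean $K$ of $G$ is normal, so $(H,\E_{[H]^{<\kappa}})$ is normal by (2) applied to $K$, and applying (1) to $H$ gives $|\mathsf C_H(A)|<\kappa$, a contradiction. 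Finally (4) follows from (3): if $G$ were solvable, its derived series has finitely many abelian factors whose cardinalities multiply to $\kappa$, so some factor $H=G^{(i)}/G^{(i+1)}$ is abelian of cardinality $\kappa$; then $A:=H$ satisfies $|A|=\kappa=|\mathsf C_H(A)|$ while $G^{(i)}\twoheadrightarrow H$, contradicting (3).

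The step I expect to be the main obstacle is (1), and within it the case $|A\cap C|=\kappa$: one must extract from a large centralizer a genuine internal near-product of two full-size subgroups, which is precisely what the essential-subgroup/divisible-hull argument delivers, and one must check that the bounded-kernel multiplication map transfers normality from $PQ$ to $P\times Q$ (i.e.\ that it is an asymptotic immersion). Given Propositions~\ref{p:emb} and \ref{p:open} and Theorem~\ref{t:main}, the remaining items are then routine.
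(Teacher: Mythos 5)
Your proof is correct, and for items (2)--(4) it coincides with the paper's own argument (quotients handled via Proposition~\ref{p:open} with an injective, hence bornologous, section; (3) by combining (1) and (2) through Corollary~\ref{c:emb}; (4) via the derived series). For the main item (1), however, you take a genuinely different route. The paper stays at the level of \emph{sets}: a transfinite induction of length $\kappa$ extracts $A\subset\Lambda$ and $B\subset\mathsf C_G(\Lambda)$ with $A^{-1}A\cap B^{-1}B=\{e\}$, so that $|xA\cap yB|\le 1$ and $A,B$ are asymptotically disjoint; the disjoint asymptotic neighbourhoods then produce, via the commutation $ab=ba$, the two ``dominating'' functions of Lemma~\ref{l2}, whence $\add([A]^{<\kappa})=\cof([A]^{<\kappa})$ and a contradiction with the singularity of $\kappa$. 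You instead manufacture two elementwise commuting \emph{subgroups} $P,Q$ of cardinality $\kappa$ with $|P\cap Q|<\kappa$ --- which in the case $|\langle A\rangle\cap\mathsf C_G(A)|=\kappa$ costs you the (correct, but extraneous to the paper) fact that an uncountable abelian group has rank equal to its cardinality --- and then transfer normality from the subballean $PQ$ to the product ballean $P\times Q$ along the multiplication map, rightly observed to be an asymptotic immersion because its fibres have size $|P\cap Q|<\kappa$; Theorem~\ref{t:main}(2) then forces a linearly ordered base of $[\kappa]^{<\kappa}$. Your version is more conceptual (a normal $(G,\E_{[G]^{<\kappa}})$ cannot contain a near-direct product of two unbounded pieces) and reuses Proposition~\ref{p:emb} and Theorem~\ref{t:main} as black boxes, at the price of the subgroup reduction; the paper's version is more hands-on but needs only the choice, at each of $\kappa$ steps, of two elements avoiding fewer than $\kappa$ forbidden values. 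Both endgames rest on the same combinatorial fact that $\add([\kappa]^{<\kappa})=\mathrm{cf}(\kappa)<\kappa\le\cof([\kappa]^{<\kappa})$ for singular $\kappa$. Two cosmetic remarks: the balleans $\E_{[G]^{<\kappa}}$ are not ``finitary'' unless $\kappa=\w$, so avoid that word for the product $P\times Q$; and in item (2) the degenerate case $|H|<\kappa$, where the ballean is bounded and trivially normal, deserves one sentence.
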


\begin{proof} 1. To derive a contradiction, assume that $G$ contains a subset $\Lambda\subset G$ such that $|\Lambda|=\kappa=|\mathsf C_G(\Lambda)|$. By transfinite induction we can construct two transfinite sequences $\{a_\alpha\}_{\alpha\in\kappa}\subset \Lambda$ and $\{b_\alpha\}_{\alpha\in\kappa}\subset \mathsf C_G(\Lambda)$ such that for every $\alpha<\kappa$ the following conditions hold:
\begin{itemize}
\item $a_\alpha\ne a_ib_j^{-1}b_k$ for any ordinals $i,j,k<\alpha$;
\item $b_\alpha\in b_ia^{-1}_ja_k$ for any ordinals $i<\alpha$ and $j,k\le\alpha$.
\end{itemize}
The choice of the elements $a_\alpha,b_\alpha$ is always possible since $|\Lambda|=\kappa=|\mathsf C_G(\Lambda)|$. Then for the sets $A=\{a_\alpha\}_{\alpha\in\kappa}$ and   $B=\{b_\alpha\}_{\alpha\in\kappa}$ the intersection $A^{-1}A\cap B^{-1}B$ consists the unique element equal to the unit $e$ of the group $G$. This property can be used to show that for every $x,y\in G$ the intersection $xA\cap yB$ is a singleton. This implies that for any subset $F\in[G]^{<\kappa}$ the intersection $FA\cap FB$ belongs to the ideal $[G]^{<\kappa}$, which means that the sets $A,B$ are asymptotically disjoint in the ballean $(G,\E_{[G]^{<\kappa}})$.

By the normality of the ballean $(G,\E_{[G]^{<\kappa}})$, the asymptotically disjoint sets $A,B$ has disjoint asymptotic neighborhoods $O_A$ and $O_B$.
Then the functions 
$$\varphi:[A]^{<\kappa}\to[B]^{<\kappa},\;\;\varphi:F\mapsto\{b\in B:Fb\not\subset O_B\}$$and
$$\psi:[B]^{<\kappa}\to[A]^{<\kappa},\;\;\psi:F\mapsto\{a\in A:Fa\not\subset O_A\}$$are well-defined. We claim that for any $A'\in[A]^{<\kappa}$ and $B'\in[B]^{<\kappa}$ we have $B'\subset\varphi(A')$ or $A'\subset\psi(B')$. Assuming that 
$B'\not\subset\varphi(A')$ and $A'\not\subset\psi(B')$, we can find points $a\in A'\setminus\psi(B')$ and $b\in B'\setminus\varphi(A')$ and conclude that $ab\subset O_B$ and $ba\in O_A$. Taking into account that $b'\in \mathsf C_G(\Lambda)\subset\mathsf C_G(a)$, we conclude that
$ab=ba\in O_B\cap O_A=\emptyset$, which is a desired contradiction proving that 
$B'\subset\varphi(A')$ or $A'\subset\psi(B')$.

Now we can apply Lemma~\ref{l2} and conclude that the partially ordered set $[A]^{<\kappa}$ has $\add([A]^{<\kappa})=\cof([A]^{<\kappa})$, which implies that the ideal $[A]^{<\kappa}$ has a linearly ordered base. Since $|A|=\kappa=|G|$, the ideal $[G]^{<\kappa}$ also has a linearly ordered base. By the (already proved) implication $(1)\Ra(4)$ of Theorem~\ref{t:group2}, the cardinal $\kappa$ is regular. But this contradicts our assumption.
\smallskip

2. Let $H$ be any quotient group of $G$ and $q:G\to H$ be the quotient homomorphism. The definition of the ball structures $\E_{[G]^{<\kappa}}$ and $\E_{[H]^{<\kappa}}$ implies that the quotient homomorphism $q:G\to H$ is open and macro-uniform. Moreover, any section $s:H\to G$ of $q$ is bornologous.  By Proposition~\ref{p:open}, the normality of the ballean $(G,\E_{[G]^{<\kappa}})$ implies the normality of the ballean $(H,\E_{[H]^{<\kappa}})$.
\smallskip

3. Assume that some subgroup $\Gamma$ of $G$ admits a homomorphism on a group $H$ containing a subset $A\subset H$ with $|A|=\kappa=|\mathsf C_H(A)|$.
By Corollary~\ref{c:emb}, the subballean $(\Gamma,\E_{[\Gamma]^{<\kappa}})$ is normal. By the preceding statement, the ballean $(H,\E_{[H]^{<\kappa}})$ is normal. Since $|H|\ge |A|=\kappa=|G|\ge|H|$, the group $H$ has cardinality $\kappa$. By the first statement, $|\mathsf C_H(A)|<\kappa$, which contradicts the choice of the set $A$.
\smallskip

4. To derive a contradiction, assume that the group $G$ is solvable. Let $G^{(0)}=G$ and for every $n\in\w$ let $G^{(n+1)}$ be the commutator subgroup of the group $G^{(n)}$. Since the group $G$ is solvable, for some $n\in\IN$ the group $G^{(n)}$ is trivial. Let $k\in\IN$ be the largest number such that $|G^{(k)}|=\kappa$. Then $G^{(k+1)}$ is a normal subgroup of cardinality $<\kappa$, which implies that the quotient group $H=G^{(k)}/G^{(k+1)}$ is Abelian and has cardinality $|H|=\kappa$. Then the set $A=H$ has $\mathsf C_H(A)=H$ and hence $|A|=\kappa=|\mathsf C_H(A)|$, which contradicts the preceding statement.  
\end{proof}

\section{Some Open Problems}\label{s:op}

In this section we ask some open problems related to normality of products.
Our first problem asks if Theorem~\ref{t:main} can be reversed.

\begin{problem} Let $X,Y$ be two normal balleans of bounded growth whose bornologies have countable base. Is the product  $X\times Y$ normal?
\end{problem}

Our next question concerns separation of cardinal characteristics $\cof_\star(\E_X)$ and $\cof_*(\E_X)$.

\begin{problem} Is $\cof_\star(\E_X)<\cof_*(\E_X)$ for some ballean $(X,\E_X)$?
\end{problem}

By Theorem~\ref{t:Gnorm1}, for every ballean $X$ and every $n\in\IN$ the normality of the $n$-th power $X^n$ implies the normality of the symmetric $n$th power $[X]^n$.

\begin{problem} Let $X$ be a ballean and $n\in\IN$. Is it true that the normality of the symmetric $n$-th power $[X]^n$ implies the normality of the hypersymmetric $n$-th power $[X]^{\le n}$?
\end{problem}

In Theorem~\ref{t:ultranorm} we proved that each ultradiscrete ballean $X$ has normal hypersymmetric powers $[X]^{\le n}$.

\begin{problem} Let $X$ be an ultradiscrete ballean. Is the hyperballean $[X]^{\mathcal B}$ normal?
\end{problem}

\begin{problem}\label{prob:ideal} Assume that for a group ideal $\I$ on a group $G$ the ballean $(G,\E_\I)$ is normal. Has the ideal $\I$ a linearly ordered base?
\end{problem}

By Theorem~\ref{t:group}, Problem~\ref{prob:ideal} has affirmative answer for the group ideal $[G]^{<\kappa}$ of subsets of cardinality $<\kappa$ where $\kappa$ is an infinite regular cardinal. We do not know if Theorem~\ref{t:group} remains true for singular cardinals.

\begin{problem}\label{prob:g} Let $G$ be a group of infinite cardinality $\kappa=|G|$ such that the ballean $(G,\E_{[G]^{<\kappa}})$ is normal. Is the cardinal $\kappa$ regular?
\end{problem}

By Theorem~\ref{t:group2}, for solvable groups the answer to Problem~\ref{prob:g} is affirmative.
\smallskip

We say that a function $\varphi:G\to \IR$ defined on a group $G$ is {\em constant at infinity} if there exists a real number $c$ (denoted by $\lim_{g\to\infty}\varphi(x)$) such that  for any neighborhood $U\subset\IR$ of $c$  the set $G\setminus \varphi^{-1}(U)$ is finite. By Theorem 3.1 of \cite{FP}, every slowly oscillating function $\varphi:G\to\IR$ defined on the finitary ballean $(G,\E_{[G]^{<\w}})$ of an uncountable Abelian group $G$ is constant at infinity. The same result holds more generally for slowly oscillating functions on any uncountable group whose any countable subset is contained in a countable normal subgroup. On the other hand, by Example 3.3 in \cite{FP}, any uncountable free group admits an unbounded slowly oscillating function.

\begin{problem} Let $G$ be a group whose finitary ballean $(G,\E_{[G]^{<\w}})$ is pseudobounded. Is each slowly oscillating function $\varphi:G\to \IR$ constant at infinity?
\end{problem} 

More open problems related to real-valued functions on balleans can be found in \cite{BP}. 
 
 \section{Acknowledgments}
 
 The authors express their thanks to MathOverflow users Nik Weaver, Yves Cornulier and Fedor Petrov for suggesting the idea of the proof of Lemmas~\ref{l:nik1} and \ref{l:nik2} (see \cite{Ban}), and to Yves Cornulier for suggesting a simple proof \cite{MO2} of the inequality $\cof([\kappa]^{<\kappa})>\kappa$ for a singular cardinal $\kappa$,  which allowed us to simplify the proof of the implication $(3)\Ra(4)$ in Theorem~\ref{t:group2}.


\begin{thebibliography}{}

\bibitem{Ban} T.~Banakh, {\em A property of an ultrafilter}, {\tt https://mathoverflow.net/questions/312592}.

\bibitem{MO2} T.~Banakh, {\em 
The cofinality of the poset $[\kappa]^{<\kappa}$ for a singular cardinal $\kappa$}, \newline
  {\tt https://mathoverflow.net/questions/313366}.
  
  \bibitem{BP} T.~Banakh, I.~Protasov, {\em Functional boundedness of balleans}, preprint (https://arxiv.org/abs/1810.12124). 
  


\bibitem{CH} Y.~Cornulier, P.~de la Harpe, {\em Metric geometry of locally compact groups}, EMS Tracts in Mathematics, 25. Z\"urich, 2016. 

\bibitem{DPPZ} D. Dikranjan, I. Protasov, K. Protasova, N. Zava, {\em Balleans, hyperballeans and ideals}, Applied General Topology (accepted).

\bibitem{DZ1} D.~Dikranjan, N.~Zava, {\em Some categorical aspects of coarse spaces and balleans}, Topology Appl. {\bf 225} (2017), 164--194.

\bibitem{DZ2} D.~Dikranjan, N.~Zava, {\em Preservation and reflection of size properties of balleans}, Topology Appl. {\bf 221} (2017), 570--595.

\bibitem{Dr} A.N.~Dranishnikov, {\em Asymptotic topology}, Russian Math. Surveys,  {\bf 55}:6 (2000), 1085--1129.

\bibitem{DH} J.~Dydak, C.~Hoffland, {\em An alternative definition of coarse structures}, Topology Appl. {\bf 155}:9 (2008), 1013--1021.

\bibitem{Eng} R.~Engelking, {\em General Topology}, Heldermann Verlag, Berlin, 1989.

\bibitem{FP} M. Filali, I.V. Protasov, {\em Slowly oscillating functions on locally compact groups}, Applied General Topology, {\bf 6}:1 (2005), 67--77.

\bibitem{LP} I.~Lutsenko, I.~Protasov, {\em Thin subsets of balleans}, Applied General Topology, {\bf 11}:2 (2010), 89--93.

\bibitem{Nek} V. Nekrashevych, {\em Uniformly bounded spaces}, Voprosy Algebry, 14 (1999), 47--67.

\bibitem{NYu} P.~Nowak, G.~Yu, {\em Large scale geometry}, EMS Textbooks in Mathematics, Z\"urich, 2012.

\bibitem{Prot} I.V.~Protasov, {\em Normal ball structures}, Matem. Studii. {\bf 10}:1 (2003), 3--16.

\bibitem{P2008} I.V.~Protasov, {\em Balleans of bounded geometry and $G$-spaces}, Matem. Studii, {\bf 30}:1 (2008), 61--66. 

\bibitem{P18} I.~Protasov, {\em Bornological, coarse and uniform groups}, Visnyk Lviv Univ. (accepted); available at (https://arxiv.org/abs/1807.03028).

\bibitem{PB} I.~Protasov, T.~Banakh, {\em Ball stuctures and colorings of graphs and groups}, VNTL Publ. 2003, 148p. 

\bibitem{ProtEJM} I. Protasov, K. Protasova, {\em On hyperballeans of bounded geometry}, Europ. J. Math. {\bf 4} (2018), 1515--1520.

\bibitem{PZ} I.~Protasov, M.~Zarichnyi, {\em General Asymptology},  VNTL Publ., Lviv, 2007.

\bibitem{Roe} J.~Roe, {\em Lectures on Coarse Geometry}, Univ. Lecture Ser. 31, Amer. Math. Soc., 2003.


\end{thebibliography}
\end{document}